\newcommand{\N}{{\mathbb N}}
\newcommand{\Z}{{\mathbb Z}}
\newcommand{\E}{{\mathbb E}}
\newcommand{\dt}{{\partial_t}}
\newcommand{\dxun}{{\partial_{x_1}}}
\newcommand{\dxde}{{\partial_{x_2}}}
\newcommand{\eps}{{\varepsilon}}
\newcommand{\tauetabar}{{(\underline{\tau},\underline{\eta})}}
\newcommand{\bchi}{\text{\boldmath{$\chi$}}}
\newcommand{\C}{{\mathbb C}}
\newcommand{\R}{{\mathbb R}}
\newcommand\cA{{\cal  A}}
\newcommand\cB{{\cal  B}}
\newcommand\cD{{\cal  D}}
\newcommand\cU{{\cal  U}}
\newcommand\cH{{\cal  H}}
\newcommand\cV{{\cal  V}}
\newcommand\cC{{\cal  C}}
\newcommand\cI{{\cal I}}
\newcommand\cJ{{\cal J}}
\newcommand\cR{{\cal  R}}
\newcommand\cG{{\cal  G}}
\newcommand\cL{{\cal  L}}
\newcommand\cF{{\cal  F}}
\newcommand\cO{{\cal O}}
\newcommand\cM{{\mathcal M}}
\newcommand\cS{{\mathcal S}}
\newcommand{\bB}{\mathbb{B}}
\newcommand{\bF}{\mathbb{F}}
\newcommand{\bR}{\mathbb{R}}
\newcommand{\bL}{\mathbb{L}}
\newcommand{\bE}{\mathbb{E}}
\newcommand{\bA}{\mathbb{A}}
\newcommand{\bN}{\mathbb{N}}
\newcommand{\bT}{\mathbb{T}}
\newcommand{\bD}{\mathbb{D}}
\newcommand{\bC}{\mathbb{C}}
\newcommand{\bZ}{\mathbb{Z}}
\newcommand{\tv}{\tilde v}
\newcommand{\tw}{\tilde{w}}
\newcommand{\uv}{{\underline v}}
\newcommand{\utau}{\underline \tau}
\newcommand{\ueta}{\underline \eta}
\newcommand{\uzeta}{\underline \zeta}
\newcommand{\uomega}{{\underline \omega}}
\newcommand\adots{\mathinner{\mkern2mu\raise1pt\hbox{.}
\mkern3mu\raise4pt\hbox{.}\mkern1mu\raise7pt\hbox{.}}}
\newcommand{\re}{{\rm Re }\, }
\newcommand{\bfS}{\textbf{S}}
\newtheorem{theo}{Theorem}[section]
\newtheorem{prop}[theo]{Proposition}
\newtheorem{cor}[theo]{Corollary}
\newtheorem{lem}[theo]{Lemma}
\newtheorem{defn}[theo]{Definition}
\newtheorem{rem}[theo]{Remark}
\newtheorem{nota}[theo]{Notations}
\newtheorem{assumption}[theo]{Assumption}
\newtheorem{definition}[theo]{Definition}
\numberwithin{equation}{section}
 \title{Semilinear geometric optics with boundary amplification}
\author{\sc \small
Jean-Francois Coulombel\thanks{CNRS and Universit\'e de Nantes, Laboratoire de math\'ematiques Jean Leray
(UMR CNRS 6629), 2 rue de la Houssini\`ere, BP 92208, 44322 Nantes Cedex 3, France;
{\tt jean-francois.coulombel@univ-nantes.fr}; Research of J.-F. C. was supported by the French Agence Nationale
de la Recherche, contract ANR-08-JCJC-0132-01.},
Olivier Gu\`es\thanks{Universit\'e de Provence, Laboratoire d'Analyse, Topologie et Probabilit\'es (UMR CNRS 6632),
Technop\^ole Ch\^ateau-Gombert, 39 rue F. Joliot Curie, 13453 Marseille Cedex 13, France; {\tt gues@cmi.univ-mrs.fr};
Research of O.G. was partially supported by the French Agence Nationale de la Recherche, contract
ANR-08-JCJC-0132-01.},
Mark Williams\thanks{University of North Carolina; Mathematics Department, CB 3250, Phillips Hall, Chapel Hill,
NC 27599. USA; {\tt williams@email.unc.edu}; Research of M.W. was partially supported by NSF grants number
DMS-0701201 and DMS-1001616.}}
\begin{document}

\maketitle
\begin{abstract}
We study weakly stable semilinear hyperbolic boundary value problems with highly oscillatory data.  Here weak
stability means that exponentially growing modes are absent, but the so-called uniform Lopatinskii condition fails
at some boundary frequency $\beta$ in the hyperbolic region. As a consequence of this degeneracy there is an
amplification phenomenon: outgoing waves of amplitude $O(\eps^2)$ and wavelength $\eps$ give rise to reflected
waves of amplitude $O(\eps)$, so the overall solution has amplitude $O(\eps)$.  Moreover, the reflecting waves
emanate from a radiating wave that propagates in the boundary along a characteristic of the Lopatinskii determinant.

An approximate solution that displays the qualitative behavior just described is constructed by solving suitable
profile equations that exhibit a loss of derivatives, so we solve the profile equations by a Nash-Moser iteration.
The exact solution is constructed by solving an associated singular problem involving singular derivatives of the
form $\partial_{x'}+\beta\frac{\partial_{\theta_0}}{\eps}$, $x'$ being the tangential variables with respect to the
boundary. Tame estimates for the linearization of that problem are proved using a  first-order calculus of singular
pseudodifferential operators constructed in the companion article \cite{CGW2}. These estimates exhibit
a loss of one singular derivative and force us to construct the exact solution by a separate Nash-Moser iteration.
The same estimates are used in the error analysis, which shows that the exact and approximate solutions are
close in $L^\infty$ on a fixed time interval independent of the (small) wavelength $\eps$. The approach using
singular systems allows us to avoid constructing high order expansions and making small divisor assumptions.
\end{abstract}

\tableofcontents

\section{Introduction and main results}
\label{intro}

\emph{\quad}In this paper we study weakly stable semilinear hyperbolic boundary value problems with oscillatory
data. The problems are weakly stable in the sense that exponentially growing modes are absent, but the uniform
Lopatinskii condition fails at a boundary frequency $\beta$ in the hyperbolic region $\cH$\footnote{See Definition
\ref{def1} and Assumption \ref{assumption3} below for precise statements.}. As a consequence of this degeneracy
in the boundary conditions, there is an amplification phenomenon: boundary data of wavelength $\eps$ and
amplitude $O(\eps^2)$ in the problem \eqref{0} below gives rise to a response of amplitude $O(\eps)$. In the
meantime, resonance may occur between distinct oscillations. In the situation studied below, a resonant quadratic
interaction between two incoming  waves of amplitude $O(\eps)$ may produce an outgoing wave of amplitude
$O(\eps^2)$. When reflected and amplified on the boundary, this oscillation gives rise to incoming waves of
amplitude $O(\eps)$. Hence the $O(\eps)$ amplitude regime appears as the natural weakly nonlinear regime.

Let us now introduce some notation. On $\overline {\mathbb{R}}^{d+1}_+ = \{x=(x',x_d)=(t,y,x_d)=(t,x''):x_d\geq 0\}$,
consider the $N\times N$ semilinear hyperbolic boundary problem  for $v=v_\eps(x)$, where $\eps>0$\footnote{We
usually suppress the subscript $\eps$.}:
\begin{align}\label{0}
\begin{split}
&(a)\;L_0(\partial)v+f_0(v)=0,\\
&(b)\;\phi(v)=\eps^2 \, G \left( x',\frac{x'\cdot\beta}{\eps} \right) \text{ on }x_d=0,\\
&(c)\;v=0 \text{ and }G=0\text{ in }t<0,
\end{split}
\end{align}
where $L_0(\partial)=\partial_t+\sum^{d}_{j=1}B_j \, \partial_j$, the matrix $B_d$ is invertible, and both $f_0(v)$ and
$\phi(v)$ vanish at $v=0$.  The function $G(x',\theta_0)$ is assumed to be periodic in $\theta_0$, and the frequency
$\beta\in\bR^{d}\setminus \{0\}$ is taken to be a boundary frequency at which the so-called uniform Lopatinskii
condition fails. A consequence of this failure is that the choice of the factor $\eps^2$ in \eqref{0}(b) corresponds to
the weakly nonlinear regime for this problem.  The leading profile is nonlinearly coupled to the next order profile in
the nonlinear system \eqref{b8s},\eqref{6} derived below. We also refer to Appendix \ref{exeuler} for a detailed
specific example which illustrates the nonlinear feature of the leading profile equation.

Before proceeding we write the problem in an equivalent form that is better adapted to the boundary.
After multiplying \eqref{0}(a) by $(B_d)^{-1}$ we obtain
\begin{align}\label{1}
\begin{split}
&L(\partial)v+f(v)=0,\\
&\phi(v)=\eps^2 \, G \left( x',\frac{x'\cdot\beta}{\eps} \right) \text{ on }x_d=0,\\
&v=0 \text{ and }G=0\text{ in }t<0,
\end{split}
\end{align}
where we have set
\begin{equation*}
L(\partial)=\partial_d+\sum^{d-1}_{j=0}A_j \, \partial_j \quad \text{ with } A_j :=B_d^{-1}B_j\text{ for }j=0,\dots,d-1.
 \end{equation*}
Setting $v=\eps u$  and writing  $f(v)=D(v)v$, $\phi(v)=\psi(v)v$, we get the problem for $u=u_\eps(x)$
\begin{align}\label{2}
\begin{split}
&(a)\;L(\partial)u+D(\eps u)u=0,\\
&(b)\;\psi(\eps u)u=\eps \, G \left( x',\frac{x'\cdot\beta}{\eps} \right)\text{ on }x_d=0,\\
&(c)\;u=0 \text{ in }t<0.
\end{split}
\end{align}
For the problem \eqref{2} we pose the two basic questions of rigorous nonlinear geometric optics:\\

(1)\; Does an exact solution $u_\eps$ of \eqref{2} exist for $\eps\in (0,1]$ on a fixed time interval $[0,T_0]$
independent of $\eps$?

(2)\; Suppose the answer to the first question is yes. If we let $u^{app}_\eps$ denote an approximate
solution on $[0,T_0]$ constructed by the methods of nonlinear geometric optics (that is, solving eikonal
equations for phases and suitable transport equations for profiles), how well does $u^{app}_\eps$
approximate $u_\eps$ for $\eps$ small? For example, is it true that\footnote{Let us observe that by the
amplification phenomenon, we expect the solution $v$ to \eqref{0} to have amplitude $O(\eps)$, so the
solution $u$ to \eqref{2} should have amplitude $O(1)$. Hence the limit \eqref{2z} deals with the difference
between two $O(1)$ quantities.}
\begin{equation}\label{2z}
\lim_{\eps\to 0}|u_\eps-u^{app}_\eps|_{L^\infty}\to 0 \, ?
\end{equation}

The amplification phenomenon was studied in a formal way for several different \emph{quasilinear} problems
in \cite{AM,MA,MR}.  In \cite{MR} amplification was studied in connection with Mach stem formation in reacting
shock fronts, while \cite{AM} explored a connection to the formation of instabilities in compressible vortex
sheets. Both papers derived equations for profiles using an ansatz that exhibited amplification; however,
neither of the two questions posed above were addressed. The first rigorous amplification results were proved
in \cite{CG} for \emph{linear} problems. That article provided positive answers to the above questions (question
$(1)$ is trivial for linear problems) by making use of approximate solutions of high order, and showed in particular
that the limit \eqref{2z} holds.

In this paper we give positive  answers to the above questions for the \emph{semilinear} system \eqref{2}.  As is
typical in nonlinear geometric optics problems involving several phases, difficulties with small divisors rule out the
construction of high order approximate solutions\footnote{Such difficulties are sometimes avoided by assuming
that small divisors do not occur, see e.g. \cite{JMR1}, but we do not want to make this assumption.}. Instead of
constructing the exact solution $u_\eps$ as a small perturbation of a high-order approximate solution, we
construct $u_\eps$  in the form
\begin{equation*}
u_\eps(x)=U_\eps(x,\theta_0)|_{\theta_0=\frac{\beta\cdot x'}{\eps}},
\end{equation*}
where $U_\eps(x,\theta_0)$ is an exact solution of the singular system \eqref{15}. The singular system is solved
using symmetrization and diagonalization arguments from \cite{W1}, modified and supplemented with methods
from \cite{C1} for deriving linear estimates for weakly stable hyperbolic boundary problems. In deriving the
basic estimate \eqref{aprioriL2} for the singular linear problem, a loss of derivatives\footnote{In fact, the basic 
$L^2$ estimate for the singular system \eqref{15} exhibits loss of a single ``singular derivative" $\partial_{x'}
+\frac{\beta\partial_{\theta_0}}{\eps}$, which is optimal according to the analysis in \cite{CG}.} forces us to use
a new tool, namely, a substantial refinement, given in the companion paper \cite{CGW2}, of the calculus of 
singular pseudodifferential operators constructed in \cite{W1}. In the new version of the calculus, residual operators 
have better smoothing properties than previously realized and can therefore be considered as remainders in
our problem. The loss of derivatives in the linear estimate presents a serious difficulty in the application to our
semilinear problem. Picard iteration appears to be out of the question, so in section \ref{nashex} we use a
Nash-Moser iteration scheme adapted to the scale of spaces \eqref{17} to construct the solution $U_\eps
(x,\theta_0)$ to the semilinear singular problem.

If the problem \eqref{2} satisfied the uniform Lopatinskii condition, then because of the factor $\eps$ in the boundary
data $\eps \, G$, the equations for the leading profile, $\cV^0$ in \eqref{a10}, would be linear; and in fact $\cV^0$
would vanish. The weakly nonlinear regime would correspond to a source term $G$ (and not $\eps \, G$) in
\eqref{2}, see \cite{W3,W2}. Under our weak stability assumption, it turns out that $\cV^0$ is nonlinearly coupled
to the second-order profile $\cV^1$ in the profile equations \eqref{b8s},\eqref{6}. To solve these equations we first
isolate a ``key subsystem" \eqref{12} that decouples from the full system. The basic $L^2$ estimate for the linearization
of the key subsystem still exhibits a loss of one derivative, and we are again forced to use Nash-Moser iteration
in order to solve this subsystem. Once the key subsystem is solved, the solution of the full profile system
\eqref{b8s},\eqref{6} follows easily. It appears in our analysis that the leading order amplitude equation shares the
weak well-posedness of the original nonlinear problem but we have not checked whether the loss of derivative for
the amplitude equation is optimal (we conjecture it is so).

The error analysis used to answer question (2) above is based on the estimate for the singular system \eqref{15},
see Proposition \ref{i5z} below, and is discussed in more detail in section \ref{errori}.

This paper can be read independently of \cite{CGW2}; for the reader's convenience, we have gathered all the
necessary material on the singular calculus in Appendix \ref{calc}. Before giving a fuller discussion, we first provide
some definitions, notation, and a precise statement of assumptions.

\subsection{Assumptions}
\label{assumptions}

We make the following hyperbolicity assumption on the system \eqref{0}:

\begin{assumption}
\label{assumption1}
There exist an integer $q \ge 1$, some real functions $\lambda_1,\dots,\lambda_q$ that are analytic on $\R^d
\setminus \{ 0 \}$ and homogeneous of degree $1$, and there exist some positive integers $\nu_1,\dots,\nu_q$
such that:
\begin{equation*}
\forall \, \xi=(\xi_1,\dots,\xi_d) \in \R^d \setminus \{ 0 \} \, ,\quad
\det \Big[ \tau \, I+\sum_{j=1}^d \xi_j \, B_j \Big] =\prod_{k=1}^q \big( \tau+\lambda_k(\xi) \big)^{\nu_k} \, .
\end{equation*}
Moreover the eigenvalues $\lambda_1(\xi),\dots,\lambda_q(\xi)$ are semi-simple (their algebraic multiplicity
equals their geometric multiplicity) and satisfy $\lambda_1(\xi)<\dots<\lambda_q(\xi)$ for all $\xi \in \R^d
\setminus \{ 0\}$.
\end{assumption}

\noindent For simplicity, we restrict our analysis to noncharacteristic boundaries and therefore make the
following:

\begin{assumption}
\label{assumption2}
The matrix $B_d$ is invertible and the matrix $B:=\psi(0)$ has maximal rank, its rank $p$ being equal to the
number of positive eigenvalues of $B_d$ (counted with their multiplicity). Moreover, the integer $p$ satisfies
$1 \le p \le N-1$.
\end{assumption}

In the normal modes analysis for  \eqref{2}, one first performs a Laplace transform in the time variable $t$ and
a Fourier transform in the tangential space variables $y$.  We let $\tau-i\, \gamma \in \C$ and $\eta \in \R^{d-1}$
denote the dual variables of $t$ and $y$.  We introduce the symbol
\begin{equation*}
{\mathcal A}(\zeta):= -i \, B_d^{-1} \left( (\tau-i\gamma) \, I +\sum_{j=1}^{d-1} \eta_j \, B_j \right)
\, ,\quad \zeta:=(\tau-i\gamma,\eta) \in \C \times \R^{d-1} \, .
\end{equation*}
For future use, we also define the following sets of frequencies:
\begin{align*}
& \Xi := \Big\{ (\tau-i\gamma,\eta) \in \C \times \R^{d-1} \setminus (0,0) : \gamma \ge 0 \Big\} \, ,
& \Sigma := \Big\{ \zeta \in \Xi : \tau^2 +\gamma^2 +|\eta|^2 =1 \Big\} \, ,\\
& \Xi_0 := \Big\{ (\tau,\eta) \in \R \times \R^{d-1} \setminus (0,0) \Big\} = \Xi \cap \{ \gamma = 0 \} \, ,
& \Sigma_0 := \Sigma \cap \Xi_0 \, .
\end{align*}

Two key objects in our analysis are the hyperbolic region and the glancing set that are defined as follows:

\begin{definition}
\label{def1}
\begin{itemize}
 \item The hyperbolic region ${\mathcal H}$ is the set of all $(\tau,\eta) \in \Xi_0$ such that the matrix
       ${\mathcal A}(\tau,\eta)$ is diagonalizable with purely imaginary eigenvalues.

 \item Let ${\bf G}$ denote the set of all $(\tau,\xi) \in \R \times \R^d$ such that $\xi \neq 0$ and there exists
       an integer $k \in \{1,\dots,q\}$ satisfying:
\begin{equation*}
\tau + \lambda_k(\xi) = \dfrac{\partial \lambda_k}{\partial \xi_d} (\xi) = 0 \, .
\end{equation*}
If $\pi ({\bf G})$ denotes the projection of ${\bf G}$ on the $d$ first coordinates (in other words $\pi (\tau,\xi)
=(\tau,\xi_1,\dots,\xi_{d-1})$ for all $(\tau,\xi)$), the glancing set ${\mathcal G}$ is ${\mathcal G} :=
\pi ({\bf G}) \subset \Xi_0$.
\end{itemize}
\end{definition}

\noindent We recall the following result that is due to Kreiss \cite{K} in the strictly hyperbolic case (when all integers
$\nu_j$ in Assumption \ref{assumption1} equal $1$) and to M\'etivier \cite{Met} in our more general framework:

\begin{prop}[\cite{K,Met}]
\label{thm1}
Let Assumptions \ref{assumption1} and \ref{assumption2} be satisfied. Then for all $\zeta \in \Xi \setminus
\Xi_0$, the matrix ${\mathcal A}(\zeta)$ has no purely imaginary eigenvalue and its stable subspace $\E^s
(\zeta)$ has dimension $p$. Furthermore, $\E^s$ defines an analytic vector bundle over $\Xi \setminus \Xi_0$
that can be extended as a continuous vector bundle over $\Xi$.
\end{prop}

\noindent For all $(\tau,\eta) \in \Xi_0$, we let $\E^s(\tau,\eta)$ denote the continuous extension of $\E^s$ to the
point $(\tau,\eta)$.  The analysis in \cite{Met} shows that away from the glancing set ${\mathcal G} \subset \Xi_0$,
$\E^s(\zeta)$ depends analytically on $\zeta$, and the hyperbolic region ${\mathcal H}$ does not contain any
glancing point.

To treat the case when the boundary operator in \eqref{2}(b) is independent of $u$, meaning $\psi(\eps u)
\equiv \psi(0) =:B$, we make the following \emph{weak stability assumption} on the problem $(L(\partial),B)$.

\begin{assumption}
\label{assumption3}
\begin{itemize}
 \item For all $\zeta \in \Xi \setminus \Xi_0$, $\text{\rm Ker} B \cap \E^s (\zeta) = \{ 0\}$.

 \item The set $\Upsilon_0 := \{ \zeta \in \Sigma_0 : \text{\rm Ker} B \cap \E^s (\zeta) \neq \{ 0\} \}$ is
       nonempty and included in the hyperbolic region ${\mathcal H}$. 

 \item For all $\underline{\zeta} \in \Upsilon_0$, there exists a neighborhood ${\mathcal V}$ of $\underline{\zeta}$
       in $\Sigma$, a real valued ${\mathcal C}^\infty$ function $\sigma$ defined on ${\mathcal V}$, a basis
       $E_1(\zeta),\dots,E_p(\zeta)$ of $\E^s(\zeta)$ that is of class ${\mathcal C}^\infty$ with respect to
       $\zeta \in {\mathcal V}$, and a matrix $P(\zeta) \in \text{\rm GL}_p (\C)$ that is of class
       ${\mathcal C}^\infty$ with respect to $\zeta \in {\mathcal V}$, such that
\begin{equation*}
\forall \, \zeta \in {\mathcal V} \, ,\quad B \, \begin{pmatrix}
E_1(\zeta) & \dots & E_p(\zeta) \end{pmatrix}
= P(\zeta) \, \text{\rm diag } \big( \gamma +i\, \sigma (\zeta),1,\dots,1 \big) \, .
\end{equation*}
\end{itemize}
\end{assumption}

\noindent For comparison and later reference we recall the following definition.

\begin{defn}[\cite{K}]
\label{ustable}
As before let $p$ be the number of positive eigenvalues of $B_d$. The problem $(L(\partial),B)$ is said to
be \emph{uniformly stable} or to satisfy the \emph{uniform Lopatinskii condition} if
\begin{equation*}
B \, : \, \E^s (\zeta) \longrightarrow \bC^p
\end{equation*}
is an isomorphism for all $\zeta\in \Sigma$.
\end{defn}

\begin{rem}
\label{careful}
\textup{Observe that if $(L(\partial),B)$ satisfies the uniform Lopatinskii condition, continuity implies that this
condition still holds for $(L(\partial),B+\dot\psi)$, where $\dot\psi$ is any sufficiently small perturbation of $B$.
Hence the uniform Lopatinskii condition is a convenient framework for nonlinear perturbation. The analogous
statement may not be true when $(L(\partial),B)$ is only weakly stable. Remarkably, weak stability persists
under perturbation in the so-called WR class exhibited in \cite{BRSZ}, and Assumption \ref{assumption3}
above is a convenient equivalent definition of the WR class (see \cite[Appendix B]{CG}). In order to handle 
general nonlinear boundary conditions as in \eqref{2} we shall strengthen Assumption \ref{assumption3} in 
Assumption \ref{nonlinbc} below.}
\end{rem}

\textbf{Boundary and interior phases.}
We consider a planar real phase $\phi_0$ defined on the boundary:
\begin{equation}
\label{phasebord}
\quad \phi_0(t,y) :=
\underline{\tau} \, t +\underline{\eta} \cdot y \, ,\quad \tauetabar \in \Xi_0 \, .
\end{equation}
As follows from earlier works, see e.g. \cite{MA}, oscillations on the boundary associated with the phase
$\phi_0$ give rise to oscillations in the interior associated with some planar phases $\phi_m$. These phases
are characteristic for the hyperbolic operator $L_0(\partial)$ and their trace on the boundary $\{ x_d=0\}$
equals $\phi_0$. For now we make the following:

\begin{assumption}
\label{a8}
The phase $\phi_0$ defined by \eqref{phasebord} satisfies $\tauetabar \in \Upsilon_0$. In particular
$\tauetabar \in {\mathcal H}$.
\end{assumption}

\noindent Thanks to Assumption \ref{a8}, we know that the matrix ${\mathcal A} \tauetabar$ is
diagonalizable with purely imaginary eigenvalues. These eigenvalues are denoted $i\, \underline{\omega}_1,
\dots,i\, \underline{\omega}_M$, where the $\underline{\omega}_m$'s are real and pairwise distinct. The
$\underline{\omega}_m$'s are the roots (and all the roots are real) of the dispersion relation:
\begin{equation*}
\det \Big[ \underline{\tau} \, I+\sum_{j=1}^{d-1} \underline{\eta}_j \, B_j +\omega \, B_d \Big] = 0 \, .
\end{equation*}
To each root $\underline{\omega}_m$ there corresponds a unique integer $k_m \in \{ 1,\dots,q\}$ such that
$\underline{\tau} + \lambda_{k_m} (\underline{\eta},\underline{\omega}_m)=0$. We can then define the following
real\footnote{If $\tauetabar$ does not belong to the hyperbolic region ${\mathcal H}$, some of the phases
$\varphi_m$ may be complex, see e.g. \cite{W3,W2,Le,Mar}. Moreover, glancing phases introduce a new
scale $\sqrt{\eps}$ as well as boundary layers. } phases and their associated group velocities:
\begin{equation}
\label{phases}
\forall \, m =1,\dots,M \, ,\quad \phi_m (x):= \phi_0(t,y)+\underline{\omega}_m \, x_d \, ,\quad
{\bf v}_m := \nabla \lambda_{k_m} (\underline{\eta},\underline{\omega}_m) \, .
\end{equation}
Let us observe that each group velocity ${\bf v}_m$ is either incoming or outgoing with respect to the space
domain $\R^d_+$: the last coordinate of ${\bf v}_m$ is nonzero. This property holds because $\tauetabar$
does not belong to the glancing set ${\mathcal G}$. We can therefore adopt the following classification:

\begin{definition}
\label{def2}
The phase $\phi_m$ is incoming if the group velocity ${\bf v}_m$ is incoming (that is, when $\partial_{\xi_d}
\lambda_{k_m} (\underline{\eta},\underline{\omega}_m)>0$), and it is outgoing if the group velocity ${\bf v}_m$
is outgoing ($\partial_{\xi_d} \lambda_{k_m} (\underline{\eta},\underline{\omega}_m) <0$).
\end{definition}

\noindent In all that follows, we let ${\mathcal I}$ denote the set of indices $m \in \{ 1,\dots,M\}$ such
that $\phi_m$ is an incoming phase, and ${\mathcal{O}}$ denote the set of indices $m \in \{ 1,\dots,M\}$ such
that $\phi_m$ is an outgoing phase.   If $p\geq 1$, then $\cI$ is nonempty, while if $p\leq N-1$, $\cO$ is
nonempty (see Lemma \ref{lem1} below).  We  will use the notation:
\begin{align*}
&L_0(\tau,\xi) := \tau \, I +\sum_{j=1}^d \xi_j \, B_j \, ,\; \;
L(\beta,\uomega_m):=\uomega_m \, I +\sum_{k=0}^{d-1} \beta_k \, A_k,\\
&\;\beta:=(\utau,\ueta), \;\;x'=(t,y),\;\; \phi_0(x')=\beta\cdot x'.
\end{align*}
For each phase $\phi_m$, $d\phi_m$ denotes the differential of the function $\phi_m$ with respect to its
argument $x=(t,y,x_d)$. It follows from Assumption \ref{assumption1} that the eigenspace of ${\mathcal A}
(\beta)$ associated with the eigenvalue $i\, \underline{\omega}_m$ coincides with the kernel of $L_0(d\phi_m)$
and has dimension $\nu_{k_m}$. The following well-known lemma, whose proof is recalled in \cite{CG}, gives a
useful decomposition of $\E^s$ in the hyperbolic region.

\begin{lem}
\label{lem1}
The stable subspace $\E^s(\beta)$ admits the decomposition:
\begin{equation}
\label{decomposition1}
\E^s (\beta) = \oplus_{m \in {\mathcal I}} \, \ker L_0({ d} \phi_m) \, ,
\end{equation}
and each vector space in the decomposition \eqref{decomposition1} admits a basis of real vectors.
\end{lem}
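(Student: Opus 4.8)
The plan is to identify $\E^s(\beta)$ at the hyperbolic frequency $\beta=\tauetabar$ by diagonalizing ${\mathcal A}(\beta)$ and then sorting eigenvalues according to their sign, using the Hersh/Kreiss-type characterization of the stable subspace in the hyperbolic region. First I would recall that, by Assumption \ref{a8}, $\tauetabar \in {\mathcal H}$, so ${\mathcal A}(\beta)$ is diagonalizable with purely imaginary eigenvalues $i\,\underline{\omega}_1,\dots,i\,\underline{\omega}_M$, and $\C^N$ splits as the direct sum of the corresponding eigenspaces $\ker L_0(d\phi_m) = \ker\big({\mathcal A}(\beta)-i\underline{\omega}_m I\big)$, each of dimension $\nu_{k_m}$ by the remark preceding the lemma. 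Since $\sum_{m=1}^M \nu_{k_m} = N$ (the dispersion relation has exactly $N$ roots counted with multiplicity), it only remains to show that the sum of those eigenspaces indexed by ${\mathcal I}$ (the incoming phases) is exactly the limit $\E^s(\beta)$ coming from $\gamma>0$, and that each such eigenspace has a real basis.

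For the real-basis claim, I would argue that $L_0(d\phi_m) = \underline\tau I + \sum_{j=1}^{d-1}\underline\eta_j B_j + \underline\omega_m B_d$ is a real matrix (the $B_j$ are real, and $\underline\tau,\underline\eta,\underline\omega_m$ are real by Assumption \ref{a8} and the discussion of the dispersion relation), hence its kernel is a real subspace of $\C^N$ of complex dimension $\nu_{k_m}$, which therefore admits a basis of real vectors. For the identification of $\E^s(\beta)$, the key step is a continuity/perturbation argument: for $\gamma>0$, $\E^s(\tau-i\gamma,\eta)$ is spanned by the generalized eigenvectors of ${\mathcal A}(\tau-i\gamma,\eta)$ associated with eigenvalues of negative real part, and one checks (via the Taylor expansion of the eigenvalues of ${\mathcal A}$ near $\beta$ in the direction $\gamma\searrow 0$, using semisimplicity from Assumption \ref{assumption1}) that the eigenvalue branch emanating from $i\underline\omega_m$ has real part with sign $-\gamma\,\partial_{\xi_d}\lambda_{k_m}(\underline\eta,\underline\omega_m)/(\text{something positive})$, so it moves into $\{\re<0\}$ precisely when $\partial_{\xi_d}\lambda_{k_m}(\underline\eta,\underline\omega_m)>0$, i.e.\ precisely when $m\in{\mathcal I}$. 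Passing to the limit $\gamma\to 0$ using the continuous extension of $\E^s$ from Proposition \ref{thm1} and the fact that $\beta\notin{\mathcal G}$ (so the eigenvalues stay apart), one gets $\E^s(\beta) = \oplus_{m\in{\mathcal I}}\ker L_0(d\phi_m)$.

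The main obstacle is the perturbation computation determining the sign of $\re$ of the perturbed eigenvalues as $\gamma$ increases from $0$: one must relate the derivative in $\gamma$ of an eigenvalue of $-iB_d^{-1}\big((\tau-i\gamma)I+\sum\eta_jB_j\big)$ to the group velocity component $\partial_{\xi_d}\lambda_{k_m}$, which requires differentiating the dispersion relation $\det L_0(\tau-i\gamma,\eta,\omega)=0$ implicitly and invoking semisimplicity to control the relevant left/right eigenvector pairing (ensuring the pairing does not vanish). Since this lemma is classical and its proof is, as the authors note, recalled in \cite{CG}, I would not reproduce the full computation but cite it; the only genuinely new verifications here are the bookkeeping that $\dim\E^s(\beta)=p=\#\{\text{incoming sheets, with multiplicity}\}$ (consistent with Assumption \ref{assumption2} and Lemma-to-come statements) and the reality of each kernel, both of which follow directly from the real-coefficient structure and Assumption \ref{assumption1}.
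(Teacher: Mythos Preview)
Your sketch is correct and matches the standard argument; note, however, that the paper does not actually give a proof of this lemma---it merely states that the result is well known and that the proof is recalled in \cite{CG}. What you have outlined (diagonalize ${\mathcal A}(\beta)$ at the hyperbolic frequency, perturb in $\gamma$ and use implicit differentiation of the dispersion relation to determine the sign of $\re$ of each eigenvalue branch via $\partial_{\xi_d}\lambda_{k_m}$, then pass to the limit using the continuous extension of $\E^s$ from Proposition~\ref{thm1}) is precisely the Hersh--Kreiss argument that \cite{CG} recalls, so your approach coincides with what the paper defers to.
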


To formulate our last assumption we observe first that for every point $\uzeta\in\cH$ there is a neighborhood
$\cV$ of $\uzeta$ in $\Sigma$ and a $C^\infty$ conjugator $Q_0(\zeta)$ defined on $\cV$ such that
\begin{align}\label{conjugator}
Q_0(\zeta)\cA(\zeta)Q_0^{-1}(\zeta)=\begin{pmatrix}
i\omega_1(\zeta) I_{n_1} & \; & 0\\
\; & \ddots & \;\\
0 & \; & i\omega_J(\zeta) I_{n_J} \end{pmatrix} =:-\bD_1(\zeta),
\end{align}
where the $\omega_j$ are real when $\gamma=0$ and there is a constant $c>0$ such that either
\begin{equation*}
\re (i\omega_j) \leq -c\, \gamma \text{ or } \re (i\omega_j) \geq c\, \gamma \text{ for all } \zeta \in \cV.
\end{equation*}
In view of Lemma \ref{lem1} we can choose the first $p$ columns of $Q^{-1}_0(\zeta)$ to be a basis of
$\bE^s(\zeta)$, and we write
\begin{equation*}
Q_0^{-1}(\zeta)=[Q_{in}(\zeta) \;Q_{out}(\zeta)].
\end{equation*}
Choose $J'$ so that the first $J'$ blocks of $-\bD_1$ lie in the first $p$ columns, and the remaining blocks
in the remaining $N-p$ columns. Thus, $\re (i\omega_j) \le -c\, \gamma$ for $1\leq j\leq J'$.

Observing that the linearization of the boundary condition in \eqref{2} is
\begin{equation*}
\dot u \longmapsto \psi(\eps u) \dot u +[\partial_v \psi(\eps u) \, \dot u] \, \eps u \, ,
\end{equation*}
we define the operator
\begin{align}\label{ca2}
\cB(v_1,v_2) \, \dot u := \psi(v_1) \dot u +[\partial_v \psi(v_1) \, \dot u] \, v_2 \, ,
\end{align}
which appears in Assumption \ref{nonlinbc} below. For later use we also define
\begin{align}\label{caa}
\cD(v_1,v_2) \, \dot u := D(v_1) \dot u +[\partial_v D(v_1) \, \dot u] \, v_2 \, ,
\end{align}
as well as
\begin{align}\label{caaa}
\cB(v_1) :=\cB(v_1,v_1) \, , \; \; \cD(v_1) := \cD(v_1,v_1) \, .
\end{align}

We now state the weak stability assumption that we make when considering the general case of nonlinear
boundary conditions in \eqref{2}.

\begin{assumption}
\label{nonlinbc}
\begin{itemize}
 \item There exists a neighborhood $\cO$ of $(0,0)\in\bR^{2N}$ such that for all $(v_1,v_2)\in\cO$ and all
          $\zeta \in \Xi \setminus \Xi_0$, $\ker \;\cB(v_1,v_2) \cap \E^s (\zeta) = \{ 0\}$. For each $(v_1,v_2) \in
          \cO$ the set $\Upsilon(v_1,v_2):=\{\zeta\in\Sigma_0:\ker \; \cB(v_1,v_2) \cap \E^s (\zeta) \neq \{ 0\} \}$
          is nonempty and included in the hyperbolic region ${\mathcal H}$. Moreover, if we set $\Upsilon :=
          \cup_{(v_1,v_2)\in\cO}\Upsilon(v_1,v_2)$, then $\overline{\Upsilon}\subset \cH$ (closure in $\Sigma_0$).

 \item For every $\uzeta\in\overline{\Upsilon}$ there exists a neighborhood $\cV$ of $\uzeta$ in $\Sigma$
          and a $C^\infty$ function $\sigma(v_1,v_2,\zeta)$ on $\cO\times\cV$ such that for all $(v_1,v_2,\zeta)
          \in \cO\times \cV$ we have $\text{\rm Ker}\; \cB(v_1,v_2) \cap \E^s (\zeta) \neq \{ 0\}$ if and only if
          $\zeta\in\Sigma_0$ and $\sigma(v_1,v_2,\zeta)=0$.

          \qquad Moreover, there exist matrices $P_i(v_1,v_2,\zeta)\in\text{\rm GL}_p (\C)$, $i=1,2,$ of class
          $C^\infty$ on $\cO\times\cV$ such that $\forall \, (v_1,v_2,\zeta) \in \cO\times {\mathcal V}$
\begin{equation}\label{ca3a}
P_1(v_1,v_2,\zeta)\cB(v_1,v_2)Q_{in}(\zeta)P_2(v_1,v_2,\zeta) =\text{\rm diag }
\big( \gamma +i\, \sigma (v_1,v_2,\zeta),1,\dots,1 \big) \, .
\end{equation}
 \end{itemize}
\end{assumption}

For nonlinear boundary conditions, the phase $\phi_0$ in \eqref{phasebord} is assumed to satisfy
$\tauetabar \in \Upsilon(0,0)$, or in other words the intersection $\ker B \cap \E^s \tauetabar$ is not
reduced to $\{ 0\}$ (the set $\Upsilon_0$ in Assumption \ref{assumption3} is a short notation for
$\Upsilon (0,0)$). The phases $\phi_m$ are still defined by \eqref{phases} and thus only depend
on $L(\partial)$ and $B$ and not on the nonlinear perturbations $f_0$ and $\psi (\eps \, u) -\psi(0)$
added in \eqref{2}.

\begin{rem}\label{ca3}
1) \textup{The properties stated in Assumption \ref{nonlinbc} are just a convenient description of the requirements
for belonging to the WR class of \cite{BRSZ}. Like the uniform Lopatinskii condition, Assumption \ref{nonlinbc} can
in practice be verified by hand via a ``constant-coefficient" computation. More precisely, for $(v_1,v_2)$ near
$(0,0)\in\bR^{2N}$ and $\zeta\in\Sigma$, one can define (see, e.g., \cite[chapter 4]{BS}) a Lopatinskii determinant
$\Delta(v_1,v_2,\zeta)$ that is $C^\infty$ in $(v_1,v_2)$, analytic in $\zeta=(\tau-i\gamma,\eta)$ on $\Sigma
\setminus \cG$, and satisfies
\begin{equation*}
\Delta(v_1,v_2,\zeta)=0\text{ if and only if }\text{\rm Ker}\; \cB(v_1,v_2) \cap \E^s (\zeta) \neq \{ 0\}.
\end{equation*}
In particular, $\Delta(v_1,v_2,\cdot)$ is real-analytic on $\cH$.}

\textup{Following \cite{BRSZ}, see also \cite[chapter 8]{BS}, we claim that  Assumption \ref{nonlinbc} holds provided
\begin{equation}\label{ca4}
\emptyset\neq\{\zeta\in\Sigma: \Delta(0,0,\zeta)=0\}\subset \cH\text{ and }
\Delta(0,0,\uzeta)=0 \Rightarrow \partial_\tau \Delta(0,0,\uzeta)\neq 0,
\end{equation}
and it thus only involves a weak stability property for the linearized problem at $(v_1,v_2)=(0,0)$. Indeed, the implicit
function theorem then implies that for $(v_1,v_2)$ near zero and $(\tau,\eta)$ near $\uzeta$, the set $\{(\tau,\eta)\in
\Sigma_0: \Delta(v_1,v_2,\tau,\eta)=0\}$ is a real-analytic hypersurface in $\cH$.  On the other hand, an application
of the implicit function theorem to $\Delta(v_1,v_2,z,\eta)$, for $(z,\eta)\in\Sigma$ shows that the real dimension of
the manifold $\{(z,\eta)\in\Sigma:\Delta(v_1,v_2,z,\eta)=0\}$ must be the same, that is, $d-2$. The two zero sets must
then coincide; there are no zeros in $\Sigma\setminus\Sigma_0$. The function $\sigma$ and the neighborhoods
$\cO$ and $\cV$ arise in a factorization of $\Delta$ given by the Weierstrass Preparation Theorem. The construction
of the conjugating matrices $P_i$, $i=1,2$ follows from a construction in \cite[p. 268-270]{MS}.}

\textup{Instead of assuming \eqref{ca4}, we have stated Assumption \ref{nonlinbc} in a form that is more directly
applicable to the proof of Proposition \ref{i5z} and to the error analysis of Theorem \ref{main2}.}

2) \textup{To prove the basic estimate for the linearized singular system, Proposition \ref{i5z}, and to construct
the exact solution $U_\eps$ to the singular system \eqref{15} below, it is enough to require that the analogue of
Assumption \ref{nonlinbc} holds when $\cB(v_1,v_2)$ is replaced by $\cB(v_1):=\cB(v_1,v_1)$. However, for the error
analysis of section \ref{error} in the case of nonlinear boundary conditions, we need  Assumption \ref{nonlinbc}
as stated.}
\end{rem}

The next Lemma, proved in \cite{CG}, gives a useful decomposition of $\bC^N$ and introduces projectors
needed later for formulating and solving the profile equations.

\begin{lem}
\label{lem2}
The space $\bC^N$ admits the decomposition:
\begin{equation}
\label{decomposition2}
\C^N = \oplus_{m=1}^M \, \ker L_0({ d} \phi_m)
\end{equation}
and each vector space in \eqref{decomposition2} admits a basis of real vectors. If we let $P_1,\dots,P_M$ denote
the projectors associated with the decomposition \eqref{decomposition2}, then there holds $\text{\rm Im } B_d^{-1}
L_0({ d} \phi_m) = \ker P_m$ for all $m=1,\dots,M$.
\end{lem}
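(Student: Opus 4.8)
The plan is to establish the decomposition \eqref{decomposition2} by the same eigenspace argument that gives Lemma \ref{lem1}, and then to verify the assertion about the projectors by a direct computation using Assumption \ref{assumption1}. First I would recall that, by Assumption \ref{a8}, the matrix $\cA(\beta)$ is diagonalizable with purely imaginary eigenvalues $i\,\uomega_1,\dots,i\,\uomega_M$, so that $\C^N$ decomposes as the direct sum of the corresponding eigenspaces $\ker(\cA(\beta)-i\,\uomega_m I)$. The point already noted in the excerpt is that this eigenspace equals $\ker L_0(d\phi_m)$ and has dimension $\nu_{k_m}$; indeed, since $\cA(\beta)=-i\,B_d^{-1}(\utau I+\sum_{j=1}^{d-1}\ueta_j B_j)$, one has $\cA(\beta)-i\,\uomega_m I = -i\,B_d^{-1}\big(\utau I+\sum_{j=1}^{d-1}\ueta_j B_j+\uomega_m B_d\big)=-i\,B_d^{-1}L_0(d\phi_m)$, and $B_d$ is invertible. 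Summing over $m$ gives \eqref{decomposition2}. The reality of the bases follows as in Lemma \ref{lem1}: the $\uomega_m$ are real, the matrices $B_j$ are real, so each $\ker L_0(d\phi_m)$ is the complexification of a real subspace of $\R^N$ and hence has a basis of real vectors.

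Next I would turn to the claim $\im B_d^{-1}L_0(d\phi_m)=\ker P_m$. Since $\cA(\beta)$ is diagonalizable, each eigenvalue is semisimple, so for the eigenvalue $i\,\uomega_m$ we have the standard splitting $\C^N=\ker(\cA(\beta)-i\,\uomega_m I)\oplus\im(\cA(\beta)-i\,\uomega_m I)$, and $P_m$ is precisely the spectral projector onto the first summand along the second. Therefore $\ker P_m=\im(\cA(\beta)-i\,\uomega_m I)=\im\big(-i\,B_d^{-1}L_0(d\phi_m)\big)=\im B_d^{-1}L_0(d\phi_m)$, which is the desired identity. The only thing to check carefully is that the projector associated with the decomposition \eqref{decomposition2} in the sense of the lemma statement coincides with this spectral projector; but this is immediate, because \eqref{decomposition2} is exactly the eigenspace decomposition of $\cA(\beta)$ (the summand $\ker L_0(d\phi_m)$ being the $i\,\uomega_m$-eigenspace), so the associated projector onto the $m$-th summand along the sum of the others is by definition the spectral projector $P_m$.

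I do not expect any serious obstacle here: the statement is essentially a repackaging of the semisimplicity of $\cA(\beta)$ granted by Assumptions \ref{assumption1} and \ref{a8}, combined with the invertibility of $B_d$. The only mild subtlety worth spelling out is the bookkeeping between the hyperbolic operator $L_0$ and the reduced operator: one must keep track of the multiplication by $B_d^{-1}$ and the factor $-i$ so that kernels and images are correctly matched, and one must note that $\ker L_0(d\phi_m)=\ker B_d^{-1}L_0(d\phi_m)$ (again since $B_d$ is invertible) so that the $P_m$ can equivalently be described via either operator. Everything else—the direct sum property, the dimension count $\sum_m\nu_{k_m}=N$, and the existence of real bases—follows verbatim from the corresponding steps in the proof of Lemma \ref{lem1} recalled in \cite{CG}.
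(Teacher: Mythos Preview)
Your proposal is correct and follows the natural approach. The paper does not actually supply its own proof of this lemma (it cites \cite{CG}), but your argument via the eigenspace decomposition of the diagonalizable matrix $\cA(\beta)$, together with the identity $\cA(\beta)-i\,\uomega_m I=-i\,B_d^{-1}L_0(d\phi_m)$, is exactly the expected one and is consistent with the formula \eqref{b8z} that the paper uses later, namely $L(d\phi_m)=\sum_{k\neq m}(\uomega_m-\uomega_k)P_k$, which is just a restatement of your image computation.
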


\subsection{Main results}

For each $m\in\{1,\dots,M\}$ we let
\begin{equation*}
r_{m,k}, \;k=1,\dots,\nu_{k_m}
\end{equation*}
denote a basis of $\ker L_0(d\phi_m)$ consisting of real vectors. In section \ref{error} we shall construct a
``corrected" approximate solution $u^c_\eps$ of \eqref{2} of the form
\begin{equation}
\label{a10}
u^c_\eps(x)=\cV^0 \left( x,\frac{\phi}{\eps} \right) +\eps \, \cV^1 \left( x,\frac{\phi}{\eps} \right)
+\eps^2 \, \cU^2_p \left( x,\frac{\phi_0}{\eps},\frac{x_d}{\eps} \right),
\end{equation}
where $\phi :=(\phi_1,\dots,\phi_M)$ denotes the collection of all phases,
\begin{align}
\label{a11}
\begin{split}
&\cV^0 \left( x,\frac{\phi}{\eps} \right) =\sum_{m\in\cI} \, \sum^{\nu_{k_m}}_{k=1}
\sigma_{m,k}\left(x,\frac{\phi_m}{\eps}\right) \, r_{m,k} \, ,\\
&\cV^1 \left( x,\frac{\phi}{\eps} \right) =\underline{\cV}^1(x) +\sum^M_{m=1} \, \sum^{\nu_{k_m}}_{k=1}
\tau_{m,k}\left(x,\frac{\phi_m}{\eps}\right) \, r_{m,k} +\cR\cV^0 \, ,
\end{split}
\end{align}
and the $\sigma_{m,k}(x,\theta_m)$ and $\tau_{m,k}(x,\theta_m)$ are scalar $C^1$ functions periodic in $\theta_m$
with mean $0$ which describe the propagation of oscillations with phase $\phi_m$ and group velocity ${\bf v}_m$.
Here $\cR$ denotes the operator
\begin{equation*}
\cR\cV^0=-R \, [L(\partial_x)\cV^0 +D(0)\cV^0]
\end{equation*}
for $R$ defined as in \eqref{R}. The last corrector $\eps^2 \, \cU^2_p(x,\theta_0,\xi_d)$ in \eqref{a10} is a trigonometric
polynomial constructed in the error analysis of section \ref{error}.

The following theorem, our main result, is an immediate corollary of the more precise Theorem \ref{main2}.
Here we let $\Omega_T :=\{(x,\theta_0)=(t,y,x_d,\theta_0)\in\bR^{d+1}\times \bT^1: x_d\geq 0, t <T\}$, and
$b\Omega_T :=\{(t,y,\theta_0) \in \bR^d \times \bT^1: t <T\}$. The spaces $E^s$ are defined in \eqref{17}
below.

\begin{theo}
\label{main}
We make Assumptions \ref{assumption1}, \ref{assumption2}, \ref{assumption3}, and \ref{a8} when the boundary
condition in \eqref{2} is linear ($\psi(\eps u) \equiv \psi(0)$); in the general case we substitute Assumption \ref{nonlinbc}
for Assumption \ref{assumption3}.
Fix $T>0$, set $M_0:=3\, d+5$, and let
\begin{align}\notag
a:=[(d+1)/2]+M_0+3 \text{ and }\tilde{a}=2a-[(d+1)/2].
\end{align}
Consider the semilinear boundary problem \eqref{2}, where $G(t,y,\theta_0)\in H^{\tilde a}(b\Omega_T)$.
There exists $\eps_0>0$ such that if $\langle G \rangle_{H^{a+2}(b\Omega_T)}$ is small enough, there exists a unique
function $U_\eps(x,\theta_0)\in E^{a-1}(\Omega_T)$ satisfying the singular system \eqref{15} on $\Omega_T$ such that
\begin{equation*}
u_\eps(x):=U_\eps\left(x,\frac{x'\cdot\beta}{\eps}\right)
\end{equation*}
is an exact solution of \eqref{2} on $(-\infty,T] \times \overline{\bR}^d_+$ for $0<\eps \leq \eps_0$. In addition there exists
a profile $\cV^0(x,\theta)$ as in \eqref{a11}, whose components $\sigma_{m,k}$ lie in $H^{a-1}(\Omega_T)$, such that
the approximate solution defined by
\begin{equation*}
u^{app}_\eps :=\cV^0 \left(x,\frac{\phi}{\eps}\right)
\end{equation*}
satisfies
\begin{equation*}
\lim_{\eps\to 0}|u_\eps-u^{app}_\eps|_{L^\infty}=0\text{ on }(-\infty,T]\times \overline{\bR}^d_+.
\end{equation*}
\end{theo}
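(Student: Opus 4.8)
The plan is to reduce Theorem \ref{main} to the more precise Theorem \ref{main2} (not displayed in this excerpt) by combining three independent pieces: (i) construction of the leading and corrected profiles $\cV^0,\cV^1,\cU^2_p$ solving the profile equations \eqref{b8s},\eqref{6}; (ii) construction of the exact solution $U_\eps$ of the singular system \eqref{15}; and (iii) the error analysis comparing $u_\eps$ with $u^{app}_\eps=\cV^0(x,\phi/\eps)$. For step (i), I would follow the discussion in the introduction: isolate the ``key subsystem'' \eqref{12} that decouples from the full profile system, observe that its linearization loses one derivative (the same loss flagged for the singular system), and therefore solve it by a Nash--Moser iteration on the scale of spaces $H^s(\Omega_T)$; the requisite tame estimate for the linearized key subsystem is where the weak-stability Assumption \ref{assumption3} (resp.\ \ref{nonlinbc}) enters, via the factorization of the Lopatinskii determinant with the simple characteristic $\gamma+i\sigma$. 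Once the key subsystem is solved, the remaining profile components $\tau_{m,k}$, $\underline{\cV}^1$, and the polynomial corrector $\cU^2_p$ are obtained by solving ordinary transport equations and algebraic relations, using the projectors $P_m$ and the operator $R$ of Lemma \ref{lem2}; this yields $\cV^0$ with components $\sigma_{m,k}\in H^{a-1}(\Omega_T)$, which requires $G\in H^{a+2}$ roughly because the Nash--Moser scheme and the loss of one singular derivative eat three levels of regularity off the data.

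For step (ii), I would set up the singular system \eqref{15} for $U_\eps(x,\theta_0)$, whose solution restricted to $\theta_0=\beta\cdot x'/\eps$ solves \eqref{2}, and invoke Proposition \ref{i5z} (the basic tame estimate \eqref{aprioriL2} for the linearization, with its one--singular--derivative loss) together with the singular pseudodifferential calculus of Appendix \ref{calc} / \cite{CGW2}. Because of that loss, Picard iteration fails, so I would run a second Nash--Moser iteration adapted to the spaces $E^s(\Omega_T)$ of \eqref{17}; the smallness of $\langle G\rangle_{H^{a+2}}$ and the choice $a=[(d+1)/2]+M_0+3$, $\tilde a=2a-[(d+1)/2]$ are exactly what is needed to close that iteration and land in $E^{a-1}(\Omega_T)$, with $\tilde a$ being the higher norm controlled along the iteration and $a-1$ the lower norm in which convergence takes place. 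Uniqueness of $U_\eps$ in $E^{a-1}(\Omega_T)$ follows from the linear estimate applied to the difference of two solutions.

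For step (iii), the error analysis, I would plug the corrected approximate solution $u^c_\eps$ of \eqref{a10} into the singular system, estimate the residual it produces (this is where the extra corrector $\eps^2\cU^2_p$ and the second-order profile $\eps\cV^1$ are needed, to kill the $O(\eps)$ and boundary--amplified terms in the residual), and then apply Proposition \ref{i5z} to the difference $U_\eps-U^c_\eps$ to get a bound of the form $|U_\eps-U^c_\eps|\to 0$ in a norm that controls $L^\infty$ on $\Omega_T$; restricting to $\theta_0=\beta\cdot x'/\eps$ and using $|u^c_\eps-u^{app}_\eps|_{L^\infty}=O(\eps)$ gives the stated limit $\lim_{\eps\to0}|u_\eps-u^{app}_\eps|_{L^\infty}=0$. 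The main obstacle, in my view, is making the two Nash--Moser schemes actually close: one must verify that the tame linear estimates (for the key subsystem and for the singular system) have losses compatible with a convergent iteration on the chosen scales, and in particular that the residual of $u^c_\eps$ is small in the \emph{high} norm $\tilde a$ and not merely in $L^2$ — controlling the singular derivative $\partial_{x'}+\beta\partial_{\theta_0}/\eps$ of the residual uniformly in $\eps$ is the delicate point, and it is precisely what forces the specific relation between $a$ and $\tilde a$ and the regularity requirement on $G$.
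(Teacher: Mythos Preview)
Your three-part outline (profile construction, exact solution of the singular system, error analysis) and the identification of two separate Nash--Moser iterations match the paper's strategy. Two points deserve correction.

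First, in the error analysis you claim the residual of $u^c_\eps$ must be small in the high norm $\tilde a$. This is not what is needed and not what the paper does. The residual $R_\eps=\bL_\eps(\cU_\eps)$ is controlled in a \emph{low} norm ($E^{a-4}$ in the paper), and the linearized estimate of Proposition \ref{i14} is applied at level $s=0$ to the difference $W_\eps=U_\eps-\cU_\eps$, yielding only $|W_\eps|_{E^0}\to 0$; the $E^{a-3}$ (hence $L^\infty$) convergence then follows by interpolation against the uniform bound on $|W_\eps|_{E^{a-2}}$. The index $\tilde a$ plays no role in the error analysis; it is only the upper index of the scale inside the Nash--Moser schemes.

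Second, and more seriously, you are missing a genuine ingredient in the construction of $\cU^2_p$. The equation one would like to solve, $\cL(\partial_\theta)\cV^2=-(I-E)(L(\partial)\cV^1+D(0)\cV^1+\partial_vD(0)(\cV^0,\cV^0))$, has a right-hand side in $H^{s;2}$ with infinitely many noncharacteristic modes, and applying $R$ to it runs into small divisors (Remark \ref{rema}); moreover Proposition \ref{b8r} fails for data in $H^{s;2}$. The paper's fix is to approximate $\cV^0,\cV^1$ by trigonometric polynomials $\cV^0_p,\cV^1_p$ to accuracy $\delta>0$, then solve for a polynomial $\cU^2_p$ via Proposition \ref{solve} (which works precisely because the spectrum is finite), obtaining a residual of size $\eps\,(K\delta+C(\delta)\eps)$ rather than merely $O(\eps)$. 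The conclusion then proceeds by first fixing $\delta$ small and only afterward sending $\eps\to 0$. Without this two-parameter $(\delta,\eps)$ structure the error analysis does not close.

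A minor remark: the weak-stability assumption enters the profile problem only through the form of the boundary operator $X_{Lop}$ (Proposition \ref{xlop}); the tame estimates for the linearized key subsystem themselves (Proposition \ref{d26}) are elementary transport-equation estimates and do not invoke Assumption \ref{assumption3} directly.
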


Observe that although the boundary data in the problem \eqref{2} is of size $O(\eps)$, the approximate solution
$u^{app}_\eps$ is of size $O(1)$, exhibiting an amplification due to the weak stability at frequency $\beta$. The
main information provided by Theorem \ref{main} is that this amplification does not rule out existence of a smooth
solution on a fixed time interval, that is it does not trigger a violent instability, at least in this weakly nonlinear regime.
As far as we know, the derivation of the leading order amplitude equation \eqref{12} is also new in the general
framework that we consider. We hope that the analysis developed in this article will be useful in justifying
quasilinear amplification phenomena such as the Mach stems or kink modes formation of \cite{AM,MA}.

\begin{rem}\label{13a}
\textup{a)\;In order to avoid some technicalities we have stated our main result for a problem \eqref{2} where all
data is $0$ in $t<0$. This result easily implies a similar result in which outgoing waves defined in $t<0$ of amplitude
$O(\eps)$ and wavelength $\eps$ give rise to reflected waves of amplitude $O(1)$. In either formulation, analysis
of the profile equations (see Remark \ref{radiate}) shows that the waves of amplitude $O(1)$ emanate from a
radiating wave that propagates in the boundary along a characteristic of the Lopatinskii determinant.}

\textup{b)\;We have decided to fix $T>0$ at the start and choose data small enough so that a solution to the nonlinear
problem exists up to time $T$. One can also (as discussed in Remark \ref{d28}) fix the data  in the problem ($G$ in
\eqref{2}) at the start, and then choose $T$ small enough so that a solution to the nonlinear problem exists up to time
$T$.}
\end{rem}

In the remainder of this introduction, we discuss the construction of exact solutions, the construction of the 
approximate solution $\cV^0$, and the error analysis. Complete proofs are given in Sections \ref{exact}, 
\ref{profile}, \ref{error}, and \ref{iteration}.

\subsection{Exact solutions and singular systems}

\emph{\quad}The theory of weakly stable hyperbolic initial boundary value problems fails to provide a solution of the
system \eqref{2} that exists on a fixed time interval independent of $\eps$\footnote{This would be true even for problems
$(L(\partial),B)$ that are uniformly stable in the sense of Definition \ref{ustable}.}. In order to obtain such an exact solution
to the system \eqref{2} we adopt the strategy of studying an associated singular problem first used by \cite{JMR} for an
initial value problem in free space.  We look for a solution of the form
\begin{align}\label{14a}
u_\eps(x)=U_\eps(x,\theta_0)|_{\theta_0=\frac{\phi_0(x')}{\eps}},
\end{align}
where $U_\eps(x,\theta_0)$ is periodic in $\theta_0$ and satisfies the singular system derived by substituting \eqref{14a}
into the problem \eqref{2}.  Recalling that $L(\partial)=\partial_d +\sum^{d-1}_{j=0}A_j\partial_j$  we obtain:
\begin{align}\label{15}
\begin{split}
&\partial_d U_\eps+\sum^{d-1}_{j=0}A_j\left(\partial_j +\frac{\beta_j \partial_{\theta_0}}{\eps}\right) U_\eps
+D(\eps U_\eps)\, U_\eps=:\\
&\qquad\qquad\partial_d U_\eps+\mathbb{A}\left(\partial_{x'}+\frac{\beta \partial_{\theta_0}}{\eps}\right)U_\eps
+D(\eps U_\eps)\, U_\eps=0\\
&\psi(\eps U_\eps) \, U_\eps|_{x_d=0} =\eps \, G(x',\theta_0),\\
&U_\eps =0 \text{ in } t<0.
\end{split}
\end{align}
The special difficulties presented by such singular problems when there is a boundary are described in detail in the
introductions to \cite{W1} and \cite{CGW1}. In particular we mention: (a) symmetry assumptions on the matrices $B_j$
appearing in the problem \eqref{0} equivalent to \eqref{2} are generally of no help in obtaining an $L^2$ estimate for
\eqref{15} (boundary conditions satisfying Assumption \ref{assumption3} can not be maximally dissipative, see \cite{CG});
(b) one cannot control $L^\infty$ norms just by estimating tangential derivatives $\partial^\alpha_{(x',\theta_0)}U_\eps$
because \eqref{15} is not a hyperbolic problem in the $x_d$ direction\footnote{For initial value problems in free space,
one \emph{can} control $L^\infty$ norms just by estimating enough derivatives tangent to time slices $t=c$.}; moreover,
even if one has estimates of tangential derivatives  uniform with respect to $\eps$, because of the factors $1/\eps$ in
\eqref{15} one cannot just use the equation to control $\partial_d U_\eps$ and thereby control $L^\infty$ norms.

In \cite{W1} a class of singular pseudodifferential operators, acting on functions $U(x',\theta_0)$ periodic in $\theta_0$
and having the form
\begin{equation*}
p_D U(x',\theta_0)=\frac{1}{(2\, \pi)^{d+1}} \, \sum_{k\in\bZ} \int_{\R^d} {\rm e}^{ix' \cdot \xi' +i\theta_0 k} \,
p\left(\eps V(x',\theta_0),\xi'+\frac{k\beta}{\eps},\gamma\right) \, \widehat{U}(\xi',k) \, {\rm d}\xi',\;\;\gamma\geq 1,
\end{equation*}
was introduced to deal with these difficulties. Observe that the differential operator $\mathbb{A}$ appearing in \eqref{15}
can be expressed in this form. Kreiss-type symmetrizers $r_s(D_{x',\theta_0})$ in the singular calculus were constructed
in \cite{W1} for (quasilinear systems similar to) \eqref{15} under the assumption that $(L(\partial),\psi(0))$ is uniformly
stable in the sense of Definition \ref{ustable}. With these one can prove $L^2(x_d,H^s(x',\theta_0))$ estimates uniform
in $\eps$ for  \eqref{15}, even when $\eps \, G$ is replaced by $G$ in the boundary condition. To progress further and
control $L^\infty$ norms, the boundary frequency $\beta$ is restricted to lie the complement of the glancing set. With
this extra assumption the singular calculus was used in \cite{W1} to block-diagonalize the operator
$\mathbb{A}\left(\eps U_\eps,\partial_{x'}+\frac{\beta \partial_{\theta_0}}{\eps}\right)$ microlocally near the $\beta$
direction and thereby prove estimates uniform with respect to $\eps$ in the spaces
\begin{align}\label{17}
E^s :=C(x_d,H^s(x',\theta_0))\cap L^2(x_d,H^{s+1}(x',\theta_0)).
\end{align}
These spaces are Banach algebras and are contained in $L^\infty$ for $s>\frac{d+1}{2}$. For large enough $s$, as
determined by the requirements of the calculus, existence of solutions to \eqref{15} in $E^s$ on a time interval $[0,T]$
independent of $\eps\in (0,\eps_0]$ follows by Picard iteration in the uniformly stable case.

The singular calculus of \cite{W1} was used again in \cite{CGW1} to rigorously justify leading order geometric optics
expansions for the quasilinear analogue of \eqref{2} in the uniformly stable case (with $\beta\in \cH$ and the forcing 
term $G$ in place of $\eps G$ in the boundary condition). Under the assumptions made in the present paper, in particular 
assuming weak stability as in Assumption \ref{assumption3} or Assumption \ref{nonlinbc}, we face the additional difficulty 
that the basic $L^2$ estimate for the problem $(L(\partial),B)$  exhibits a loss of  derivatives. A consequence of this is 
that the singular calculus of \cite{W1} is no longer adequate for estimating solutions of \eqref{15}. The main reason is 
that remainders in the calculus of \cite{W1} are just bounded operators on $L^2$, while for energy estimates with a loss 
of derivative remainders should be smoothing operators. We therefore need to use an improved version of the calculus 
constructed in \cite{CGW2} in which residual operators are shown to have better smoothing properties than previously 
thought. With the improved calculus we are able in section \ref{tameex} to estimate solutions of \eqref{15} in $E^s$ 
spaces \eqref{17}, but of course there is a loss of one singular derivative in the estimates. This loss forces us in section 
\ref{nashex} to use Nash-Moser iteration on the scale of $E^s$ spaces to obtain an exact solution of the singular 
system \eqref{15} on a fixed time interval independent of $\eps$. Observe that one singular derivative costs a factor 
$1/\eps$ and this is another reason why the scaling $\eps G$ in \eqref{15} is crucial.

\begin{rem}\label{diff}
\textup{The main idea for proving the estimate for the linearized singular problem, Proposition \ref{i5z}, is to adapt
the techniques of \cite{C1} to the singular pseudodifferential framework. There is however one major obstacle
along the way. While the error term in the composition of two zero order operators (or in the composition of an
operator of order of order $-1$ (on the left) with an operator of order $1$, a $(-1,1)$ composition) is smoothing of order
one in the sense of \eqref{n17}, the same is unfortunately not true of the error term in $(1,-1)$ compositions (there are
counter-examples for that). The properties of the $(1,-1)$ error terms that arise in our proof are described in \eqref{ib3a};
see also Proposition \ref{n31a}. To deal with these errors we use further microlocal cutoffs $\chi^e$ in the extended
calculus\footnote{These are cutoffs in $(\xi',m)$-space which depend on $(\eps,\gamma)$; they are described in section
\ref{extended}.} and partition the solution as in \eqref{ia2}.  In particular we are led to estimate terms like those involving
norms of $\nabla_{x'}\Lambda_D^{-1} \dot U^\gamma_{2,in}$ in Proposition \ref{hard} and  $\nabla_{x'}\Lambda_D^{-1}
\dot U^\gamma_3$ in Proposition \ref{kreisspiece}.}
\end{rem}

\subsection{Derivation of the leading profile equations}

\qquad We now derive  the profile equations for the semilinear problem \eqref{2}. We work with profiles $\cV^j(x,\theta)$
periodic in $\theta=(\theta_1,\dots,\theta_M)$, where $\theta_j$ is a placeholder for $\phi_j/\eps$. Looking for an
approximate solution of \eqref{2} of the form $u^a=(\cV^0+\eps\cV^1+\eps^2\cV^2)|_{\theta=\phi/\eps}$, where
$\phi=(\phi_1,\dots,\phi_M)$,  we get interior equations
\begin{align}\label{3}
\begin{split}
&(a)\;\cL(\partial_{\theta})\cV^0=0,\\
&(b)\;\cL(\partial_{\theta})\cV^1+L(\partial)\cV^0+D(0)\cV^0=0,\\
&(c)\;\cL(\partial_{\theta})\cV^2+L(\partial)\cV^1+D(0)\cV^1+(\partial_vD(0) \cV^0)\cV^0=0,
\end{split}
\end{align}
by plugging $u^a$ into \eqref{2}(a) and setting the coefficients of, respectively, $\eps^{-1}$, $\eps^0$, and $\eps$
equal to zero. The operator $\cL(\partial_\theta)$ is defined by
\begin{equation}\label{3a}
\cL(\partial_\theta):=\sum^M_{j=1}L(d\phi_j)\partial_{\theta_j}.
\end{equation}

With $B:=\psi(0)$, the boundary equations, obtained by plugging $u^a$ into \eqref{2}(b) and setting the coefficients
of $\eps^0$ and $\eps$ equal to zero, are
\begin{align}\label{4}
\begin{split}
&B\cV^0(x',0,\theta_0,\dots,\theta_0)=0,\\
&B\cV^1+(\partial_v \psi(0) \cV^0) \, \cV^0 =G(x',\theta_0),
\end{split}
\end{align}
where $\theta_0$ is a placeholder for $\phi_0/\eps$.  We will see that as a consequence of the weak stability at
frequency $\beta$, the problem for the leading profile $\cV^0$ is nonlinear and nonlocal. (See Appendix \ref{exeuler}
for a concrete example.) Thus, the scaling in \eqref{1} \emph{is} the weakly nonlinear scaling when the uniform
Lopatinskii condition fails at a hyperbolic frequency $\beta$. To analyze these equations we proceed to define
appropriate function spaces and a pair of auxiliary operators $E$ and $R$.

Functions $\cV(x,\theta)\in L^2(\overline{\bR}^{d+1}_+\times\mathbb{T}^M)$ have Fourier series
\begin{align}\label{b1}
\cV(x,\theta)=\sum_{\alpha\in\bZ^M}V_\alpha(x)e^{i\alpha\cdot\theta}.
\end{align}
Since only quadratic interactions appear in \eqref{3} and we anticipate that $\cV^0$ will have the form in \eqref{a11},
for $k=1,2$ we let
\begin{equation*}
\bZ^{M;k}=\{\alpha\in\bZ^M: \text{ at most $k$ components of }\alpha \text{ are nonzero}\},
\end{equation*}
and we consider the subspace $H^{s;k}(\overline{\bR}^{d+1}_+\times\mathbb{T}^M)\subset
H^s(\overline{\bR}^{d+1}_+\times\mathbb{T}^M)$ defined by
\begin{align}\label{b4a}
H^{s;k}(\overline{\bR}^{d+1}_+\times\mathbb{T}^M)=\left\{\cV(x,\theta) \in
H^s(\overline{\bR}^{d+1}_+\times\mathbb{T}^M) : \cV(x,\theta)
=\sum_{\alpha\in\bZ^{M;k}}V_\alpha(x)e^{i\alpha\cdot\theta} \right\}.
\end{align}
Thus, multiplication defines a continous map
\begin{align}\label{b4c}
H^{s;1}(\overline{\bR}^{d+1}_+\times\mathbb{T}^M) \times H^{s;1}(\overline{\bR}^{d+1}_+\times\mathbb{T}^M)
\to H^{s;2}(\overline{\bR}^{d+1}_+\times\mathbb{T}^M)
\end{align}
for $s>(d+1+2)/2$.

\begin{defn}\label{b2}
Setting $\phi :=(\phi_1,\dots,\phi_M)$, we say $\alpha\in\bZ^{M;2}$ is a \emph{characteristic mode} and write
$\alpha\in\cC$ if $\det L(d(\alpha \cdot \phi))=0$.  Otherwise we call $\alpha$  a \emph{noncharacteristic} mode.
We decompose
\begin{equation*}
\cC= \cup_{m=1}^M\cC_m, \text{ where } \cC_m
:= \{ \alpha\in\bZ^{M;2}:\alpha \cdot \phi =n_\alpha\phi_m\text{ for some }n_\alpha\in\bZ \}.
\end{equation*}
\end{defn}

Observe that for $\alpha\in\cC_m$, the integer $n_\alpha$ is necessarily equal to $\sum_{k=1}^M\alpha_k$.
Since $\phi_i$ and $\phi_j$ are linearly independent for $i\neq j$, any $\alpha\in\bZ^{M;2}\setminus 0$ belongs
to at most one of the sets $\cC_m$ and $n_\alpha \neq 0$ if $\alpha \neq 0$.

Elements $\alpha\in\cC_m$ with two nonzero components correspond to \emph{resonances}. Resonances are
generated in products like $\sigma_{p,k}(x,\frac{\phi_p}{\eps})\sigma_{r,k'}(x,\frac{\phi_r}{\eps})$, which arise
from the quadratic term in \eqref{3}(c), whenever there exists a relation of the form
\begin{equation*}
n_m\phi_m=n_p\phi_p+n_r\phi_r, \text{ where }m\in\{1,\dots,M\}\setminus \{p,r\}\text{ and }n_m,n_p,n_r\in\bZ.
\end{equation*}
We then refer to $(\phi_m$, $\phi_p$, $\phi_r)$ as a triple of resonant phases. This relation implies, for example,
that $\phi_p$ oscillations interact with $\phi_r$ oscillations to produce $\phi_m$ oscillations.

\begin{defn}\label{b2z}
We define the continuous projector\footnote{The continuity of $E$ is shown in \cite{CGW1}, Remark 2.5.}
$E:H^{s;2}(\overline{\bR}^{d+1}_+\times\mathbb{T}^M)\to H^{s;1}(\overline{\bR}^{d+1}_+\times\mathbb{T}^M)$,
$s \ge 0$, by
\begin{align}\label{b4e}
E =E_0 +\sum^M_{m=1}E_m, \text{ where } E_0 \cV :=V_0 \text{ and } 
E_{m}\cV :=\sum_{\alpha\in\cC_m\setminus 0} P_m V_\alpha(x)e^{in_\alpha\theta_m},
\end{align}
for $P_m$ as in Lemma \ref{lem2}.
\end{defn}

For $\cL(\partial_\theta)$ as in \eqref{3a} we have that for $\cV^0\in H^{s;2}(\overline{\bR}^{d+1}_+\times
\mathbb{T}^M)$,
\begin{align}\label{b7}
E\cV^0=\cV^0\text{ if and only if }\cV^0\in H^{s;1}(\overline{\bR}^{d+1}_+\times\mathbb{T}^M)
\text{ and }\cL(\partial_\theta)\cV^0=0,
\end{align}
and \eqref{b7} in turn is equivalent to the property that $\cV^0$ has an expansion of the form
\begin{align}\label{b8}
\cV^0=\uv(x)+\sum^M_{m=1} \, \sum^{\nu_{k_m}}_{k=1}\sigma_{m,k}\left(x,\theta_m\right)r_{m,k} \, ,
\end{align}
for some real-valued functions $\sigma_{m,k}$. Moreover, since for any $m$,
\begin{align}\label{b8z}
L(d\phi_m)=\uomega_m \, I +\sum_{j=0}^{d-1} \beta_j \, A_j =\sum_{k\neq m}(\uomega_m-\uomega_k) P_k,
\end{align}
we have for $\cV\in H^{s;2}(\overline{\bR}^{d+1}_+\times\mathbb{T}^M)$:
\begin{align}\label{b8y}
E\cL(\partial_\theta)\cV=\cL(\partial_\theta)E\cV=0.
\end{align}

We also need to introduce a partial inverse $R$ for $\cL(\partial_\theta)$.  We begin by defining
\begin{equation*}
R_m=\sum_{k\neq m}\frac{1}{\uomega_m-\uomega_k}P_k,
\end{equation*}
which in view of \eqref{b8z} satisfies
\begin{align}\label{b8x}
L(d\phi_m)R_m=R_mL(d\phi_m)=I-P_m.
\end{align}
The operator $R$ is defined formally at first on functions $\cV(x,\theta)=\sum_{\alpha\in\bZ^{M;2}} V_\alpha(x)
e^{i\alpha \cdot \theta}$ of $H^{s;2}(\overline{\bR}^{d+1}_+\times\mathbb{T}^M)$ by
\begin{align}\label{R}
R\cV :=\sum_{\alpha\in\bZ^{M;2}} R(\alpha)V_\alpha(x)e^{i\alpha \cdot \theta}
\end{align}
where
\begin{align}\label{b8u}
R(\alpha) :=\begin{cases}
\frac{1}{i n_\alpha}R_m\text{ if }\alpha\in\cC_m\setminus \{ 0\}, \\
0 \text{ if }\alpha=0, \\
\cL(i\alpha)^{-1}\text{ if }\alpha \notin \cC, \end{cases}
\text{ and } \cL(i\alpha) :=i\sum^M_{j=1}L(d\phi_j)\alpha_j=iL(d(\alpha\cdot\phi)).
\end{align}

\begin{rem}
\textup{The operator $R$ is well-defined on functions $\cV\in H^{s;2}(\overline{\bR}^{d+1}_+\times\mathbb{T}^M)$
whose spectrum contains only finitely many noncharacteristic modes, and then $R\cV$ lies in the same space.
Otherwise, there can be a problem with small divisors; the possibility of there being infinitely many noncharacteristic
modes $\alpha$ for which $\det L(d(\alpha\cdot\phi))$ is close to zero can prevent convergence of \eqref{R} in
$H^{t;2}(\overline{\bR}^{d+1}_+\times\mathbb{T}^M)$  for any $t$.}
\end{rem}

It follows readily from \eqref{b8x} that for $\cF\in H^{s;1}(\overline{\bR}^{d+1}_+\times\mathbb{T}^M)$, $s>0$,
\begin{align}\label{b8t}
\cL(\partial_{\theta})R\cF=R\cL(\partial_{\theta})\cF=(I-E)\cF.
\end{align}
Such $\cF$ have no noncharacteristic modes. Along with \eqref{b8y} equation \eqref{b8t} implies

\begin{prop}\label{b8r}
Suppose $\cF\in H^{s;1}(\overline{\bR}^{d+1}_+\times\mathbb{T}^M)$, $s\ge 0$. Then the equation
$\cL(\partial_\theta) \cV=\cF$ has a solution $\cV\in H^{s;1}(\overline{\bR}^{d+1}_+\times\mathbb{T}^M)$
if and only if $E\cF=0$.
\end{prop}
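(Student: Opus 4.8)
The plan is to deduce both implications directly from the operator identities \eqref{b8y} and \eqref{b8t}, together with the fact that $E$ is a projector whose range is exactly the set of $\cL(\partial_\theta)$-free profiles in $H^{s;1}$. For the ``only if'' direction, suppose $\cV \in H^{s;1}(\overline{\bR}^{d+1}_+\times\mathbb{T}^M)$ solves $\cL(\partial_\theta)\cV = \cF$. Apply $E$ to both sides: by \eqref{b8y} we have $E\cL(\partial_\theta)\cV = 0$, hence $E\cF = 0$. This direction is essentially immediate.

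For the ``if'' direction, assume $\cF \in H^{s;1}$ with $E\cF = 0$. The natural candidate solution is $\cV := R\cF$. First I would check that $R\cF$ makes sense and lies in $H^{s;1}$: since $\cF \in H^{s;1}$, its spectrum lies in $\bZ^{M;1}\subset\bZ^{M;2}$, and every nonzero $\alpha$ in that set is either $0$ or lies in exactly one $\cC_m$ (as noted after Definition \ref{b2}); in particular $\cF$ has no noncharacteristic modes, so only the first two branches of \eqref{b8u} are invoked and no small-divisor issue arises. Concretely, if $\cF = \uug(x) + \sum_m \sum_k g_{m,k}(x,\theta_m) r_{m,k}$ is the expansion analogous to \eqref{b8}, then one computes from \eqref{b8u} that $R\cF$ has the same structure with coefficients obtained by applying the bounded operators $R_m/(in_\alpha)$ frequency-by-frequency, so $R\cF \in H^{s;1}$ with norm controlled by that of $\cF$. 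Then \eqref{b8t} gives $\cL(\partial_\theta) R\cF = (I-E)\cF = \cF - E\cF = \cF$, so $\cV = R\cF$ is the desired solution.

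The only genuinely non-routine point is verifying that $R\cF \in H^{s;1}$ (and not merely in $H^{s;2}$ or some formal completion) when $\cF \in H^{s;1}$; this is where the restriction of the domain to the ``at most one nonzero component'' spectrum \eqref{b4a} is used, ruling out resonant products and hence guaranteeing that $R$ acts by a uniformly bounded multiplier in each Fourier mode. Everything else is a one-line application of the already-established identities \eqref{b8y} and \eqref{b8t}. I do not anticipate any real obstacle: the substance of the proposition was front-loaded into the construction of $E$ and $R$ and the verification of \eqref{b8x}--\eqref{b8t}, and this statement is the clean corollary that packages those facts as a solvability criterion (a discrete, several-phase analogue of the Fredholm alternative for $\cL(\partial_\theta)$).
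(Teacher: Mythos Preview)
Your proposal is correct and follows exactly the paper's approach: the proposition is stated in the paper as an immediate consequence of the identities \eqref{b8y} and \eqref{b8t}, which is precisely how you argue both directions. One small slip: a general $\cF \in H^{s;1}$ need not have the form \eqref{b8} (that expansion characterizes $\cV$ with $E\cV=\cV$, i.e.\ each Fourier coefficient lies in $\ker L_0(d\phi_m)$); the correct statement is simply $\cF = F_0(x) + \sum_m F_m(x,\theta_m)$ with $F_m$ of mean zero and arbitrary values in $\C^N$, but your conclusion that $R$ acts as a uniformly bounded multiplier mode-by-mode (via $R_m/(in_\alpha)$ with $|n_\alpha|\ge 1$) is unaffected.
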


By applying the operators $E$ and $R$ to the equations \eqref{3} and using \eqref{b7}, \eqref{b8y}, and \eqref{b8t},
we obtain:
\begin{align}\label{b8s}
\begin{split}
&(a)\;E\cV^0=\cV^0,\\
&(b)\;E(L(\partial)\cV^0+D(0)\cV^0)=0,\\
&(c)\;B\cV^0=0\text{ on }x_d=0,\theta=(\theta_0,\dots,\theta_0),\\
&(d)\;\cV^0=0\text{ in } t<0.
\end{split}
\end{align}
and
\begin{align}\label{6}
\begin{split}
&(a)\;(I-E)\cV^1+R(L(\partial)\cV^0+D(0)\cV^0)=0,\\
&(b)\;E\left(L(\partial)\cV^1+D(0)\cV^1+(\partial_vD(0) \, \cV^0)\cV^0 \right)=0,\\
&(c)\;B\cV^1+(\partial_v \psi(0)\cV^0) \cV^0=G \text{ on }x_d=0, \theta=(\theta_0,\dots,\theta_0),\\
&(d)\;\cV^1=0\text{ in }t<0.
\end{split}
\end{align}

\begin{rem}\label{rema}
\textup{(a) Since $E\cV^0=\cV^0$ the function $L(\partial)\cV^0+D(0)\cV^0$ in \eqref{6}(a) has \emph{no}
noncharacteristic modes so the action of $R$ on this function is well-defined.}

\textup{(b) It is easy to check that functions $\cV^0$, $\cV^1$ belonging to $H^{s;1}(\overline{\bR}^{d+1}_+
\times\mathbb{T}^M)$, $s>\frac{d+3}{2}$, and  satisfying \eqref{b8s} and \eqref{6}(a) also satisfy \eqref{3}(a),(b)
and \eqref{4}. Equation \ref{6}(b) and Proposition \ref{b8r} suggest that we might obtain a solution of \eqref{3}(c)
by taking
\begin{equation*}
(I-E)\cV^2=-R \, \Big( L(\partial)\cV^1+D(0)\cV^1+(\partial_vD(0) \cV^0) \cV^0 \Big).
\end{equation*}
There are two problems with this.  First, the quadratic term $(\partial_vD(0)\cV^0) \cV^0$ generally has \emph{infinitely}
many noncharacteristic modes, so one should expect a problem with small divisors. Second, the statement \eqref{b8t}
and Proposition \ref{b8r} are both \emph{not} true when $\cF\in H^{s;2}(\overline{\bR}^{d+1}_+\times\mathbb{T}^M)$,
even if $\cF$ has finitely many noncharacteristic modes\footnote{This is because of the fact that for any $k\in \bZ
\setminus \{0 \}$, there can be many $\alpha\in (\cC_m\setminus 0)\cap\bZ^{M;2}$ such that $n_\alpha=k$. See the
proof of Proposition \ref{solve}.}. These difficulties affect the error analysis and are discussed further in section
\ref{errori}.}
\end{rem}

\emph{\quad}To determine the equations satisfied by the individual profiles $\uv(x)$, $\sigma_{m,k}(x,\theta_m)$ in
the expansion \eqref{b8} of $\cV^0$,  we first refine the decomposition of the projector $E$ in \eqref{b4e}. For each
$m\in\{1,\dots,M\}$ we let
\begin{equation*}
\ell_{m,k}, \;k=1,\dots,\nu_{k_m}
\end{equation*}
denote a basis of real vectors for the left eigenspace of the real matrix
\begin{align}\label{b16a}
i\cA(\beta) =\utau \, A_0+\sum_{j=1}^{d-1} \ueta_j \, A_j
\end{align}
associated to the eigenvalue $-\uomega_m$, chosen to satisfy
\begin{equation*}
\ell_{m,k} \cdot r_{m',k'}=\begin{cases}1, &\text{ if }m=m'\text{ and }k=k',\\
0, &\text{ otherwise.}
\end{cases}
\end{equation*}
For $v\in\bC^N$ set
\begin{equation*}
P_{m,k}v := (\ell_{m,k}\cdot v)r_{m,k} \text{ (no complex conjugation here).}
\end{equation*}
We can now write
\begin{equation*}
E =E_0 +\sum_{m=1}^M \sum_{k=1}^{\nu_{k_m} }E_{m,k},
\end{equation*}
where $E_{m,k} :=P_{m,k}E_m.$  When the multiplicity $k=1$ we write $E_m$ instead of $E_{m,1}$ and do similarly
for $\ell_{m,k}$, $r_{m,k}$ and so on.

The following lemma, which is a slight variation on a well-known result \cite{L}, is included for the sake of completeness:

\begin{lem}\label{b21}
Suppose $E\cV^0=\cV^0$ and that $\cV^0$ has the expansion \eqref{b8}. Then
\begin{equation*}
E_{m,k}(L(\partial)\cV^0)=(X_{\phi_m}\sigma_{m,k}) \, r_{m,k}
\end{equation*}
where $X_{\phi_m}$ is the characteristic vector field associated to $\phi_m$\footnote{The vector field $X_{\phi_m}$
is a constant multiple of the vector field $\partial_t + \mathbf{v}_m\cdot \nabla_{x''}$ computed by Lax for the Cauchy
problem, where $\mathbf{v}_m$ is the group velocity defined in Definition \ref{def2}.}:
\begin{equation*}
X_{\phi_m} :=\partial_d +\sum^{d-1}_{j=0}-\partial_{\xi_j}\omega_m(\beta) \, \partial_j \, .
\end{equation*}
\end{lem}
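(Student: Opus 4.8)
\textbf{Proof proposal for Lemma \ref{b21}.}

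The plan is to apply the projector $E_{m,k}$ directly to $L(\partial)\cV^0$ using the expansion \eqref{b8} of $\cV^0$ and to track which Fourier modes survive. Write $L(\partial) = \partial_d + \sum_{j=0}^{d-1} A_j \partial_j$ and plug in $\cV^0 = \uv(x) + \sum_{m'}\sum_{k'} \sigma_{m',k'}(x,\theta_{m'}) r_{m',k'}$. The term $L(\partial)\uv(x)$ contributes only to the mode $\alpha = 0$, hence is annihilated by $E_{m,k}$ (which projects onto $\cC_m \setminus 0$ composed with $P_{m,k}$); so we may assume $\cV^0$ is a single summand $\sigma_{m',k'}(x,\theta_{m'}) r_{m',k'}$. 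Since $\sigma_{m',k'}$ depends on $\theta$ only through $\theta_{m'}$, its nonzero Fourier modes $\alpha$ all lie along the $m'$-axis, i.e. in $\cC_{m'}$; because distinct $\cC_m$'s share only the zero mode, $E_m$ kills this summand unless $m' = m$. Thus only the $m' = m$ part of $\cV^0$ can contribute, and we are reduced to computing $E_{m,k}\bigl(L(\partial)[\sigma_{m,k'}(x,\theta_m) r_{m,k'}]\bigr)$.

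Next I would split $L(\partial)$ into its $\theta$-differentiating part and its $x$-differentiating part. Recall that $\theta_m$ is the placeholder for $\phi_m/\eps$, so when one later substitutes $\theta = \phi/\eps$ the operator $L(\partial)$ acting on $\sigma_{m,k'}(x,\phi_m/\eps)$ produces, besides the $\eps^{-1}$-singular piece $\frac{1}{\eps}(\partial_{\theta_m}\sigma_{m,k'}) L(d\phi_m) r_{m,k'}$, the genuine $x$-derivative piece $(\partial_d \sigma_{m,k'} + \sum_j A_j \partial_j \sigma_{m,k'}\cdot)\,r_{m,k'}$ together with $\sigma_{m,k'}\,L(\partial)r_{m,k'} = 0$ since $r_{m,k'}$ is constant. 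At the profile level, \eqref{3a} is exactly the encoding of this: $\cL(\partial_\theta)\cV^1$ absorbs the singular term, so what $E_{m,k}$ sees in \eqref{b8s},\eqref{6} is the $x$-derivative piece. Since $L(d\phi_m) r_{m,k'} = 0$ (as $r_{m,k'} \in \ker L_0(d\phi_m)$, equivalently the eigenvector identity for $i\cA(\beta)$), I would compute
\begin{equation*}
E_{m,k}\bigl(L(\partial)[\sigma_{m,k'} r_{m,k'}]\bigr) = P_{m,k}\Bigl( \partial_d \sigma_{m,k'} + \sum_{j=0}^{d-1} (\partial_j \sigma_{m,k'})\,A_j r_{m,k'}\Bigr),
\end{equation*}
the $E_m$ being automatic because every nonzero mode of $\sigma_{m,k'}(x,\theta_m)$ lies in $\cC_m$ along the $\theta_m$-axis.

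The heart of the computation — and the step I expect to be the only real obstacle — is evaluating the action of $P_{m,k} = (\ell_{m,k}\cdot\;)\,r_{m,k}$ on the vector $\partial_d \sigma_{m,k'}\, r_{m,k'} + \sum_j (\partial_j \sigma_{m,k'}) A_j r_{m,k'}$. The coefficient of $r_{m,k}$ is $\ell_{m,k}\cdot\bigl(\partial_d \sigma_{m,k'} r_{m,k'}\bigr) + \sum_j (\partial_j\sigma_{m,k'})\,\ell_{m,k}\cdot(A_j r_{m,k'})$. By the biorthogonality $\ell_{m,k}\cdot r_{m,k'} = \delta_{kk'}$, the first term is $\delta_{kk'}\partial_d\sigma_{m,k'}$. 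For the tangential terms one uses that $\ell_{m,k}$ is a left eigenvector of $i\cA(\beta) = \utau A_0 + \sum_{j=1}^{d-1}\ueta_j A_j$ for the eigenvalue $-\uomega_m$, and differentiates the eigenvalue relation $L(\beta,\uomega) = \uomega I + \sum_{k=0}^{d-1}\beta_k A_k$ in the directions $\beta_j = (\utau,\ueta)_j$: the standard first-order perturbation formula gives $\ell_{m,k}\cdot(A_j r_{m,k'}) = -\partial_{\xi_j}\omega_m(\beta)\,\delta_{kk'}$ (for $j = 0,\dots,d-1$), since the semi-simplicity in Assumption \ref{assumption1} guarantees the eigenprojection is well-behaved and $\partial_{\xi_j}\omega_m(\beta) = -\partial_{\xi_j}\lambda_{k_m}$ matches the group-velocity sign conventions of Definition \ref{def2}. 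Collecting, the coefficient of $r_{m,k}$ is $\delta_{kk'}\bigl(\partial_d\sigma_{m,k'} + \sum_{j=0}^{d-1}(-\partial_{\xi_j}\omega_m(\beta))\partial_j\sigma_{m,k'}\bigr) = \delta_{kk'}(X_{\phi_m}\sigma_{m,k'})$. Hence only the $k' = k$ term of $\cV^0$ survives, yielding $E_{m,k}(L(\partial)\cV^0) = (X_{\phi_m}\sigma_{m,k})\,r_{m,k}$ as claimed. I would cite \cite{L} for the classical content of the perturbation identity and note that the semi-simplicity hypothesis is exactly what is needed for it to hold verbatim in the multiple-eigenvalue case.
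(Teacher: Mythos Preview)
Your proposal is correct and takes essentially the same route as the paper: both hinge on the identity $P_m A_j P_m = -\partial_{\xi_j}\omega_m(\beta)\,P_m$ (equivalently, $\ell_{m,k}\cdot A_j r_{m,k'} = -\partial_{\xi_j}\omega_m(\beta)\,\delta_{kk'}$), obtained by differentiating the eigenvector relation $(\omega_m(\xi')I+\sum_j A_j\xi_j)P_m(\xi')=0$ in $\xi_j$, evaluating at $\beta$, and applying $P_m$ on the left. The paper states only this differentiation step and says the lemma then ``readily follows''; you have written out the reduction to $m'=m$ via the disjointness of the $\cC_m$, the biorthogonality computation, and the handling of the mean $\uv$ explicitly, which is fine.

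Two small remarks. First, your middle paragraph about the ``$\eps^{-1}$-singular piece'' is unnecessary: in the lemma $L(\partial)$ acts on $\cV^0(x,\theta)$ in the $x$-variables with $\theta$ as a parameter, so there is no $\theta$-derivative to split off. Second, the fact that the matrix $(\ell_{m,k}\cdot A_j r_{m,k'})_{k,k'}$ is \emph{scalar} (not merely Hermitian) is a consequence of \emph{constant multiplicity} in Assumption~\ref{assumption1} (the $\lambda_k$ are globally defined analytic functions with fixed multiplicity $\nu_k$), not of semi-simplicity alone; semi-simplicity at a single point would only give diagonalizability of that matrix, not that all its eigenvalues coincide. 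This is implicit in the paper's use of a smooth $\omega_m(\xi')$ and $P_m(\xi')$ near $\beta$.
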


\begin{proof}
For  $\xi'\in\cH$ near $\beta$, let $-\omega_m(\xi')$   be the eigenvalues $i\cA(\xi')$ \eqref{b16a} and $P_m(\xi')$ the
corresponding projectors (these objects depend smoothly on $\xi'$ near $\beta$ thanks to the analysis of \cite{Met}).
Differentiate the equation
\begin{equation*}
\left(\omega_m(\xi')I+\sum_{j=0}^{d-1}{A}_j\xi_j\right)P_m(\xi')=0
\end{equation*}
with respect to $\xi_j$, evaluate at $\beta$,  apply $P_m$ on the left to obtain
\begin{equation*}
P_m A_j P_m=-\partial_{\xi_j}\omega_m(\beta)P_m,
\end{equation*}
from which the lemma readily follows.
\end{proof}

By Assumption \ref{assumption3} we know that the vector space $\ker B \, \cap \, \E^s(\beta)$ is one-dimensional;
moreover, it admits a real basis because $B$ has real coefficients and $\E^s(\beta)$ has a real basis. This vector
space is therefore spanned by a vector $e \in \R^N \setminus \{ 0\}$ that we can decompose in a unique way by
using Lemma \ref{lem1}:
\begin{equation}
\label{defe}
\ker B \, \cap \, \E^s(\beta) = \text{\rm Span } \{e\} \, ,\quad
e = \sum_{m \in {\mathcal I}} e_m \, ,\quad P_m \, e_m =e_m \, .
\end{equation}
Each vector $e_m$ in \eqref{defe} has real components. We also know that the vector space $B \, \E^s(\beta)$
is $(p-1)$-dimensional. We can therefore write it as the kernel of a real linear form:
\begin{equation}
\label{defb}
B \, \E^s(\beta) = \left\{ X \in \C^p \, ,\quad b \cdot X = 0 \right\} \, ,
\end{equation}
for a suitable vector $b \in \R^p \setminus \{ 0\}$.

Any function $\cV(x,\theta)\in H^{s;2}(\overline{\bR}^{d+1}_+\times\mathbb{T}^M)$ can be decomposed
\begin{equation*}
\cV =\underline{\cV} +\cV_{inc} +\cV_{out} +\cV_{nonch}=\underline{\cV}+\cV^*,
\end{equation*}
where the terms correspond respectively to the parts of the Fourier series \eqref{b1} with $\alpha=0$, $\alpha$
incoming, $\alpha$ outgoing, and $\alpha$ noncharacteristic\footnote{Here we say $\alpha$ is incoming if
$\alpha\in\cC_m\setminus 0$ for an index $m$ such that $\phi_m$ is an incoming phase.}.

\begin{prop}\label{7a}
Suppose $\cV^0\in H^{s;2}(\overline{\bR}^{d+1}_+\times\mathbb{T}^M)$, $s\geq 1$, is a solution of \eqref{b8s}. Then
\begin{align*}
&\underline\cV^0=0, \quad \cV^0_{out}=0, \quad \cV^0_{nonch}=0, \quad \text{ and so }\cV^0=\cV^0_{inc}=E\cV^0_{inc},\\
&\cV^0(x',0,\theta_0,\dots,\theta_0)=a(x',\theta_0) \, e \text{ for some unknown periodic function }a \text{ with mean }0.
\end{align*}
\end{prop}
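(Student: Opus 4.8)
The plan is to extract the conclusions one by one from the three content conditions in \eqref{b8s}: the structural condition $E\cV^0=\cV^0$ in (a), the evolution condition (b), the boundary condition (c), and the causality condition (d). First, since $E\cV^0=\cV^0$, Proposition \ref{b8r} (or rather \eqref{b7}) already tells us that $\cV^0\in H^{s;1}$, so $\cV^0$ has the expansion \eqref{b8}; in particular $\cV^0$ has no noncharacteristic modes, giving $\cV^0_{nonch}=0$ immediately. It remains to kill the mean $\underline\cV^0$ and the outgoing part $\cV^0_{out}$, and then to identify the boundary trace.

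The heart of the argument is to run a \emph{linear weakly-stable energy estimate} on the profile system, exploiting that the relevant linearized boundary operator $B$ is weakly stable in the WR sense (Assumption \ref{assumption3}). Concretely, I would view \eqref{b8s} as a (constant-coefficient, in the $(x,\theta)$-frequency sense) boundary value problem for $\cV^0$: equation (b), after applying $E_{m,k}$ and using Lemma \ref{b21}, says $X_{\phi_m}\sigma_{m,k}=0$ for every $m\in\cI$, i.e.\ each incoming profile component is transported by its characteristic field $X_{\phi_m}$; likewise the mean $\underline\cV^0$ solves a homogeneous transport-type equation $E_0(L(\partial)\cV^0+D(0)\cV^0)=0$. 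Combined with the causality condition $\cV^0=0$ in $t<0$, the incoming components are determined by their boundary data, while the \emph{outgoing} components and the mean are determined by their data at $t=0$ (or, propagating backward, forced to vanish) — there is no incoming data for them other than $0$. The upshot is: the outgoing part $\cV^0_{out}$ and the mean $\underline\cV^0$ satisfy homogeneous evolution equations with zero initial data and are not constrained by the boundary condition in a way that could excite them, hence $\cV^0_{out}=0$ and $\underline\cV^0=0$. This is where I expect to lean on the decomposition \eqref{decomposition2} of $\C^N$, the projector identities \eqref{b8z}, \eqref{b8x}, and the fact (Lemma \ref{lem1}) that $\E^s(\beta)$ is exactly the sum of the \emph{incoming} kernels.

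Once $\cV^0=\cV^0_{inc}=E\cV^0_{inc}$ is established, the boundary condition (c) reads $B\cV^0(x',0,\theta_0,\dots,\theta_0)=0$, and the restriction $\cV^0(x',0,\theta_0,\dots,\theta_0)$ — obtained by setting all $\theta_m=\theta_0$ in \eqref{b8} — lies in $\E^s(\beta)$ by \eqref{decomposition1}, since each $r_{m,k}$ with $m\in\cI$ lies in $\ker L_0(d\phi_m)$. Therefore this trace takes values in $\ker B\cap\E^s(\beta)=\text{\rm Span}\{e\}$ by \eqref{defe}, so it must equal $a(x',\theta_0)\,e$ for a scalar function $a$. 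The mean-zero property of $a$ follows from the mean-zero property of the $\sigma_{m,k}$ in $\theta_m$ (equivalently, $E_0\cV^0=\underline\cV^0=0$ already forces the zeroth Fourier coefficient in $\theta_0$ of the trace to vanish, hence $a$ has mean $0$).

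The main obstacle is the rigorous justification that $\cV^0_{out}=0$ and $\underline\cV^0=0$: one must argue that the homogeneous transport equations satisfied by these pieces, together with vanishing in $t<0$, genuinely force them to vanish for all $t$ in $\Omega_T$ — i.e.\ that no reflection at the boundary can feed energy into the outgoing or mean modes. This is precisely the point where weak stability (rather than mere absence of growing modes) is used: because $\ker B\cap\E^s(\zeta)=\{0\}$ for $\zeta\notin\Sigma_0$ and the degeneracy set $\Upsilon_0\subset\cH$ is confined to the boundary frequency $\beta$, the boundary condition determines $\cV^0$ on $\E^s(\beta)$ up to the one-dimensional defect $\text{\rm Span}\{e\}$, and the outgoing/mean components — which live in a complement of $\E^s(\beta)$ — are seen only through their initial data, which is zero. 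I would make this precise by a characteristics/duality argument phrased in terms of the projectors $P_m$ and the operator $R$, rather than by a full symmetrizer construction, since the profile system is already diagonalized by \eqref{decomposition2}.
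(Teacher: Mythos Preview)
Your treatment of $\cV^0_{nonch}=0$, $\cV^0_{out}=0$, and the identification of the boundary trace are all essentially correct and match the paper's argument (modulo omitting the zero-order term $D(0)$ in the outgoing transport equations, which is harmless: one gets a coupled \emph{linear} homogeneous system $X_{\phi_m}\sigma_{m,k}+\sum_{k'}(\ell_{m,k}\cdot D(0)r_{m,k'})\sigma_{m,k'}=0$ for each outgoing $m$, which still vanishes by causality since $X_{\phi_m}$ is outgoing).

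The genuine gap is your argument for $\underline\cV^0=0$. Applying $E_0$ to \eqref{b8s}(b) and (c) gives
\[
L(\partial)\underline\cV^0+D(0)\underline\cV^0=0,\qquad B\underline\cV^0|_{x_d=0}=0,\qquad \underline\cV^0=0\text{ in }t<0,
\]
which is the \emph{full} $N\times N$ hyperbolic boundary value problem for $(L(\partial),B)$, not a transport equation. The decomposition \eqref{decomposition2} diagonalizes the fixed matrix $i\cA(\beta)$, not the differential operator $L(\partial)$: the projectors $P_m$ do not commute with $L(\partial)$, so you cannot split $\underline\cV^0$ into decoupled modes governed by characteristic vector fields. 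Your claim that the mean ``lives in a complement of $\E^s(\beta)$'' and is ``seen only through initial data'' is simply false --- $\underline\cV^0$ is an $\R^N$-valued function of $x$ with no $\theta$-structure to exploit. The paper handles this step by invoking the well-posedness result of \cite{C} for the weakly stable problem $(L(\partial),B)$: uniqueness for that linear BVP with zero data forces $\underline\cV^0=0$. This is exactly the kind of ``full symmetrizer construction'' you said you would avoid, but for the mean component it is unavoidable.
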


\begin{proof}
Since $E\cV^0=\cV^0$, we have $\cV^0_{nonch}=0$. Applying $E_0$ to the problem \eqref{b8s}, we find that the
mean value $\underline\cV^0$ satisfies the weakly stable boundary problem
\begin{align*}
& L(\partial) \underline\cV^0 +D(0) \underline\cV^0=0\\
& B\underline\cV^0=0\text{ on }x_d=0\\
& \underline\cV^0=0\text{ in }t<0.
\end{align*}
By the well-posedness result of \cite{C} we have $\underline\cV^0=0$.

Lemma \ref{b21} implies that outgoing profiles $\sigma_{m,k}$, $m\in\cO$, in the expansion \eqref{b8} of $\cV^0$
satisfy problems of the form
\begin{align*}
& X_{\phi_m}\sigma_{m,k} +\sum_{k'=1}^{\nu_{k_m}} (\ell_{m,k}\cdot D(0) r_{m,k'}) \sigma_{m,k'}=0,\\
& \sigma_{m,k}=0\text{ in }t<0,
\end{align*}
where $X_{\phi_m}$ is an outgoing vector field. Thus, $\sigma_{m,k}=0$ for all $k=1,\dots,\nu_{k_m}$.

Part (b) follows immediately from the boundary condition in \eqref{b8s} and \eqref{defe}.
\end{proof}

Since $\cV^0=\cV^0_{inc}$,  we obtain from \eqref{6}(a)
\begin{equation*}
(I-E)\cV^1=(I-E)\cV^1_{inc}=-R(L(\partial)\cV^0+D(0)\cV^0),
\end{equation*}
so
\begin{equation*}
\cV^1=\underline{\cV}^1+\cV^1_{inc}+\cV^1_{out} \in H^{s;1},\text{ where }E\cV^1_{out}=\cV^1_{out}.
\end{equation*}
Next decompose the boundary condition \eqref{6}(c):
\begin{multline}\label{10}
BE\cV^1_{inc} =G^*-[(\partial_v \psi(0)\cV^0) \cV^0)]^*-B\cV^1_{out}-B(I-E)\cV^1_{inc} \\
=G^*-[(\partial_v \psi(0)\cV^0)\cV^0)]^*-B\cV^1_{out}+BR(L(\partial)\cV^0+D(0)\cV^0) \, .
\end{multline}

\begin{rem}\label{res}
\textup{(a)  If $\cV^1_{out}|_{x_d=0,\theta_j=\theta_0}$ were known, one could write down a transport equation
for $a(x',\theta_0)$ which is determined by the solvability condition for \eqref{10} implied by \eqref{defb}:
\begin{align}\label{10a}
b\cdot\left(G^*-[(\partial_v \psi(0)\cV^0)\cV^0)]^*-B\cV^1_{out}+BR(L(\partial)\cV^0+D(0)\cV^0)\right)=0.
\end{align}
However, the presence of the term $E \left( (\partial_vD(0)\cV^0)\cV^0\right)$ in \eqref{6}(b) implies that two
incoming modes in $\cV^0_{inc}$ (which is still unknown) can resonate to produce an outgoing mode that will
affect $\cV^1_{out}$. Thus, we do not know $\cV^1_{out}|_{x_d=0,\theta_j=\theta_0}$, and we see that the
nonlinear boundary equation \eqref{10a} is coupled to the nonlinear interior equation \eqref{6}}.

\textup{(b) If the phases are such that an outgoing mode can never be produced by a product of two incoming
modes, then $\cV^1_{out}$ can be determined from \eqref{6} to be $0$, and one can proceed as in \cite{CG}
to solve for $a$ without having to use Nash-Moser iteration.}
\end{rem}

The key subsystem to focus on now is (recalling $\cV^0=E\cV^0=\cV^0_{inc}$ and writing with obvious notation
$E=E_0+E_{inc}+E_{out}$):
\begin{align}\label{12}
\begin{split}
&(a)\, E_{inc}(L(\partial)\cV^0_{inc}+D(0)\cV^0_{inc})=0,\\
&(b)\, E_{out}\left(L(\partial)\cV^1_{out} +D(0)\cV^1_{out} +(\partial_vD(0)\cV^0_{inc})\cV^0_{inc}\right)=0,\\
&(c)\, b \cdot \left( G^*-[(\partial_v \psi(0)\cV^0)\cV^0)]^*-B\cV^1_{out}+BR(L(\partial)\cV^0_{inc}+D(0)\cV^0_{inc})
\right)=0,\\
&(d)\, \cV^0_{inc}(x',0,\theta_0,\dots,\theta_0)=a(x',\theta_0)e,
\end{split}
\end{align}
where $\cV^0_{inc}$ and $\cV^1_{out}$ both vanish in $t<0$.

A formula for $\cV^0_{inc}$ in terms of $a(x',\theta_0)$ can be determined by solving transport equations using
\eqref{12}(a), and that formula can be plugged into \eqref{12}(b) to get $\cV^1_{out}$ in terms of $a$. Thus, the
subsystem \eqref{12} can be expressed as a very complicated nonlinear, nonlocal equation for the single unknown
$a$. This is done in Appendix \ref{exeuler} for  a strictly hyperbolic example with only one resonance. However, that 
is not the way we solve \eqref{12}; instead we solve the subsystem in its given form by iteration. Picard iteration does 
not work; there is a loss of derivatives from one iterate to the next (because of $R$), so we use a Nash-Moser scheme. 
An essential point is to take advantage of the smoothing property of the interaction integrals that pick out resonances 
in $E_{out}((\partial_vD(0)\cV^0_{inc}) \cV^0_{inc})$\footnote{Interaction integrals are similar to convolution integrals.}; 
that property allows us to get tame estimates in section \ref{profile}.

An important tool in solving the subsystem \eqref{12} is the following result from \cite{CG}, which will allow us to
write the boundary equation \eqref{12}(c) as a transport equation for $a(x',\theta_0)$.

\begin{prop}[\cite{CG}, Proposition 3.5]\label{xlop}
Let the vectors $b$ and $e_m$ be as in \eqref{defb}, \eqref{defe}, and let $\sigma(\zeta)$ be the function appearing
in Assumption \ref{assumption3}.  Then there exists a nonzero real number $\kappa$ such that
\begin{align*}
& R_m P_m=0 \text{ for all }m\in\{1,\dots, M\}\\
& b\cdot B \sum_{m\in\cI}R_m A_0 e_m=\kappa \partial_\tau\sigma(\utau,\ueta) \text{ and }
\partial_\tau\sigma(\utau,\ueta)=1\, ,\\
& b\cdot B\sum_{m\in \cI}R_mA_j e_m=\kappa \partial_{\eta_j}\sigma(\utau,\ueta),\;j=1,\dots,d-1.
\end{align*}
and thus
\begin{equation*}
b\cdot B\sum_{m\in\cI}R_m L(\partial)e_m=\kappa\left(\partial_\tau\sigma(\utau,\ueta)\partial_t
+\sum^{d-1}_{j=1}\partial_{\eta_j}\sigma(\utau,\ueta)\partial_{x_j}\right) =: X_{Lop}.
\end{equation*}
\end{prop}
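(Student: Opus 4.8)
The statement is essentially a computation about the Lopatinskii determinant near the frequency $\tauetabar \in \Upsilon_0$, so the plan is to extract each identity from the defining relations of $b$, the $e_m$, the projectors $R_m$, $P_m$, and the factorization in Assumption \ref{assumption3}. First I would dispose of $R_m P_m = 0$: by definition $R_m = \sum_{k\neq m}\frac{1}{\uomega_m-\uomega_k}P_k$, and since the $P_k$ are the spectral projectors of the diagonalizable matrix $i\cA(\beta)$ they are mutually orthogonal idempotents, so $P_k P_m = 0$ for $k\neq m$ and the product vanishes termwise.

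The heart of the matter is the computation of $b \cdot B \sum_{m\in\cI} R_m A_j e_m$. The natural route is to differentiate, with respect to the frequency parameters, the identity that encodes membership in $\E^s(\beta)$. Concretely, for $\zeta$ near $\tauetabar$ one has a smooth vector $E(\zeta)$ spanning $\ker \cB(\zeta)\cap\E^s(\zeta)$ along the characteristic variety $\{\sigma = 0\}$, with $B E(\zeta) = (\gamma + i\sigma(\zeta))(\text{something})$ by the factorization \eqref{ca3a}; at $\zeta = \tauetabar$ this vector is $e$. Differentiating the eigenvalue relation for $\E^s$ — i.e.\ the relation saying $E(\zeta)$ lies in the kernel of the appropriate block of $L_0$ or equivalently in $\E^s$ — in the direction of $\tau$ (resp.\ $\eta_j$) produces, after applying the spectral projectors $P_m$ and summing over incoming modes, exactly a term of the form $\sum_{m\in\cI} R_m A_j e_m$ plus a component still lying in $\ker B$. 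Pairing with $b$ kills the $\ker B \cap \E^s$ part (by \eqref{defb}, $b$ annihilates $B\E^s(\beta)$, and one checks the leftover term lands there), and the surviving contribution is the derivative of $\gamma + i\sigma$, i.e.\ $\kappa\,\partial_\tau\sigma$ (resp.\ $\kappa\,\partial_{\eta_j}\sigma$), where $\kappa$ is the nonzero scalar coming from the matrix $P(\zeta)$ in \eqref{ca3a} evaluated at $\tauetabar$. The normalization $\partial_\tau\sigma(\utau,\ueta) = 1$ is then just a choice of how $\sigma$ is scaled (one is free to rescale $\sigma$ so that its $\tau$-derivative is $1$ at the base point, since $\partial_\tau\sigma\neq 0$ there by the weak-stability/WR hypothesis), and this forces the value of $\kappa$. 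Finally the displayed formula for $X_{Lop}$ is obtained by linearity: $L(\partial) = \partial_d + \sum_{j=0}^{d-1}A_j\partial_j$, but only the tangential derivatives $\partial_t, \partial_{x_j}$ survive because $e_m$ is built from boundary data and the $\partial_d$ contribution is handled separately (or is absorbed), so summing the previous three identities against $\partial_t$ and $\partial_{x_j}$ gives precisely $\kappa(\partial_\tau\sigma\,\partial_t + \sum_{j=1}^{d-1}\partial_{\eta_j}\sigma\,\partial_{x_j})$.

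The main obstacle I anticipate is bookkeeping the derivative of the kernel vector $E(\zeta)$: one must argue that its derivative, after projection by the $P_m$, decomposes cleanly into the ``$R_m A_j e_m$'' piece plus something in $\ker B$, and that nothing spurious survives the pairing with $b$. This is where one uses that $\E^s(\beta) = \oplus_{m\in\cI}\ker L_0(d\phi_m)$ (Lemma \ref{lem1}), that each $R_m$ inverts $L(d\phi_m)$ modulo $P_m$ (relation \eqref{b8x}), and the precise form of the factorization in Assumption \ref{assumption3}. Since this is Proposition 3.5 of \cite{CG}, I would present the above as the structure of the argument and refer to \cite{CG} for the detailed verification of the projection bookkeeping, as the authors themselves do.
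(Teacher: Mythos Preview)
The paper does not give its own proof of this proposition: it is quoted verbatim from \cite{CG} (Proposition 3.5 there) and used as a black box, so there is no argument in the present paper to compare against. Your sketch is a correct outline of the computation carried out in \cite{CG}, and your concluding remark---to present the structure and defer the detailed bookkeeping to \cite{CG}---matches exactly what the authors do here.

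One small point worth tightening in your sketch: when you say the $\partial_d$ contribution ``is handled separately (or is absorbed)'', the precise reason is that the coefficient of $\partial_d$ in $L(\partial)$ is the identity, so that term contributes $\sum_{m\in\cI} R_m e_m = \sum_{m\in\cI} R_m P_m e_m = 0$ by the very first identity $R_m P_m = 0$. This is why only the tangential derivatives survive in $X_{Lop}$.
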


\noindent Taking note of the factor $\frac{1}{in_\alpha}$ in the definition \eqref{b8u} of $R$, we obtain the
immediate corollary:

\begin{cor}\label{12c}
The boundary term $b\cdot B \, R \, L(\partial)\cV^0_{inc}$ in \eqref{12} may be written
\begin{equation*}
b\cdot B \, R \, L(\partial)\cV^0_{inc}=X_{Lop}\cA,
\end{equation*}
where $\cA(x',\theta_0)$ is the unique function with mean $0$ in $\theta_0$ such that $\partial_{\theta_0}\cA=a$.
\end{cor}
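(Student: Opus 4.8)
The plan is to start from Proposition~\ref{xlop} and simply keep track of the extra factor $1/(in_\alpha)$ that enters the definition \eqref{b8u} of $R$ on the characteristic modes. Recall that $\cV^0_{inc} = E\cV^0_{inc}$ has, by \eqref{b8} and Proposition~\ref{7a}, an expansion supported on the incoming characteristic modes, so $L(\partial)\cV^0_{inc}$ again has only characteristic modes and $R$ acts on it through the first branch of \eqref{b8u}. Concretely, writing $\cV^0_{inc} = \sum_{m\in\cI}\sum_k \sigma_{m,k}(x,\theta_m)\,r_{m,k}$ and expanding each $\sigma_{m,k}$ in a Fourier series in $\theta_m$, the mode with frequency $n\theta_m$ gets multiplied by $\tfrac{1}{in}R_m$ under $R$; applying $b\cdot B$ and summing over $m$ and the Fourier index reproduces, term by term, the combination $b\cdot B\sum_{m\in\cI}R_m L(\partial)e_m$ from Proposition~\ref{xlop} acting on the primitive in $\theta_0$.

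The key steps, in order, are: (i) use Proposition~\ref{7a} to reduce to $\cV^0_{inc}=E\cV^0_{inc}$ with the explicit form \eqref{b8}, so that $R\,L(\partial)\cV^0_{inc}$ is computed by the characteristic branch of \eqref{b8u}; (ii) observe that the boundary trace $\theta_j=\theta_0$ identifies all the phase placeholders, so that on $x_d=0$ the relevant oscillation variable is $\theta_0$ and, by Proposition~\ref{7a}(b), the trace of $\cV^0_{inc}$ is $a(x',\theta_0)\,e=\sum_{m\in\cI}a(x',\theta_0)\,e_m$; (iii) define $\cA(x',\theta_0)$ as the mean-zero primitive of $a$ in $\theta_0$, which exists and is unique precisely because $a$ has mean zero — here the factor $1/(in_\alpha)=1/(in)$ in \eqref{b8u} is exactly the Fourier-multiplier realization of $\partial_{\theta_0}^{-1}$; (iv) combine with the identity $b\cdot B\sum_{m\in\cI}R_m L(\partial)e_m = X_{Lop}$ from Proposition~\ref{xlop} to conclude $b\cdot B\,R\,L(\partial)\cV^0_{inc} = X_{Lop}\cA$, using that $X_{Lop}$ commutes with $\partial_{\theta_0}$ (it involves only $\partial_t,\partial_{x_j}$).

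The only genuine subtlety — and thus the ``main obstacle'' such as it is — is bookkeeping the correspondence between the abstract operator $R$ acting on a function of $\theta=(\theta_1,\dots,\theta_M)$ and the scalar operator $\partial_{\theta_0}^{-1}$ acting after the boundary restriction $\theta_j=\theta_0$: one must check that for a single incoming summand $\sigma_{m,k}(x,\theta_m)r_{m,k}$, the phase relation $\alpha\cdot\phi=n_\alpha\phi_m$ gives $n_\alpha$ equal to the Fourier index $n$ in $\theta_m$, that $L(d\phi_m)r_{m,k}$ is handled correctly by $R_m$ via \eqref{b8x}, and that after setting $\theta_m=\theta_0$ the factor $1/(in)$ is uniform across all terms contributing to a fixed frequency in $\theta_0$, so that it genuinely assembles into $\partial_{\theta_0}^{-1}$ rather than a mode-dependent multiplier. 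Once this identification is in place the corollary is immediate from Proposition~\ref{xlop}; no new estimates or analytic input are needed.
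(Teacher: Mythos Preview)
Your proposal is correct and follows exactly the approach the paper itself takes: the paper states the corollary as ``immediate'' from Proposition~\ref{xlop} once one notes the factor $1/(in_\alpha)$ in the definition of $R$, and you have simply (and accurately) unpacked what that means. The only point you might make more explicit is that the normal-derivative contribution from $\partial_d$ in $L(\partial)$ is annihilated by $R_m r_{m,k}=0$ (a consequence of $R_mP_m=0$ in Proposition~\ref{xlop}), so the restriction to $x_d=0$ causes no trouble and the operator identity $b\cdot B\sum_{m\in\cI}R_m L(\partial)e_m=X_{Lop}$ genuinely reduces to a tangential operator acting on $\cA$.
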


\begin{rem}\label{radiate}
\textup{Proposition \ref{xlop} shows that propagation in the boundary, which is described by $a(x',\theta_0)$, is
governed by the ($x$-projection of the) Hamiltonian vector field associated to the Lopatinskii determinant. Since
$\cV^0(x',0,\theta_0,\dots,\theta_0)=a(x',\theta_0)$, this shows that waves of amplitude $O(1)$ emanate from the
radiating boundary wave defined by $a$.}
\end{rem}

After  \eqref{12} is solved $\cV^0$ is known, so $\underline{\cV}^1$, $\cV^1_{out}$,  and $(I-E)\cV^1_{inc}$ can
now be determined by returning to the full system \eqref{6}.  The trace of $E\cV^1_{inc}$ is not yet determined;
one should make a \emph{choice} of $E\cV^1_{inc}|_{x_d=0,\theta_j=\theta_0}$ such that \eqref{10} holds, and
then solve for $E\cV^1_{inc}$ using \eqref{6}(b). A precise description of the regularity of $\cV^0$ and $\cV^1$
is given in Theorem \ref{mainprof}. The last piece of the corrected approximate solution, $\eps^2 \, \cU^2_p$ in
\eqref{a10} is discussed next.

\subsection{Error analysis}
\label{errori}

\emph{\quad}Given a periodic function $f(x,\theta)$, where $\theta=(\theta_1,\dots,\theta_M)$, let us denote
\begin{equation*}
f(x,\theta)|_{\theta\rightarrow(\theta_0,\xi_d)}:=f(x,\theta_0+\uomega_1\xi_d,\dots,\theta_0+\uomega_M\xi_d);
\end{equation*}
so we have
\begin{equation*}
f(x,\theta)|_{\theta\rightarrow(\frac{\phi_0}{\eps},\frac{x_d}{\eps})}=f \left(x,\frac{\phi}{\eps} \right).
\end{equation*}
Taking the profiles $\cV^0$, $\cV^1$ constructed in Theorem \ref{mainprof}, if we define
\begin{equation*}
\cU^b_\eps(x,\theta_0):=\left(\cV^0(x,\theta)+\eps\cV^1(x,\theta)\right)|_{\theta\rightarrow(\theta_0,\frac{x_d}{\eps})},
\end{equation*}
we find that $\cU^b_\eps$ satisfies the singular system
\begin{align}\label{23}
\begin{split}
&(a)\;\bL_{\eps}(\cU^b_\eps) :=
\partial_d \cU^b_\eps+\mathbb{A}\left(\partial_{x'}+\frac{\beta \partial_{\theta_0}}{\eps}\right)\cU^b_\eps
+D(\eps \cU^b_\eps)Ê\, \cU^b_\eps=O(\eps),\\
&(b)\;\psi(\eps \, \cU^b_\eps) \, \cU^b_\eps=\eps \, G(x',\theta_0)+O(\eps^2)\text{ on }x_d=0,\\
&(c)\;\cU^b_\eps=0\text{ in }t<0,
\end{split}
\end{align}
where the error terms refer to norms in $E^s$ and $H^t$ spaces whose orders are made precise in section \ref{error}.
For example, \eqref{23} follows directly from the profile equations \eqref{3}(a),(b), together with the identity
\begin{align}\label{24}
\bL_{\eps} \left(f(x,\theta)|_{\theta\rightarrow(\theta_0,\frac{x_d}{\eps})}\right)
=\dfrac{1}{\eps} \left(\cL(\partial_{\theta})f(x,\theta)\right)|_{\theta\rightarrow(\theta_0,\frac{x_d}{\eps})}
+\left(L(\partial)f(x,\theta)\right)|_{\theta\rightarrow(\theta_0,\frac{x_d}{\eps})}
+(D(\eps f)f)|_{\theta\rightarrow(\theta_0,\frac{x_d}{\eps})}.
\end{align}

Since our basic estimate for the linearized singular system exhibits a loss of one singular derivative (basically,
we lose a $1/\eps$ factor), the accuracy in \eqref{23}(a) is not good enough to conclude that
$|U_\eps-\cU^b_\eps|_{L^\infty (x,\theta_0)}$ is small (the error terms are only $O(\eps)$). To improve the
accuracy we construct an additional corrector $\cU^2_p(x,\theta_0,\xi_d)$ and replace $\cU^b_\eps$ by
\begin{align}\label{25}
\cU_\eps(x,\theta_0) :=\left(\cV^0(x,\theta)+\eps\cV^1(x,\theta)\right)|_{\theta\rightarrow(\theta_0,\frac{x_d}{\eps})}
+\eps^2 \, \cU^2_p(x,\theta_0,\frac{x_d}{\eps}).
\end{align}
In constructing $\cU^2_p$ we deal with the first (small divisor) problem described in Remark \ref{rema}(b)
by approximating $\cV^0$ and $\cV^1$ by trigonometric polynomials $\cV^0_p$ and $\cV^1_p$ to within
an accuracy $\delta>0$ in appropriate Sobolev norms, and seek $\cU^2_p$ in the form of a trigonometric
polynomial\footnote{Trigonometric polynomial approximations were already used to deal with small divisor
problems in the error analysis of \cite{JMR}.}. To deal with the second (solvability) problem, we use the
following Proposition, which allows us to use the profile equation \eqref{6}(b) as a solvability condition, in
spite of the failure of Proposition \ref{b8r} when  $\cF\in H^{s;2}(\overline{\bR}^{d+1}_+\times\mathbb{T}^M)$.
We define
\begin{equation*}
\cL_0(\partial_{\theta_0},\partial_{\xi_d}):=L(d\phi_0)\partial_{\theta_0}+\partial_{\xi_d}.
\end{equation*}

\begin{prop}\label{solve}
Suppose $F(x,\theta)\in H^{s;2}(\overline{\bR}^{d+1}_+\times\mathbb{T}^M)$ has a Fourier series which is a finite
sum and that $EF=0$. Then there exists a solution of the equation
\begin{align}\label{27}
\cL_0(\partial_{\theta_0},\partial_{\xi_d})\cU(x,\theta_0,\xi_d)=F(x,\theta)|_{\theta\rightarrow(\theta_0,\xi_d)}
\end{align}
in the form of a trigonometric polynomial  in $(\theta_0,\xi_d)$ of the form
\begin{align}\label{28}
\cU(x,\theta_0,\xi_d)
=\sum_{(\kappa_0,\kappa_d)\in\cJ}U_{\kappa_0,\kappa_d}(x) e^{i\kappa_0\theta_0+i\kappa_d\xi_d},
\end{align}
where $\cJ$ is a finite subset of $\bZ\times\bR$ and the coefficients $U_{\kappa_0,\kappa_d}$ lie in
$H^s(\overline{\bR}^{d+1}_+).$
\end{prop}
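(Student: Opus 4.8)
The plan is to expand $F$ into its finite Fourier series in $\theta$, restrict to the diagonal $\theta \to (\theta_0,\xi_d)$, and solve the equation mode by mode. Write
\begin{equation*}
F(x,\theta) = \sum_{\alpha \in S} F_\alpha(x) \, e^{i\alpha\cdot\theta},
\end{equation*}
where $S \subset \bZ^{M;2}$ is finite and $F_\alpha \in H^s(\overline{\bR}^{d+1}_+)$. After substituting $\theta_j = \theta_0 + \uomega_j \xi_d$, the mode $e^{i\alpha\cdot\theta}$ becomes $e^{i n_\alpha \theta_0} e^{i (\sum_j \alpha_j \uomega_j) \xi_d}$ with $n_\alpha = \sum_j \alpha_j$, so the right-hand side of \eqref{27} is a trigonometric polynomial of the type \eqref{28} with frequencies $(\kappa_0,\kappa_d) = (n_\alpha, \sum_j \alpha_j \uomega_j)$. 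Since $\cL_0(\partial_{\theta_0},\partial_{\xi_d})$ acts on $e^{i\kappa_0\theta_0 + i\kappa_d\xi_d}$ as multiplication by the matrix $M(\kappa_0,\kappa_d) := i\kappa_0 L(d\phi_0) + i\kappa_d I$, we seek $\cU$ in the form \eqref{28} and reduce \eqref{27} to solving, for each frequency pair $(\kappa_0,\kappa_d)$ occurring on the right, the linear algebraic system $M(\kappa_0,\kappa_d) U_{\kappa_0,\kappa_d}(x) = (\text{sum of } F_\alpha(x) \text{ over } \alpha \text{ with that frequency pair})$.

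The crux is the solvability of these algebraic systems, i.e.\ understanding $\ker M(\kappa_0,\kappa_d)$ and the range condition. Here is where $EF = 0$ enters. The matrix $L(d\phi_0) = \sum_{k} (\uomega_0 - \uomega_k) P_k$ type formula (cf.\ \eqref{b8z}) — more precisely, recall $L(d\phi_0)$ has eigenvalues related to $\uomega_m - \uomega_0$ — shows that $M(\kappa_0,\kappa_d)$ is singular exactly when $\kappa_d = -\kappa_0 \uomega_m$ for some $m$, i.e.\ when the frequency pair corresponds to a characteristic mode for one of the interior phases $\phi_m$. For a fixed nonzero $\kappa_0$, the pairs $(\kappa_0, -\kappa_0\uomega_m)$, $m = 1,\dots,M$, are distinct since the $\uomega_m$ are pairwise distinct; so each singular frequency pair singles out one $m$, and on the corresponding invariant subspace $M$ restricts to an isomorphism on $\ker P_m$ and to $0$ on $\range P_m = \ker L_0(d\phi_m)$. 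Hence the system is solvable iff the right-hand side at that frequency pair lies in $\ker P_m$, i.e.\ iff $P_m$ applied to the relevant sum of $F_\alpha$'s vanishes.

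The main obstacle — and the subtlety flagged in the footnote to Remark \ref{rema}(b) — is that the hypothesis $EF = 0$ is a statement about $\bZ^M$-indexed Fourier coefficients, whereas the diagonal restriction collapses many distinct $\alpha \in \cC_m$ into a single frequency pair $(\kappa_0, -\kappa_0\uomega_m)$ (all those with the same value of $n_\alpha$). So I must check that $E_m F = 0$ in the $H^{s;2}$ sense implies that for every integer $k \neq 0$, $P_m \sum_{\alpha \in \cC_m,\, n_\alpha = k} F_\alpha = 0$. But $E_m F = \sum_{\alpha \in \cC_m \setminus 0} P_m F_\alpha e^{in_\alpha \theta_m}$, and this vanishes iff for each value of $n_\alpha$ the coefficient of $e^{in_\alpha\theta_m}$ vanishes — which is precisely $P_m \sum_{\alpha \in \cC_m,\, n_\alpha = k} F_\alpha = 0$. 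So the collapse is exactly compensated by the way $E_m$ is defined (summing $P_m F_\alpha$ over $\alpha$ with common $n_\alpha$), and the solvability conditions hold. For the $\kappa_0 = 0$ modes ($\alpha = 0$, contributing the pair $(0,0)$, or $\alpha$ with $n_\alpha = 0$): $M(0,0) = 0$, so solvability requires the constant-in-$(\theta_0,\xi_d)$ part of the right-hand side to vanish; this is guaranteed by $E_0 F = V_0 = 0$, together with the fact (noted after Definition \ref{b2}) that $n_\alpha \neq 0$ for $\alpha \neq 0$, so no nonzero mode contributes to the $(0,0)$ frequency.

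Having established solvability, I pick for each singular frequency pair a particular solution $U_{\kappa_0,\kappa_d}(x)$ by applying the partial inverse $R_m$ (extended to act on the relevant subspace) to the right-hand side, which produces a coefficient in $H^s(\overline{\bR}^{d+1}_+)$ since the $F_\alpha$ are; for each nonsingular frequency pair, $U_{\kappa_0,\kappa_d} = M(\kappa_0,\kappa_d)^{-1}(\cdots)$ is likewise in $H^s$. The index set $\cJ$ is the finite set of frequency pairs $\{(n_\alpha, \sum_j \alpha_j \uomega_j) : \alpha \in S\}$, hence finite. Summing these finitely many modes gives the trigonometric polynomial $\cU$ of the form \eqref{28} solving \eqref{27}, completing the proof.
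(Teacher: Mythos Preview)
Your proof is correct and follows essentially the same approach as the paper's: split the finite Fourier series of $F$ into noncharacteristic modes (where the symbol $iL(n_\alpha\beta,\alpha\cdot\underline{\omega})$ is invertible) and characteristic modes in each $\cC_m$ (where one groups the $F_\alpha$ with common $n_\alpha=k$, uses $E_mF=0$ to get $P_m\sum_{\alpha\in\cC_m,\,n_\alpha=k}F_\alpha=0$, and then solves via the partial inverse of $L(d\phi_m)$). One small slip: since $L(d\phi_0)=i\cA(\beta)$ has eigenvalues $-\underline{\omega}_m$, the matrix $M(\kappa_0,\kappa_d)=i(\kappa_0 L(d\phi_0)+\kappa_d I)$ is singular when $\kappa_d=+\kappa_0\underline{\omega}_m$, not $-\kappa_0\underline{\omega}_m$; this is consistent with the fact that $\alpha\in\cC_m$ gives the frequency pair $(n_\alpha,n_\alpha\underline{\omega}_m)$, and your subsequent argument goes through unchanged with this correction.
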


The proof is given in section \ref{error}. Observe that $\cU$ is periodic in $\theta_0$ but almost periodic in
$(\theta_0,\xi_d)$. Proposition \ref{solve} is applied to solve the equation
\begin{equation*}
\cL_0(\partial_{\theta_0},\partial_{\xi_d})\cU^2_p =\left[-(I-E)\left(L(\partial)\cV^1_p+D(0)\cV^1_p
+(\partial_vD(0)\cV^0_p)\cV^0_p\right)\right]|_{\theta\rightarrow(\theta_0,\xi_d)}
\end{equation*}
With this choice of $\cU^2_p$ we show in section \ref{error} that the new approximate solution
$\cU_\eps(x,\theta_0)$ in \eqref{25} satisfies instead of \eqref{23} the singular system
\begin{align}\label{30}
\begin{split}
&(a)\;\bL_\eps(\cU_\eps)=O\left(\eps(K\delta+C(\delta)\eps)\right),\\
&(b)\;\psi(\eps\cU_\eps) \, \cU_\eps-\eps G(x',\theta_0)=O(\eps^2C(\delta))\text{ on }x_d=0,\\
&(c)\;\cU_\eps=0\text{ in }t<0,
\end{split}
\end{align}
where the errors in \eqref{30}(a),(b) are measured in appropriate norms. Now one can apply our basic estimate
\eqref{i16} for the linearized singular problem to conclude that the difference between exact
and approximate solutions of the semilinear singular system \eqref{15} satisfies for some constants  $C(\delta)$
and $K$:
\begin{equation*}
|U_\eps(x,\theta_0)-\cU_\eps(x,\theta_0)|_{E^s}\leq K\, \delta +C(\delta) \, \eps, \text{ for some } s>\dfrac{d+1}{2}.
\end{equation*}
This estimate clearly implies the conclusion of Theorem \ref{main} by choosing first $\delta>0$ small enough
and then letting $\eps$ tend to zero.

\subsection{Remarks on quasilinear problems}

In this article, we are able to rigorously justify a weakly nonlinear regime with amplification for \emph{semilinear}
hyperbolic initial boundary value problems. Our assumptions only deal with the principal part of the operators,
meaning that we only assume a weak stability property for the problem $(L(\partial),B)$
obtained by linearizing at the origin and dropping the zero order term in the hyperbolic system. The weak stability
is of WR type in the terminology of \cite{BRSZ}. Despite the weak regime that we consider ($O(\eps^2)$ source
term at the boundary and $O(\eps)$ solution), the leading profile equation displays some nonlinear features. We
emphasize that the regime that we consider here is exactly one power of $\eps$ weaker than the weakly nonlinear
regime for the semilinear Cauchy problem or for semilinear uniformly stable boundary value problems. As in \cite{CG},
this power of $\eps$ corresponds exacty to the loss of one derivative in the energy estimates.

We believe that the techniques developed here can be extended to give a rigorous justification of weakly nonlinear
geometric optics with amplification for \emph{quasilinear} hyperbolic initial boundary value problems of the form
\begin{align*}
\begin{split}
&\dt v +\sum_{j=1}^d B_j(v) \, \partial_j v +f_0(v)=0,\\
&\phi(v)=\eps^3 \, G \left( x',\frac{x'\cdot\beta}{\eps} \right) \text{ on }x_d=0,\\
&v=0 \text{ and }G=0\text{ in }t<0.
\end{split}
\end{align*}
The corresponding solution $v_\eps$ would be of amplitude $O(\eps^2)$. In particular the arguments used in
Section \ref{exact} to obtain uniform estimates with a loss of one singular derivative for the singular initial boundary
value problem might be extended to the corresponding singular quasilinear problem. There are however several new 
obstacles along the way, one of which is to extend  the singular pseudodifferential calculus of
\cite{CGW2} in order to obtain a two-terms expansion of $(1,0)$ and $(0,1)$ compositions. The weaker scaling
($\eps^2$ in place of $\eps$) should be sufficient to obtain the appropriate results. Let us observe that for
$O(\eps^2)$ solutions, the principal part of the hyperbolic operator has coefficients that are uniformly bounded in
$W^{2,\infty}$, which is precisely the regularity needed in \cite{C1,C} to obtain a priori estimates and well-posedness.
The leading profile equation obtained in this quasilinear framework is very similar to the one we
have derived here, and we thus believe that a weak well-posedness result using Nash-Moser iteration should prove
the existence of the leading profile. For all the above reasons, we thus believe that the $\eps^3$ source term on the
boundary is the relevant "weakly nonlinear regime with amplification" in the quasilinear case, and we postpone the
verification of the many technical details to a future work.

\newpage
\section{Exact oscillatory solutions on a fixed time interval}
\label{exact}

\subsection{The basic  estimate for the linearized singular system}
\label{est}

\emph{\quad}The goal of this section is to prove Proposition \ref{i5z} below and its time-localized version, that is,
Proposition \ref{i14}. These propositions provide the a priori estimates for the linearized singular system that form
the basis for the Nash-Moser iteration of section \ref{nashex} and the error analysis of section \ref{error}.

We begin by gathering some of the notation for spaces and norms that is needed below.

\begin{nota}\label{spaces}
Here  we take $s\in \bN=\{0,1,2,\dots\}$.

(a)\;Let $\Omega:=\overline{\mathbb{R}}^{d+1}_+\times\mathbb{T}^1$, $\Omega_T:=\Omega\cap\{-\infty<t<T\}$,
$b\Omega:=\mathbb{R}^d\times\mathbb{T}^1$, $b\Omega_T:=b\Omega\cap \{-\infty<t<T\}$,  and set $\omega_T
:=\overline{\mathbb{R}}^{d+1}_+\cap\{-\infty<t<T\}$.

(b)\;Let $H^s\equiv H^s(b\Omega)$, the standard Sobolev space with norm $\langle V(x',\theta_0)\rangle_s$.
For $\gamma\geq 1$ we set $H^s_\gamma:=e^{\gamma t} \, H^s$ and $\langle V\rangle_{s,\gamma} := \langle
e^{-\gamma t} \, V \rangle_s$.

(c)\;$L^2H^s\equiv L^2(\overline{\mathbb{R}}_+,H^s(b\Omega))$ with norm $|U(x,\theta_0)|_{L^2H^s} \equiv
|U|_{0,s}$ given by
\begin{equation*}
|U|_{0,s}^2=\int^\infty_0|U(x',x_d,\theta_0)|_{H^s(b\Omega)}^2dx_d.
\end{equation*}
The corresponding norm on $L^2H^s_\gamma$ is denoted $|V|_{0,s,\gamma}$.

(d)\;$CH^s\equiv C(\overline{\mathbb{R}}_+,H^s(b\Omega))$ denotes the space of continuous bounded
functions of $x_d$ with values in $H^s(b\Omega)$, with norm $|U(x,\theta_0)|_{CH^s} =|U|_{\infty,s} :=
\sup_{x_d\geq 0} |U(.,x_d,.)|_{H^s(b\Omega_T)}$ (note that $CH^s\subset L^\infty H^s$). The corresponding
norm on $CH^s_\gamma$ is denoted $|V|_{\infty,s,\gamma}$.






(e) Let $M_0:=3d+5$ and define $C^{0,M_0} :=C(\overline{\mathbb{R}}_+,C^{M_0}(b\Omega))$ as the space of
continuous bounded functions of $x_d$ with values in $C^{M_0}(b\Omega)$, with norm $|U(x,\theta_0)|_{C^{0,M_0}}
:= |U|_{L^\infty W^{M_0,\infty}}$. Here $L^\infty W^{M_0,\infty}$ denotes the space $L^\infty(\overline{\bR}_+;
W^{M_0,\infty}(b\Omega))$.\footnote{The size of $M_0$ is determined by the requirements of the singular
calculus described in Appendix \ref{calc}.}

(f)The corresponding spaces on $\Omega_T$ are denoted $L^2H^s_T$, $L^2H^s_{\gamma,T}$, $CH^s_T$,
$CH^s_{\gamma,T}$ and $C^{0,M_0}_T$ with norms $|U|_{0,s,T}$, $|U|_{0,s,\gamma,T}$, $|U|_{\infty,s,T}$,
$|U|_{\infty,s,\gamma,T}$, and $|U|_{C^{0,M_0}_T}$ respectively. On $b\Omega_T$ we use the spaces
$H^s_T$ and $H^s_{\gamma,T}$ with norms $\langle U\rangle_{s,T}$ and $\langle U\rangle_{s,\gamma,T}$.

(g) All constants appearing in the estimates below are independent of $\eps$, $\gamma$, and $T$ unless such
dependence is explicitly noted.
\end{nota}

The linearization of the singular problem \eqref{15} at $U(x,\theta_0)$ has the form
\begin{align}\label{i3}
\begin{split}
&(a)\, \partial_d \dot U_\eps +\mathbb{A} \left( \partial_{x'}+\dfrac{\beta \partial_{\theta_0}}{\eps} \right)
\dot U_\eps +\cD (\eps U) \, \dot U_\eps=f(x,\theta_0) \quad \text{ on }\Omega \, ,\\
&(b)\, \cB (\eps U) \, \dot U_\eps|_{x_d=0} =g(x',\theta_0) \, ,\\
&(c)\, \dot U_\eps=0 \text{ in } t<0,
\end{split}
\end{align}
where the matrices $\cB(\eps U)$, $\cD(\eps U)$ are defined in \eqref{caaa}\footnote{Here and below we often
suppress the subscript $\eps$ on $\dot U$.}. Instead of \eqref{i3}, consider the equivalent problem satisfied by
$\dot U^\gamma :=e^{-\gamma t}\dot U$:
\begin{align}\label{i5}
\begin{split}
&\partial_d \dot U^\gamma +\mathbb{A} \left( (\partial_{t}+\gamma,\partial_{x''})
+\dfrac{\beta \, \partial_{\theta_0}}{\eps} \right) \dot U^\gamma +\cD (\eps U) \, \dot U^\gamma
=f^\gamma(x,\theta_0) \, ,\\
&\cB (\eps U) \, \dot U^\gamma|_{x_d=0} =g^\gamma(x',\theta_0) \, ,\\
&\dot U^\gamma=0 \text{ in } t<0 \, .
\end{split}
\end{align}
Below we let $\Lambda_D$ denote the singular Fourier multiplier (see \eqref{n5}) associated to the symbol
\begin{equation}\label{i5a}
\Lambda(X,\gamma) :=\left( \gamma^2+\left|\xi'+\frac{k\, \beta}{\eps}\right|^2 \right)^{1/2},\;
X := \xi'+\dfrac{k\, \beta}{\eps}.
\end{equation}
The basic  estimate for the linearized singular problem \eqref{i5} is given in the next Proposition. Observe that the
estimate \eqref{aprioriL2} exhibits a loss of one ``singular derivative" $\Lambda_D$. This is quite a high price to pay,
which counts as a factor $1/\eps$. In view of \cite[Theorem 4.1]{CG}, there is strong evidence that the loss below is
optimal.

\begin{prop}[Main $L^2$ linear estimate]\label{i5z}
We make the structural assumptions of Theorem \ref{main} and recall $M_0 =3d+5$. Fix $K>0$ and suppose
$|\eps \, \partial_d U|_{C^{0,M_0-1}} +|U|_{C^{0,M_0}} \leq K$ for $\eps \in (0,1]$. There exist positive constants
$\eps_0(K)>0$, $C(K)>0$ and $\gamma_0(K) \ge 1$ such that sufficiently smooth solutions $\dot U$ of the
linearized singular problem \eqref{i3} satisfy:\footnote{Note that the norms $|u|_{0,1}$ and $|\Lambda_D u|_{0,0}$
are not equivalent.}
\begin{align}\label{aprioriL2}
|\dot U^\gamma|_{0,0} +\dfrac{\langle \dot U^\gamma\rangle_0}{\sqrt{\gamma}}
\leq C(K) \left( \dfrac{|\Lambda_D f^\gamma|_{0,0}+|\nabla_{x'}f^\gamma|_{0,0}}{\gamma^2}
+\dfrac{\langle\Lambda_D g^\gamma \rangle_0 +\langle \nabla_{x'}g^\gamma \rangle_0}{\gamma^{3/2}} \right)
\end{align}
for $\gamma \geq \gamma_0(K)$, \;$0<\eps\leq \eps_0(K)$.

The same estimate holds if $\cB(\eps U)$ in \eqref{i3} is replaced by $\cB(\eps U,\eps\cU)$ and $\cD(\eps U)$ is
replaced by $\cD(\eps U,\eps\cU)$ as long as $|\eps \partial_d(U,\cU)|_{C^{0,M_0-1}} +|U,\cU|_{C^{0,M_0}} \leq
K$ for $\eps\in (0,1]$.
\end{prop}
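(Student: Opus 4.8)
The plan is to transplant the weakly stable (WR-class) energy method of \cite{C1} into the singular pseudodifferential framework of \cite{W1}, systematically using the refined calculus of \cite{CGW2} whose residual operators gain one singular derivative. After the weight substitution $\dot U^\gamma:=e^{-\gamma t}\dot U$, which turns \eqref{i3} into \eqref{i5}, I would first dispose of the zero-order term: since $|U|_{C^{0,M_0}}\le K$ and $|\eps\partial_d U|_{C^{0,M_0-1}}\le K$ with $M_0=3d+5$, the matrix $\cD(\eps U)$ has coefficients bounded in $C^{0,M_0}$, so its contribution to the energy is $O(K)\,|\dot U^\gamma|_{0,0}^2$ and is absorbed once $\gamma\ge\gamma_0(K)$; the coefficients of $\cB(\eps U)$ are controlled by the same norms. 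It then remains to estimate the principal operator $\partial_d+\mathbb{A}\big(\eps U,\partial_{x'}+\beta\partial_{\theta_0}/\eps\big)$ together with the WR-type boundary operator, all symbolic manipulations being carried out in the singular calculus so that every symbol is frozen at $X=\xi'+k\beta/\eps$.

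The next step is to microlocalize in $(\xi',k)$ near the $\beta$-direction. On the complement of a small conic neighbourhood of $\R_+\beta$ the frequency $X$ stays away from the range where $\E^s$ degenerates, and Proposition \ref{thm1}, the continuity of $\E^s$ up to $\Xi_0$, and the uniform Lopatinskii property off $\overline{\Upsilon}$ (valid since $\overline{\Upsilon}\subset\cH$ by Assumption \ref{nonlinbc}) yield the estimate with the stronger weight and no loss, by a direct Kreiss symmetrizer argument in the singular calculus. Near $\R_+\beta$, Assumption \ref{a8} together with \eqref{conjugator} allows conjugation of $\mathbb{A}$ by a smooth invertible $Q_0$ into the block form $-\bD_1$, splitting the system into ``elliptic'' blocks for which $\re(i\omega_j)$ keeps a definite sign bounded away from $0$ (handled by the usual forward/backward exponential weights) and ``hyperbolic'' blocks with $\omega_j$ real at $\gamma=0$. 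On each hyperbolic block I would build a Kreiss-type symmetrizer $r_s$ in the singular calculus following \cite{C1}: since the uniform Lopatinskii condition fails only to first order at each $\uzeta\in\overline{\Upsilon}$ --- precisely, by \eqref{ca3a} the relevant component of the boundary symbol factors as $\gamma+i\sigma$ --- the symmetrizer is \emph{not} positive definite but degenerate of WR type, giving the interior estimate and a boundary bilinear form that controls the incoming trace only after one application of $\Lambda_D^{-1}$, i.e.\ at the cost of one singular derivative, the outgoing trace being absorbed through the boundary condition. Reassembling the blocks and undoing the conjugation produces \eqref{aprioriL2}; the extra powers of $\gamma$ relative to the uniformly stable weights $\gamma^{-1},\gamma^{-1/2}$, together with the appearance of $\Lambda_D$ on the data $f^\gamma,g^\gamma$, encode the loss of one singular derivative, itself a consequence of the simple vanishing $\gamma+i\sigma$ of the Lopatinskii determinant at $\uzeta$.

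The step I expect to be the main obstacle, as anticipated in Remark \ref{diff}, is the error analysis for the singular calculus. The refined calculus of \cite{CGW2} does gain one singular derivative in the error of $(0,0)$ and $(-1,1)$ compositions --- exactly what the symmetrizer argument with a one-derivative loss requires --- but the error in $(1,-1)$ compositions is \emph{not} smoothing (there are counterexamples; see \eqref{ib3a} and Proposition \ref{n31a}), and such compositions inevitably appear when $\Lambda_D^{-1}$-type operators meet the first-order pieces of the symmetrizer estimate. To handle them I would insert the extended cutoffs $\chi^e$ and partition $\dot U^\gamma$ as in \eqref{ia2}, so that the dangerous compositions act only on pieces whose frequency support makes $\nabla_{x'}\Lambda_D^{-1}$ bounded; the leftover terms, of the type $\nabla_{x'}\Lambda_D^{-1}\dot U^\gamma_{2,in}$ and $\nabla_{x'}\Lambda_D^{-1}\dot U^\gamma_3$, are then estimated separately (Propositions \ref{hard} and \ref{kreisspiece}). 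This is precisely why $|\nabla_{x'}f^\gamma|_{0,0}$ and $\langle\nabla_{x'}g^\gamma\rangle_0$ appear on the right of \eqref{aprioriL2} alongside the $\Lambda_D$ norms; keeping every $\gamma$-power sharp through this bookkeeping (so as to obtain exactly $\gamma^{-2}$ and $\gamma^{-3/2}$, which by \cite[Theorem 4.1]{CG} should be optimal) is the delicate part.

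For the last assertion, I would simply note that the whole argument uses only (i) the bounds $|\eps\partial_d(\cdot)|_{C^{0,M_0-1}}+|\cdot|_{C^{0,M_0}}\le K$ on whatever arguments enter the coefficients, and (ii) the WR-type factorization \eqref{ca3a} of the boundary operator, which by Remark \ref{ca3}(2) holds with $\cB(v_1,v_2)$ in place of $\cB(v_1)$ for $(v_1,v_2)$ in a neighbourhood of the origin. Since neither the interior symmetrizer construction nor the boundary degeneracy analysis makes any use of the restriction $v_2=v_1$, the same proof yields \eqref{aprioriL2} verbatim with $\cB(\eps U,\eps\cU)$ and $\cD(\eps U,\eps\cU)$ substituted for $\cB(\eps U)$ and $\cD(\eps U)$, under the stated bound on $(U,\cU)$.
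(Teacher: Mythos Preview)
Your overall architecture (microlocalization into pieces near and away from $\overline{\Upsilon}$, Kreiss symmetrizer away from the bad set, diagonalization near it, the extended cutoff $\chi^e$ to tame $(1,-1)$ errors) matches the paper. But there is a genuine gap at the very first step, and it propagates.

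You propose to ``dispose of the zero-order term'' $\cD(\eps U)\dot U^\gamma$ by observing that its contribution to the energy is $O(K)|\dot U^\gamma|_{0,0}^2$ and absorbing it for $\gamma\ge\gamma_0(K)$. This works in a uniformly stable problem, but not here. Near $\overline{\Upsilon}$ the boundary estimate loses one $\Lambda_D$: to control $\langle w_{in}(0)\rangle_0$ you need $\langle\Lambda_D w_{out}(0)\rangle_0$, which forces you to pair the outgoing transport equation with $\Lambda_D^2 w_j$ and thus to estimate $|\Lambda_D f^\gamma|_{0,0}$ on the right. If $\cD(\eps U)\dot U^\gamma$ is left as forcing, this produces $|\Lambda_D\cD(\eps U)\dot U^\gamma|_{0,0}\sim |\Lambda_D\dot U^\gamma|_{0,0}$, and since the off-diagonal part of $Q_0\cD Q_0^{-1}$ couples incoming to outgoing components, you end up with $|\Lambda_D w_{in}|_{0,0}$ on the right --- a quantity you never control (only $|w_{in}|_{0,0}$ appears on the left). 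The paper states this explicitly at the start of the proof of Proposition~\ref{ia3}: the loss of derivatives ``prevents us from treating the zero order term $\cD(\eps U)\dot U^\gamma$ as a forcing term''. The remedy is the \emph{simultaneous diagonalization} $Q=Q_0+Q_{-1}$, with $Q_{-1}\in S^{-1}$ chosen so that $\bD_0:=[Q_{-1}Q_0^{-1},\bD_1]+Q_0\cD(\eps U)Q_0^{-1}$ is block diagonal; then $C_{j,D}$ in \eqref{ii9} acts only on $w_j$, and $\re(\Lambda_D C_{j,D}w_j,\Lambda_D w_j)$ is absorbed into $\gamma|\Lambda_D w_j|_{0,0}^2$.

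This missing ingredient is also the actual source of the $(1,-1)$ obstruction you flag. The problematic composition is not ``$\Lambda_D^{-1}$ meeting first-order symmetrizer pieces'' but precisely $\bD_{1,D}\,Q_{-1,D}$ (and $(\partial_d Q_{-1})_D$) arising from the simultaneous diagonalization; see \eqref{ii7}, \eqref{ib2}, \eqref{ib3}. On $\mathrm{supp}\,\chi^e$ these errors are genuine $r_{-1,D}$ by Proposition~\ref{n31a}(a),(b), which gives Proposition~\ref{ia3}. On $\mathrm{supp}\,(1-\chi^e)$ they only satisfy \eqref{ib3a}, and closing the estimate requires the auxiliary control of $\nabla_{x'}\Lambda_D^{-1}\dot U^\gamma$ carried through Propositions~\ref{hard} and~\ref{kreisspiece}; this is exactly why $|\nabla_{x'}f^\gamma|_{0,0}$ and $\langle\nabla_{x'}g^\gamma\rangle_0$ appear on the right of \eqref{aprioriL2}. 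Your proposal cites the right Propositions but, having skipped $Q_{-1}$, misidentifies where the bad errors come from and hence what the $\chi^e$-partition is really buying.
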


\begin{cor}[Main $H^1_{tan}$ linear estimate]\label{estimH1}
Under the same assumptions as in Proposition \ref{i5z}, smooth enough solutions $\dot U$ of the linearized singular
problem \eqref{i3} satisfy:
\begin{align}\label{i6}
|\dot U^\gamma|_{\infty,0}+|\dot U^\gamma|_{0,1}+\frac{\langle \dot U^\gamma\rangle_1}{\sqrt{\gamma}}
\leq C(K) \left( \frac{|\Lambda_D f^\gamma|_{0,1}+|\nabla_{x'}f^\gamma|_{0,1}}{\gamma^2}
+\frac{\langle\Lambda_D g^\gamma \rangle_1 +\langle \nabla_{x'}g^\gamma \rangle_1}{\gamma^{3/2}} \right)
\end{align}
for $\gamma \geq \gamma_0(K)$, \;$0<\eps\leq \eps_0(K)$.
\end{cor}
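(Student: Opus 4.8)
The plan is to deduce Corollary~\ref{estimH1} from Proposition~\ref{i5z} by applying the $L^2$ estimate \eqref{aprioriL2} to the tangential derivatives of $\dot U^\gamma$, which gives the $|\dot U^\gamma|_{0,1}$ and $\langle\dot U^\gamma\rangle_1/\sqrt{\gamma}$ parts of \eqref{i6}, and by recovering the $CH^0$ norm $|\dot U^\gamma|_{\infty,0}$ through the energy/flux estimate built into the proof of Proposition~\ref{i5z}. For the first part, let $Z$ range over the $d+1$ vector fields $\partial_t,\partial_{y_1},\dots,\partial_{y_{d-1}},\partial_{\theta_0}$. Because $\mathbb{A}(\partial_{x'}+\beta\,\partial_{\theta_0}/\eps)$ has constant coefficients, $Z$ commutes with it, so $Z\dot U^\gamma$ again solves the linearized singular problem \eqref{i5}, now with interior source $Zf^\gamma-[Z,\cD(\eps U)]\dot U^\gamma$ and boundary source $Zg^\gamma-[Z,\cB(\eps U)]\dot U^\gamma|_{x_d=0}$. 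By \eqref{caaa} these commutators are zero-order multiplication operators --- no new derivative of $\dot U^\gamma$ --- and, crucially, each carries an explicit factor $\eps$: for instance $[Z,\cD(\eps U)]=\eps\,(\partial_v\cD)(\eps U)\,(ZU)$, whose coefficient is bounded by $C(K)$ since $|U|_{C^{0,M_0}}\le K$ with $M_0\ge1$.

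Applying \eqref{aprioriL2} to each $Z\dot U^\gamma$ and summing over $Z$, the data terms on the right assemble into $|\Lambda_D f^\gamma|_{0,1}$, $|\nabla_{x'}f^\gamma|_{0,1}$ and the corresponding $H^1$ boundary norms, while the commutator contributions require $\Lambda_D$ and $\nabla_{x'}$ applied to expressions $\eps\,c\,\dot U^\gamma$ with $c$ smooth and $|c|_{C^{0,M_0-1}}\le C(K)$. Here the extra $\eps$ is what saves the estimate: since $\eps\,\Lambda(X,\gamma)=\big(\eps^2\gamma^2+|\eps\xi'+k\beta|^2\big)^{1/2}\le\eps\gamma+\eps|\xi'|+|k|\,|\beta|$, a Moser-type product bound yields $|\Lambda_D(\eps\,c\,\dot U^\gamma)|_{0,0}\le C(K)\big(\eps\gamma\,|\dot U^\gamma|_{0,0}+|\dot U^\gamma|_{0,1}\big)$ and $|\nabla_{x'}(\eps\,c\,\dot U^\gamma)|_{0,0}\le C(K)\,\eps\,|\dot U^\gamma|_{0,1}$, with no surviving singular derivative of $\dot U^\gamma$. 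Dividing the interior contributions by $\gamma^2$ and the boundary ones by $\gamma^{3/2}$, all these feedback terms are of the form $\gamma^{-1}C(K)\big(|\dot U^\gamma|_{0,1}+\langle\dot U^\gamma\rangle_1/\sqrt{\gamma}\big)$ plus multiples of $|\dot U^\gamma|_{0,0}$ and $\langle\dot U^\gamma\rangle_0/\sqrt{\gamma}$ that are already controlled by Proposition~\ref{i5z}; hence for $\gamma\ge\gamma_0(K)$ large enough they are absorbed into the left-hand side, which proves the $|\dot U^\gamma|_{0,1}$ and $\langle\dot U^\gamma\rangle_1/\sqrt{\gamma}$ bounds.

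The term $|\dot U^\gamma|_{\infty,0}$ is the genuine obstacle. It cannot be obtained from the equation: solving \eqref{i5} for $\partial_d\dot U^\gamma$ reintroduces $\mathbb{A}(\beta\,\partial_{\theta_0}/\eps)\dot U^\gamma$, which is only $O(1/\eps)$, so the one-dimensional Sobolev bound $|\dot U^\gamma|_{\infty,0}^2\lesssim|\dot U^\gamma|_{0,0}\,|\partial_d\dot U^\gamma|_{0,0}$ loses a factor $\eps^{-1}$, and under Assumption~\ref{assumption3} there is no fixed symmetrizer to repair this. The fix is to run the flux identity $|\dot U^\gamma(\cdot,x_d,\cdot)|_{L^2(b\Omega)}^2=\langle\dot U^\gamma\rangle_0^2+2\int_0^{x_d}\re\,(\partial_s\dot U^\gamma,\dot U^\gamma)_{L^2(b\Omega)}\,ds$ inside the microlocal block-diagonalization of $\mathbb{A}(\partial_{x'}+\beta\,\partial_{\theta_0}/\eps)$ near the $\beta$ direction provided by \cite{W1} and refined in \cite{CGW2}: the hyperbolic blocks have purely imaginary diagonal symbols, so the $\mathbb{A}$-pairing contributes only $O(\gamma)$ and lower-order terms and the flux is controlled by the trace; the elliptic blocks have diagonal symbols with real part $\gtrsim\Lambda$ of a fixed sign, so integrating the resulting differential inequality in $x_d$ gains a factor $\Lambda^{-1}=O(\eps)$ that kills the singular scaling; and the residual is smoothing in the calculus of \cite{CGW2}. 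Bounding the flux by $\langle\dot U^\gamma\rangle_0$, $\gamma|\dot U^\gamma|_{0,0}^2$ and $|f^\gamma|_{0,0}\,|\dot U^\gamma|_{0,0}$, all of which are controlled via Proposition~\ref{i5z} applied to $\dot U^\gamma$ and to its tangential derivatives, then yields $|\dot U^\gamma|_{\infty,0}$ bounded by the right-hand side of \eqref{i6}; combining with the previous step, and enlarging $\gamma_0(K)$ once more if needed, one obtains \eqref{i6}.
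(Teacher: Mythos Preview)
Your first step --- applying \eqref{aprioriL2} to each tangential derivative $Z\dot U^\gamma$ and exploiting the explicit $\eps$ in the commutators $[Z,\cD(\eps U)]$, $[Z,\cB(\eps U)]$ to absorb the feedback --- is exactly what the paper does (see \eqref{i132} and the paragraph following it). That part is fine.

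The second step, for $|\dot U^\gamma|_{\infty,0}$, has a gap. Your description invokes ``elliptic blocks'' whose real part is $\gtrsim\Lambda$, but there are none here: by Assumption~\ref{a8} the frequency $\beta$ lies in the hyperbolic region $\cH$, so after diagonalization near $\beta$ \emph{all} blocks have purely imaginary symbol (up to $O(\gamma)$), and the gain you claim from integrating an ``elliptic'' differential inequality in $x_d$ is simply not available. You also do not say what happens in the frequency region away from the $\beta$ direction, where the block-diagonalization of \cite{W1} near $\beta$ no longer applies.

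The paper's argument is different and more concrete. It splits $\dot U^\gamma=\chi^e_D\dot U^\gamma+(1-\chi^e_D)\dot U^\gamma$ with an extended cutoff $\chi^e$ supported where $|\xi',\gamma|\le\delta_2|k\beta|/\eps$. On the support of $1-\chi^e$ one has $|X,\gamma|\le C|\xi',\gamma|$, so the singular operator $\bA_D$ is bounded by an ordinary first-order tangential operator; solving \eqref{i5} for $\partial_d\dot U^\gamma$ and using the one-dimensional Sobolev bound then gives $|(1-\chi^e_D)\dot U^\gamma|_{\infty,0}\le C(|\dot U^\gamma|_{0,1}+|f^\gamma|_{0,0})$. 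On the support of $\chi^e$ the frequencies are close to the $\beta$ direction, so $\chi^e_D\dot U^\gamma=\chi^e_D(\dot U_1^\gamma+\dot U_2^\gamma)$ with $\dot U_1^\gamma,\dot U_2^\gamma$ as in Propositions~\ref{ia3} and~\ref{hard}; the $L^\infty$ control of these pieces was \emph{already} obtained in the course of proving Proposition~\ref{i5z} (it is recorded in \eqref{i131}), via the flux identities \eqref{ii13}, \eqref{ii15}, \eqref{ib6}, \eqref{ib8} for the diagonalized incoming/outgoing components. So the right way to recover $|\dot U^\gamma|_{\infty,0}$ is not to redo a block-diagonal flux argument from scratch, but to recycle the $L^\infty$ bound \eqref{i131} for the near-$\beta$ piece and use the elementary Sobolev argument for the complementary piece.
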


\textbf{ Short guide to the proof.} The proof of Proposition \ref{i5z} is completed using the next three propositions,
each of which has the same hypotheses as Proposition \ref{i5z}. In the first step of the proof of Proposition \ref{i5z},
we choose a partition of unity defined by frequency cutoffs $\chi_i(\zeta)$, $i=1,\dots,N_1+N_2$, such that for
$i=1,\dots,N_1$ the function $\chi_i$ is supported near a point of the ``bad" set $\overline{\Upsilon}$, while
for $i>N_1$ the function $\chi_i$ is supported away from $\overline{\Upsilon}$. The estimates of $\chi_{i,D} \,
\dot U^\gamma$ for $i>N_1$ are done in Proposition \ref{kreisspiece}. For such indices, Kreiss symmetrizers
in the singular calculus are used to estimate $ \chi_{i,D} \dot U^\gamma$ without loss.

To handle the piece $\chi_{i,D} \dot U^\gamma$, $i\leq N_1$, where the loss occurs, we also choose an auxiliary
cutoff $\chi^e$ in the extended calculus with the properties \eqref{n31} and use the decomposition
\begin{align}\label{ia2}
\chi_{i,D}\dot U^\gamma =\chi^e_D \, \chi_{i,D} \dot U^\gamma +(1-\chi^e_D)\, \chi_{i,D} \dot U^\gamma \, .
\end{align}
The first term on the right in \eqref{ia2} is estimated in the Proposition \ref{ia3} while the second term in Proposition
\ref{hard}. Proposition \ref{hard} is the most difficult part in the analysis because in this frequency domain, some of
the commutators are poorly controlled.

\begin{proof}[Proof of Proposition \ref{i5z}]
\textbf{I) Partition of unity.} The compactness of $\overline\Upsilon$ (see Assumption \ref{nonlinbc}) and $\Sigma$
allows us to choose a finite open covering of $\Sigma$, $\cC=\{\cV_i\}_{i=1,\dots,N_1+N_2}$ such that $\{ \cV_i
\}_{i=1,\dots,N_1}$ covers $\overline{\Upsilon}$ and such that $\cup_{N_1+1}^{N_1+N_2}\cV_i$ is disjoint from
a neighborhood of $\overline{\Upsilon}$. Since $\overline{\Upsilon}\subset \cH$ we can arrange so that for each
$i\in\{1,\dots,N_1\}$ there is a conjugator $Q_{0,i}(\zeta)$\footnote{Recall the notation $\zeta=(\tau-i\gamma,\eta)$.
Sometimes we will also write $\zeta=(\xi',\gamma)$ to match the notation of \cite{CGW2}.} and diagonal matrix
$\bD_{1,i}(\zeta)$ satisfying \eqref{conjugator} in $\cV_i$. Moreover, we can choose a neighborhood $\cO$ of
$(0,0)\in\bR^{2N}$ such that for each $i \leq N_1$ there are functions $\sigma_i$, $P_{i,1}$, and $P_{i,2}$ on
$\cO \times \cV_i$ with the properties described in Assumption \ref{nonlinbc}. For these symbols, we shall use
the substitution $(v_1,v_2) \rightarrow \eps U(x,\theta_0)$ to prescribe the space dependence.

We let $\chi_i(\zeta),i=1,\dots,N_1+N_2$ be a smooth partition of unity subordinate to $\cC$, and extend the
$\chi_i$ to all $\zeta$ as functions homogeneous of degree zero. We smoothly extend each $Q_{0,i}$ (as a
matrix with bounded inverse) first to $\Sigma$, and then to all $\zeta$ as a function homogenous of degree zero.
We take similar extensions in $\zeta$ of $P_{i,1}$, $P_{i,2}$, $\bD_{1,i}$ and $\sigma_i$, but with homogeneity
of degree $1$ in the cases of $\bD_{1,i}$ and $\sigma_i$. As with $Q_{0,i}$, the extensions of $P_{i,1}$ and
$P_{i,2}$ are taken to have bounded inverses.\footnote{Taking such extensions will reduce the number of
cutoff functions we need later.} Of course, for a given $i\leq N_1$ the property \eqref{ca3a} is satisfied only
for $\zeta/|\zeta|\in\cV_i$.

\textbf{II) First estimate near the bad set.} The first estimate deals with a piece of $\dot U^\gamma$ that is
microlocalized near the bad set $\Upsilon$ and in the frequency domain $|(\xi',\gamma)| \ll |k \, \beta|/\eps$.

\begin{prop}\label{ia3}
Fix $i$ such that $1\leq i\leq N_1$ and let $\dot U^\gamma_1:=\chi^e_D \, \chi_{i,D}\dot U^\gamma$ where
$\chi_i$ is defined in step \textbf{I)} above and $\chi^e_D$ is a cut-off in the extended calculus satisfying
\eqref{n31}. Then we have the a priori estimate
\begin{align}\label{ia4}
|\dot U^\gamma_1|_{0,0} +\dfrac{|\dot U^\gamma_1|_{\infty,0}}{\sqrt{\gamma}} \leq C(K) \left(
\frac{|\Lambda_D f^\gamma|_{0,0}}{\gamma^2} +\frac{\langle\Lambda_D g^\gamma\rangle_{0}}{\gamma^{3/2}}
+\frac{|\dot U^\gamma|_{0,0}}{\gamma^2} +\frac{\langle\dot U^\gamma\rangle_0}{\gamma^{3/2}}\right) \, ,
\end{align}
for $\gamma \ge \gamma_0(K)$.
\end{prop}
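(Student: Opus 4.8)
The plan is to exploit the special frequency localization built into $\dot U^\gamma_1 = \chi^e_D\,\chi_{i,D}\dot U^\gamma$. By the defining property \eqref{n31} of the extended cutoff $\chi^e$, on the support of $\chi^e$ we have $|(\xi',\gamma)|$ small compared to $|k\,\beta|/\eps$, hence $\Lambda(X,\gamma) = (\gamma^2 + |\xi' + k\beta/\eps|^2)^{1/2}$ is comparable to $|k\beta|/\eps$ and in particular $\Lambda_D$ is a large, elliptic multiplier on this piece; equivalently $\Lambda_D^{-1}$ gains a genuine factor of order $\eps$. First I would write down the equation satisfied by $\dot U^\gamma_1$: apply $\chi^e_D\chi_{i,D}$ to \eqref{i5}, producing
\begin{align*}
\partial_d \dot U^\gamma_1 + \mathbb{A}\Big((\partial_t+\gamma,\partial_{x''}) + \frac{\beta\partial_{\theta_0}}{\eps}\Big)\dot U^\gamma_1 + \cD(\eps U)\dot U^\gamma_1 = \chi^e_D\chi_{i,D} f^\gamma + \mathcal{R}\dot U^\gamma,
\end{align*}
where $\mathcal{R}$ collects the commutators $[\chi^e_D\chi_{i,D}, \mathbb{A}]$ and $[\chi^e_D\chi_{i,D}, \cD(\eps U)]$ coming from the singular calculus of Appendix \ref{calc}. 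Similarly at $x_d=0$ we get $\cB(\eps U)\dot U^\gamma_1|_{x_d=0} = \chi^e_D\chi_{i,D} g^\gamma + (\text{boundary commutator})$.

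The core of the argument is a straightforward energy estimate on this system, made possible by the ellipticity of $\Lambda_D$ on the relevant frequency set. Concretely, I would pair the interior equation with $\dot U^\gamma_1$ in $L^2(\Omega)$, integrate by parts in $x_d$, and use symmetrizability together with the sign structure of $\re \cD$ to extract $\gamma |\dot U^\gamma_1|_{0,0}^2$ on the left, plus a boundary term $\langle \dot U^\gamma_1\rangle_0^2$. Because on the support of $\chi^e$ one has $|\dot U^\gamma_1|_{0,0} \lesssim \eps |\Lambda_D \dot U^\gamma_1|_{0,0}$ and, conversely, $|\Lambda_D(\chi^e_D\chi_{i,D} h)|_{0,0} \lesssim |\Lambda_D h|_{0,0}$, every source term and every commutator is absorbed at the cost of the factor $\gamma^{-2}$ (interior) or $\gamma^{-3/2}$ (boundary) displayed in \eqref{ia4}: the commutators $\mathcal{R}\dot U^\gamma$ are handled by the smoothing property of residual operators in the refined calculus, which turns them into terms bounded by $|\dot U^\gamma|_{0,0}/\gamma^2 + \langle\dot U^\gamma\rangle_0/\gamma^{3/2}$ (this is where the last two terms on the right of \eqref{ia4} originate). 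The boundary piece requires a trace estimate: I would use the standard $\langle w\rangle_0^2 \lesssim \gamma^{-1}|\partial_d w|_{0,0}|w|_{0,0} + |w|_{0,0}^2$ type inequality adapted to the singular setting, controlling $\partial_d\dot U^\gamma_1$ directly from the equation since on this frequency block the $1/\eps$ factors are benign — the term $\mathbb{A}(\beta\partial_{\theta_0}/\eps)\dot U^\gamma_1$ is itself of size $\Lambda$ times $\dot U^\gamma_1$, hence controlled.

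The main obstacle I anticipate is the precise bookkeeping of the commutator $\mathcal{R}$ and the boundary commutator in the singular calculus: one must verify that $[\chi^e_D\chi_{i,D}, \mathbb{A}]$ and $[\chi^e_D\chi_{i,D},\cD(\eps U)]$ land in the residual class with the smoothing of order one guaranteed by \cite{CGW2} (recalled in Appendix \ref{calc}), so that these terms contribute only $O(\gamma^{-2})$ in $L^2 H^0$ after applying $\Lambda_D$, and not worse. Here the hypothesis $|\eps\partial_d U|_{C^{0,M_0-1}} + |U|_{C^{0,M_0}} \leq K$ is exactly what is needed to control the symbols $\cD(\eps U)$, $\cB(\eps U)$ and their derivatives in the calculus norms. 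The favorable localization $|(\xi',\gamma)| \ll |k\beta|/\eps$ is what makes this block easy relative to Proposition \ref{hard}: there is no Lopatinskii degeneracy to contend with here because the estimate is purely an interior/elliptic one, the failure of the uniform Lopatinskii condition being irrelevant at these frequencies. Once all commutator and source contributions are collected and the $\eps$-gain from $\Lambda_D^{-1}$ is used to absorb the terms proportional to $\dot U^\gamma_1$ itself into the left-hand side (choosing $\gamma_0(K)$ large and $\eps_0(K)$ small), the inequality \eqref{ia4} follows.
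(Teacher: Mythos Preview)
Your proposal rests on a fundamental misreading of the frequency localization. The cutoff $\chi_i$ with $1\le i\le N_1$ is supported \emph{near} the bad set $\overline{\Upsilon}$, and on the support of $\chi^e$ one has $X/|X,\gamma|\approx \pm\beta/|\beta|\in\Upsilon_0$; so $\dot U^\gamma_1$ is microlocalized \emph{exactly} at the frequencies where the uniform Lopatinskii condition fails. Your claim that ``the failure of the uniform Lopatinskii condition [is] irrelevant at these frequencies'' is the opposite of the truth, and your proposed direct energy estimate cannot work: the system is not symmetric (the paper explicitly notes symmetry assumptions on the $B_j$ are of no help here), and boundary conditions in the WR class are never maximally dissipative, so pairing with $\dot U^\gamma_1$ and integrating by parts produces a boundary term you have no mechanism to control.

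The paper's proof proceeds quite differently. One first performs a simultaneous diagonalization $w:=Q_D v$ with $Q=Q_0+Q_{-1}$, where $Q_0$ block-diagonalizes $\bA$ as in \eqref{conjugator} and the order $-1$ correction $Q_{-1}$ is chosen so that the zero-order term $\cD(\eps U)$ becomes block-diagonal as well (this is essential precisely because the loss of a derivative prevents treating $\cD(\eps U)\dot U^\gamma$ as a forcing term). The resulting decoupled transport equations \eqref{ii9} are estimated separately: outgoing modes give control of $\Lambda_D w_{out}$ directly, while incoming modes require a boundary estimate. That boundary estimate (step~4 of the proof) uses the factorization \eqref{ca3a} with its degenerate factor $\gamma+i\sigma$; this is where the Lopatinskii degeneracy enters and where the loss of $\Lambda_D$ in \eqref{ia4} originates. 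The reason this block is easier than Proposition~\ref{hard} is not ellipticity but rather that the support property of $\chi^e$ makes the $(1,-1)$ composition errors in \eqref{ii7} genuinely of type $r_{-1,D}$ (via Proposition~\ref{n31a}), so they can be absorbed; off the support of $\chi^e$ this fails and is the source of the extra work in Proposition~\ref{hard}.
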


\begin{proof}[Proof of Proposition \ref{ia3}]
The loss of derivatives in the estimate prevents us from treating the zero order term $\cD (\eps U)\, \dot{U}^\gamma$
as a forcing term, as we would in a uniformly stable problem. Thus, we need to use an argument that simultaneously
diagonalizes $\bA$ and the lower order term $\cD (\eps U)$.

We now set $\chi^e \chi_i =\bchi$, $v:=\bchi_D\dot U^\gamma$ and look for an estimate $v$. We let
$\bA(X,\gamma) =-\cA (X,\gamma)$ denote the singular symbol such that
\begin{equation*}
\bA_D =\mathbb{A} \left( (\partial_t +\gamma,\partial_{x''}) +\frac{\beta \partial_{\theta_0}}{\eps} \right).
\end{equation*}
Dropping superscripts $\gamma$,  we see from \eqref{i5} that $v$ satisfies
\begin{align}\label{ii2}
\begin{split}
&\partial_d v +\bA_D v +\cD(\eps U)\, v =\bchi_D f +[\cD(\eps U),\bchi_D] \dot U =\bchi_D f+r_{-1,D}\dot U \, ,\\
&\cB(\eps U)\, v|_{x_d=0} =\bchi_D g +[\cB(\eps U),\bchi_D] \dot U |_{x_d=0} =\bchi_D g +r_{-1,D}\dot U |_{x_d=0}.
\end{split}
\end{align}
Here and below $r_{-1,D}$ denotes a singular operator of order $-1$ (which can change from one occurrence
to the next) computed using the singular calculus.  Similarly, $r_{0,D}$ will denote an operator of order $0$. In
spite of the loss of the factor $\Lambda_D$ in the estimate \eqref{aprioriL2}, we are able to treat $r_{-1,D}\dot U$
as a forcing term (see, for example, \eqref{ii12} below). A term like $r_{0,D}\dot U/\gamma$ would  be too large
to absorb.

\textbf{1. Simultaneous diagonalization.} This diagonalization argument is similar to the one in \cite{C1}. Let
$Q_0(\zeta) :=Q_{0,i}(\zeta)$ and $\bD_1(\zeta):=\bD_{1,i}(\zeta)$ be the matrices as in \eqref{conjugator}
such that $Q_0(\zeta) \bA(\zeta) Q_0^{-1}(\zeta)=\bD_1(\zeta)$ in the conical extension of $\cV_i$. We define
\begin{align}\label{ii3}
w := Q_D v \, ,
\end{align}
where $Q=Q_0(X,\gamma)+Q_{-1}(\eps U,X,\gamma)$. Here the matrix $Q_{-1}(\eps U,\zeta)$ is a symbol of
order $-1$ defined for all $\zeta$, but chosen so that on the conical extension of $\cV_i$, the matrix
\begin{align}\label{ii4}
\bD_0(\eps U,\zeta) := [Q_{-1}Q_0^{-1},\bD_1] +Q_0 \, \cD(\eps U) \, Q_0^{-1}
\end{align}
is block diagonal, necessarily of order $0$, with blocks of the same dimensions $n_1,\dots,n_J$ as those of $\bD_1$.
Since the eigenvalues associated to the blocks of $\bD_1$ are mutually distinct, a direct computation shows that
$Q_{-1}Q_0^{-1}$, and thus $Q_{-1}$, can be chosen so that the commutator cancels the off-diagonal blocks of
$Q_0 \, \cD(\eps U) \, Q_0^{-1}$. (The diagonal blocks of the commutator are all zero blocks and can therefore not
cancel those of $Q_0 \, \cD(\eps U) \, Q_0^{-1}$.) Since $Q_0 \, \bA =\bD_1 \, Q_0$ on $\cV_i$, \eqref{ii4} implies
the relation
\begin{align}\label{ii5}
Q \, \bA +Q_0 \, \cD =\bD_1 \, Q +[Q_{-1}Q_0^{-1},\bD_1] \, Q_0 +Q_0 \, \cD =\bD_1 \, Q +\bD_0 \, Q_0.
\end{align}

\begin{rem}\label{iia4}[Entries of the symbol $Q_{-1}$]
\textup{Observe that the scalar entries of the matrix $Q_{-1,D}$ can be chosen to have the form
\begin{align}\label{iia5}
(Q_{-1,D})_{i,j}=c(\eps U) \, a_{-1,D},
\end{align}
where $a_{-1}(\zeta)$ is of order $-1$ and independent of $(x,\theta)$, thus giving rise to a Fourier multiplier.}
\end{rem}

Noting that $x$-dependence is absent in $\bA$ and $Q_0$ and using the commutation property \eqref{ii5}, we have
\begin{align}\label{ii6}
\begin{split}
\partial_d w &= Q_D \, \partial_d v +(\partial_dQ_{-1})_D v
= -Q_D \, (\bA +\cD(\eps U))_D v +Q_D\, \bchi_D f +r_{-1,D} \dot U \\
&= -(Q\, \bA +Q_0 \, \cD(\eps U))_D v +Q_D\, \bchi_D f +r_{-1,D} \dot U \\
&= -(\bD_1Q+\bD_0Q_0)_Dv+Q_D\, \bchi_D f+r_{-1,D}\dot U \\
&= -(\bD_1+\bD_0)_Dw+Q_D\, \bchi_D f+r_{-1,D}\dot U \, .
\end{split}
\end{align}
Here we have used the support property of $\chi^e$  to conclude (see Proposition \ref{n31a}(a) and (b), as well
as Remark \ref{n31c})
\begin{align}\label{ii7}
\begin{split}
&(a) \, (\partial_d Q_{-1})_D v =r_{-1,D} \dot U \quad \text{ and }\\
&(b) \, (\bD_1 \, Q_{-1})_D v =\bD_{1,D} \, (Q_{-1})_D v +r_{-1,D} \dot U \, .
\end{split}
\end{align}
The fact that the statements in \eqref{ii7} are no longer true when $\chi^e$ is replaced by $(1-\chi^e)$ in the
definition of $v$ is the main reason for the difference between Propositions \ref{ia3} and \ref{hard} (the latter
being much more technical). Here we are able to treat all commutators as forcing terms since they are order
$-1$ operators.

\textbf{2. Outgoing modes.} Recall that $-\bD_1$ and $-\bD_0$ are block diagonal:
\begin{equation*}
-\bD_1(\zeta) =\begin{pmatrix}
i\omega_1(\zeta) I_{n_1} & & 0\\
 & \ddots & \\
0 & & i\omega_J(\zeta) I_{n_J} \end{pmatrix} \, ,\; \;
-\bD_0(\eps U,\zeta) =\begin{pmatrix}
C_1 & & 0\\
 & \ddots & \\
0 & & C_J \end{pmatrix},
\end{equation*}
so the system \eqref{ii6} satisfied by $w=(w_1,\dots,w_J)$ can be written as a collection of $J$ decoupled
transport equations
\begin{align}\label{ii9}
\partial_d w_j =(i\omega_j)_Dw_j+C_{j,D}w_j+ r_{0,D} f+r_{-1,D}\dot U
\end{align}
with $\re (i\omega_j) \leq -c\, \gamma$ for $1\leq j\leq J'$, and $\re (i\omega_j) \geq c\, \gamma$ for
$J'+1\leq j\leq J$ ($c>0$ denotes a constant).

Following the strategy of \cite{C1}, we now give two preliminary estimates of the outgoing modes $w_j$,
$j\geq J'+1$. Taking the real part of the $L^2(\Omega)$ inner product of \eqref{ii9} with $\Lambda_D^2 w_j$,
we obtain
\begin{align*}
-\dfrac{\langle \Lambda_Dw_j(0)\rangle^2_0}{2} &= \re (\Lambda_D \, (i\omega_j)_Dw_j,\Lambda_D w_j)_{L^2(\Omega)}
+\re (\Lambda_D \, C_{j,D}w_j,\Lambda_D w_j)_{L^2(\Omega)}\\
&+\re (\Lambda_D \, Q_D\, \bchi_D f,\Lambda_D w_j)_{L^2(\Omega)}
+\re (\Lambda_D \, r_{-1,D}\dot U,\Lambda_D w_j)_{L^2(\Omega)} \, .
\end{align*}
Since $\re (i\omega_j) \geq c \, \gamma$, we get after absorbing some terms on the left
\begin{align}\label{ii11}
\gamma \, |\Lambda_D w_j|_{0,0}^2 +\langle \Lambda_D w_j(0)\rangle^2_0 \leq
\dfrac{C}{\gamma} \, \left( |\Lambda_D f|_{0,0}^2 +|\dot U|^2_{0,0} \right) \, .
\end{align}
Here, for example, we have used Young's inequality and estimated
\begin{align}\label{ii12}
|\re (\Lambda_D \, r_{-1,D}\dot U ,\Lambda_D w_j)_{L^2(\Omega)}| \leq \dfrac{C_\delta}{\gamma} \,
|\dot U|_{0,0}^2 +\delta \, \gamma \, |\Lambda_D w_j|^2_{0,0} \, ,
\end{align}
with $\delta>0$ small enough so as to absorb the term $|\Lambda_D w_j|_{0,0}^2$ from right to left. Similarly, taking
the real part of the $L^2$ inner product of \eqref{ii9} with $w_j$ on $[x_d,\infty)\times b\Omega$ instead of $\Omega$,
we obtain for all $x_d\geq 0$:
\begin{align}\label{ii13}
\gamma \, | w_j|_{0,0}^2 +\langle  w_j(x_d)\rangle^2_0 \leq \dfrac{C}{\gamma} \, \left(
|f|_{0,0}^2 +\dfrac{1}{\gamma^2} \, |\dot U|^2_{0,0} \right) \, .
\end{align}

\textbf{3. Incoming modes.} Estimating $w_j$ for $j\leq J'$ in a similar way, but now using $\re (i\omega_j) \leq
-c\, \gamma$ and pairing the corresponding transport equation with $w_j$, we obtain
\begin{equation*}
\gamma \, |w_j|_{0,0}^2 \leq C\, \langle w_j(0) \rangle^2_0 +\dfrac{C}{\gamma} \, \left( |f|_{0,0}^2
+|r_{-1,D}\dot U|^2_{0,0}\right) \leq C \, \langle w_j(0) \rangle^2_0
+\dfrac{C}{\gamma} \, \left( |f|_{0,0}^2 +\dfrac{1}{\gamma^2} \, |\dot U|^2_{0,0} \right) \, .
\end{equation*}
Taking the real part of the $L^2$ inner product of \eqref{ii9} with $w_j$ on $[0,x_d]\times b\Omega$ rather than
on all $\Omega$, we also obtain
\begin{align}\label{ii15}
\gamma \, |w_j|_{0,0}^2 +\langle w_j(x_d) \rangle^2_0 \leq C \, \langle w_j(0) \rangle^2_0
+\dfrac{C}{\gamma} \, \left( |f|_{0,0}^2 +\dfrac{1}{\gamma^2} \, |\dot U|^2_{0,0} \right) \, .
\end{align}

\textbf{4. Boundary estimate.} We observe that $v$ can be expressed in terms of $w$ as
\begin{align}\label{ii17}
v =(Q_0^{-1})_D w +r_{-1,D}\dot U \, .
\end{align}
Recalling the boundary condition in \eqref{ii2} and using the decomposition $Q_0^{-1}(\zeta) =[Q_{in}(\zeta)
\; Q_{out}(\zeta)]$, we let accordingly $w=(w_{in},w_{out})$ and rewrite the boundary condition in \eqref{ii2} as
\begin{align}\label{ii18}
\cB(\eps U) \, Q_{in,D} w_{in}|_{x_d=0} =-\cB(\eps U) \, Q_{out,D} w_{out}|_{x_d=0} +\bchi_D g +r_{-1,D}
\dot U|_{x_d=0} \, .
\end{align}
By \eqref{ca3a} we have on $\cV_i$
\begin{equation*}
\cB(\eps U) \, Q_{in} =P_1^{-1} \, \left( P_1 \, \cB(\eps U) \, Q_{in} \, P_2 \right) \, P_2^{-1}
=P^{-1}_1 \, \begin{pmatrix}
\Lambda^{-1} \, (\gamma+i\sigma) & \\
 & I \end{pmatrix} \, P^{-1}_2,
\end{equation*}
so using the rules of singular calculus, we get
\begin{equation*}
\Lambda_D \, \cB(\eps U) \, Q_{in,D} w_{in}|_{x_d=0} =(P^{-1}_1)_D \, \begin{pmatrix}
\gamma +i\sigma_D & \\
 & \Lambda_D \, I \end{pmatrix} \, (P^{-1}_2)_D w_{in}|_{x_d=0} +r_{0,D} w_{in}|_{x_d=0} .
\end{equation*}
With \eqref{ii18}, this implies
\begin{align}\label{ii21}
\langle (P^{-1}_1)_D \begin{pmatrix}
\gamma +i\sigma_D & \\
 & \Lambda_D \, I \end{pmatrix} (P^{-1}_2)_D w_{in}|_{x_d=0} \rangle_0 \leq C \left(
\langle \Lambda_D w_{out}|_{x_d=0} \rangle_0 +\langle \Lambda_D g \rangle_0 +\langle \dot U|_{x_d=0} \rangle_0
\right).
\end{align}
We have $P_{1,D}(P^{-1}_1)_D=I+r_{-1,D}$ so up to choosing $\gamma$ large (and absorbing the $r_{-1,D}$ term),
the estimate \eqref{ii21} implies
\begin{align}\label{ii22}
\langle \begin{pmatrix}
\gamma +i\sigma_D & \\
 & \Lambda_D \, I \end{pmatrix} (P^{-1}_2)_D w_{in}|_{x_d=0} \rangle_0 \leq C \left(
\langle \Lambda_D w_{out}|_{x_d=0} \rangle_0 +\langle \Lambda_D g \rangle_0
+\langle \dot U|_{x_d=0} \rangle_0 \right).
\end{align}
Letting $\begin{pmatrix}w_1\\w'\end{pmatrix}:=(P^{-1}_2)_Dw_{in}|_{x_d=0}$ we find, using the fact that $\sigma$ is
real and choosing again $\gamma$ large enough,
\begin{equation*}
\langle \begin{pmatrix}
\gamma +i\sigma_D & \\
 & \Lambda_D \, I \end{pmatrix} \begin{pmatrix}
w_1\\
w' \end{pmatrix} \rangle_0^2 \geq \dfrac{1}{C} \, \big( \gamma^2 \, \langle w_1\rangle^2_0
+\langle \Lambda_D w' \rangle_0^2 \big) \geq \dfrac{\gamma^2}{C} \, \langle w_1,w'\rangle^2_0.
\end{equation*}
Thus, from \eqref{ii22} we may conclude
\begin{align}\label{ii24}
\gamma \, \langle w_{in}|_{x_d=0} \rangle_0 \leq C \left( \langle \Lambda_D w_{out}|_{x_d=0} \rangle_0
+\langle \Lambda_D g \rangle_0 +\langle \dot U|_{x_d=0} \rangle_0 \right).
\end{align}

\textbf{5. Conclusion.} Combining the estimates \eqref{ii11}, \eqref{ii13}, \eqref{ii15}, and \eqref{ii24} we obtain
\begin{multline*}
\gamma^3 \, |w_{in}|^2_{0,0} +\gamma^2 \, |w_{in}|^2_{\infty,0} +\gamma \, |\Lambda_D w_{out}|^2_{0,0}
+\langle \Lambda_D w_{out}|_{x_d=0} \rangle^2_0 +\gamma^2 \, |w_{out}|^2_{\infty,0} \\
\le C \left( \dfrac{|\Lambda_D f|_{0,0}^2}{\gamma} +\langle \Lambda_D g\rangle_0^2
+\dfrac{|\dot U|^2_{0,0}}{\gamma} +\langle \dot U|_{x_d=0} \rangle_0^2 \right).
\end{multline*}
Recalling the relation \eqref{ii17} between $v$ and $w =Q_Dv$, we obtain the estimate \eqref{ia4} for $\gamma$
large enough. Though we have not emphasized the regularity of the symbols, we claim that the regularity assumption
on $U$ is sufficient to apply the rules of singular symbolic calculus at each step of the above calculations, see
Appendix \ref{calc}. We feel free to skip some of the details.
\end{proof}

\textbf{III) Second estimate near the bad set.} Next we estimate the second term on the right in \eqref{ia2}. In
the statement below $Q_D=Q_{0,D}+Q_{-1,D}$ is the same operator as that constructed above in the proof
of Proposition \ref{ia3}. Since $(Q_{0,D})^{-1}Q_{-1,D}$ has norm less than one as an operator on $L^2$ for
$\gamma$ large, we can define $(Q_D)^{-1}$ as an operator on $L^2$ using a Neumann series.

\begin{prop}\label{hard}
Fix $i$ such that $1\leq i\leq N_1$, let $\dot U^\gamma_2 := (1-\chi^e_D) \chi_{i,D} \dot U^\gamma$ and write
\begin{align}\label{ia8}
\dot U^\gamma_2 =\dot U^\gamma_{2,in} +\dot U^\gamma_{2,out},
\end{align}
where\footnote{Here $w_{in} \in \C^p$ and $w_{out} \in \C^{N-p}$ are defined by the same diagonalization procedure
as in the previous proof.}
\begin{align}\label{ia8a}
\dot U^\gamma_{2,in} :=(Q_D)^{-1}(w_{in},0) \quad \text{ and } \quad \dot U^\gamma_{2,out} := (Q_D)^{-1}(0,w_{out}).
\end{align}
Then we have
\begin{multline}\label{ia9}
|\dot U^\gamma_{2,in}|_{0,0} +\dfrac{|\dot U^\gamma_{2,in}|_{\infty,0}}{\sqrt{\gamma}}
+\dfrac{|(\nabla_{x'},\gamma) \, \dot U^\gamma_{2,out}|_{0,0}}{\gamma}
+\dfrac{| (\nabla_{x'},\gamma) \, \dot U^\gamma_{2,out}|_{\infty,0}}{\gamma^{3/2}} \\
+|\nabla_{x'} \, \Lambda_D^{-1} \dot U^\gamma_{2,in}|_{0,0}
+\dfrac{\langle \nabla_{x'} \, \Lambda_D^{-1} \dot U^\gamma_{2,in}|_{x_d=0} \rangle_{0}}{\sqrt{\gamma}}
\le C \left( \dfrac{|(\nabla_{x'},\gamma) \, f^\gamma|_{0,0}}{\gamma^2}
+\dfrac{\langle (\nabla_{x'},\gamma) \, g^\gamma\rangle_{0}}{\gamma^{3/2}} \right. \\
\left. +\dfrac{|\dot U^\gamma|_{0,0}+|\nabla_{x'} \, \Lambda_D^{-1}\dot U^\gamma|_{0,0}}{\gamma^2}
+\dfrac{\langle \dot U^\gamma|_{x_d=0} \rangle_0
+\langle \nabla_{x'} \, \Lambda_D^{-1}\dot U^\gamma|_{x_d=0} \rangle_0}{\gamma^{3/2}} \right) \, .
\end{multline}
\end{prop}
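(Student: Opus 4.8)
The plan is to follow, block by block, the simultaneous--diagonalization scheme already used in the proof of Proposition~\ref{ia3}, but now keeping careful track of the commutator errors that, on the support of $1-\chi^e$, fail to be smoothing of order one. The key preliminary observation is that on that frequency region one has $\Lambda(X,\gamma)\lesssim|\xi'|+\gamma$, so that when acting on $\dot U^\gamma_2$ the singular multiplier $\Lambda_D$ is controlled by $(\nabla_{x'},\gamma)$; this is why the ``loss of $\Lambda_D$'' of Proposition~\ref{i5z} shows up here as a genuine loss of one tangential derivative, with the forcing on the right of \eqref{ia9} entering as $(\nabla_{x'},\gamma)f^\gamma$ rather than $\Lambda_D f^\gamma$. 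First I would write the equation satisfied by $v:=\dot U^\gamma_2=(1-\chi^e_D)\chi_{i,D}\dot U^\gamma$ in the form \eqref{ii2}, conjugate it by the same operator $Q_D=Q_{0,D}+Q_{-1,D}$ as in Proposition~\ref{ia3} (which block--diagonalizes $\bA$ and $\cD(\eps U)$ simultaneously), and set $w=Q_Dv=(w_{in},w_{out})$ as in \eqref{ia8a}. Exactly as in \eqref{ii6}--\eqref{ii9} this produces decoupled transport equations $\partial_d w_j=(i\omega_j)_Dw_j+C_{j,D}w_j+r_{0,D}f^\gamma+(\text{errors})$ with $\re(i\omega_j)\le-c\gamma$ for $j\le J'$ and $\re(i\omega_j)\ge c\gamma$ for $j>J'$; the difference with Proposition~\ref{ia3} is that the errors produced by $(\partial_d Q_{-1})_Dv$ and by the $(1,-1)$ composition $(\bD_1 Q_{-1})_Dv$ are no longer of the form $r_{-1,D}\dot U$.

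Granting for the moment that those errors are admissible (see the last paragraph), I would run the energy estimates as in \cite{C1}. For the outgoing modes, pairing the transport equation with $\Lambda_D^2 w_j$ over $\Omega$ and over $[x_d,\infty)\times b\Omega$ and using $\re(i\omega_j)\ge c\gamma$ yields, after absorption, control of $|(\nabla_{x'},\gamma)\dot U^\gamma_{2,out}|_{0,0}/\gamma$, of $|(\nabla_{x'},\gamma)\dot U^\gamma_{2,out}|_{\infty,0}/\gamma^{3/2}$, and of the trace $\langle\Lambda_D w_{out}|_{x_d=0}\rangle_0$, with the forcing appearing as $|(\nabla_{x'},\gamma)f^\gamma|_{0,0}$. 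For the incoming modes, pairing with $w_j$ over $[0,x_d]\times b\Omega$ and using $\re(i\omega_j)\le-c\gamma$ gives $\gamma|w_{in}|_{0,0}^2+\langle w_{in}(x_d)\rangle_0^2$ in terms of $\langle w_{in}(0)\rangle_0^2$ plus $\gamma^{-1}$ times forcing; applying in addition the Fourier multiplier $\nabla_{x'}\Lambda_D^{-1}$ to the incoming transport equations (its commutators with $(i\omega_j)_D$ and $C_{j,D}$ being of lower order) yields the companion bounds for $\nabla_{x'}\Lambda_D^{-1}\dot U^\gamma_{2,in}$ and its trace that appear on the left of \eqref{ia9}.

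The boundary estimate uses the WR factorization \eqref{ca3a} exactly as in step~4 of the proof of Proposition~\ref{ia3}: multiplying the boundary relation in \eqref{ii2} by $\Lambda_D$ and writing $(P_2^{-1})_Dw_{in}|_{x_d=0}=(w_1,w')$, one obtains an estimate for $\langle(\gamma+i\sigma_D)w_1\rangle_0+\langle\Lambda_D w'\rangle_0$; since $\sigma$ is real--valued, $\gamma+i\sigma_D$ is elliptic ``of size $\gamma$'', so for $\gamma$ large one gets $\gamma\langle w_{in}|_{x_d=0}\rangle_0\lesssim\langle\Lambda_D w_{out}|_{x_d=0}\rangle_0+\langle\Lambda_D g^\gamma\rangle_0+\langle\dot U^\gamma|_{x_d=0}\rangle_0$, together with its $\nabla_{x'}\Lambda_D^{-1}$--weighted analogue. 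Feeding in the outgoing--trace bound from the previous step, combining with the interior estimates, choosing $\gamma$ large to absorb the $r_{-1,D}$ contributions and the ellipticity remainders, and finally returning from $w$ to $v=\dot U^\gamma_2$ via $(Q_D)^{-1}$ (bounded on $L^2$ by the Neumann--series argument stated just before the Proposition) delivers \eqref{ia9}.

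The step I expect to be the main obstacle is the one postponed above: controlling the $(1,-1)$--composition errors $(\partial_d Q_{-1})_Dv$ and $(\bD_1 Q_{-1})_Dv$ on $\mathrm{supp}\,(1-\chi^e)$. As recalled in Remark~\ref{diff}, the error in a $(1,-1)$ composition need not be smoothing of order one (there are counterexamples), so these terms cannot simply be treated as order $-1$ forcing. The way around this is to exploit the very special structure of $Q_{-1}$ --- by Remark~\ref{iia4} its scalar entries are of the form $c(\eps U)\,a_{-1,D}$ with $a_{-1}$ an $x$--independent symbol of order $-1$ --- together with the refined composition estimates of \cite{CGW2} (Proposition~\ref{n31a} and \eqref{ib3a}); these allow one to bound the offending terms, on the support of $1-\chi^e$, by a constant times $|\nabla_{x'}\Lambda_D^{-1}\dot U^\gamma|_{0,0}+\gamma^{-1}|\dot U^\gamma|_{0,0}$ plus the analogous quantities built from $v$ that already appear on the left of \eqref{ia9}. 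Because of this, the quantities $\nabla_{x'}\Lambda_D^{-1}\dot U^\gamma$ (and their traces) must be carried along on both sides of \eqref{ia9}; they are disposed of only later, when \eqref{ia9} is combined with the estimates for $\dot U^\gamma_1$ and for the pieces $\chi_{i,D}\dot U^\gamma$, $i>N_1$, and $\gamma$ is taken large in the proof of Proposition~\ref{i5z}.
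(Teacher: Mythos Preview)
Your proposal is correct and follows essentially the same strategy as the paper's proof. Two tactical points where the paper proceeds slightly differently are worth noting. First, for the outgoing modes the paper pairs the transport equation \eqref{ib5} with $-\partial_{x_k}^2 w_j$ and with $\gamma^2 w_j$ rather than with $\Lambda_D^2 w_j$; this matters because the bad remainder $R^a_D v$ (coming from $(\partial_d Q_{-1})_Dv$ and from the $(1,-1)$ composition in $(\bD_1 Q_{-1})_Dv$) is \emph{not} frequency--localized to $\mathrm{supp}\,(1-\chi^e)$ (it involves multiplication by $c(\eps U)$), so $|\Lambda_D R^a_D v|_{0,0}$ is not automatically controlled by $|(\nabla_{x'},\gamma)R^a_D v|_{0,0}$, whereas the key estimate \eqref{ib3a} that you correctly identify delivers precisely a bound on $|\nabla_{x'}R^a_D v|_{0,0}$ via Proposition~\ref{n31a}(c). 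Second, for the $\nabla_{x'}\Lambda_D^{-1}$--weighted incoming estimate the paper does not apply $\nabla_{x'}\Lambda_D^{-1}$ directly to the already--diagonalized transport equations; instead it returns to the system \eqref{ii2} for $v$, sets $\tv':=\partial_{x'}\Lambda_D^{-1}v$, writes the problem \eqref{ic1} satisfied by $\tv'$, and re-diagonalizes with the \emph{same} $Q_D$ to obtain $\tw'=Q_D\tv'$. This keeps the commutator bookkeeping more transparent but requires an extra closing step relating $\Lambda_D\tw'_{out}$ to $\partial_{x'}w_{out}$ (so that the outgoing estimate \eqref{ib8} can be fed in) and relating $Q_D\,\partial_{x'}\Lambda_D^{-1}\dot U^\gamma_{2,in}$ back to $\tw'_{in}$. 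Both differences are purely tactical; your outline would go through once these adjustments are made.
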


\begin{proof}[Proof of Proposition \ref{hard}]
\textbf{1. Simultaneous diagonalization with a new remainder.} We now set $(1-\chi^e) \chi_i =\bchi$, $v :=
\bchi_D\dot U^\gamma =\dot U^\gamma_2$, $w :=Q_D v =(w_{in},w_{out})$ and repeat the proof of Proposition
\ref{ia3} down to line \eqref{ii6} with only one change. Suppressing superscripts $\gamma$, we now have
instead of \eqref{ii6}
\begin{align}\label{ib1}
\partial_d w =-(\bD_1+\bD_0)_D w +r_{0,D} f +r_{-1,D} \dot U +R^a_D v,
\end{align}
where $R^a_D =R^b_D-R^c_D$ with operators $R^b_D,R^c_D$ defined in \eqref{ib2} below. As can be checked
by looking at each line in \eqref{ii6}, we also observe that the remainder term $r_{-1,D} \dot U$ reads
\begin{equation}\label{defreste}
Q_D \, [\cD (\eps U),\bchi_D] \dot U -(Q_{0,D} \, \bD_{0,D} -(Q_0 \, \bD_0)_D) \, \bchi_D \dot U
-Q_{-1,D} \, \cD (\eps U) \, \bchi_D \dot U +\bD_{0,D} \, Q_{-1,D} \, \bchi_D \dot U \, .
\end{equation}
The equalities of \eqref{ii7} should be replaced by
\begin{align}\label{ib2}
\begin{split}
&(a) \, (\partial_d Q_{-1})_D v = R^b_D v \, ,\quad \text{ and }\\
&(b) \, (\bD_1 \, Q_{-1})_Dv=\bD_{1,D} \, (Q_{-1})_Dv +R^c_Dv \, ,
\end{split}
\end{align}
where in view of Remark \ref{iia4} the scalar entries of $R^b_D$ and $R^c_D$ have the forms
\begin{align}\label{ib3}
(\partial_d c(\eps U)) \, a_{-1,D} \quad \text{ and } \quad [\alpha_{1,D},c(\eps U)] \, a_{-1,D} \, ,
\end{align}
respectively. In \eqref{ib3}, $\alpha_1(\zeta)$ denotes one of the diagonal entries of $\bD_1(\zeta)$. Here and
below $a_{-1,D}$ denotes a singular operator of order $-1$ associated to a symbol $a_{-1}(\zeta)$ which may
change from term to term. We claim
\begin{equation}\label{ib3a}
|R^a_D v|_{0,0} \leq \dfrac{C}{\gamma} \, |v|_{0,0} \quad \text{ and } \quad
|\nabla_{x'}R^a_D v|_{0,0} \leq C \, |\nabla_{x'}\Lambda_D^{-1}v|_{0,0} +\dfrac{C}{\gamma} \, |v|_{0,0}.
\end{equation}
Indeed, the first estimate is clear from the definition \eqref{ib3} since $\partial_d c(\eps U)$ is a bounded function
and $[\alpha_{1,D},c(\eps U)]$ is bounded on $L^2$ by the rules of singular calculus. The $L^2$ estimate of
$\nabla_{x'} R^b_D v$ is readily obtained by differentiating the product $(\partial_d c(\eps U)) \, a_{-1,D}v$
with respect to $x'$ so we now focus on the estimate of $\nabla_{x'} R^c_D v$. Setting $r_{0,D} :=[\alpha_{1,D},
c(\eps U)]$, which is a bounded operator on $L^2$ by the rules of singular calculus, we have
\begin{equation*}
|\nabla_{x'} \, R^c_Dv|_{0,0} \leq |r_{0,D} \, \nabla_{x'} \, a_{-1,D}v|_{0,0}
+|[\nabla_{x'},r_{0,D}] \, a_{-1,D}v|_{0,0} \leq C \, \left( |\nabla_{x'} \Lambda_D^{-1}v|_{0,0}
+\dfrac{|v|_{0,0}}{\gamma} \right),
\end{equation*}
where we have used Proposition \ref{n31a}(c) to estimate the double commutator term $[\nabla_{x'},r_{0,D}]$.

The estimates \eqref{ib3a} will be used below to handle the source term $R^a_D v$ appearing on the right of
\eqref{ib1}. Unlike what happened in the proof of Proposition \ref{ia3}, \eqref{ib3a} seems to be the best we
can hope for the bad $(1,-1)$ product.

\textbf{2. Incoming modes I.} In place of \eqref{ii9} we now have
\begin{align}\label{ib5}
\partial_d w_j =(i\omega_j)_Dw_j+C_{j,D}w_j+ r_{0,D} f+r_{-1,D}\dot U+R^a_D v, \;j=1,\dots,J
\end{align}
and for the incoming part $w_{in}$ we obtain as in \eqref{ii15}:
\begin{align}\label{ib6}
\gamma^3 \, | w_{in}|_{0,0}^2 +\gamma^2 \, |w_{in}|^2_{\infty,0}
\leq \gamma^2 \, \langle w_{in}|_{x_d=0} \rangle^2_0
+\dfrac{C}{\gamma} \, \left( |\gamma \, f|_{0,0}^2 +|\dot U|^2_{0,0} \right).
\end{align}
This $L^2$ estimate does not cause any problem because we have the good $L^2$ control \eqref{ib3a} of the
additional term $R^a_D v$ appearing on the right of \eqref{ib5}.

We can also repeat the same argument as in step \textbf{4} of the previous proof for the boundary terms because
these estimates did not rely on the support properties of $\chi^e$ (the support properties of $\chi^e$ were only
used to control some error terms in the interior equation). We thus derive again
\begin{equation*}
\gamma \, \langle w_{in}|_{x_d=0} \rangle_0 \leq C \, \left( \langle \Lambda_D w_{out}|_{x_d=0} \rangle_0
+\langle \Lambda_D \, \bchi_D g \rangle_0 +\langle \dot U|_{x_d=0} \rangle_0 \right) \, .
\end{equation*}
Using the fact that on the support of $1-\chi^e(\xi',\frac{k\beta}{\eps},\gamma)$, we have
\begin{equation*}
|X,\gamma| \leq C \, |\xi',\gamma| \, ,
\end{equation*}
we obtain the estimate
\begin{equation*}
\langle \Lambda_D w_{out}|_{x_d=0} \rangle_0 +\langle \Lambda_D \, \bchi_D g \rangle_0 \leq C \,
\langle (\nabla_{x'},\gamma) \, w_{out}|_{x_d=0} \rangle_0
+C \, \langle (\nabla_{x'},\gamma) \, g \rangle_0 \, .
\end{equation*}
At this stage, we have thus derived the bound
\begin{multline}
\label{ib6prime}
\gamma^3 \, | w_{in}|_{0,0}^2 +\gamma^2 \, |w_{in}|^2_{\infty,0}
\leq \dfrac{C}{\gamma} \, \left( |\gamma \, f|_{0,0}^2 +|\dot U|^2_{0,0} \right)
+C \, \left( \langle (\nabla_{x'},\gamma) \, g \rangle_0^2 +\langle \dot U|_{x_d=0} \rangle_0^2 \right) \\
+C \, \langle (\nabla_{x'},\gamma) \, w_{out}|_{x_d=0} \rangle_0^2 \, .
\end{multline}

\textbf{3. Outgoing modes.} Let $j \in \{J'+1,\dots,J\}$. Taking the real part of the $L^2(\Omega)$ inner product
of \eqref{ib5} with $-\partial_{x_k}^2 w_j$ for $k=0,\dots,d-1$, and with $\gamma^2w_j$, we obtain in place of
\eqref{ii13}
\begin{align}\label{ib8}
\gamma \, |(\nabla_{x'},\gamma) \, w_{out}|_{0,0}^2 +|(\nabla_{x'},\gamma) \, w_{out}|^2_{\infty,0}
\leq \dfrac{C}{\gamma} \, \left(|(\nabla_{x'},\gamma) \, f|_{0,0}^2 +|\dot U|^2_{0,0}
+|\partial_{x_k} \, \Lambda_D^{-1}\dot U|_{0,0}^2 \right) \, .
\end{align}
Here in place of \eqref{ii12} we have used the estimate
\begin{equation*}
|(\partial_{x_k} \, r_{-1,D} \dot U,\partial_{x_k} w_j)_{L^2(\Omega)}| \leq \dfrac{C_\delta}{\gamma} \,
|\partial_{x_k} \, \Lambda_D^{-1} \dot U|_{0,0}^2 +\dfrac{C_\delta}{\gamma} \, |\dot U|^2_{0,0}
+\delta \, \gamma \, |\partial_{x_k}w_j|^2_{0,0},
\end{equation*}
along with a similar estimate for $|(\partial_{x_k} R^a_Dv,\partial_{x_k}w_j)_{L^2(\Omega)}|$. Such estimates
follow either from the precise expression of the remainder $\partial_{x_k} \, r_{-1,D} \dot U$ (differentiating each
of the four terms in \eqref{defreste} with respect to $x_k$) or from \eqref{ib3a}. As usual, we then choose $\delta$
small enough so as to absorb terms from right to left.

Adding \eqref{ib6prime} for the incoming modes and \eqref{ib8} for the outgoing modes, we have already shown
how to control the first line in \eqref{ia9} by the quantity on the right of the inequality \eqref{ia9}. At this stage, it
thus only remains to control the term $\nabla_{x'} \, \Lambda_D^{-1} \, \dot U^\gamma_{2,in}$.

\begin{rem}\label{ib13}
\textup{At this point we can see the need to estimate the remaining terms on the left in the estimate \eqref{ia9}
as well as the similar terms on the left in the Kreiss estimate \eqref{ia11} below. We must estimate  those terms
in order to be able to absorb the terms involving $\nabla_{x'} \, \Lambda_D^{-1} \dot U^\gamma$ on the right
side of \eqref{ia9}. We recall that these terms come from the "bad" composition term $\bD_{1,D} \, Q_{-1,D}$
and from $\partial_d Q_{-1,D}$. Observe that there is no need for a separate estimate of $\nabla_{x'} \,
\Lambda_D^{-1} \dot U^\gamma_1$ since, for example,
\begin{equation*}
|\nabla_{x'} \, \Lambda_D^{-1} \dot U^\gamma_1|_{0,0} \leq C \, |\dot U^\gamma_1|_{0,0}
\end{equation*}
because $|X,\gamma| \ge C \, |\xi',\gamma|$ on the support of  $\chi^e$.}
\end{rem}

\textbf{4. Incoming modes II.} Here we begin to estimate the terms in the second line of \eqref{ia9}. The problem
satisfied by $v=\dot U^\gamma_2$ has the form \eqref{ii2} with the new cut-off function $\bchi =(1-\chi^e) \,
\chi_i$. We now introduce the functions $\tv :=\Lambda_D^{-1}v$ and $\tv' :=\partial_{x'} \tv$, where $\partial_{x'}$
denotes any of the tangential derivatives $\partial_{x_0},\dots,\partial_{x_{d-1}}$, and see that the function
$\tv'$ satisfies
\begin{align}\label{ic1}
\begin{split}
&\partial_d \tv' +\bA_D \tv' +\cD(\eps U) \tv' =\partial_{x'} \, \Lambda_D^{-1} \, \bchi_D f +\partial_{x'} \, \Lambda_D^{-1}
\, [\cD(\eps U),\bchi_D] \dot U \\
&\qquad \qquad \qquad \qquad \qquad +\partial_{x'} \big( (\cD(\eps U) -\Lambda_D^{-1} \, \cD(\eps U) \, \Lambda_D) \tv
\big) -{\rm d}\cD (\eps U) \cdot \eps \partial_{x'}U \, \tv \, ,\\
&\cB(\eps U) \, \tv'|_{x_d=0} =\partial_{x'} \, \Lambda_D^{-1} \, \bchi_D g +\partial_{x'} \, \Lambda_D^{-1} \,
[\cB(\eps U),\bchi_D]\dot U|_{x_d=0} +\partial_{x'} \, [\cB(\eps U),\Lambda_D^{-1}] v|_{x_d=0}.
\end{split}
\end{align}
We can thus diagonalize the problem for $\tv'$ with the \emph{same} operator $Q_D$ as before. Introducing the
function $\tw' :=Q_D \tv'$, we find that $\tw'$ satisfies
\begin{align}\label{ic3}
\begin{split}
&(a) \, \partial_d \tw' =-(\bD_1+\bD_0)_D\tw' +Q_D \, \partial_{x'} \, \Lambda_D^{-1} \, \bchi_D f
+\dfrac{1}{\gamma} \, r_{0,D} \tv' +\dfrac{1}{\gamma} \, r_{0,D} \dot U
+Q_D \, \partial_{x'} \, \Lambda_D^{-1} \, [\cD(\eps U),\bchi_D] \dot U \, ,\\
&(b) \, \cB(\eps U) \, Q_{in,D} \tw_{in}' =-\cB(\eps U) \, Q_{out,D} \tw_{out}' +\partial_{x'} \, \Lambda_D^{-1} \, \bchi_D g
+\partial_{x'} \, \Lambda_D^{-1} \, [\cB(\eps U),\bchi_D] \dot U \\
&\qquad \qquad \qquad \qquad \qquad +\partial_{x'} \, \Lambda_D^{-1} \, [\Lambda_D,\cB(\eps U)] \tv +r_{-1,D} \tv' \, .\\
\end{split}
\end{align}
where we have collected many terms (for instance the $R^a_D$ operator as in \eqref{ib1}) into remainders of
the form $\gamma^{-1} \, r_{0,D}$. For instance, we have used
\begin{multline*}
Q_D \, \partial_{x'} \big( (\cD(\eps U) -\Lambda_D^{-1} \, \cD(\eps U) \, \Lambda_D) \tv \big) \\
= Q_D \, (\cD(\eps U) -\Lambda_D^{-1} \, \cD(\eps U) \, \Lambda_D) \tv'
+Q_D \, (\partial_{x'} \cD(\eps U) -\Lambda_D^{-1} \, \partial_{x'} \cD(\eps U) \, \Lambda_D) \tv \\
=\dfrac{1}{\gamma} \, r_{0,D} \tv' +\dfrac{1}{\gamma} \, r_{0,D} \tv =\dfrac{1}{\gamma} \, r_{0,D} \tv'
+\dfrac{1}{\gamma} \, r_{0,D} \dot U \, .
\end{multline*}

Next we fix an index $j\in\{1,\dots,J'\}$. Taking the real part of the $L^2(\Omega)$ inner product of \eqref{ic3}$(a)$
with $\tw_j'$, we obtain the standard $L^2$ estimate for incoming modes:
\begin{multline*}
\gamma \, |\tw_{in}'|_{0,0}^2 \leq C \, \langle \tw_{in}'|_{x_d=0} \rangle^2_0
+\dfrac{C}{\gamma} \, \left( |\partial_{x'} \, \Lambda_D^{-1}f|_{0,0}^2
+\dfrac{1}{\gamma^2} \, |\partial_{x'} \, \Lambda_D^{-1} \dot U|_{0,0}^2 +\dfrac{1}{\gamma^2} \, |\dot U|^2_{0,0} \right) \\
+\dfrac{C}{\gamma} \, |\partial_{x'} \, \Lambda_D^{-1} \, [\cD(\eps U),\bchi_D] \dot U|_{0,0}^2 \, .
\end{multline*}
The last term on the right of the inequality is estimated by similar techniques as above, namely we write
\begin{equation*}
\partial_{x'} \, \Lambda_D^{-1} \, [\cD(\eps U),\bchi_D] =[\cD(\eps U),\bchi_D] \, \partial_{x'} \, \Lambda_D^{-1}
+[\partial_{x'} \, \Lambda_D^{-1}, \cD(\eps U)] \, \bchi_D -\bchi_D \, [\partial_{x'} \, \Lambda_D^{-1}, \cD(\eps U)] \, ,
\end{equation*}
and then decompose the commutator $[\partial_{x'} \, \Lambda_D^{-1}, \cD(\eps U)]$ as follows:
\begin{equation*}
[\partial_{x'} \, \Lambda_D^{-1}, \cD(\eps U)] =\partial_{x'} \, \big( \Lambda_D^{-1} \, \cD(\eps U) \, \Lambda_D
-\cD(\eps U) \big) \, \Lambda_D^{-1} +(\partial_{x'} \cD(\eps U)) \, \Lambda_D^{-1} \, .
\end{equation*}
In the end we obtain the estimate
\begin{equation}\label{ic4prime}
\gamma \, |\tw_{in}'|_{0,0}^2 \leq C \, \langle \tw_{in}'|_{x_d=0} \rangle^2_0 +\dfrac{C}{\gamma^3} \, \left(
|\partial_{x'} f|_{0,0}^2 +|\partial_{x'} \, \Lambda_D^{-1} \dot U|^2_{0,0} +|\dot U|_{0,0}^2 \right) \, ,
\end{equation}
and we thus wish to control the trace of $\tw_{in}'$.


\textbf{5. Control of the trace of $\tw_{in}'$.} Using \eqref{ic3}$(b)$, and arguing as in step \textbf{4} of the
proof of Proposition \ref{ia3}, we obtain the boundary estimate
\begin{multline}\label{ic5}
\gamma \, \langle \tw_{in}'|_{x_d=0} \rangle_0 \leq C \left( \langle \Lambda_D\tw_{out}'|_{x_d=0} \rangle_0
+\langle \partial_{x'} g \rangle_0 +\langle \partial_{x'} \, [\cB(\eps U),\bchi_D] \dot U|_{x_d=0} \rangle_0 \right) \\
+C \, \left( \langle \partial_{x'} \, [\Lambda_D,\cB(\eps U)] \tv|_{x_d=0} \rangle_0 +\langle \tv'|_{x_d=0} \rangle_0 \right) \, .
\end{multline}
The two commutators appearing on the right of \eqref{ic5} are dealt with as in the previous step and we get
\begin{equation*}
\gamma \, \langle \tw_{in}'|_{x_d=0} \rangle_0 \leq C \left( \langle \Lambda_D\tw_{out}'|_{x_d=0} \rangle_0
+\langle \partial_{x'} g \rangle_0 +\langle \dot U|_{x_d=0} \rangle_0
+\langle \partial_{x'} \, \Lambda_D^{-1} \dot U|_{x_d=0} \rangle_0 \right) \, .
\end{equation*}
Combining with \eqref{ic4prime}, we have derived
\begin{multline}\label{ic4second}
\gamma \, |\tw_{in}'|_{0,0}^2 +\langle \tw_{in}'|_{x_d=0} \rangle^2_0 \le \dfrac{C}{\gamma^2} \,
\langle \Lambda_D\tw_{out}'|_{x_d=0} \rangle_0^2 +\dfrac{C}{\gamma^3} \, \left(
|\partial_{x'} f|_{0,0}^2 +|\partial_{x'} \, \Lambda_D^{-1} \dot U|^2_{0,0} +|\dot U|_{0,0}^2 \right) \\
+\dfrac{C}{\gamma^2} \, \left( \langle \partial_{x'} g \rangle_0^2 +\langle \dot U|_{x_d=0} \rangle_0^2
+\langle \partial_{x'} \, \Lambda_D^{-1} \dot U|_{x_d=0} \rangle_0^2 \right) \, .
\end{multline}
We expect $\Lambda_D\tw_{out}'$ to be comparable to $\partial_{x'} w_{out}$ and thus use \eqref{ib8}; this is
checked and made precise in the next step.

\textbf{6. Relation between $\Lambda_D \tw'$ and $\partial_{x'}w$, and conclusion.} Using the definitions
\begin{equation*}
\tw' =Q_D \, \tv' =Q_D \, \partial_{x'} \, \Lambda_D^{-1}v \quad \text{and} \quad w =Q_D v \, ,
\end{equation*}
and the fact that $\Lambda_D$ commutes with $Q_{0,D}$, we compute
\begin{equation*}
\Lambda_D \tw' =Q_D \, \partial_{x'}v +r_{0,D} \tv' =\partial_{x'} w +r_{0,D} v +r_{0,D} \tv'\, .
\end{equation*}
We have thus derived the bound from above
\begin{equation*}
\dfrac{1}{\gamma^2} \, \langle \Lambda_D\tw_{out}'|_{x_d=0} \rangle_0^2 \le \dfrac{C}{\gamma^2} \, \left(
\langle \partial_{x'} w_{out}|_{x_d=0} \rangle_0^2 +\langle \dot U|_{x_d=0} \rangle_0^2
+\langle \partial_{x'} \, \Lambda_D^{-1} \dot U|_{x_d=0} \rangle_0^2 \right) \, ,
\end{equation*}
which we combine with \eqref{ic4second} and \eqref{ib8} to obtain
\begin{multline}\label{ic40}
\gamma \, |\tw_{in}'|_{0,0}^2 +\langle \tw_{in}'|_{x_d=0} \rangle^2_0 \le \dfrac{C}{\gamma^3} \, \left(
|(\partial_{x'},\gamma) \, f|_{0,0}^2 +|\partial_{x'} \, \Lambda_D^{-1} \dot U|^2_{0,0} +|\dot U|_{0,0}^2 \right) \\
+\dfrac{C}{\gamma^2} \, \left( \langle \partial_{x'} g \rangle_0^2 +\langle \dot U|_{x_d=0} \rangle_0^2
+\langle \partial_{x'} \, \Lambda_D^{-1} \dot U|_{x_d=0} \rangle_0^2 \right) \, .
\end{multline}

It only remains to derive a bound from below to go from $\tw_{in}'$ to $\partial_{x'} \Lambda_D^{-1}
\dot U^\gamma_{2,in}$. We first observe that estimating $\partial_{x'} \Lambda_D^{-1} \dot U^\gamma_{2,in}$
as claimed in \eqref{ia9} amounts to estimating $Q_D \, \partial_{x'} \Lambda_D^{-1} \dot U^\gamma_{2,in}$.
We use the relation
\begin{equation*}
Q_D \, \partial_{x'} \Lambda_D^{-1} \dot U^\gamma_{2,in} =\partial_{x'} \Lambda_D^{-1} w_{in}
-[\partial_{x'} \Lambda_D^{-1},Q_D] \, \dot U^\gamma_{2,in} =\partial_{x'} \Lambda_D^{-1} w_{in}
-[\partial_{x'} \Lambda_D^{-1},Q_{-1,D}] \, \dot U^\gamma_{2,in}\, ,
\end{equation*}
and we use the special "decoupled" form of the coefficients of $Q_{-1}$ to show that the commutator
$[\partial_{x'} \Lambda_D^{-1},Q_{-1,D}]$ reads
\begin{equation*}
[\partial_{x'} \Lambda_D^{-1},Q_{-1,D}] =\dfrac{1}{\gamma^2} \, r_{0,D} +\dfrac{1}{\gamma^2} \, r_{0,D} \,
(\partial_{x'} \, \Lambda_D^{-1}) \, .
\end{equation*}
Similary, we can write
\begin{equation*}
\tw_{in}' =\partial_{x'} \Lambda_D^{-1} w_{in} +\dfrac{1}{\gamma^2} \, r_{0,D} v
+\dfrac{1}{\gamma^2} \, r_{0,D} \, (\partial_{x'} \, \Lambda_D^{-1}) v \, ,
\end{equation*}
so we obtain
\begin{equation*}
Q_D \, \partial_{x'} \Lambda_D^{-1} \dot U^\gamma_{2,in} =\tw_{in}' +\dfrac{1}{\gamma^2} \, r_{0,D} \dot U
+\dfrac{1}{\gamma^2} \, r_{0,D} \, (\partial_{x'} \, \Lambda_D^{-1}) \dot U \, .
\end{equation*}
We have therefore proved that \eqref{ic40} implies that the second line in \eqref{ia9} is controlled
by the terms on the right of \eqref{ia9}. This completes the proof of Proposition \ref{hard}.
\end{proof}

\textbf{IV) Estimate away from the bad set.} The next proposition provides a Kreiss-type estimate for the terms
$\chi_{i,D}\dot U^\gamma$, where $i>N_1$.

\begin{prop}\label{kreisspiece}
Fix $i$ such that $N_1+1\leq i\leq N_2$ and let
$\dot U^\gamma_3:=\chi_{i,D}\dot U^\gamma$.
We have
\begin{multline}\label{ia11}
|\dot U^\gamma_{3}|_{0,0} +\dfrac{\langle \dot U^\gamma_{3}|_{x_d=0} \rangle_{0}}{\sqrt{\gamma}}
+|\nabla_{x'} \, \Lambda_D^{-1}\dot U^\gamma_{3}|_{0,0}
+\dfrac{\langle \nabla_{x'} \, \Lambda_D^{-1}\dot U^\gamma_{3}|_{x_d=0} \rangle_{0}}{\sqrt{\gamma}} \\
\le C\, \left( \dfrac{|f^\gamma|_{0,0} +|\nabla_{x'} \, \Lambda_D^{-1} f^\gamma|_{0,0}}{\gamma}
+\dfrac{\langle g^\gamma \rangle_0 +\langle \nabla_{x'}\, \Lambda_D^{-1} g^\gamma \rangle_0}{\sqrt{\gamma}}
\right.\\
\left. +\dfrac{|\dot U^\gamma|_{0,0} +|\nabla_{x'} \, \Lambda_D^{-1}\dot U^\gamma|_{0,0}}{\gamma^2}
+\dfrac{\langle \dot U^\gamma|_{x_d=0} \rangle_0
+\langle \nabla_{x'} \, \Lambda_D^{-1}\dot U^\gamma|_{x_d=0} \rangle_{0}}{\gamma} \right) \, .
\end{multline}
\end{prop}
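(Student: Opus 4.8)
The plan is to exploit that, for $i>N_1$, the cutoff $\chi_i$ is supported at frequencies bounded away from the bad set $\overline{\Upsilon}$, so there the uniform Lopatinskii condition holds and the full Kreiss symmetrizer machinery of the singular calculus applies with \emph{no} loss of derivatives. By Assumption \ref{nonlinbc} (or \ref{assumption3} in the linear case) the Lopatinskii condition for $(L(\partial),B)$, $B=\psi(0)$, fails only on $\Upsilon_0\subset\overline{\Upsilon}$; since $|U|_{C^{0,M_0}}\le K$ and we may take $\eps_0(K)$ small, $\cB(\eps U)$ stays close to $B$ on the support of $\chi_i$, so $(L(\partial),\cB(\eps U))$ satisfies the uniform Lopatinskii condition there, uniformly in $\eps\in(0,\eps_0(K)]$ and in $x$. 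On the conical extension of each $\cV_i$, $i>N_1$, one then has a self-adjoint singular symmetrizer $r_{s,i}(\eps U,\zeta)$, bounded with bounded inverse, such that $\re(r_{s,i}\,\bA)\ge c\gamma I$ microlocally (after block-diagonalizing $\bA$ by the sign of $\re(i\omega_j)$, including the usual glancing blocks carrying an extra weight) and $r_{s,i}+C\,\cB(\eps U)^*\cB(\eps U)\ge cI$ on the range of the incoming projector; this is the construction of \cite{W1}, now carried out in the refined calculus of \cite{CGW2} so that residual operators are genuinely smoothing. Unlike in Proposition \ref{ia3}, the zeroth-order term $\cD(\eps U)$ carries no $1/\eps$ and is bounded on $L^2$ uniformly in $\eps$, so it needs no simultaneous diagonalization and is simply absorbed for $\gamma$ large.

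First I would write down the problem solved by $v:=\dot U^\gamma_3=\chi_{i,D}\dot U^\gamma$. From \eqref{i5}, since $\chi_{i,D}$ is a Fourier multiplier, $v$ satisfies $\partial_d v+\bA_D v+\cD(\eps U)v=\chi_{i,D}f^\gamma+r_{-1,D}\dot U^\gamma$ on $\Omega$, with $\cB(\eps U)v|_{x_d=0}=\chi_{i,D}g^\gamma+r_{-1,D}\dot U^\gamma|_{x_d=0}$, the remainders being the commutators $[\cD(\eps U),\chi_{i,D}]$ and $[\cB(\eps U),\chi_{i,D}]$, which are singular operators of order $-1$ by the rules of the calculus. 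Taking the real part of the $L^2(\Omega)$ pairing of the interior equation with $r_{s,i,D}v$, integrating by parts in $x_d$, and using the G\r{a}rding inequality for the singular calculus together with the interior and boundary positivity of $r_{s,i}$ and the boundary condition, I obtain the standard Kreiss estimate
\[
\gamma\,|v|_{0,0}^2+\langle v|_{x_d=0}\rangle_0^2\le\frac{C}{\gamma}\,|\chi_{i,D}f^\gamma|_{0,0}^2+C\,\langle\chi_{i,D}g^\gamma\rangle_0^2+(\text{remainders}),
\]
where, since every $r_{-1,D}$ gains a factor $1/\gamma$ (as $\Lambda\ge\gamma$), the remainders are $O(\gamma^{-3}|\dot U^\gamma|_{0,0}^2+\gamma^{-1}\langle\dot U^\gamma|_{x_d=0}\rangle_0^2)$. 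Dividing by $\gamma$ and using $|\chi_{i,D}f^\gamma|_{0,0}\le C|f^\gamma|_{0,0}$, $\langle\chi_{i,D}g^\gamma\rangle_0\le C\langle g^\gamma\rangle_0$ yields the first two terms on the left of \eqref{ia11}, bounded by the right-hand side of \eqref{ia11}.

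For the $\nabla_{x'}\Lambda_D^{-1}$-weighted terms, set $\tv':=\partial_{x'}\Lambda_D^{-1}v$, where $\partial_{x'}$ ranges over the tangential derivatives $\partial_{x_0},\dots,\partial_{x_{d-1}}$; since $\chi_{i,D}$, $\Lambda_D$ and $\partial_{x'}$ are constant-coefficient singular Fourier multipliers they commute, so $\tv'=\nabla_{x'}\Lambda_D^{-1}\dot U^\gamma_3$ and it suffices to estimate $\tv'$. Because $\bA_D$ is also a constant-coefficient singular multiplier, applying $\partial_{x'}\Lambda_D^{-1}$ to the $v$-equation leaves the operator $\partial_d+\bA_D+\cD(\eps U)$ unchanged and produces the same type of problem for $\tv'$, with forcing $\partial_{x'}\Lambda_D^{-1}(\chi_{i,D}f^\gamma+r_{-1,D}\dot U^\gamma)+[\cD(\eps U),\partial_{x'}\Lambda_D^{-1}]v$ and boundary data $\partial_{x'}\Lambda_D^{-1}(\chi_{i,D}g^\gamma+r_{-1,D}\dot U^\gamma)|_{x_d=0}+[\cB(\eps U),\partial_{x'}\Lambda_D^{-1}]\dot U^\gamma|_{x_d=0}$. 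The two new commutators are order $-1$ (commutator of the $x$-dependent zeroth-order matrices $\cD,\cB$ with the constant-coefficient zeroth-order operator $\partial_{x'}\Lambda_D^{-1}$); decomposing them as in the proof of Proposition \ref{hard} via $[\partial_{x'}\Lambda_D^{-1},\cD]=\partial_{x'}(\Lambda_D^{-1}\cD\Lambda_D-\cD)\Lambda_D^{-1}+(\partial_{x'}\cD)\Lambda_D^{-1}$, they are controlled by $C|\nabla_{x'}\Lambda_D^{-1}\dot U^\gamma|_{0,0}+C\gamma^{-1}|\dot U^\gamma|_{0,0}$ and the analogous boundary bound. Applying the same symmetrizer $r_{s,i,D}$ to the $\tv'$-problem, and using $\partial_{x'}\Lambda_D^{-1}\chi_{i,D}f^\gamma=\chi_{i,D}\nabla_{x'}\Lambda_D^{-1}f^\gamma$ (likewise for $g^\gamma$), I obtain $\gamma|\tv'|_{0,0}^2+\langle\tv'|_{x_d=0}\rangle_0^2$ bounded by the right-hand side of \eqref{ia11}, i.e. the last two terms on the left of \eqref{ia11}. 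Combining the two estimates and taking $\gamma_0(K)$ large enough to absorb the $\cD(\eps U)$ term and the order $-1$ remainders completes the proof; the regularity $|U|_{C^{0,M_0}}\le K$, $M_0=3d+5$, is exactly what the symbolic calculus of Appendix \ref{calc} requires at each step.

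The step I expect to carry the real content is the first one: verifying that the Kreiss symmetrizer of \cite{W1}, transported into the improved singular calculus of \cite{CGW2}, can be built uniformly in $\eps\in(0,\eps_0(K)]$ over the \emph{entire} frequency region away from $\overline{\Upsilon}$. Since $\{\cV_i\}_{i>N_1}$ must cover all of $\Sigma$ outside a neighborhood of $\overline{\Upsilon}$, this region contains elliptic points ($\gamma>0$), hyperbolic points and genuinely glancing points, so one needs the full block structure with the delicate glancing blocks; but this is precisely the setting of \cite{W1} (the hypothesis $\tauetabar\notin\cG$ is what forces the large-$k$ singular frequencies $\xi'+k\beta/\eps$ to stay away from glancing, so the glancing analysis there only involves bounded $k$ and reduces to the classical Kreiss construction). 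Everything else — the energy estimates, the commutator bookkeeping, and the passage through $\tv'$ — is routine once the symmetrizer is in hand.
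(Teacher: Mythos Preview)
Your approach is correct and follows the same two-step structure as the paper: first a Kreiss-type $L^2$ estimate for $v=\chi_{i,D}\dot U^\gamma$ using a symmetrizer, then a repetition of the same argument for $\tv'=\partial_{x'}\Lambda_D^{-1}v$ with the commutators handled exactly as you describe. The remainder bookkeeping and the passage to $\tv'$ match the paper's proof.

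One simplification worth noting: the paper does \emph{not} use a variable-coefficient symmetrizer $r_{s,i}(\eps U,\zeta)$. Instead it takes the classical Kreiss symmetrizer $R(\zeta)$ for the constant-coefficient problem $(L(\partial),B)$ with $B=\cB(0)$, which is available on the good set $G\supset\overline{\cup_{i>N_1}\cV_i}$ since $G\cap\overline{\Upsilon}=\emptyset$. The boundary positivity $R(\zeta)+C\,\cB(0)^*\cB(0)\ge cI$ then persists with $\cB(\eps U)$ in place of $\cB(0)$ by a straightforward continuity argument for $\eps$ small (your observation that $\cB(\eps U)$ is close to $B$). Because $R(\zeta)$ is $x$-independent, $R_D$ is a singular Fourier multiplier and no products of genuine pseudodifferential operators arise in the energy identity; the only place the calculus enters nontrivially is the localized G\r{a}rding inequality (Proposition~\ref{n27}) for the boundary term, which produces the $\gamma^{-1}\langle\dot U^\gamma|_{x_d=0}\rangle_0^2$ remainder you identified. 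So your ``real content'' step is lighter than you feared: there is no need to build a $(\eps U,\zeta)$-dependent symmetrizer, and the glancing blocks are encapsulated in the classical symbol $R(\zeta)$ from \cite{K,CP} without further discussion.
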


\begin{proof}
\textbf{1. $L^2$ estimate.} The first step is to prove the Kreiss-type estimate
\begin{align}\label{ij1}
|\dot U^\gamma_{3}|_{0,0} +\dfrac{\langle \dot U^\gamma_{3}|_{x_d=0} \rangle_{0}}{\sqrt{\gamma}} \leq
C\, \left( \dfrac{| f^\gamma|_{0,0}}{\gamma} +\dfrac{\langle g^\gamma\rangle_{0}}{\sqrt{\gamma}}
+\dfrac{|\dot U^\gamma|_{0,0}}{\gamma^2}+\dfrac{\langle \dot U^\gamma |_{x_d=0} \rangle_0}{\gamma}\right) \, .
\end{align}
For this we define the good set $G\subset \Sigma$ to be a neighborhood of the closure of
$\cup^{N_2}_{i=N_1+1}\cV_i$ such that $G$ is disjoint from $\overline{\Upsilon}$; here the uniform Lopatinskii
condition is satisfied.   The classical construction of Kreiss symmetrizers \cite{K,CP} provides us with an
$N\times N$ symbol $R(\zeta)$, homogeneous of degree $0$, such that for some positive constants $C$, $c$
and $\zeta/|\zeta|\in G$ we have
\begin{align}\label{ii27}
\begin{split}
&(a) \quad R(\zeta) =R(\zeta)^*\\
&(b) \quad -\re (R(\zeta) \, \bA(\zeta)) \geq c\, \gamma \, I_N\\
&(c) \quad R(\zeta) +C\, \cB(0)^* \, \cB(0) \geq c \, I_N.
\end{split}
\end{align}
We take a smooth extension of  $R$ to all $\zeta$ as a symbol of order $0$ such that \eqref{ii27}(a) holds.
Observe that by continuity \eqref{ii27}(c) implies
\begin{equation}\label{ii28}
R(\zeta)+C\, \cB(\eps U)^* \, \cB(\eps U)\geq c \, I_N \text{ for } \eps \text{ small enough.}
\end{equation}

As observed in \cite{W1} we may now use $R_D$, the singular Fourier multiplier associated to the symbol
$R(X,\gamma)$ as a Kreiss symmetrizer for the singular problem. Let $\chi_i =\bchi$, $v:=\bchi_D
\dot U^\gamma$, and denote by $\langle \cdot,\cdot \rangle$ the $L^2$ inner product on $b\Omega$.
Using the equation \eqref{ii2} to expand $\partial_d\langle v,R_Dv\rangle$ and integrating in $x_d$ over
$[0,\infty)$ we obtain
\begin{multline*}
-\langle v|_{x_d=0} ,R_Dv|_{x_d=0} \rangle =-2\, \re (R_D\, \bA_Dv,v) \\
-2\, \re(R_D \, \cD(\eps U) v,v) +2 \, \re(R_D \, {\bchi}_D f^\gamma,v)+O(|\dot U^\gamma|_{0,0}^2/\gamma^3).
\end{multline*}
From \eqref{ii27}(b), \eqref{ii28} and the localized G{\aa}rding inequality (Proposition \ref{n27}),
\begin{align}\label{ii30}
\re \langle (R +C \, \cB(\eps U)^* \, \cB(\eps U))_Dv|_{x_d=0} ,v|_{x_d=0} \rangle \geq c \,
\langle v|_{x_d=0} \rangle_{0}^2 -C\, \dfrac{\langle \dot U^\gamma|_{x_d=0}  \rangle^2_{0}}{\gamma},
\end{align}
we easily derive the estimate \eqref{ij1}.

\textbf{2. Estimate of $\nabla_{x'}\Lambda_D^{-1}\dot U^\gamma_3$.} Set $\tv :=\Lambda_D^{-1}v$ and for now
let $\tv'$ denote one of the tangential derivatives $\partial_j \tv$, $j=0,\dots,d-1$. Then $\tv'$ satisfies the system
\eqref{ic1}, where the truncation function $\bchi$ has changed but the forcing terms have exactly the same
expression.
An argument just like the one that gave the estimate \eqref{ij1} yields
\begin{multline*}
|\nabla_{x'} \, \Lambda_D^{-1}\dot U^\gamma_{3}|_{0,0}
+\dfrac{\langle \nabla_{x'} \, \Lambda_D^{-1}\dot U^\gamma_{3}|_{x_d=0} \rangle_{0}}{\sqrt{\gamma}}
\le C\, \left( \dfrac{|\nabla_{x'} \, \Lambda_D^{-1} f^\gamma|_{0,0}}{\gamma}
+\dfrac{\langle \nabla_{x'} \, \Lambda_D^{-1} g^\gamma \rangle_{0}}{\sqrt{\gamma}} \right. \\
\left. +\dfrac{|\dot U^\gamma|_{0,0} +|\nabla_{x'}\Lambda_D^{-1}\dot U^\gamma|_{0,0}}{\gamma^2}
+\dfrac{\langle \dot U^\gamma|_{x_d=0} \rangle_0
+\langle \nabla_{x'} \, \Lambda_D^{-1}\dot U^\gamma|_{x_d=0} \rangle_{0}}{\gamma} \right) \, .
\end{multline*}
Here instead of \eqref{ii30} we have used
\begin{equation*}
\re \langle (R +C \, \cB(\eps U)^* \, \cB(\eps U))_D \tv'|_{x_d=0},\tv'|_{x_d=0} \rangle \geq c \,
\langle \tv'|_{x_d=0} \rangle_{0}^2
-C \, \dfrac{\langle \partial_{x'}\, \Lambda_D^{-1}\dot U^\gamma|_{x_d=0} \rangle^2_{0}}{\gamma} \, ,
\end{equation*}
to recover the estimate of the trace of $\tv'$. The $L^2$ estimates of the forcing terms in the interior and
on the boundary are exactly the same as in steps {\bf 4} and {\bf 5} of the previous proof.
Since these estimates are actually simpler or similar to those we dealt with in Proposition \ref{hard}, we
feel free to skip the details that should become more or less familiar to the reader at this stage.
\end{proof}

\textbf{V) Conclusion.} We use the previous propositions to complete the proof of Proposition \ref{i5z}.

It remains to estimate $|\dot U^\gamma|_{0,1}$ and $|\dot U^\gamma|_{\infty,0}$.
Summing the estimates \eqref{ia4}, \eqref{ia9}, and \eqref{ia11} over $i\in\{1,\dots,N_2\}$ and absorbing error terms
from the right by taking $\gamma$ large, we derive
\begin{align}\label{i131}
|\dot U^\gamma|_{0,0} +\dfrac{\langle \dot U^\gamma|_{x_d=0} \rangle_0}{\sqrt{\gamma}}
+\dfrac{|\dot U_1^\gamma +\dot U_2^\gamma|_{\infty,0}}{\sqrt{\gamma}}
\leq C(K) \left( \dfrac{|\Lambda_D f^\gamma|_{0,0} +|\nabla_{x'}f^\gamma|_{0,0}}{\gamma^2}
+\dfrac{\langle\Lambda_D g^\gamma \rangle_0 +\langle \nabla_{x'}g^\gamma \rangle_0}{\gamma^{3/2}} \right) \, ,
\end{align}
where we have "forgotten" on the left of the inequality the additional control of $\nabla_{x'} \, \Lambda_D^{-1}
\dot U^\gamma$ (this term has played its role, meaning that it was used to absorb some bad terms appearing
on the right). This gives exactly \eqref{aprioriL2} with the additional control of $\dot U_1^\gamma +\dot U_2^\gamma$
in $L^\infty (L^2)$. This additional property will be used in the proof of Corollary \ref{estimH1} below.
\end{proof}

\begin{proof}[Proof of Corollary \ref{estimH1}]
We first estimate the first order tangential derivatives. We can apply the a priori estimate \eqref{aprioriL2} to the
problem satisfied by $\partial_{(x',\theta_0)} \dot U^\gamma$,  which is obtained by differentiating \eqref{i5}. This
yields
\begin{align}\label{i132}
|\dot U^\gamma|_{0,1} +\dfrac{\langle \dot U^\gamma|_{x_d=0} \rangle_1}{\sqrt{\gamma}} \leq C(K)
\left( \dfrac{|\Lambda_D f^\gamma|_{0,1} +|\nabla_{x'}f^\gamma|_{0,1}}{\gamma^2}
+\dfrac{\langle \Lambda_D g^\gamma \rangle_1 +\langle \nabla_{x'}g^\gamma \rangle_1}{\gamma^{3/2}}\right)
\end{align}
which is the same as \eqref{i6}, except for the absence of $|\dot U^\gamma|_{\infty,0}$ on the left. Here
we were able to treat commutators as forcing terms because, for example,
\begin{equation*}
[\cD(\eps U),\partial_{(x',\theta_0)}] \, \dot U^\gamma =-({\rm d}\cD(\eps U) \cdot \eps \partial_{(x',\theta_0)}U) \,
\dot U^\gamma \, ,
\end{equation*}
and the factor of $\eps$ coming out from the commutation allows us to estimate
\begin{equation*}
|\Lambda_D \, [\cD(\eps U),\partial_{(x',\theta_0)}] \, \dot U^\gamma|_{0,0} \leq C\, |\dot U^\gamma|_{0,1}.
\end{equation*}

It thus only remains to estimate the norm $|\dot U^\gamma|_{\infty,0}$. For $\delta_2>0$ to be chosen, we
take $0<\delta_1<\delta_2$ and consider a symbol of order zero in the extended calculus, $\chi^e
(\xi',\frac{k\beta}{\eps},\gamma)$, such that
\begin{align*}
\begin{split}
&0\leq \chi^e \leq 1 \, ,\\
&\chi^e \left( \xi',\dfrac{k\beta}{\eps},\gamma \right) =1 \text{ on } \left\{
|\xi',\gamma| \leq \delta_1 \, \dfrac{|k\, \beta|}{\eps} \right\} \, ,\\
&\mathrm{supp } \, \chi^e \subset \left\{ |\xi',\gamma| \leq \delta_2 \, \dfrac{|k\, \beta|}{\eps} \right\} \, .
\end{split}
\end{align*}
We then write $\dot U^\gamma =\chi^e_D\dot U^\gamma +(1-\chi^e_D)\dot U^\gamma$ and begin by estimating
$|(1-\chi^e_D)\dot U^\gamma|_{0,\infty}$ by using the Sobolev-type estimate
\begin{align}\label{ii36}
|(1-\chi^e_D)\dot U^\gamma|_{\infty,0} \leq C \, |(1-\chi^e_D)\, \partial_d \dot U^\gamma|_{0,0} +C\,
|(1-\chi^e_D) \dot U^\gamma|_{0,0} \leq C \, |(1-\chi^e_D)\, \partial_d \dot U^\gamma|_{0,0} +C\, |\dot U^\gamma|_{0,0}.
\end{align}
Using the equation \eqref{i5} and the fact that
\begin{equation*}
|X,\gamma| \, \left( 1-\chi^e \left( \xi',\dfrac{k\, \beta}{\eps},\gamma \right) \right) \leq C\, |\xi',\gamma| \, ,
\end{equation*}
we obtain
\begin{multline*}
|(1-\chi^e_D) \, \partial_d \dot U^\gamma|_{0,0} \leq |\bA_D \, (1-\chi^e_D) \dot U^\gamma|_{0,0}
+|(1-\chi^e_D) \, \cD \dot U^\gamma|_{0,0} +|(1-\chi^e_D) f^\gamma|_{0,0} \\
\le C\, \left( |\dot U^\gamma|_{0,1} +|f^\gamma|_{0,0} \right)
\leq C\, \left( |\dot U^\gamma|_{0,1} +\dfrac{|\Lambda_D f^\gamma|_{0,1}}{\gamma^2}\right),
\end{multline*}
where the last inequality follows from $|f^\gamma|_{0,0}\leq C|f^\gamma|_{0,1}/\gamma$.
With \eqref{ii36} this gives
\begin{align}\label{ii38}
|(1-\chi^e_D)\dot U^\gamma|_{\infty,0} \leq C \left( |\dot U^\gamma|_{0,1}
+\dfrac{|\Lambda_D (f^\gamma)|_{0,1}}{\gamma^2} \right).
\end{align}

To estimate $|\chi^e_D \dot U^\gamma|_{\infty,0}$ we observe that since $\beta \in\Upsilon$, we have for
$\delta_2>0$ chosen small enough
\begin{equation*}
\chi^e \left( \xi',\dfrac{k\, \beta}{\eps},\gamma \right) =\chi^e \left( \xi',\dfrac{k\, \beta}{\eps},\gamma \right) \,
\sum^{N_1}_{i=1} \chi_i (X,\gamma) \, ,
\end{equation*}
for the $\chi_i$ chosen in step \textbf{I)} of the proof of Proposition \ref{i5z}. Thus,
\begin{equation*}
|\chi^e_D \dot U^\gamma|_{\infty,0} \leq |\chi^e_D \, (\dot U_1^\gamma +\dot U_2^\gamma)|_{\infty,0}
\le |\dot U_1^\gamma +\dot U_2^\gamma|_{\infty,0} \, ,
\end{equation*}
with $\dot U_1^\gamma$ and $\dot U_2^\gamma$ defined in Propositions \ref{ia3} and \ref{hard}. We can
then apply the a priori estimate \eqref{i131} and obtain
\begin{equation*}
|\chi^e_D \dot U^\gamma|_{\infty,0} \leq C \left( \dfrac{|\Lambda_D f^\gamma|_{0,0}
+|\nabla_{x'}f^\gamma|_{0,0}}{\gamma^{3/2}}
+\dfrac{\langle\Lambda_D g^\gamma \rangle_0 +\langle \nabla_{x'}g^\gamma \rangle_0}{\gamma} \right).
\end{equation*}
With \eqref{ii38} and \eqref{i132}, this completes the proof of Corollary \ref{estimH1}.
\end{proof}

Let us quickly observe that the genuine G{\aa}rding's inequality was used only once, in the proof of Proposition
\ref{i5z}, namely in \eqref{ii30}. In all other cases, we only used Plancherel's Theorem for Fourier multipliers.
This explains the slight difference between \eqref{ia11} and \eqref{ia4}, \eqref{ia9} for the powers of $\gamma$.
\bigskip


Next we ``localize the estimate" to $\Omega_T$. Since\footnote{Here ``$\sim$" denotes equivalence of norms with
constants independent of $\eps$ and $\gamma$.}
\begin{equation*}
|\Lambda_D f^\gamma|_{0,1} \sim
\left| \left( \gamma,\partial_{x'}+\dfrac{\beta \, \partial_{\theta_0}}{\eps} \right) f^\gamma \right|_{0,1} \sim
\left| \left( \gamma,\partial_{x'}+\dfrac{\beta\, \partial_{\theta_0}}{\eps} \right) f \right|_{0,1,\gamma} \, ,
\end{equation*}
we can rewrite the a priori estimate \eqref{i6} for solutions to the linearized system \eqref{i3} as
\begin{multline}\label{i8}
|\dot U|_{\infty,0,\gamma} +|\dot U|_{0,1,\gamma} +\dfrac{\langle \dot U|_{x_d=0} \rangle_{1,\gamma}}{\sqrt{\gamma}} \\
\le C(K) \left(
\dfrac{|(\gamma,\partial_{x'}+\frac{\beta\partial_{\theta_0}}{\eps})f|_{0,1,\gamma}+|\nabla_{x'}f|_{0,1,\gamma}}{\gamma^2}
+\dfrac{\langle (\gamma,\partial_{x'}+\frac{\beta\partial_{\theta_0}}{\eps})g\rangle_{1,\gamma}
+\langle \nabla_{x'}g\rangle_{1,\gamma}}{\gamma^{3/2}} \right) \, .
\end{multline}

Suppose now that the singular problem \eqref{i3} is posed on $\Omega_T$ instead of $\Omega$. Given
$f \in L^2H^1_T$, one can define a Seeley extension $\tilde f\in L^2H^1$ such that
\begin{equation*}
\left| \left( \gamma,\partial_{x'}+\dfrac{\beta\, \partial_{\theta_0}}{\eps} \right) \tilde f \right|_{0,1}
+|\nabla_{x'}\tilde f|_{0,1} \le C \left( \left| \left( \gamma,\partial_{x'}+\dfrac{\beta\, \partial_{\theta_0}}{\eps}
\right) f \right|_{0,1,T} +|\nabla_{x'}f|_{0,1,T} \right) \, ,
\end{equation*}
where $C$ is independent of $\gamma$, $\eps$, and $T$. It is readily checked that the same extension satisfies
\begin{align}\label{i10}
\left| \left( \gamma,\partial_{x'}+\dfrac{\beta\, \partial_{\theta_0}}{\eps} \right) \tilde f \right|_{0,1,\gamma}
+|\nabla_{x'}\tilde f|_{0,1,\gamma} \leq C\left( \left| \left( \gamma,\partial_{x'}+\dfrac{\beta\, \partial_{\theta_0}}{\eps}
\right) f \right|_{0,1,\gamma,T}+|\nabla_{x'}f|_{0,1,\gamma,T} \right),
\end{align}
where again $C$ is independent of $\gamma$, $\eps$, and $T$. We claim that changing $f$, $g$, and $U$ in
$\{ t>T \}$ does not affect the solution of \eqref{i3} in $\{ t<T\}$. (This causality principle is discussed further below
together with the existence of solutions to the linearized system \eqref{i3}.) Hence the estimates \eqref{i8} and
\eqref{i10} imply the following estimate for the singular problem on $\Omega_T$:
\begin{multline*}
|\dot U|_{\infty,0,\gamma,T} +|\dot U|_{0,1,\gamma,T}
+\dfrac{\langle \dot U|_{x_d=0} \rangle_{1,\gamma,T}}{\sqrt{\gamma}} \\
\le C(K) \left( \dfrac{|(\gamma,\partial_{x'}+\frac{\beta\partial_{\theta_0}}{\eps}) f|_{0,1,\gamma,T}
+|\nabla_{x'} f|_{0,1,\gamma,T}}{\gamma^2}
+\dfrac{\langle (\gamma,\partial_{x'}+\frac{\beta\partial_{\theta_0}}{\eps}) g \rangle_{1,\gamma,T}
+\langle \nabla_{x'}g \rangle_{1,\gamma,T}}{\gamma^{3/2}} \right) \, .
\end{multline*}

Let us now consider the linearized singular problem \eqref{i3} on $\Omega_T$ with data of the form $\eps f$, $\eps g$
instead of $f$ and $g$. We note that
\begin{equation*}
\left| \left( \gamma,\partial_{x'}+\dfrac{\beta \, \partial_{\theta_0}}{\eps} \right) \eps f \right|_{0,1,\gamma,T}
\leq C\, |f|_{0,2,\gamma,T} \text{ and }
\langle \left( \gamma,\partial_{x'}+\dfrac{\beta \, \partial_{\theta_0}}{\eps} \right) \eps g \rangle_{1,\gamma,T}
\leq C\, \langle g \rangle_{2,\gamma,T}.
\end{equation*}
Let us write the linearized operators on the left sides of \eqref{i3}(a) and (b) as $\bL'(\eps U)\dot U$ and $\bB'(\eps U)
\dot U$ respectively, and define
\begin{equation*}
\cL'_\eps(U) \dot U :=\dfrac{1}{\eps} \, \bL'(\eps U)\dot U \, ,\quad
\cB'_\eps(U) \dot U :=\dfrac{1}{\eps} \, \bB'(\eps U) \dot U.
\end{equation*}
We have proved:

\begin{prop}\label{i14}
Fix $K>0$ and suppose $|\eps \partial_d U|_{C^{0,M_0-1}_T} +|U|_{C^{0,M_0}_T}\leq K$ for $\eps\in (0,1]$. There
exist positive constants $\eps_0(K)$, $\gamma_0(K)$ such that solutions of the singular problem
\begin{align}\label{i15}
\begin{split}
&\cL'_\eps(U)\dot U=f \text{ on } \Omega_T \, ,\\
&\cB'_\eps(U)\dot U=g \text{ on } b\Omega_T \, ,\\
&\dot U=0\text{ in }t<0,
\end{split}
\end{align}
satisfy
\begin{align}\label{i16}
|\dot U|_{\infty,0,\gamma,T} +|\dot U|_{0,1,\gamma,T}
+\dfrac{\langle \dot U|_{x_d=0} \rangle_{1,\gamma,T}}{\sqrt{\gamma}}
\leq C(K) \, \left( \dfrac{|f|_{0,2,\gamma,T}}{\gamma^2} +\dfrac{\langle g \rangle_{2,\gamma,T}}{\gamma^{3/2}} \right)
\end{align}
for $0<\eps \leq \eps_0(K)$, $\gamma \geq \gamma_0(K)$, and the constant $C(K)$ only depends on $K$.

The same estimate holds if $\cB(\eps U)$ in \eqref{i3} is replaced by $\cB(\eps U,\eps\cU)$ given in \eqref{ca2},
and $\cD(\eps U)$ is replaced by $\cD(\eps U,\eps\cU)$ given in \eqref{caa}, as long as there holds $|\eps \partial_d
(U,\cU)|_{C^{0,M_0-1}_T} +|U,\cU|_{C^{0,M_0}_T}\leq K$ for $\eps\in (0,1]$.
\end{prop}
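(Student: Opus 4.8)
The proof of this proposition is the bookkeeping capstone of the section: essentially all the analytic work has already been done in Proposition \ref{i5z} and Corollary \ref{estimH1}, and what remains is to repackage the estimate \eqref{i6} into weighted, time-localized, and rescaled form. The plan is as follows. First I would start from the a priori estimate \eqref{i6} for the linearized singular problem \eqref{i3} posed on the full domain $\Omega$, and rewrite it in exponentially weighted norms. The key point is the norm equivalence $|\Lambda_D f^\gamma|_{0,1} \sim |(\gamma,\partial_{x'}+\frac{\beta\,\partial_{\theta_0}}{\eps})f|_{0,1,\gamma}$, with constants independent of $\eps$ and $\gamma$: indeed $\Lambda_D$ is a Fourier multiplier in $(x',\theta_0)$ whose symbol is $(\gamma^2+|\xi'+k\beta/\eps|^2)^{1/2}$, and the weight $e^{\gamma t}$ merely turns $\partial_t$ into $\partial_t+\gamma$. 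Applying this equivalence on both sides of \eqref{i6} produces the estimate \eqref{i8} on $\Omega$.

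Next I would localize to $\Omega_T$. Given $f\in L^2H^1_T$ and $g\in H^1_T$, take Seeley extensions $\tilde f\in L^2H^1$ and $\tilde g\in H^1$ whose relevant norms are controlled by the corresponding $T$-truncated norms of $f$ and $g$, uniformly in $\gamma$, $\eps$, and $T$, as in \eqref{i10}. Using the causality principle for the singular system --- changing $f$, $g$ and $U$ in $\{t>T\}$ does not alter the solution $\dot U$ of \eqref{i3} in $\{t<T\}$, a fact recorded here and justified in the discussion of solvability of the linearized problem --- one may apply \eqref{i8} to the extended data and then restrict to $\{t<T\}$, obtaining the $\Omega_T$-version of \eqref{i8} in which every norm is replaced by its $T$-truncated analogue.

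The final step is the rescaling that absorbs the singular factor $1/\eps$. Replacing $f,g$ by $\eps f,\eps g$ in \eqref{i3} and using the bounds $|(\gamma,\partial_{x'}+\frac{\beta\,\partial_{\theta_0}}{\eps})\eps f|_{0,1,\gamma,T}\le C|f|_{0,2,\gamma,T}$ and $\langle(\gamma,\partial_{x'}+\frac{\beta\,\partial_{\theta_0}}{\eps})\eps g\rangle_{1,\gamma,T}\le C\langle g\rangle_{2,\gamma,T}$ --- here the prefactor $\eps$ cancels the $1/\eps$ in the singular derivative, while the extra tangential derivative costs exactly one more Sobolev index, which is precisely why the scaling $\eps G$ in \eqref{15} is the natural one --- one recognizes that the left-hand operators of \eqref{i3}, after division by $\eps$, are exactly $\cL'_\eps(U)=\frac1\eps\bL'(\eps U)$ and $\cB'_\eps(U)=\frac1\eps\bB'(\eps U)$. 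This yields \eqref{i16} with the same constants $\gamma_0(K)$ and $\eps_0(K)$ furnished by Corollary \ref{estimH1}. For the variant statement, Proposition \ref{i5z} (and hence Corollary \ref{estimH1}) is already asserted with $\cB(\eps U,\eps\cU)$ and $\cD(\eps U,\eps\cU)$ in place of $\cB(\eps U)$ and $\cD(\eps U)$ under the bound $|\eps\partial_d(U,\cU)|_{C^{0,M_0-1}_T}+|U,\cU|_{C^{0,M_0}_T}\le K$, so the three steps above apply verbatim.

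I do not expect a genuine obstacle in this proposition itself: the difficulty of the whole circle of ideas is concentrated upstream in Proposition \ref{hard} (the bad $(1,-1)$ composition error and the terms $\nabla_{x'}\Lambda_D^{-1}\dot U^\gamma$). The only points here requiring mild care are the uniformity in $(\eps,\gamma,T)$ of the Seeley extension and the causality statement for the singular system; both are standard once the finite-propagation-speed and well-posedness theory for \eqref{i3} is in hand, which is why it is appropriate to defer the latter to the solvability discussion rather than reprove it in situ.
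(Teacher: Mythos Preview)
Your proposal is correct and follows essentially the same approach as the paper: rewrite \eqref{i6} in weighted form via the norm equivalence for $\Lambda_D$ to get \eqref{i8}, localize to $\Omega_T$ by Seeley extension plus causality, then rescale the data by $\eps$ and recognize the resulting operators as $\cL'_\eps(U)$ and $\cB'_\eps(U)$. The paper presents exactly these three steps in the paragraphs preceding the proposition and then states the result as proved.
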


\subsection{Well-posedness of the linearized singular equations}

\emph{\quad} In this short section, we explain why the analysis in \cite{C} gives existence and uniqueness of a solution
to the linearized singular problem \eqref{i15} for which the estimate \eqref{i16} holds. First of all, we can define a dual
problem for \eqref{i3} that reads
\begin{align}\label{dual}
\begin{split}
&\partial_d \dot U +\mathbb{A}^* \left( \partial_{x'}+\dfrac{\beta \, \partial_{\theta_0}}{\eps} \right)
\dot U +\tilde{\cD} (\eps U) \, \dot U =f(x,\theta_0) \quad \text{ on } \Omega \, ,\\
&\cM (\eps U) \, \dot U|_{x_d=0} =g(x',\theta_0) \, ,
\end{split}
\end{align}
where $\mathbb{A}^*$ is obtained from $\mathbb{A}$ by first multiplying the system by the constant matrix $B_d$,
then by integrating by parts on $\Omega$ and eventually by multiplying by $(B_d^T)^{-1}$. The zero order term is
also changed accordingly. Following the standard procedure described for instance in \cite[Chapter 4.4]{BS}, the
matrix $\cM$ giving the adjoint boundary conditions is chosen such that for all $v$ sufficiently close to the origin,
there holds
\begin{equation*}
B_D =\cB(v)_1^T \, \cB(v) +\cM(v)^T \, \cM_1(v) \, ,
\end{equation*}
where $\cB_1(v)$ and $\cM_1(v)$ are additional matrices depending smoothly on $v$.

The expression of $\mathbb{A}^*$ shows that this singular operator coincides with the operator obtained by applying
the substitution $\partial_{x'} \rightarrow \partial_{x'}+\beta \partial_{\theta_0}/\eps$ to the dual operator
\begin{equation*}
\dt +\sum_{j=1}^d B_j^T \, \partial_j =-L_0(\partial)^* \, .
\end{equation*}
It is known from the analysis in \cite[Chapter 8.3]{BS} that the latter constant multiplicity hyperbolic operator with
boundary conditions given by $\cM(v)$ gives rise to a boundary value problem in the "backward" WR class (one
just has to replace $\gamma$ by $-\gamma$ for this dual problem). When we apply the singular transformation
$\partial_{x'} \rightarrow \partial_{x'}+\beta \partial_{\theta_0}/\eps$ to the boundary value problem defined by
$(L_0(\partial)^*,\cM(\eps U))$, we can reproduce the analysis of the former section and show that the same
type of a priori estimate as in Proposition \ref{i5z} holds for \eqref{dual}.

For all fixed $\eps>0$ small enough, we have thus proved that both the forward problem \eqref{i3} and its dual
problem \eqref{dual} satisfy an a priori estimate with a loss of one tangential derivative. The estimates depend
very badly on $\eps$ because  singular derivative $\partial_{x'}+\beta \partial_{\theta_0}/\eps$ is estimated by
$1/\eps$ times the tangential $H^1$ norm with respect to $(x',\theta_0)$. Nevertheless, we can at this stage
reproduce the arguments of \cite{C} to show the existence and uniqueness of $L^2$ solutions to \eqref{i3}
when the source terms $f$ and $g$ satisfy $f,\partial_{\theta_0}f,\partial_{x'}f \in L^2(\Omega_T)$, $g \in
H^1(b\Omega_T)$. The analysis is actually much simpler than in \cite{C} because most of the technical
difficulties in \cite{C} arise from commutations with the hyperbolic operator. Here the hyperbolic operator
has constant coefficients so commutation with any scalar Fourier multiplier is exact. The analysis in \cite{C}
also shows that weak solutions are limit of strong solutions when the hyperbolic operator has constant
coefficients\footnote{Weak solutions are only "semi-strong" solutions when the hyperbolic operator has
variable coefficients.} so we can show that weak solutions satisfy the energy estimate \eqref{aprioriL2} with
constants that are \emph{uniform} with respect to the small parameter $\eps$. Such global in time estimates
imply the causality principle that "future does not affect the past" and can be localized to $\Omega_T$ by the
extension procedure previously described.

\subsection{Tame estimates}\label{tameex}

\emph{\quad} In this section we prove higher derivative estimates for the linearized singular problem \eqref{i3},
first in the ``pre-tame" form of Proposition \ref{i17}, and then in the final "tame" form of Proposition \ref{i33a},
which is suitable for Nash-Moser iteration. Propositions \ref{i25} and \ref{i30a} give pre-tame and tame estimates
for second derivatives.

\begin{nota}\label{i17a}
(a)  Let $L^\infty W^{1,\infty}\equiv L^\infty(\overline{\bR}_+,W^{1,\infty}(b\Omega))$ with norm $|U|_{L^\infty W^{1,\infty}}
:= |U|^*$. We also write $|U|_{L^\infty(\Omega)}=|U|_*$, $\langle V\rangle_{L^\infty(b\Omega)}=\langle V\rangle_*$,
$\langle V\rangle_{W^{1,\infty}(b\Omega)}=\langle V\rangle^*$, $|U|_{L^\infty(\Omega_T)}=|U|_{*,T}$, etc..

(b) For $k\in\mathbb{N}$, let $\partial^k$ denote the collection of tangential operators $\partial^\alpha_{(x',\theta_0)}$
with $|\alpha|=k$ ($\alpha$ is a multi-index). Sometimes $\partial^k$ is used to denote a particular member of this
collection. Set $\partial^0\phi=\phi$.

(c) For $k\in\{1,2,3,\dots\}$, denote by $\partial^{\langle k\rangle}\phi$ the set of products of the form $(\partial^{\alpha_1}
\phi_{i_1}) \cdots (\partial^{\alpha_r}\phi_{i_r})$ where $1\leq r\leq k$, $\alpha_1+\cdots\alpha_r=k$, $\alpha_i\geq 1$.
Set $\partial ^{\langle 0\rangle}\phi=1$.

(d) For $r\geq 0$, let $[r]$ denote the smallest integer greater than $r$.
\end{nota}

\noindent Our first goal is to prove the following ``pre-tame" estimate for solutions to \eqref{i15}.

\begin{prop}\label{i17}
Fix $K>0$ and suppose $|\eps\partial_d U|_{C^{0,M_0-1}}+|U|_{C^{0,M_0}}\leq K$ for $\eps\in (0,1]$. For $s\geq 0$
in any fixed finite interval there exist positive constants $\eps_0(K)$, $\gamma_0(K)$ such that the solution to the
linearized singular problem \eqref{i15} satisfies
\begin{multline}\label{i18}
|\dot U|_{\infty,s,\gamma,T} +|\dot U|_{0,s+1,\gamma,T}
+\dfrac{\langle \dot U|_{x_d=0} \rangle_{s+1,\gamma,T}}{\sqrt{\gamma}} \\
\le C(K) \left( \dfrac{|f|_{0,s+2,\gamma,T}}{\gamma^2} +\dfrac{\langle g\rangle_{s+2,\gamma,T}}{\gamma^{3/2}}
+\dfrac{|U|_{0,s+2,\gamma,T} \, |\dot U|_{*,T}}{\gamma^2}
+\dfrac{\langle U|_{x_d=0} \rangle_{s+2,\gamma,T} \, \langle \dot U|_{x_d=0}\rangle_{*,T}}{\gamma^{3/2}} \right) \, ,
\end{multline}
for $0<\eps\leq \eps_0(K)$ and $\gamma \geq \gamma_0(K)$.
\end{prop}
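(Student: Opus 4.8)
The plan is to derive the pre-tame estimate \eqref{i18} by commuting tangential derivatives $\partial^\alpha_{(x',\theta_0)}$, $|\alpha|=s$, through the linearized singular system \eqref{i15} and applying the basic $L^2$ estimate \eqref{i16} (or rather its $H^1_{tan}$ version, Corollary \ref{estimH1}, localized to $\Omega_T$) to the differentiated problem. Writing $\dot U_\alpha := \partial^\alpha \dot U$, one sees that $\dot U_\alpha$ solves a problem of the same form $\cL'_\eps(U)\dot U_\alpha = f_\alpha$, $\cB'_\eps(U)\dot U_\alpha = g_\alpha$ on $\Omega_T$, with $\dot U_\alpha = 0$ in $t<0$, where the new forcing terms are $f_\alpha = \partial^\alpha f - [\partial^\alpha,\cD(\eps U)]\dot U$ and $g_\alpha = \partial^\alpha g - [\partial^\alpha,\cB(\eps U)]\dot U|_{x_d=0}$, plus analogous commutator contributions from the $[\partial_v D(\eps U)\,\cdot\,]\eps U$ and $[\partial_v\psi(\eps U)\,\cdot\,]\eps U$ pieces hidden inside $\cD,\cB$. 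The key point, exactly as in the proof of Corollary \ref{estimH1}, is that each commutation with $\partial_{(x',\theta_0)}$ produces an explicit factor of $\eps$ (because $\cD(\eps U)$, $\cB(\eps U)$ depend on $\eps U$, not on $U$), so that $\Lambda_D[\partial^\alpha,\cD(\eps U)]\dot U$ remains controlled: the singular multiplier $\Lambda_D$ costs a $1/\eps$ which is compensated by the $\eps$ from the commutator, leaving only ordinary tangential derivatives.

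The main technical step is the Moser-type estimate for these commutator terms. After expanding $[\partial^\alpha, \cD(\eps U)]\dot U$ by the Leibniz rule, each term is a product of derivatives of $U$ (of total order $\le s$, at least one derivative) times a derivative of $\dot U$ of order $\le s$, with the matrix-valued coefficient functions evaluated at $\eps U$ — and these coefficients are smooth functions whose derivatives in $U$ bring out further powers of $\eps$ and further factors of $\partial U$. I would invoke the standard tame product and Moser composition inequalities for the algebra $H^{s+1}(x',\theta_0)$ in the $x_d$-uniform norms (the spaces $E^s$ are Banach algebras for $s>(d+1)/2$), interpolating so that the high-order factor is $|\dot U|_{0,s+1}$ or $\langle\dot U|_{x_d=0}\rangle_{s+1}$ times a low-order $C^{0,M_0}$-type norm of $U$ (bounded by $K$), plus the ``dual'' term in which $U$ carries the high norm $|U|_{0,s+2,\gamma,T}$ and $\dot U$ carries the low norm $|\dot U|_{*,T}$. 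One has to be careful that after applying $\Lambda_D$ the count of singular derivatives is exactly absorbed: the worst term is $\Lambda_D\bigl((\partial^{s}\eps U)\,(\text{coeff})\,\dot U\bigr)$, where the $\eps$ cancels the $\eps^{-1}$ in $\Lambda_D$ and one is left estimating $|\partial^s\nabla_{x'}U \cdot \dot U|_{0,0}$-type quantities, giving $|U|_{0,s+2,\gamma,T}|\dot U|_{*,T}$ after interpolation; this is precisely the structure of the last two terms on the right of \eqref{i18}. The boundary commutators are handled the same way using the trace algebra $H^{s+1}(b\Omega)$ and its Moser estimates, producing the $\langle U|_{x_d=0}\rangle_{s+2}\langle\dot U|_{x_d=0}\rangle_*$ term.

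I expect the main obstacle to be bookkeeping the powers of $\gamma$ and $\eps$ correctly through the interpolation, so that the $1/\gamma^2$ and $1/\gamma^{3/2}$ gains of \eqref{i16} survive and the extra lower-order terms produced by interpolation (mixed norms like $|U|_{0,k}|\dot U|_{0,s+2-k}$ for $1\le k\le s+1$) can be absorbed. These genuinely mixed terms must be disposed of either by an induction on $s$ (the $s'<s$ cases being already established, with $|\dot U|_{0,s'+1}$ controlled, hence usable on the right) or by a convexity/Young argument splitting them into an $|\dot U|_{0,s+1}$ piece absorbed on the left for $\gamma$ large and a piece of the stated form. One also needs the Seeley-extension/causality machinery already invoked before Proposition \ref{i14} to pass from $\Omega$ to $\Omega_T$, and one must check that the regularity of $U$ assumed (control of $|U|_{C^{0,M_0}}$, $|\eps\partial_d U|_{C^{0,M_0-1}}$ with $M_0=3d+5$) is enough to justify all the singular-calculus manipulations — this is exactly the role of the size condition on $M_0$ and is imported from Appendix \ref{calc} without further comment. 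Beyond that the argument is routine once the $\eps$-gain from commutators is exploited as in Corollary \ref{estimH1}.
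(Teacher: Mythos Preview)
Your approach is essentially the same as the paper's: commute $\partial^s$ through \eqref{i15}, apply Proposition \ref{i14} (estimate \eqref{i16}) to the resulting problem for $\partial^s\dot U$, and control the commutator forcing terms via Moser estimates. The paper's proof is shorter than your outline because two of the complications you anticipate do not arise. First, there is no need to track $\Lambda_D$ or worry about $\eps^{-1}$ cancellation at that level: Proposition \ref{i14} already has the factor $\eps$ absorbed (recall $\cL'_\eps=\eps^{-1}\bL'$), so the commutator forcing is $\eps^{-1}[\cD(\eps U),\partial^s]\dot U$, and to apply \eqref{i16} one only needs its $|\cdot|_{0,2,\gamma,T}$ norm. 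Differentiating twice more produces terms $\tilde\cD(\eps U)\,\partial^{\langle j\rangle}(\eps U)\,\partial^k\dot U$ with $j+k=s+2$, $j\ge 1$; pulling one $\eps$ out of $\partial^{\langle j\rangle}(\eps U)$ cancels the prefactor. Second, no induction on $s$ is needed: a single Moser (Gagliardo--Nirenberg) inequality applied to the product $\partial^{\langle j-1\rangle}(\eps U)\,\partial U\,\partial^k\dot U$ gives directly the two-term bound $C(K)\bigl(|\dot U|_{*,T}|U|_{0,s+2,\gamma,T}+|\dot U|_{0,s+1,\gamma,T}\bigr)$, and the second term is absorbed on the left for $\gamma$ large. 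The boundary commutator is identical with $\cB$ in place of $\cD$.
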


\begin{proof}
The problem satisfied by $\partial^s \dot U$ is
\begin{align*}
\begin{split}
&\cL'_\eps(U)\partial^s \dot U=\partial^sf+ \dfrac{1}{\eps} \, [\cD(\eps U),\partial^s]\dot U \, ,\\
&\cB'_\eps(U)\partial^s \dot U=\partial^s g +\dfrac{1}{\eps} \, [\cB(\eps U),\partial^s]\dot U.
\end{split}
\end{align*}
In applying the estimate \eqref{i18} to this problem we must, for example, compute $\partial^2
([\cD(\eps U),\partial^s]\dot U)$, which is a sum of terms of the form\footnote{More precisely, each component
is a sum of such terms.}
\begin{equation*}
\tilde{\cD} (\eps U) \, \partial^{\langle j\rangle}(\eps U) \, \partial^k \dot U,\text{ where }j+k=s+2, \;j\geq 1,
\end{equation*}
and $\tilde{\cD}$ is some smooth function of its argument. Since $j\geq 1$, we can rewrite this as
\begin{equation*}
\tilde{\cD}(\eps U) \, \partial^{\langle {j-1}\rangle}(\eps U) \, \partial(\eps U) \, \partial^k \dot U.
\end{equation*}
Using  Moser estimates we obtain
\begin{equation*}
\left| \dfrac{1}{\eps} \, \tilde{\cD}(\eps U) \, \partial^{\langle {j-1}\rangle}(\eps U) \, \partial(\eps U) \, \partial^k \dot U 
\right|_{0,\gamma,T} \leq C(K) \, |\dot U|_{*,T} \, |U|_{0,s+2,\gamma,T} +C(K) \, |\dot U|_{0,s+1,\gamma,T}.
\end{equation*}
The contribution from the final term on the right can be absorbed by taking $\gamma$ large enough; thus
this explains the third term on the right in \eqref{i18}. The final term on the right in \eqref{i18} arises by the
same argument applied to the boundary commutator.
\end{proof}

Next we prove estimates for the second derivatives
\begin{align*}
&\cL''_\eps(U)(\dot U^a,\dot U^b) =\partial_v \cD(\eps U) \, (\dot U^a,\dot U^b) \, ,\\
&\cB''_\eps(U)(\dot U^a,\dot U^b) =\partial_v \cB(\eps U) \, (\dot U^a,\dot U^b) \, ,
\end{align*}
where we use the notation
\begin{equation*}
\partial_v \cD(\eps U) \, (\dot U^a,\dot U^b) :=\sum^N_{i=1} \left( \partial_{v_i}\cD(\eps U) \, \dot U^a_i \right) 
\, \dot U^b.
\end{equation*}

\begin{prop}\label{i25}
 We have
\begin{align*}
&(a)\;|\cL''_\eps(U)(\dot U^a,\dot U^b)|_{\infty,s,\gamma,T}\leq \\
&\qquad C(|U|_{*,T})\left(|\dot U^a|_{\infty,s,\gamma,T}|\dot U^b|_{*,T}+|\dot U^b|_{\infty,s,\gamma,T} 
|\dot U^a|_{*,T}+\eps |U|_{\infty,s,\gamma,T}|\dot U^a|_{*,T}|\dot U^b|_{*,T}\right) \, ,\\
&(b)\;|\cL''_\eps(U)(\dot U^a,\dot U^b)|_{0,s+1,\gamma,T}\leq \\
&\qquad C(|U|_{*,T})\left(|\dot U^a|_{0,s+1,\gamma,T}|\dot U^b|_{*,T}+|\dot U^b|_{0,s+1,\gamma,T} 
|\dot U^a|_{*,T}+\eps |U|_{0,s+1,\gamma,T}|\dot U^a|_{*,T}|\dot U^b|_{*,T}\right) \, ,\\
&(c)\;\langle\cB''_\eps(U)(\dot U^a,\dot U^b)\rangle_{s,\gamma,T}\leq \\
&\qquad C(\langle U\rangle_{*,T})\left(\langle \dot U^a\rangle_{s,\gamma,T}\langle\dot U^b\rangle_{*,T} 
+\langle\dot U^b\rangle_{s,\gamma,T}\langle\dot U^a\rangle_{*,T} 
+\eps \langle U\rangle_{s,\gamma,T}\langle\dot U^a\rangle_{*,T}\langle\dot U^b\rangle_{*,T}\right).
\end{align*}
\end{prop}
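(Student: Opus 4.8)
The plan is to reduce all three estimates to a single trilinear tame (Moser-type) estimate. Writing out $\partial_v\cD(\eps U)$ and $\partial_v\cB(\eps U)$ in terms of $D$, $\psi$ and their first two $v$-derivatives, one sees that, component by component, $\cL''_\eps(U)(\dot U^a,\dot U^b)$ and $\cB''_\eps(U)(\dot U^a,\dot U^b)$ are finite sums of terms of the model form
\[
g(\eps U)\,\dot U^a_i\,\dot U^b\qquad\text{or}\qquad \eps\,g(\eps U)\,U_j\,\dot U^a_i\,\dot U^b,
\]
where $g$ is a fixed $C^\infty$ matrix-valued function (a derivative of $D$ or of $\psi$); the second type carries an extra bounded factor $\eps U_j$, which only helps, so it suffices to treat the first. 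All three estimates then follow by applying, fiberwise in $x_d$, the classical product and composition Moser inequalities in the Sobolev spaces $H^s(b\Omega_T)$ — for the trace estimate (c), and, after a supremum in $x_d$, for (a) — or in $H^{s+1}(b\Omega_T)$ for (b), where the extra H\"older inequality $\|PQ\|_{L^2_{x_d}}\le\|P\|_{L^2_{x_d}}\,\|Q\|_{L^\infty_{x_d}}$ lets one keep the top-order factor in $L^2_{x_d}$ and the remaining factors in $L^\infty$.

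For the model term I would expand $\partial^\alpha\bigl(g(\eps U)\,\dot U^a_i\,\dot U^b\bigr)$, $|\alpha|\le s$ (resp.\ $\le s+1$), by the Leibniz rule and sort the resulting terms according to how the derivatives split among the three factors. When all of them fall on $\dot U^a_i$, the term is $g(\eps U)\,\partial^\alpha\dot U^a_i\,\dot U^b$, estimated by $C(|U|_{L^\infty})\,|\partial^\alpha\dot U^a_i|_{L^2}\,|\dot U^b|_{L^\infty}$, which after the $x_d$-wrapper produces the first term on the right of each inequality; when all of them fall on $\dot U^b$ one gets the second term; the intermediate cases, with $g(\eps U)$ undifferentiated and the derivatives split between $\dot U^a_i$ and $\dot U^b$, are controlled by Gagliardo--Nirenberg interpolation and absorbed into those same two terms. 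The remaining possibility is that at least one derivative hits $g(\eps U)$, and here the chain rule is decisive: $\partial^\beta\bigl(g(\eps U)\bigr)$ for $|\beta|\ge1$ is a sum of terms $\eps^{r}\,g^{(r)}(\eps U)\,\prod_{l=1}^r\partial^{\beta_l}U$ with $r\ge1$ and $\sum_l\beta_l=\beta$, $|\beta_l|\ge1$; factoring out one $\eps$, using $\eps\le1$, $|g^{(r)}(\eps U)|_{L^\infty}\le C(|U|_{L^\infty})$ and Gagliardo--Nirenberg to replace $\prod_l\partial^{\beta_l}U$ by $C(|U|_{L^\infty})\,|\partial^{|\beta|}U|_{L^2}$, and bounding the now-undifferentiated $\dot U^a_i,\dot U^b$ in $L^\infty$, one arrives at the third term, namely $\eps\,C(|U|_{L^\infty})$ times $|U|_{\infty,s}$ (resp.\ $|U|_{0,s+1}$, resp.\ $\langle U\rangle_s$) times $|\dot U^a|_{L^\infty}\,|\dot U^b|_{L^\infty}$, with the correct $x_d$-wrapper. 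Summing over the finitely many Leibniz terms and over components gives (a) and (b); (c) is obtained in the same way, directly on $b\Omega_T$.

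The one point needing a little care — these being otherwise routine estimates of the same flavor as Proposition \ref{i17} — is the interplay between the weight $e^{-\gamma t}$ and the fact that $b\Omega_T$ has infinite measure, so that $g(\eps U)$ itself need not lie in $H^s(b\Omega_T)$ (its value at infinity is the nonzero constant $g(0)$). One should therefore never isolate $\langle g(\eps U)\rangle_{H^s}$, but keep it glued to a decaying factor $\dot U^a_i$ or $\dot U^b$; equivalently, split $g(\eps U)=g(0)+\bigl(g(\eps U)-g(0)\bigr)$ and treat the constant part $g(0)\,\dot U^a_i\,\dot U^b$ by the ordinary bilinear Moser product estimate and the vanishing part $g(\eps U)-g(0)=\eps\,U\!\int_0^1 g'(t\eps U)\,dt$ — which supplies the $\eps$ and lies in the homogeneous space $\dot H^s$ with norm $\lesssim\eps\,C(|U|_{L^\infty})\,|U|_{\dot H^s}$ — by a multilinear tame estimate. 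The weight $e^{-\gamma t}$ is then carried in the usual way: it is attached to the single factor measured in the strongest norm ($\dot U^a$, $\dot U^b$, or the explicit $U$ in the $\eps$-term, using that $e^{-\gamma t}(\dot U^a_i\dot U^b)$ equals both $(e^{-\gamma t}\dot U^a_i)\,\dot U^b$ and $\dot U^a_i\,(e^{-\gamma t}\dot U^b)$), thereby turning each strong norm directly into its $\gamma$-weighted version, and it is bounded by $1$ on the factors measured only in $L^\infty$; the commutators of $e^{-\gamma t}$ with tangential derivatives contribute only lower-order terms dominated by the three displayed quantities. With this organization the remaining computations are entirely standard and may be omitted.
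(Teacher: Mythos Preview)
Your proposal is correct and follows essentially the same approach as the paper: Leibniz expansion of $\partial^t\bigl(\cL''_\eps(U)(\dot U^a,\dot U^b)\bigr)$ into terms of the form $\tilde{\cD}(\eps U)\,\partial^{\langle k\rangle}(\eps U)\,\partial^l\dot U^a\,\partial^m\dot U^b$ with $k+l+m=t$, followed by Moser-type product estimates. The paper's proof is extremely terse (three lines), whereas you spell out the Gagliardo--Nirenberg interpolation for the mixed terms, the handling of the $e^{-\gamma t}$ weight, and the $g(\eps U)=g(0)+\bigl(g(\eps U)-g(0)\bigr)$ splitting needed because $b\Omega_T$ has infinite measure; these are exactly the points hidden under the paper's phrase ``follows directly from Moser estimates.''
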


\begin{proof}
For $t\leq s$ one computes $\partial^t(\cL''_\eps(U)(\dot U^a,\dot U^b))$, which is a sum of terms of the form
\begin{equation*}
\tilde{\cD}(\eps U) \, \partial^{\langle k\rangle}(\eps U) \, \partial^l\dot U^a \, \partial ^m\dot U^b, \text{ where }k+l+m=t.
\end{equation*}
 Thus, the first estimate follows directly from Moser estimates.  The remaining estimates are proved the same way.
\end{proof}

In the iteration scheme of section \ref{nashex} we will use $H^s_T$ spaces on the boundary, while in the interior we 
use the following spaces.

\begin{defn}\label{Espaces}
For $s\in\{0,1,2,\dots\}$ let
\begin{align*}
&E^s_T=CH^s_T\cap L^2H^{s+1}_T, \text{ with the norm }|U(x,\theta_0)|_{E^s_T}:=|U|_{\infty,s,T}+|U|_{0,s+1,T}\\
&E^s_{\gamma,T}=CH^s_{\gamma,T}\cap L^2H^{s+1}_{\gamma,T}, \text{ with the norm } 
|U(x,\theta_0)|_{E^s_{\gamma,T}} := |U|_{\infty,s,\gamma,T} +|U|_{0,s+1,\gamma,T}.
\end{align*}
 \end{defn}

\begin{rem}\label{embed}
\textup{By Sobolev embedding we have
\begin{align*}
&s\geq [(d+1)/2] \Rightarrow E^s_T\subset CH^s_T\subset L^\infty(\Omega_T)\\
&s\geq[(d+1)/2]+1\Rightarrow E^s_T\subset CH^s_T\subset L^\infty(\overline{\bR}_+,W^{1,\infty}({b\Omega_T}))\\
&s\geq [(d+1)/2]+M_0\Rightarrow E^s_T\subset CH^s_T\subset C^{0,M_0}_T.\\
\end{align*}
Note that $E^s_T$ is a Banach algebra for $s\geq [(d+1)/2]$.}
\end{rem}

By Proposition \ref{i25} and Remark \ref{embed} we immediately obtain:

\begin{prop}[Tame estimates for second derivatives]\label{i30a}
Let $b_0=[(d+1)/2]$ and suppose $s\geq 0$ lies in some finite interval. Then
\begin{align*}
&(a)\;|\cL''_\eps(U)(\dot U^a,\dot U^b)|_{E^s_{\gamma,T}}\leq \\
&\qquad C(|U|_{E^{b_0}_{T}})\left(|\dot U^a|_{E^s_{\gamma,T}}|\dot U^b|_{E^{b_0}_{T}}
+|\dot U^b|_{E^s_{\gamma,T}}|\dot U^a|_{E^{b_0}_{T}}+\eps |U|_{E^s_{\gamma,T}} 
|\dot U^a|_{E^{b_0}_{T}} |\dot U^b|_{E^{b_0}_{T}}\right)\\
&(b)\langle\cB''_\eps(U)(\dot U^a,\dot U^b)\rangle_{s,\gamma,T}\leq \\
&\qquad C(\langle U\rangle_{b_0,T})\left(\langle \dot U^a\rangle_{s,\gamma,T}\langle\dot U^b\rangle_{b_0,T}
+\langle\dot U^b\rangle_{s,\gamma,T}\langle\dot U^a\rangle_{b_0,T}
+\eps \langle U\rangle_{s,\gamma,T}\langle\dot U^a\rangle_{b_0,T}\langle\dot U^b\rangle_{b_0,T}\right).
\end{align*}
\end{prop}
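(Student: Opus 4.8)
The plan is to derive Proposition~\ref{i30a} directly from the Moser-type bounds of Proposition~\ref{i25} together with the Sobolev embeddings collected in Remark~\ref{embed}. The only work is bookkeeping: the $E^s_{\gamma,T}$ norm splits as the sum of the $CH^s_{\gamma,T}$ norm $|\cdot|_{\infty,s,\gamma,T}$ and the $L^2H^{s+1}_{\gamma,T}$ norm $|\cdot|_{0,s+1,\gamma,T}$, and parts (a) and (b) of Proposition~\ref{i25} control precisely these two pieces; meanwhile every factor appearing there in an $L^\infty$-type norm ($|\cdot|_{*,T}$ in the interior, $\langle\cdot\rangle_{*,T}$ on the boundary) can be replaced, up to a dimensional constant, by the corresponding $E^{b_0}_T$ (resp.\ $H^{b_0}_T$) norm, since $b_0=[(d+1)/2]$.

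For part (a), I would write
\begin{equation*}
|\cL''_\eps(U)(\dot U^a,\dot U^b)|_{E^s_{\gamma,T}} = |\cL''_\eps(U)(\dot U^a,\dot U^b)|_{\infty,s,\gamma,T} + |\cL''_\eps(U)(\dot U^a,\dot U^b)|_{0,s+1,\gamma,T},
\end{equation*}
bound the first summand by Proposition~\ref{i25}(a) and the second by Proposition~\ref{i25}(b), and add the two inequalities. Grouping the $CH^s$ and $L^2H^{s+1}$ contributions of each argument reconstitutes the norms $|\dot U^a|_{E^s_{\gamma,T}}$, $|\dot U^b|_{E^s_{\gamma,T}}$ and $|U|_{E^s_{\gamma,T}}$ that occur in the statement. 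The remaining $L^\infty$ factors $|\dot U^b|_{*,T}$, $|\dot U^a|_{*,T}$, $|U|_{*,T}$ are then estimated by $C(d)\,|\cdot|_{CH^{b_0}_T}\le C(d)\,|\cdot|_{E^{b_0}_T}$ via the first line of Remark~\ref{embed}; and since $v\mapsto C(v)$ is nondecreasing and $|U|_{*,T}\le C(d)\,|U|_{E^{b_0}_T}$, the prefactor $C(|U|_{*,T})$ is absorbed into a new nondecreasing function $C(|U|_{E^{b_0}_T})$. This gives (a) as stated. Part (b) is even more immediate: Proposition~\ref{i25}(c) already phrases everything in boundary norms $\langle\cdot\rangle_{s,\gamma,T}$ and $\langle\cdot\rangle_{*,T}$, and because $b\Omega$ has dimension $d+1$ and $b_0=[(d+1)/2]>(d+1)/2$, the embedding $H^{b_0}(b\Omega)\hookrightarrow L^\infty(b\Omega)$ gives $\langle\cdot\rangle_{*,T}\le C(d)\,\langle\cdot\rangle_{b_0,T}$; combined with the monotonicity of $C(\cdot)$ this turns Proposition~\ref{i25}(c) into (b).

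There is no genuine analytic obstacle here: all the substance (the Leibniz/Fa\`a di Bruno expansion of $\partial^t$ applied to $\cL''_\eps(U)(\dot U^a,\dot U^b)$ and the attendant Gagliardo--Nirenberg interpolation, as well as the fact that the $\eps$ prefactor tames the top-order term in $U$) is already contained in the proof of Proposition~\ref{i25}. The one point deserving a line of care is checking that, after the regrouping, one reproduces exactly the three terms on the right of Proposition~\ref{i30a} without generating any ``$E^s$ times $E^s$'' cross term; this is automatic because in each term of the expansion used to prove Proposition~\ref{i25} at most one of $\dot U^a$, $\dot U^b$, $U$ carries a high-order derivative, the others entering through $L^\infty$ norms. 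Thus the ``hard part'' is really just verifying that the norm bookkeeping closes, which it does.
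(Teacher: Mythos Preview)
Your proposal is correct and follows exactly the paper's approach: the paper simply states that Proposition~\ref{i30a} is obtained immediately from Proposition~\ref{i25} and Remark~\ref{embed}, and your write-up just spells out that bookkeeping. The only content is the splitting $|\cdot|_{E^s_{\gamma,T}}=|\cdot|_{\infty,s,\gamma,T}+|\cdot|_{0,s+1,\gamma,T}$ together with the Sobolev embeddings $|\cdot|_{*,T}\le C|\cdot|_{E^{b_0}_T}$ and $\langle\cdot\rangle_{*,T}\le C\langle\cdot\rangle_{b_0,T}$, which you have identified correctly.
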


In order to obtain a tame estimate for the linearized system suitable for Nash-Moser iteration, we must recast 
estimate \eqref{i18} without the $L^\infty$ norms of $\dot U^a$ and $\dot U^b$ on the right. First of all, we fix 
the paramater $K>0$. For instance, one may take $K=1$. This choice is arbitrary because we are interested 
in a small data result\footnote{If we were interested in a small time result for a given source term $G$, one 
would need to fix the constant $K$ in terms of $G$ and the parameters $\gamma,T$ would be chosen accordingly.}. 
We then choose constants $\eps_0(K)$, $\gamma_0(K)$ as in Proposition \ref{i17} so that the the estimate 
\eqref{i18} holds for $s \in [0,\tilde \alpha]$, where $\tilde \alpha$ is defined in \eqref{j6a}. \emph{For the 
remainder of Section \ref{tameex} and in Section \ref{nashex}, the parameter $K$ is fixed, and $\gamma$ is 
also fixed as $\gamma =\gamma_0(K)$.}

Let
\begin{equation*}
\kappa := |U|_{0,\mu_0,\gamma,T} +\langle U|_{x_d=0} \rangle_{\mu_0,\gamma,T} \text{ where } \mu_0:=[(d+1)/2]+2.
\end{equation*}
Applying \eqref{i18} with $s=\mu_0-2$ we obtain for $0<\eps\leq\eps_0$:
\begin{multline}\label{i32}
|\dot U|_{\infty,\mu_0-2,\gamma,T}+|\dot U|_{0,\mu_0-1,\gamma,T}+\langle \dot U|_{x_d=0}\rangle_{\mu_0-1,\gamma,T} \\
\le C(K,\gamma) \, \left( |f|_{0,\mu_0,\gamma,T} +\langle g\rangle_{\mu_0,\gamma,T} 
+(|\dot U|_*+\langle \dot U\rangle_*) \, \kappa \right).
\end{multline}
By Remark \ref{embed} if $\kappa$ is chosen small enough, we can absorb the last term on the right in \eqref{i32} and 
obtain with a new constant $C$:
\begin{align}\label{i33}
|\dot U|_* +\langle \dot U|_{x_d=0} \rangle_* \leq C\, \big( |f|_{0,\mu_0,\gamma,T}+\langle g\rangle_{\mu_0,\gamma,T} 
\big) \, .
\end{align}
Substituting \eqref{i33} in  \eqref{i18}, we have proved

\begin{prop}[Tame estimate for the linearized system]\label{i33a}
Let $K$ and $\gamma=\gamma(K)$ be as fixed in Proposition \ref{i17} and suppose $|\eps \partial_dU|_{C^{0,M_0-1}} 
+|U|_{C^{0,M_0}}\leq K$ for $\eps\in (0,1]$.  Let  $\mu_0 = [\frac{d+1}{2}] + 2$ and $s\in[0,\tilde\alpha]$, where 
$\tilde{\alpha}$ is defined in \eqref{j6a}. There exist positive constants $\kappa_0(\gamma,T)$, $\eps_0$, and 
$C$  such that if
\begin{equation*}
|U|_{0,\mu_0,\gamma,T} +\langle U|_{x_d=0} \rangle_{\mu_0,\gamma,T} \le \kappa_0,
\end{equation*}
then solutions $\dot U$ of the linearized system \eqref{i15} satisfy for $0<\eps \leq \eps_0$:
\begin{multline*}
|\dot U|_{E^s_{\gamma,T}} +\langle \dot U|_{x_d=0} \rangle_{s+1,\gamma,T} \\
\le C\, \left[ |f|_{0,s+2,\gamma,T} +\langle g\rangle_{s+2,\gamma,T} +\left( |f|_{0,\mu_0\gamma,T} 
+\langle g\rangle_{\mu_0,\gamma,T} \right) \, \left(|U|_{0,s+2,\gamma,T} 
+\langle U|_{x_d=0} \rangle_{s+2,\gamma,T} \right) \right].
\end{multline*}
\end{prop}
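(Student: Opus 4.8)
The plan is to derive Proposition~\ref{i33a} from the pre-tame estimate of Proposition~\ref{i17} by replacing the $L^\infty$-type norms $|\dot U|_{*,T}$ and $\langle \dot U|_{x_d=0}\rangle_{*,T}$ appearing on the right-hand side of \eqref{i18} with quantities controlled by the data $f$, $g$ alone. Since $K$ and $\gamma=\gamma(K)$ have already been fixed for the remainder of the section, all constants $C(K)$, $\eps_0(K)$, $\gamma_0(K)$ from Proposition~\ref{i17} become absolute constants, and the only remaining dependence to track is on the smallness parameter $\kappa$ measuring $U$ in the low norm $\mu_0=[(d+1)/2]+2$.

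The first step is to apply \eqref{i18} at the specific value $s=\mu_0-2$. By the Sobolev embeddings recorded in Remark~\ref{embed}, the choice $\mu_0-2=[(d+1)/2]$ makes $E^{\mu_0-2}_T\subset L^\infty(\Omega_T)$ and $E^{\mu_0-1}_T\subset L^\infty(\overline{\bR}_+,W^{1,\infty}(b\Omega_T))$, so the left-hand side of \eqref{i18} at this level dominates $|\dot U|_{*,T}+\langle \dot U|_{x_d=0}\rangle_{*,T}$. This yields the inequality \eqref{i32}, in which the troublesome terms on the right are exactly $(|\dot U|_*+\langle\dot U\rangle_*)\,\kappa$ with $\kappa:=|U|_{0,\mu_0,\gamma,T}+\langle U|_{x_d=0}\rangle_{\mu_0,\gamma,T}$. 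Choosing $\kappa$ small enough — i.e. imposing the hypothesis $|U|_{0,\mu_0,\gamma,T}+\langle U|_{x_d=0}\rangle_{\mu_0,\gamma,T}\le\kappa_0$ for a suitable $\kappa_0(\gamma,T)>0$ — lets us absorb those terms into the left side, producing the clean bound \eqref{i33}:
\begin{equation*}
|\dot U|_{*,T}+\langle \dot U|_{x_d=0}\rangle_{*,T}\le C\big(|f|_{0,\mu_0,\gamma,T}+\langle g\rangle_{\mu_0,\gamma,T}\big).
\end{equation*}

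The final step is to substitute \eqref{i33} back into the general pre-tame estimate \eqref{i18} for arbitrary $s\in[0,\tilde\alpha]$. The terms $|U|_{0,s+2,\gamma,T}|\dot U|_{*,T}$ and $\langle U|_{x_d=0}\rangle_{s+2,\gamma,T}\langle\dot U|_{x_d=0}\rangle_{*,T}$ on the right of \eqref{i18} become, after this substitution, bounded by $C\big(|f|_{0,\mu_0,\gamma,T}+\langle g\rangle_{\mu_0,\gamma,T}\big)\big(|U|_{0,s+2,\gamma,T}+\langle U|_{x_d=0}\rangle_{s+2,\gamma,T}\big)$, which is precisely the tame product structure (low norm of data times high norm of coefficient) demanded by a Nash--Moser scheme. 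Reorganizing the left-hand side using the definition of $|\cdot|_{E^s_{\gamma,T}}$ from Definition~\ref{Espaces} gives the stated estimate. The only mild subtlety — not really an obstacle — is bookkeeping: one must check that $\tilde\alpha$ (defined in \eqref{j6a}) is chosen so that all indices $s$, $s+1$, $s+2$ appearing here stay in the finite interval on which Proposition~\ref{i17} was established, and that $\mu_0-2$ also lies in that interval; since $\tilde\alpha$ is defined precisely to accommodate the loss of two derivatives in \eqref{i18}, this is automatic. The genuinely hard analytic work — the loss-of-one-singular-derivative $L^2$ estimate and its pre-tame upgrade via Moser estimates — has already been done in Propositions~\ref{i5z}, \ref{i14}, and \ref{i17}; Proposition~\ref{i33a} is the routine but essential repackaging of that work into Nash--Moser-ready form.
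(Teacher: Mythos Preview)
Your proposal is correct and follows essentially the same approach as the paper: the argument in the text preceding Proposition~\ref{i33a} is exactly to apply \eqref{i18} at $s=\mu_0-2$, use Sobolev embedding (Remark~\ref{embed}) to absorb the $\kappa$-term and obtain \eqref{i33}, and then substitute \eqref{i33} back into \eqref{i18} for general $s$.
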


\newpage
\section{Profile equations}\label{profile}

\subsection{The key subsystem in the $3\times 3$ strictly hyperbolic case}

To simplify the exposition we first treat the case of a $3\times 3$ strictly hyperbolic system and a boundary
frequency $\beta$ for which there is one single resonance in which two incoming modes interact to produce
an outgoing mode. This case already contains the main difficulties and is exactly the one we emphasize in
the example of Appendix \ref{exeuler}. We shall explain later on the relatively small changes needed to treat
the general case of systems satisfying the assumptions of Section \ref{assumptions}.

The leading profile is decomposed as
\begin{align}\label{c1}
\cV^0(x,\theta_1,\theta_2,\theta_3) =\sigma_1(x,\theta_1) \, r_1 +\sigma_3(x,\theta_3) \, r_3
\end{align}
where $\phi_2$ is the outgoing phase and the resonant triple $(n_1,n_2,n_3) \in \Z^3 \setminus \{ 0\}$ satisfies
\begin{align}\label{c2}
n_1\phi_1=n_2\phi_2+n_3\phi_3.
\end{align}
We can thus write
\begin{align}\label{c3}
\cV^0_{inc}=\sigma_1(x,\theta_1)\, r_1 +\sigma_3(x,\theta_3) \, r_3 \, ,\qquad \cV^1_{out}=\tau_2(x,\theta_2) \, r_2.
\end{align}
Furthermore we have
\begin{align}\label{c4}
\cV^0_{inc}|_{x_d=0,\theta_1=\theta_3=\theta_0} =a(x',\theta_0)\, e =a(x',\theta_0) \, (e_1+e_3) \, ,
\end{align}
so (recall that $e=e_1+e_3$, where $e_i\in \mathrm{span} \{r_i\}$, spans $\ker B \cap \E^s (\beta)$)
\begin{align}\label{c4a}
\sigma_i(x',0,\theta_0)\, r_i =a(x',\theta_0)\, e_i, \; \; i=1,3,
\end{align}
which determines the trace of $\sigma_i$ in terms of $a$.

Applying the operators $E_i$ for $i=1,3$ to \eqref{12}(a) and for $i=2$ to \eqref{12}(b) and using Corollary \ref{12c}
for \eqref{12}(c), we obtain the following system for the unknowns $(\sigma_1,\tau_2,\sigma_3,a)$, where
$\cA(x',\theta_0)$ denotes the unique function with mean zero such that $\partial_{\theta_0}\cA=a$:
\begin{align}\label{c5}
\begin{split}
&X_{\phi_1}\sigma_1 +c_1\, \sigma_1=0\\
&X_{\phi_3}\sigma_3 +c_3\, \sigma_3=0\\
&X_{\phi_2}\tau_2 +c_0\, \tau_2+c_2\int_0^{2\pi} \sigma_{1,n_1} \left(
x,\dfrac{n_2}{n_1} \, \theta_2 +\dfrac{n_3}{n_1} \, \theta_3 \right) \, \sigma_3(x,\theta_3) \, d\theta_3=0\\
&X_{Lop} \, \cA +c_4 \, \cA +c_5 \, \tau_2|_{x_d=0} +c_6 \, (a^2)^* =-b\cdot G^* \text{ on } b\Omega_T \, ,
\end{split}
\end{align}
where the first three equations hold on $\Omega_T$, and the constants $c_i$ are readily computed real constants.
Here $\sigma_{1,n_1}(x,\theta_1)$ is the image of the function $\sigma_1$ under the \emph{preparation map}
\begin{align}\label{c7a}
\sigma_1(x,\theta_1)=\sum_{k\in\bZ} f_k(x)e^{ik\theta_1} \longrightarrow \sum_{k\in\bZ}f_{kn_1}(x)e^{ikn_1\theta_1},
\end{align}
a map designed so that the integral in \eqref{c5} picks out resonances in the product of $\sigma_1$ and
$\sigma_3$.\footnote{Interaction integrals like the one in \eqref{c5} are discussed further in \cite{CGW1},
Proposition 2.13.}

Differentiating with respect to $\theta_0$, we rewrite the last equation of \eqref{c5} as
\begin{align}\label{c6}
X_{Lop}a +c_4 \, a +c_5 \, \partial_{\theta_0}\tau_2|_{x_d=0} +c_6 \, \partial_{\theta_0}(a^2)
=-b\cdot \partial_{\theta_0}G^* =:g \text{ on } b\Omega_T.
\end{align}
We now set $V:=\left(\sigma_1(x,\theta_1),\sigma_3(x,\theta_3),\tau_2(x,\theta_2),a(x',\theta_0)\right)$ and define
the interior and boundary operators for the leading profile system:
\begin{align}\label{c7}
\begin{split}
&\cL(V) :=\begin{pmatrix}
X_{\phi_1}\sigma_1 +c_1\, \sigma_1\\
X_{\phi_3}\sigma_3 +c_3\, \sigma_3\\
X_{\phi_2}\tau_2 +c_0\, \tau_2 +c_2\int_0^{2\pi} \sigma_{1,n_1} \left(
x,\dfrac{n_2}{n_1} \, \theta_2 +\dfrac{n_3}{n_1} \, \theta_3 \right) \, \sigma_3(x,\theta_3) \, d\theta_3
\end{pmatrix}\\
&\cB(V) :=X_{Lop}a +c_4\, a +c_5\, \partial_{\theta_0}\tau_2|_{x_d=0} +c_6\, \partial_{\theta_0}(a^2).
\end{split}
\end{align}

In this notation the profile subsystem becomes
\begin{align}\label{c8}
\begin{split}
&\cL(V)=0\text{ in }\Omega_T \, ,\\
&\cB(V)=g\text{ in }b\Omega_T \, ,\\
&V=0\text{ in }t\leq 0,
\end{split}
\end{align}
where the additional relations \eqref{c4a} hold giving the traces of $\sigma_1,\sigma_3$ in terms of $a$. The following existence result for the key subsystem is proved in section \ref{nashprof} using the tame estimates derived below in section \ref{tameprof}.

\begin{prop}\label{c8b}
Fix $T>0$, let $\alpha_0 :=\left[\frac{d+1}{2}\right]+1$, $\alpha :=2\alpha_0+4$, $\tilde\alpha :=2\alpha-\alpha_0$,
and suppose $g\in H^{\tilde \alpha-2}(b\Omega_T)$. Rewrite $V$ as $V=(V',a)$. If $\langle g \rangle_\alpha$ is
small enough, then there exists a solution $V$ of the profile subsystem  \eqref{c8} with $V'\in H^{\alpha-1}(\Omega_T)$,
$(V'|_{x_d=0},a)\in H^{\alpha-1}(b\Omega_T)$.
\end{prop}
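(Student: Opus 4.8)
The plan is to solve the subsystem \eqref{c8} by a Nash--Moser iteration on the scale of Sobolev spaces, since the boundary equation \eqref{c6} couples $a$ to the trace $\partial_{\theta_0}\tau_2|_{x_d=0}$, and $\tau_2$ is obtained from $(\sigma_1,\sigma_3)$ via the interaction integral, while $\sigma_1,\sigma_3$ in turn have traces $\sigma_i(x',0,\theta_0)r_i = a(x',\theta_0)e_i$ prescribed by $a$ through \eqref{c4a}; the loss of derivatives comes from the fact that the partial inverse $R$ (hence $c_5\,\partial_{\theta_0}\tau_2$) costs a derivative relative to the data that produced it. First I would linearize \eqref{c8} at a point $V=(\sigma_1,\sigma_3,\tau_2,a)$: the linearized interior operator $\cL'(V)\dot V$ is a collection of three transport equations (constant-coefficient vector fields $X_{\phi_1},X_{\phi_3},X_{\phi_2}$ plus the differentiated interaction integral, which is bilinear in $(\sigma_1,\sigma_3)$ so contributes a term linear in $(\dot\sigma_1,\dot\sigma_3)$), and the linearized boundary operator $\cB'(V)\dot V = X_{Lop}\dot a + c_4\dot a + c_5\,\partial_{\theta_0}\dot\tau_2|_{x_d=0} + 2c_6\,\partial_{\theta_0}(a\dot a)$, again a transport equation for $\dot a$ along the Hamiltonian field of the Lopatinskii determinant. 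The key structural point, established in section \ref{tameprof}, is that the interaction integral is \emph{smoothing}: it behaves like a convolution and the preparation map \eqref{c7a} only picks out the resonant frequencies, so although $\dot\tau_2$ is obtained by integrating a transport equation with source involving $(\dot\sigma_1,\dot\sigma_3)$, the trace $\partial_{\theta_0}\dot\tau_2|_{x_d=0}$ fed back into $\cB'(V)$ can be controlled in terms of $(\dot\sigma_1,\dot\sigma_3)$ in the \emph{same} number of tangential derivatives, up to a loss of exactly one derivative overall; that is what yields a tame estimate of the form $|\dot V'|_{\alpha-1} + \langle(\dot V'|_{x_d=0},\dot a)\rangle_{\alpha-1} \lesssim \langle g\rangle_{\alpha+1} + (\text{small})\,(\text{higher norms of }V)$ suitable for Nash--Moser.

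The steps, in order, would be: (1) set up the linearized subsystem and prove its $L^2$ (or lowest-order $H^s$) a priori estimate, combining the standard energy estimate for the outgoing/incoming transport equations with the trace estimate for the Lopatinskii transport equation \eqref{c6}, using Proposition \ref{xlop} and Corollary \ref{12c} to identify $X_{Lop}$ and to see that $b\cdot B\,R\,L(\partial)\cV^0_{inc} = X_{Lop}\cA$ so no derivative is lost on that particular term; (2) upgrade to higher derivatives by differentiating and using Moser-type product estimates, taking advantage of the smoothing property of the interaction integral to absorb the apparently-lost derivative — this is the heart of section \ref{tameprof}; (3) verify the tame estimate has the right form (linear in the highest norm of the data, with the coefficient of the highest norm of $V$ multiplied by a small quantity, using smallness of $\langle g\rangle_\alpha$ and hence of $V$); (4) run the Nash--Moser scheme of section \ref{nashprof} with smoothing operators adapted to the $H^s(\Omega_T)$ and $H^s(b\Omega_T)$ scales, checking the quadratic error estimates via Proposition \ref{i25}-type bounds on the second derivative of $(\cL,\cB)$; (5) conclude existence of $V$ with $V'\in H^{\alpha-1}(\Omega_T)$ and $(V'|_{x_d=0},a)\in H^{\alpha-1}(b\Omega_T)$ provided $\langle g\rangle_\alpha$ is small, with the index arithmetic $\alpha = 2\alpha_0+4$, $\tilde\alpha = 2\alpha-\alpha_0$ dictated by the usual Nash--Moser budget (loss of one derivative per linearized solve, plus the fixed loss in passing from data in $H^{\tilde\alpha-2}$ to solution in $H^{\alpha-1}$).

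The main obstacle I expect is step (2): controlling the feedback term $c_5\,\partial_{\theta_0}\tau_2|_{x_d=0}$ in the boundary equation without a net loss of more than one derivative. The difficulty is that $\tau_2$ solves a transport equation whose source is the interaction integral $\int_0^{2\pi}\sigma_{1,n_1}(x,\tfrac{n_2}{n_1}\theta_2+\tfrac{n_3}{n_1}\theta_3)\,\sigma_3(x,\theta_3)\,d\theta_3$, which at first glance is just a product and would cost a derivative when restricting to the boundary and differentiating in $\theta_0$; the rescue is the convolution-like (smoothing in one variable after the preparation map) structure, so one must make the smoothing quantitative — i.e. show it gains back exactly the derivative that the restriction-plus-$\partial_{\theta_0}$ would cost, leaving the single unavoidable loss. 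Getting the bookkeeping of this gain correct, uniformly along the Nash--Moser iteration and compatibly with the trace estimates, is the delicate part; everything else (the transport estimates, the Moser estimates, the Nash--Moser machinery) is by now standard once the tame estimate is in hand.
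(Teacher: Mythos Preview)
Your proposal is correct and follows the same approach as the paper (elementary transport estimates in Proposition \ref{d10}, assembled into the pre-tame and tame estimates of Propositions \ref{d19} and \ref{d26}, then the Nash--Moser scheme of section \ref{nashprof}). One clarification on the point you flag as delicate in step (2): the mechanism is more elementary than a genuine smoothing gain --- in the linearized interaction integral $\int_0^{2\pi} \sigma_{1,n_1}(x,\tfrac{n_2}{n_1}\theta_2+\tfrac{n_3}{n_1}\theta_3)\,\dot\sigma_3(x,\theta_3)\,d\theta_3$ only the background factor $\sigma_{1,n_1}$ depends on the output variable $\theta_2$, so $\partial_{\theta_2}$ lands on it rather than on $\dot\sigma_3$, and since the outgoing trace estimate \eqref{d12} loses nothing, one obtains $\langle\partial_{\theta_0}\dot V_{out}\rangle_s$ controlled by $|\dot V_{inc}|_s$ (not $|\dot V_{inc}|_{s+1}$) as in \eqref{d17}; the single derivative loss in the tame estimate \eqref{d27} then shows up on $f$ and on the background $V$ (via $|f|_{s+1}$, $|V|_{s+1}$), not on the unknown $\dot V$.
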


\begin{rem}\label{c8a}
1)  \textup{Although the original problem is semilinear with a nonlinear zero-order boundary condition, the profile
system \eqref{c7} has a \emph{quasilinear} first-order boundary operator and an interior operator that includes
a nonlinear, nonlocal, integro-pseudodifferential operator given by the interaction integral. The nonlocality arises
both from the $d\theta_3$-integration and from the pseudodifferential operator $\sigma_1\to\sigma_{1,n_1}$.}

2) \textup{Attempts to solve the system \eqref{c8} by a standard Picard iteration lead to a (fatal) loss of a derivative
from one iterate to the next. The reason is that $\sigma_1$ and $\sigma_3$ have the regularity of $a$ (incoming
transport equation), and therefore $\tau_2$ has the same regularity as $a$. However, the equation for $a$ involves
the derivative $\partial_{\theta_0}\tau_2|_{x_d=0}$ and this term induces the loss. Thus, we shall use Nash-Moser
iteration to prove Proposition \ref{c8b}.}
\end{rem}

\subsection{Tame estimates}\label{tameprof}

\qquad With $V=(\sigma_1,\sigma_3,\tau_2,a)$ and $\dot  V=(\dot\sigma_1,\dot\sigma_3,\dot\tau_2,\dot a)$, we
compute the first derivatives of $\cL$ and $\cB$:
\begin{align}\label{d1}
\begin{split}
&(a) \;\cL'(V)\dot V =\begin{pmatrix}
X_{\phi_1} \dot \sigma_1 +c_1\, \dot \sigma_1\\
X_{\phi_3} \dot \sigma_3 +c_3\, \dot \sigma_3\\
X_{\phi_2} \dot\tau_2 +c_0\, \dot\tau_2
+c_2 \int_0^{2\pi} \sigma_{1,n_1} \left( x,\dfrac{n_2}{n_1}\theta_2+\dfrac{n_3}{n_1}\theta_3 \right) \,
\dot\sigma_3(x,\theta_3) \, d\theta_3 \\
\qquad +c_2 \int_0^{2\pi} \sigma_{3,n_3} \left( x,-\dfrac{n_2}{n_3}\theta_2+\dfrac{n_3}{n_1}\theta_1 \right) \,
\dot\sigma_1(x,\theta_1) \, d\theta_1 \end{pmatrix}\\
&(b)\;\cB'(V)\dot V =X_{Lop}\dot a +c_4\, \dot a +c_5\, \partial_{\theta_0}\dot\tau_2|_{x_d=0}
+2c_6 \, (a\partial_{\theta_0}\dot a+\dot a\partial_{\theta_0}a).
\end{split}
\end{align}
Here we have used the property
\begin{align}\label{d2}
\int_0^{2\pi} \sigma_{3,n_3} \left( x,-\dfrac{n_2}{n_3}\theta_2+\dfrac{n_3}{n_1}\theta_1 \right) \,
\dot\sigma_1(x,\theta_1) \, d\theta_1 =\int_0^{2\pi} \dot\sigma_{1,n_1}
\left( x,\dfrac{n_2}{n_1}\theta_2+\dfrac{n_3}{n_1}\theta_3 \right) \, \sigma_3(x,\theta_3) \, d\theta_3,
\end{align}
which follows readily by looking at the Fourier series of the factors of the integrand. For the second derivatives
we obtain
\begin{align}\label{d3}
\begin{split}
&\cL''(V)(\dot V^a,\dot V^b) =c_2 \, \begin{pmatrix}
0 \\
0 \\
\int_0^{2\pi} \dot\sigma^a_{1,n_1} \left( x,\dfrac{n_2}{n_1}\theta_2 +\dfrac{n_3}{n_1}\theta_3 \right) \,
\dot\sigma^b_3(x,\theta_3) \, d\theta_3 \\
\qquad +\int_0^{2\pi} \dot\sigma^b_{1,n_1} \left( x,\dfrac{n_2}{n_1}\theta_2+\dfrac{n_3}{n_1}\theta_3 \right) \,
\dot\sigma^a_3(x,\theta_3) \, d\theta_3 \end{pmatrix} \\
&\cB''(V)(\dot V^a,\dot V^b) =2c_6 \, (\dot a^a \, \partial_{\theta_0}\dot a^b +\dot a^b \, \partial_{\theta_0}\dot a^a).
\end{split}
\end{align}




\begin{prop}[Tame estimates for second derivatives]\label{d5}
a) Let $b_0$ be the smallest integer greater than $\frac{d+2}{2}$ and let $s\geq 0$.  We have
\begin{align}\label{d6}
|\cL''(V)(\dot V^a,\dot V^b)|_{s,\gamma} \leq C \left( |\dot V^a|_{s,\gamma} \, |\dot V^b|_{b_0}
+|\dot V^b|_{s,\gamma} \, |\dot V^a|_{b_0} \right),
\end{align}
where $C$ is independent of $V$, $\gamma$, and $T$.

b) Let $c_0$ be the smallest integer greater than $\frac{d+1}{2}+1$ and let $s\geq 0$.  We have
\begin{align}\label{d7}
\langle \cB''(V)(\dot V^a,\dot V^b) \rangle_{s,\gamma} \leq C \left( \langle \dot V^a \rangle_{s+1,\gamma}
\, \langle \dot V^b \rangle_{c_0} +\langle \dot V^b \rangle_{s+1,\gamma} \, \langle \dot V^a \rangle_{c_0}\right),
\end{align}
where $C$ is independent of $V$, $\gamma$, and $T$.

In both estimates \eqref{d6}, \eqref{d7}, the constant $C$ can be chosen independent of $s$ in any fixed
finite interval.
\end{prop}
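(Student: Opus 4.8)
The plan is to prove the two estimates by examining the explicit formulas \eqref{d3} for $\cL''$ and $\cB''$ and applying standard Moser-type product estimates in Sobolev spaces, together with the fact that the interaction integral and the preparation map $\sigma_1 \mapsto \sigma_{1,n_1}$ are bounded on $H^s$ with constants independent of $s$ on any finite interval. The key observation is that both second derivatives are genuinely \emph{bilinear} (no zero-order term in $V$ survives differentiation twice because $\cL$ depends on $V$ only through terms that are at most quadratic, and $\cB$ only through the quadratic term $c_6\partial_{\theta_0}(a^2)$), so there is no $V$-dependence on the right-hand sides and no loss of derivative beyond the single $\partial_{\theta_0}$ visible in $\cB''$.

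First I would treat part (a). The only nonzero component of $\cL''(V)(\dot V^a,\dot V^b)$ is the third one, a sum of two interaction integrals of the form $\int_0^{2\pi}\dot\sigma^a_{1,n_1}(x,\tfrac{n_2}{n_1}\theta_2+\tfrac{n_3}{n_1}\theta_3)\,\dot\sigma^b_3(x,\theta_3)\,d\theta_3$. The preparation map is a Fourier multiplier in $\theta_1$ that only relabels frequencies, hence bounded on $H^s_\gamma(\Omega_T)$ with norm $1$, uniformly in $s$; and the substitution plus integration in $\theta_3$ is, as noted in the footnote, essentially a convolution, so it maps $H^s_\gamma \times H^s_\gamma \to H^s_\gamma$ continuously. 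Combining this with the tame (Moser) product inequality $|fg|_{s,\gamma}\le C(|f|_{s,\gamma}|g|_{b_0}+|g|_{s,\gamma}|f|_{b_0})$ for $b_0 > (d+1+1)/2$ — here the spatial dimension relevant to $\Omega_T$ is $d+1$ tangential plus the $\theta$ variables, giving the threshold $b_0 > \tfrac{d+2}{2}$ — yields \eqref{d6}. The uniformity of $C$ in $s$ over a finite interval is the standard feature of Moser estimates.

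Next I would treat part (b). Here $\cB''(V)(\dot V^a,\dot V^b) = 2c_6(\dot a^a\,\partial_{\theta_0}\dot a^b + \dot a^b\,\partial_{\theta_0}\dot a^a)$ on $b\Omega_T$. This is a product of one factor at level $s$ (after taking $s$ tangential derivatives and distributing via Leibniz) with one factor carrying an extra $\partial_{\theta_0}$; the $\partial_{\theta_0}$ is exactly why the right-hand side of \eqref{d7} has $\langle\cdot\rangle_{s+1,\gamma}$ rather than $\langle\cdot\rangle_{s,\gamma}$. The Moser product estimate on $b\Omega_T$ (dimension $d+1$: the $d$ tangential variables plus $\theta_0$, so the algebra/multiplier threshold is $c_0 > \tfrac{d+1}{2}+1$ to absorb the lost derivative) gives $\langle \dot a^a\,\partial_{\theta_0}\dot a^b\rangle_{s,\gamma}\le C(\langle \dot a^a\rangle_{s,\gamma}\langle\partial_{\theta_0}\dot a^b\rangle_{c_0-1} + \langle\partial_{\theta_0}\dot a^b\rangle_{s,\gamma}\langle \dot a^a\rangle_{c_0})$, and bounding $\langle\partial_{\theta_0}\dot a^b\rangle_{s,\gamma}\le\langle \dot a^b\rangle_{s+1,\gamma}$ and $\langle\partial_{\theta_0}\dot a^b\rangle_{c_0-1}\le\langle \dot a^b\rangle_{c_0}$ produces \eqref{d7}; the symmetric term is handled identically. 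Since $\dot a$ is a component of $\dot V$, we have $\langle\dot a\rangle_{s,\gamma}\le\langle\dot V\rangle_{s,\gamma}$ and the estimate follows.

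The only mild obstacle is bookkeeping: one must be careful that the Moser/algebra thresholds $b_0$ and $c_0$ are chosen correctly for the relevant domains ($\Omega_T$ carries $d+1$ continuous plus periodic variables, $b\Omega_T$ carries $d$ plus the single periodic $\theta_0$), and that the $\gamma$-weighted norms behave well under products — this is true because $e^{-\gamma t}$ is multiplicative, so $\langle e^{-\gamma t}(fg)\rangle_s$ splits with one $L^\infty$ factor unweighted, giving exactly the stated form with $C$ independent of $\gamma$ and $T$. Uniformity in $T$ comes from using intrinsic Sobolev norms on $\Omega_T, b\Omega_T$ (no extension constants enter a product estimate). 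None of this requires the singular calculus; it is purely classical harmonic analysis once the structure of \eqref{d3} is laid bare.
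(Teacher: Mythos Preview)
Your proposal is correct and follows essentially the same approach as the paper: both arguments exploit the explicit bilinear structure of $\cL''$ and $\cB''$ from \eqref{d3}, apply Moser product estimates (with $L^\infty$ factors controlled by Sobolev embedding at the thresholds $b_0$, $c_0$), and use that the preparation map $\sigma \mapsto \sigma_{n_1}$ is norm-decreasing on $H^s_\gamma$. The paper's treatment of the interaction integral is slightly more concrete---it estimates the integrand in $H^s_\gamma(x,\theta_2)$ for fixed $\theta_3$ using that $\dot\sigma^b_3$ is independent of $\theta_2$, then integrates in $\theta_3$ via Cauchy--Schwarz---whereas you invoke the convolution structure more abstractly, but the content is the same.
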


\begin{proof}
\textbf{a. } Moser estimates imply
\begin{align}\label{d7a}
|\dot\sigma^a_{1,n_1} (x,\frac{n_2}{n_1}\theta_2+\frac{n_3}{n_1}\theta_3) \, \dot\sigma^b_3(x,\theta_3)
|_{H^s_\gamma(x,\theta_2)} \leq C \, ( |\dot\sigma^a_{1,n_1}|_{s,\gamma} \, |\dot\sigma^b_3|_{L^\infty}
+|\dot\sigma^a_{1,n_1}|_{L^\infty} \, |\dot\sigma^b_3|_{H^s_\gamma(x)} ),
\end{align}
since $\dot\sigma^b_3$ is independent of $\theta_2$.  We have
\begin{align}\label{d7b}
\int^{2\pi}_0|\dot\sigma^b_3(x,\theta_3)|_{H^s_\gamma(x)} \, d\theta_3 \leq
C \, |\dot\sigma^b_3|_{L^2(\theta_3,H^s_\gamma(x))} \leq C\, |\dot\sigma^b_3|_{s,\gamma}.
\end{align}
The estimate \eqref{d6} now follows by Sobolev embedding and the fact that
\begin{align}\label{d8a}
|\dot\sigma^a_{1,n_1}|_{s,\gamma} \leq |\dot\sigma^a_{1}|_{s,\gamma}.
\end{align}

\textbf{b. }Again Moser estimates imply
\begin{align}\label{d8}
\langle\dot a^a\partial_{\theta_0}\dot a^b\rangle_{s,\gamma}\leq C \, \left(
\langle\dot a^a\rangle_{s,\gamma} \, \langle\partial_{\theta_0}\dot a^b\rangle_{L^\infty}
+\langle\dot a^a\rangle_{L^\infty} \, \langle\partial_{\theta_0}\dot a^b\rangle_{s,\gamma}\right),
\end{align}
so the estimate \eqref{d7} follows by Sobolev embedding.
\end{proof}

Next we derive tame energy estimates for the linearized problem
\begin{align}\label{d9}
\begin{split}
&\cL'(V)\dot V=f\text{ in }\Omega_T\\
&\cB'(V)\dot V=g\text{ in }b\Omega_T\\
&V=0 \text{ in }t<0,
\end{split}
\end{align}
where $f$ and $g$ vanish in $t<0$.  We begin with a simple proposition:
\begin{prop}\label{d10}
1. If the phase $\phi_p$ is incoming, solutions  of
\begin{align}\label{d10a}
X_{\phi_p}\sigma_p+c_p\sigma_p=h \text{ in }\Omega_T,\;\; \sigma_p=0\text{ in } t<0
\end{align}
satisfy for $\gamma$ large (depending on $c_p$):
\begin{align}\label{d11}
\sqrt{\gamma}|\sigma_p|_{s,\gamma}\leq C\left(\langle\sigma_p\rangle_{s,\gamma}+\frac{|h|_{s,\gamma}}{\sqrt{\gamma}}\right).
\end{align}

2. If the phase $\phi_p$ is outgoing, solutions of \eqref{d10a} satisfy for $\gamma$ large (depending on $c_p$):
\begin{align}\label{d12}
\sqrt{\gamma}|\sigma_p|_{s,\gamma}+\langle\sigma_p\rangle_{s,\gamma}\leq C\frac{|h|_{s,\gamma}}{\sqrt{\gamma}}.
\end{align}

3. Solutions in $\omega_T$ of
\begin{align}
X_{Lop}\dot a+c_4\dot a+2c_6( a\partial_{\theta_0}\dot a+\dot a\partial_{\theta_0} a)=g,\;\;\dot a=0\text{ in }t<0
\end{align}
 satisfy for $C_K$, $\gamma\geq \gamma_K$ (where $K=\langle a\rangle_{W^{1,\infty}}$):
\begin{align}\label{d13}
\sqrt{\gamma}\langle\dot a\rangle_{s,\gamma}\leq \frac{C_K}{\sqrt{\gamma}}\left(\langle g\rangle_{s,\gamma}+\langle a\rangle_{s+1,\gamma}\langle\dot a\rangle_{W^{1,\infty}}\right).
\end{align}
The second term on the right in \eqref{d13} does not appear in the $s=0$ estimate.
\end{prop}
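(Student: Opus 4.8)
The plan is to prove the three parts of Proposition~\ref{d10} by standard weighted ($e^{-\gamma t}$) energy estimates for scalar transport equations, treating the nonlocal/nonlinear terms as perturbations that are absorbed for $\gamma$ large. In all three cases the underlying operator is a first-order vector field with constant principal part ($X_{\phi_p}$, respectively $X_{Lop}$), so after conjugation $\sigma^\gamma := e^{-\gamma t}\sigma$ the operator becomes $X_{\phi_p}+\gamma$ (since $X_{\phi_p}$ has $\partial_t$-coefficient $1$), and likewise for $X_{Lop}$ the coefficient of $\partial_t$ is $\partial_\tau\sigma(\utau,\ueta)=1$ by Proposition~\ref{xlop}. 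The key algebraic input is that $X_{\phi_p}$ is, up to a positive constant, $\partial_d$ plus a tangential vector field, so the boundary $\{x_d=0\}$ is noncharacteristic, and the sign of the $\partial_d$-coefficient is positive if $\phi_p$ is incoming and negative if $\phi_p$ is outgoing (Definition~\ref{def2}).

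First I would handle part~1. Apply $\partial^s_{(x',\theta_0)}$ to \eqref{d10a}; since $X_{\phi_p}$ has constant coefficients the commutator with $\partial^s$ is exact (it only hits the zero-order coefficient $c_p$, which is constant, so there is no commutator at all), giving $X_{\phi_p}\partial^s\sigma_p+c_p\partial^s\sigma_p=\partial^s h$. Take the real part of the $L^2(\Omega_T)$ inner product of the $\gamma$-conjugated equation with $\partial^s\sigma^\gamma_p$. Integration by parts in $x_d$ produces the boundary term $\pm\tfrac12\langle\partial^s\sigma_p(0)\rangle_{0,\gamma}^2$ with the sign dictated by the incoming/outgoing character; for an incoming phase the $\partial_d$-coefficient is positive so integrating from $x_d=0$ to $\infty$ the boundary contribution appears with a favorable sign on one side, yielding $\gamma|\partial^s\sigma^\gamma_p|_{0,0}^2\le C\langle\partial^s\sigma_p(0)\rangle_{0,\gamma}^2 + C|\partial^s\sigma^\gamma_p|_{0,0}\,|\partial^s h^\gamma|_{0,0}$. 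Summing over $|s|$, using Young's inequality to absorb the forcing term (picking up the $1/\sqrt\gamma$), and taking $\gamma$ large enough to dominate the contribution of $c_p$ gives \eqref{d11}. Part~2 is the same computation, but now the $\partial_d$-coefficient is negative, so integrating from $x_d=0$ to $\infty$ the boundary term $\langle\partial^s\sigma_p(0)\rangle_{0,\gamma}^2$ comes out on the \emph{left} with a favorable sign, and since there is no boundary condition imposed on an outgoing characteristic we simply gain control of the trace: $\gamma|\partial^s\sigma^\gamma_p|_{0,0}^2+\langle\partial^s\sigma_p(0)\rangle_{0,\gamma}^2\le C|\partial^s\sigma^\gamma_p|_{0,0}\,|\partial^s h^\gamma|_{0,0}$, hence \eqref{d12}.

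For part~3, the equation lives on $\omega_T=\overline{\R}^{d+1}_+\cap\{t<T\}$ (no normal variable $x_d$), and $X_{Lop}$ is a tangential constant-coefficient vector field in $(t,x_1,\dots,x_{d-1})$ whose coefficient of $\partial_t$ is $1$. This is a genuine Friedrichs-type estimate: after $\gamma$-conjugation, pairing $X_{Lop}\dot a^\gamma+\gamma\dot a^\gamma+\dots=g^\gamma$ with $\dot a^\gamma$ and integrating over a half-space $\{t<T\}$ the constant-coefficient first-order part integrates to zero (no boundary in the spatial variables, and the $t=T$ surface is handled as in the localization to $\Omega_T$), leaving $\gamma\langle\dot a^\gamma\rangle_{0,0}^2\le\langle\dot a^\gamma\rangle_{0,0}\langle g^\gamma\rangle_{0,0}+|c_4|\langle\dot a^\gamma\rangle_{0,0}^2 + (\text{quadratic term})$. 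The only subtlety is the genuinely nonlinear term $2c_6(a\,\partial_{\theta_0}\dot a+\dot a\,\partial_{\theta_0}a)$: for the $s=0$ estimate, $a\,\partial_{\theta_0}\dot a$ can be symmetrized against $\dot a$ by one more integration by parts in $\theta_0$, turning it into $\tfrac12(\partial_{\theta_0}a)\dot a^2$, so both contributions are bounded by $C\langle a\rangle_{W^{1,\infty}}\langle\dot a\rangle_{0,0}^2$ and absorbed for $\gamma\ge\gamma_K$ — this is why the $\langle a\rangle_{s+1,\gamma}\langle\dot a\rangle_{W^{1,\infty}}$ term is absent when $s=0$. For $s\ge1$ one commutes $\partial^s$ through, and the commutator terms $[\partial^s,a\partial_{\theta_0}]\dot a$ and $[\partial^s,\partial_{\theta_0}a]\dot a$ are controlled by the Moser/tame product estimate $\langle a\,\partial^{s+1}\dot a + (\partial^{s+1}a)\dot a+\dots\rangle\le C(\langle a\rangle_{W^{1,\infty}}\langle\dot a\rangle_{s,\gamma}+\langle a\rangle_{s+1,\gamma}\langle\dot a\rangle_{W^{1,\infty}})$, and the first piece is again absorbed by large $\gamma$ while the second becomes the stated source term. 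Dividing by $\sqrt\gamma\langle\dot a\rangle_{s,\gamma}$ gives \eqref{d13}.

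The main obstacle is purely bookkeeping rather than conceptual: in parts~1 and~2 one must be careful that the tangential part of $X_{\phi_p}$ (which does have nonconstant-in-$x$... actually constant) contributes nothing to the boundary term and that $\partial^s$ genuinely commutes (which it does, since all coefficients of $X_{\phi_p}$ and the constants $c_p$ are constants), so there are no lower-order loss terms; in part~3 the only real care is the symmetrization of the quasilinear term and the precise form of the Moser estimate that produces exactly the tame right-hand side claimed, with the observation about the $s=0$ case. None of this requires the singular calculus — these are elementary constant-coefficient transport estimates — so the proof is short.
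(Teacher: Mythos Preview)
Your proposal is correct and follows the same approach as the paper's proof: weighted $L^2$ energy estimates for the constant-coefficient transport equations in parts~1--2 (with normal derivatives then recovered from the equation, a step the paper mentions explicitly), and Moser-type commutator estimates for the quasilinear term in part~3. One caveat on normalization: in the paper's definition (Lemma~\ref{b21}) $X_{\phi_m}=\partial_d+(\text{tangential})$ has $\partial_d$-coefficient $+1$ in both cases, and it is the $\partial_t$-coefficient $-\partial_\tau\omega_m(\beta)$ whose sign distinguishes incoming from outgoing --- equivalent to your description up to multiplying the equation by a nonzero constant.
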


\begin{proof}
\textbf{1. }To prove \eqref{d12} with $s=0$, one considers the problem satisfied by $e^{-\gamma t}\sigma_p$,
multiplies the equation by $e^{-\gamma t}\sigma_p$, integrates $dxd\theta_p$ on $\Omega_T$, and performs
obvious integrations by parts.  One then applies the $L^2$ estimate to the problem satisfied by tangential derivatives
$\gamma^{s-|\beta|}\partial^\beta_{x',\theta_p}\sigma_p$, $|\beta|\leq s$. Normal derivatives are estimated using the
equation and the tangential estimates.  The proof of \eqref{d12} is similar.

\textbf{2. }The proof of \eqref{d13} is similar, but in the higher derivative estimates one now has forcing terms that are
commutators involving $a$.  The commutators are linear combinations of terms of the form
\begin{align}\label{d14}
\gamma^{s-|\beta|}(\partial^{\beta_1}_{x',\theta_0}a)(\partial^{\beta_2}_{x',\theta_0}\partial_{\theta_0}\dot a)
\text{ where }|\beta_1|+|\beta_2|=|\beta|, \;|\beta_1|\geq 1,
\end{align}
or linear combinations of terms of the form
\begin{align}\label{d15}
\gamma^{s-|\beta|}(\partial^{\beta_1}_{x',\theta_0}\dot a)(\partial^{\beta_2}_{x',\theta_0}\partial_{\theta_0}a)
\text{ where }|\beta_1|+|\beta_2|=|\beta|, \;|\beta_2|\geq 1,
\end{align}
Applying Moser estimates to \eqref{d14} after writing $\partial^{\beta_1}a=\partial^{\beta_1'}\partial a$,  we obtain
\begin{align}\label{d16}
\begin{split}
&\langle\gamma^{s-|\beta|}(\partial^{\beta_1}_{x',\theta_0}a)
(\partial^{\beta_2}_{x',\theta_0}\partial_{\theta_0}\dot a)\rangle_{0,\gamma}\leq C \left(
\langle\partial a\rangle_{L^\infty}\langle\partial_{\theta_0}\dot a\rangle_{m-1,\gamma}
+\langle\partial a\rangle_{m-1,\gamma}\langle\partial_{\theta_0}\dot a\rangle_{L^\infty}\right)\\
&\qquad\leq C\left(\langle a\rangle_{W^{1,\infty}}\langle\dot a\rangle_{s,\gamma}+\langle a\rangle_{s,\gamma}
\langle\dot a\rangle_{W^{1,\infty}}\right).
\end{split}
\end{align}
The factor $C_K/\sqrt{\gamma}$ on the forcing term in the $L^2$ estimate allows the first term on the right to be
absorbed by taking $\gamma$ large.

The estimate of \eqref{d15} is similar, but we do not split the $\partial^{\beta_2}$ derivative, and after absorbing
a term we are left with $\frac{C_K}{\sqrt{\gamma}}\langle\dot a\rangle_{L^\infty}\langle a\rangle_{s+1,\gamma}$
on the right.
\end{proof}

We now use Proposition \ref{d10} to estimate solutions of the linearized problem \eqref{d9} by treating the interaction
integrals in \eqref{d1}(a) and the term $c_5\partial_{\theta_0}\tau_2$ in \eqref{d1}(b) as additional forcing terms. Setting
\begin{align}\label{d16a}
V_{inc,n}:=(\sigma_{1,n_1},\;\sigma_{3,n_3}),\; V_{inc}=(\sigma_1,\sigma_3), \;V_{out}=\tau_2,
\end{align}
estimating interaction integrals as in \eqref{d7a},\eqref{d7b}, and using \eqref{d8a}, we obtain immediately
\begin{align}\label{d17}
\begin{split}
&\sqrt{\gamma}|\dot V_{out}|_{s,\gamma}+\langle\dot V_{out}\rangle_{s,\gamma}\leq \frac{C}{\sqrt{\gamma}}
\left(|f|_{s,\gamma}+|V_{inc,n}|_{L^\infty}|\dot V_{inc}|_{s,\gamma}+|V_{inc}|_{s,\gamma}|\dot V_{inc}|_{L^\infty}\right)\\
&\sqrt{\gamma}|\partial_{\theta}\dot V_{out}|_{s,\gamma}+\langle\partial_{\theta_0}\dot V_{out}\rangle_{s,\gamma}\leq
\frac{C}{\sqrt{\gamma}}\left(|\partial_{\theta}f|_{s,\gamma}+|\partial_{\theta} V_{inc,n}|_{L^\infty}|\dot V_{inc}|_{s,\gamma}
+|\partial_{\theta} V_{inc}|_{m,\gamma}|\dot V_{inc}|_{L^\infty}\right)
\end{split}
\end{align}
and
\begin{align}\label{d18}
\begin{split}
&\sqrt{\gamma}|\dot V_{inc}|_{s,\gamma}\leq C\left(\langle\dot V_{inc}\rangle_{s,\gamma}
+\dfrac{|f|_{s,\gamma}}{\sqrt{\gamma}}\right)\\
&\sqrt{\gamma}\langle\dot V_{inc}\rangle_{s,\gamma}\leq
\frac{C_K}{\sqrt{\gamma}}\left(\langle g\rangle_{s,\gamma}+\langle \partial_{\theta_0}\dot V_{out}\rangle_{s,\gamma}
+\langle V_{inc}\rangle_{s+1,\gamma}\langle\dot V_{inc}\rangle_{W^{1,\infty}}\right).
\end{split}
\end{align}
This leads to the following ``pre-tame" estimate.

\begin{prop}\label{d19}
Let  $\mu_0 = [\frac{d+1}{2}] + 2$, fix $K_1>0$, and suppose $|V_{inc}|_{\mu_0}\leq K_1$.\footnote{In this Proposition
$\mu_0=[\frac{d}{2}] + 2$ would work, but we make the above choice so as not to have to redefine $\mu_0$ later.} For
$s\geq 0$ in any fixed finite interval, there exist constants $C(K_1)$, $\gamma(K_1)$ such that for $\gamma\geq
\gamma(K_1)$, solutions of the linearized problem \eqref{d9} satisfy
\begin{align}\label{d20}
\begin{split}
&\sqrt{\gamma}|\dot V_{out},\partial_{\theta}\dot V_{out},\dot V_{inc}|_{s,\gamma}+\langle\dot V_{out},\partial_{\theta_0}\dot V_{out}\rangle_{s,\gamma}+\sqrt{\gamma}\langle\dot V_{inc}\rangle_{s,\gamma}\leq\\
&\qquad\qquad\frac{C(K_1)}{\sqrt{\gamma}}\left(|f|_{s+1,\gamma}+\langle g\rangle_{s,\gamma}+|V_{inc}|_{s+1,\gamma}|\dot V_{inc}|_{L^\infty}+\langle V_{inc}\rangle_{s+1,\gamma}\langle\dot V_{inc}\rangle_{W^{1,\infty}}\right).
\end{split}
\end{align}
\end{prop}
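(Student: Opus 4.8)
The plan is to assemble the estimate \eqref{d20} from the three component estimates in Proposition \ref{d10}, applied to the three scalar transport problems that make up the linearized system \eqref{d9}, while treating the coupling terms (the interaction integrals in \eqref{d1}(a) and the trace term $c_5\,\partial_{\theta_0}\dot\tau_2|_{x_d=0}$ in \eqref{d1}(b)) as forcing. The structure of the argument is a fixed-point-free a priori bound: we simply write down each of the four scalar estimates and then combine them, absorbing cross terms by choosing $\gamma$ large. The key observation that makes the argument close is that the loss of derivative occurs only in \emph{one} place — the boundary equation for $\dot a$ sees $\partial_{\theta_0}\dot\tau_2|_{x_d=0}$, which is one $\theta$-derivative more than $\dot\tau_2$ itself — and this is exactly compensated by carrying along the estimate for $\partial_\theta \dot V_{out}$ as in the second line of \eqref{d17}.

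First I would estimate $\dot V_{out}=\dot\tau_2$. Since $\phi_2$ is outgoing, apply part 2 of Proposition \ref{d10} to the third component of \eqref{d1}(a), with forcing term equal to $f$ (third component) plus the two interaction integrals. By the Moser/interaction-integral bounds \eqref{d7a}, \eqref{d7b} and the preparation-map inequality \eqref{d8a}, the interaction integrals are controlled by $|V_{inc,n}|_{L^\infty}\,|\dot V_{inc}|_{s,\gamma}+|V_{inc}|_{s,\gamma}\,|\dot V_{inc}|_{L^\infty}$ (and similarly with an extra $\partial_\theta$ for the $\partial_\theta\dot V_{out}$ bound, noting that $\partial_{\theta_0}$ commutes with $X_{\phi_2}$ up to lower-order terms and with the preparation map exactly), giving \eqref{d17}. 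Next, for the incoming modes $\dot V_{inc}=(\dot\sigma_1,\dot\sigma_3)$, apply part 1 of Proposition \ref{d10} to the first two components of \eqref{d1}(a); since there is no zero-order coupling among these scalar equations, this directly yields the interior half of \eqref{d18} with forcing $f$. For the traces $\langle\dot V_{inc}\rangle_{s,\gamma}$, use the relation $\dot\sigma_i(x',0,\theta_0)\,r_i = \dot a(x',\theta_0)\,e_i$ from \eqref{c4a} to reduce to estimating $\langle\dot a\rangle_{s,\gamma}$, then apply part 3 of Proposition \ref{d10} to the boundary equation \eqref{d1}(b), treating $c_5\,\partial_{\theta_0}\dot\tau_2|_{x_d=0}$ as additional forcing; this gives the boundary half of \eqref{d18}, where the commutator terms involving $a$ produce the $\langle V_{inc}\rangle_{s+1,\gamma}\langle\dot V_{inc}\rangle_{W^{1,\infty}}$ contribution (using $\langle a\rangle\sim\langle V_{inc}\rangle$ at the boundary via \eqref{c4a}).

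Finally I would close the loop. Multiply the interior incoming estimate by $\sqrt\gamma$ and feed the trace bound from \eqref{d18} into it; feed the trace $\langle\partial_{\theta_0}\dot V_{out}\rangle_{s,\gamma}$ from the second line of \eqref{d17} into the boundary estimate for $\dot V_{inc}$; and feed $|\dot V_{inc}|_{s,\gamma}$ back into \eqref{d17}. Each substitution introduces a factor of $C/\sqrt\gamma$ or $C/\gamma$ in front of the quantity being reinjected, so for $\gamma\geq\gamma(K_1)$ large (depending on the constants $c_i$ and on $K_1$ through the hypothesis $|V_{inc}|_{\mu_0}\le K_1$, which controls the $L^\infty$ and $W^{1,\infty}$ coefficient norms via Sobolev embedding) the self-referential terms can be absorbed on the left. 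What remains on the right after absorption is exactly $\frac{C(K_1)}{\sqrt\gamma}\big(|f|_{s+1,\gamma}+\langle g\rangle_{s,\gamma}+|V_{inc}|_{s+1,\gamma}|\dot V_{inc}|_{L^\infty}+\langle V_{inc}\rangle_{s+1,\gamma}\langle\dot V_{inc}\rangle_{W^{1,\infty}}\big)$, which is \eqref{d20}; note the $|f|_{s+1,\gamma}$ (rather than $|f|_{s,\gamma}$) on the right is forced by the need to bound $|\partial_\theta f|_{s,\gamma}$ when estimating $\partial_\theta\dot V_{out}$. The main obstacle is bookkeeping the derivative loss: one must verify that the only place an $H^{s+1}$ norm of the \emph{solution} appears on the right is through $|V_{inc}|_{s+1,\gamma}$ and $\langle V_{inc}\rangle_{s+1,\gamma}$ with \emph{low-regularity} ($L^\infty$, $W^{1,\infty}$) factors of $\dot V$ attached — this tame structure is what later permits the Nash–Moser scheme — and in particular that the transfer $\dot V_{out}\rightsquigarrow\partial_\theta\dot V_{out}\rightsquigarrow$ boundary data for $\dot a$ does not cascade into a genuine loss after the $\sqrt\gamma$ weights are accounted for.
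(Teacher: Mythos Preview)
Your proposal is correct and follows essentially the same approach as the paper: derive the component estimates \eqref{d17} and \eqref{d18} from Proposition \ref{d10} by treating the interaction integrals and the trace term $c_5\partial_{\theta_0}\dot\tau_2|_{x_d=0}$ as forcing, then add them and absorb the cross terms $\frac{C_K}{\sqrt{\gamma}}\big(\langle\dot V_{inc}\rangle_{s,\gamma}+\langle\partial_{\theta_0}\dot V_{out}\rangle_{s,\gamma}+|V_{inc,n},\partial_\theta V_{inc,n}|_{L^\infty}|\dot V_{inc}|_{s,\gamma}\big)$ by taking $\gamma$ large, using Sobolev embedding and $|V_{inc}|_{\mu_0}\le K_1$ to bound the $L^\infty$ and $W^{1,\infty}$ coefficient norms.
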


\begin{proof}
We add the estimates \eqref{d17}, \eqref{d18} and absorb the terms
\begin{align}\label{d21}
\frac{C_K}{\sqrt{\gamma}}\left(\langle\dot V_{inc}\rangle_{s,\gamma}+\langle\partial_{\theta_0}\dot V_{out}\rangle_{s,\gamma}+|V_{inc,n},\partial_{\theta} V_{inc,n}|_{L^\infty}|\dot V_{inc}|_{s,\gamma}\right)
\end{align}
by taking $\gamma$ large, after observing that
\begin{align}\label{d22}
 |V_{inc,n},\partial_{\theta}V_{inc,n}|_{L^\infty}\leq C|V_{inc,n}|_{\mu_0}\leq C|V_{inc}|_{\mu_0}\text{ and }K=\langle V_{inc}\rangle_{W^{1,\infty}}\leq C|V_{inc}|_{\mu_0}.
\end{align}
\end{proof}

We now set $\tilde\alpha:=2\alpha-\alpha_0$ as in Proposition \ref{c8b} and choose constants $C(K_1)$, $\gamma(K_1)$
as in Proposition \ref{d19} corresponding to the interval $s\in[0,\tilde\alpha]$\footnote{The choice of $\tilde\alpha$ is
explained in section \ref{nashprof}.}. \emph{In the remainder of section \ref{tameprof} and also in section \ref{nashprof},
$\gamma$ is fixed as $\gamma=\gamma(K_1)$.}

To obtain a tame estimate we need to remove the terms depending on $\dot V_{inc}$ on the right side of \eqref{d20}. Let
\begin{align}\label{d23}
K_2=|V_{inc}|_{\mu_0,\gamma}+\langle V_{inc}\rangle_{\mu_0,\gamma}.
\end{align}
Applying \eqref{d20} with $s=\mu_0-1$ we obtain
\begin{align}\label{d24}
\sqrt{\gamma}|\dot V_{inc}|_{\mu_0-1,\gamma}+\sqrt{\gamma}\langle\dot V_{inc}\rangle_{\mu_0-1,\gamma}\leq
\frac{C(K_1)}{\sqrt{\gamma}}\left[|f|_{\mu_0,\gamma}+\langle g\rangle_{\mu_0-1,\gamma}+\left(|\dot V_{inc}|_{L^\infty}
+\langle\dot V_{inc}\rangle_{W^{1,\infty}}\right)K_2\right].
\end{align}
By Sobolev embedding if $K_2=K_2(\gamma,T)$ is chosen small enough, we can absorb the last term on the right in
\eqref{d24} and obtain with a new $C$
\begin{align}\label{d25}
|\dot V_{inc}|_{L^\infty}+\langle\dot V_{inc}\rangle_{W^{1,\infty}}\leq
C\left(|f|_{\mu_0,\gamma}+\langle g\rangle_{\mu_0-1,\gamma}\right)
\end{align}

For $\gamma$ fixed as above, setting $|U|_{s,\gamma}=|U|_s$ now and substituting \eqref{d25} in  \eqref{d20}, we
obtain the estimate in the following Proposition.

\begin{prop}[Tame estimate for the linearized system]\label{d26}
Let  $\mu_0 = [\frac{d+1}{2}] + 2$ and $s\in[0,\tilde\alpha]$.  There exists $\kappa=\kappa(\gamma,T)>0$ and a constant
$C$ depending on $\kappa$ such that if
\begin{align}\label{d27a}
 |V_{inc}|_{\mu_0}+\langle V_{inc}\rangle_{\mu_0}<\kappa,
 \end{align}
 then solutions of the linearized system \eqref{d9} satisfy
\begin{align}\label{d27}
|\dot V|_s+\langle\dot V\rangle_s\leq C\left[|f|_{s+1}+\langle g\rangle_s+\left(|f|_{\mu_0}+\langle g\rangle_{\mu_0-1}\right)
\left(|V|_{s+1}+\langle V\rangle_{s+1}\right)\right].
\end{align}
\end{prop}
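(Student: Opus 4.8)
The plan is to derive the tame estimate \eqref{d27} from the pre-tame estimate \eqref{d20} of Proposition \ref{d19} by the standard device of first bounding the low norms $|\dot V_{inc}|_{L^\infty}$ and $\langle \dot V_{inc}\rangle_{W^{1,\infty}}$ by the data, and then feeding that bound back into the high-norm estimate. First I would apply Proposition \ref{d19} with the fixed intermediate index $s=\mu_0-1$, where $\mu_0=[\tfrac{d+1}{2}]+2$; this yields
\begin{equation*}
\sqrt{\gamma}\,|\dot V_{inc}|_{\mu_0-1,\gamma}+\sqrt{\gamma}\,\langle\dot V_{inc}\rangle_{\mu_0-1,\gamma}\leq \frac{C(K_1)}{\sqrt{\gamma}}\Big(|f|_{\mu_0,\gamma}+\langle g\rangle_{\mu_0-1,\gamma}+\big(|\dot V_{inc}|_{L^\infty}+\langle\dot V_{inc}\rangle_{W^{1,\infty}}\big)K_2\Big),
\end{equation*}
where $K_2$ is the low-norm of $V_{inc}$ defined in \eqref{d23}. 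Here the hypothesis $|V_{inc}|_{\mu_0}\le K_1$ from Proposition \ref{d19} is guaranteed for $\gamma$ large by the smallness assumption \eqref{d27a} together with the definitions of the weighted norms, so the pre-tame estimate applies.

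Next, I would invoke Sobolev embedding (as recorded in Remark \ref{embed}, adapted to the boundary/profile spaces): since $\mu_0-1 = [\tfrac{d+1}{2}]+1$ controls the $W^{1,\infty}$ norm in the tangential variables and $\mu_0-1\ge[\tfrac{d+1}{2}]$ controls $L^\infty$, the left side of the displayed inequality dominates $\sqrt{\gamma}\,(|\dot V_{inc}|_{L^\infty}+\langle\dot V_{inc}\rangle_{W^{1,\infty}})$ up to a constant. Choosing $\kappa=\kappa(\gamma,T)$ in \eqref{d27a} small enough that $C(K_1)K_2/\gamma\le 1/2$, the term $(|\dot V_{inc}|_{L^\infty}+\langle\dot V_{inc}\rangle_{W^{1,\infty}})K_2$ on the right can be absorbed into the left side, giving exactly \eqref{d25}:
\begin{equation*}
|\dot V_{inc}|_{L^\infty}+\langle\dot V_{inc}\rangle_{W^{1,\infty}}\leq C\big(|f|_{\mu_0,\gamma}+\langle g\rangle_{\mu_0-1,\gamma}\big).
\end{equation*}

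Finally, I would substitute this low-norm bound back into the general pre-tame estimate \eqref{d20} for arbitrary $s\in[0,\tilde\alpha]$. The terms $|V_{inc}|_{s+1,\gamma}\,|\dot V_{inc}|_{L^\infty}$ and $\langle V_{inc}\rangle_{s+1,\gamma}\,\langle\dot V_{inc}\rangle_{W^{1,\infty}}$ on the right of \eqref{d20} are then each bounded by $C\,(|f|_{\mu_0,\gamma}+\langle g\rangle_{\mu_0-1,\gamma})\,(|V|_{s+1,\gamma}+\langle V\rangle_{s+1,\gamma})$, while the remaining terms on the right of \eqref{d20} are already of the form $C(|f|_{s+1,\gamma}+\langle g\rangle_{s,\gamma})$. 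Dividing by $\sqrt{\gamma}$ and recalling that $\gamma$ is now fixed as $\gamma(K_1)$ so weighted and unweighted norms are equivalent with $\gamma$-dependent constants (hence the notational convention $|U|_{s,\gamma}=|U|_s$ introduced just before the statement), we obtain precisely \eqref{d27}. The one point requiring a little care — and the main (though modest) obstacle — is arranging the constants so that the absorption in the middle step is legitimate: one must fix $K_1$ first (the bound on $|V_{inc}|_{\mu_0}$), then $\gamma=\gamma(K_1)$, and only then choose $\kappa$ small in terms of both; the smallness of $\kappa$ controls $K_2$, which is what makes $C(K_1)K_2/\gamma$ small. No genuinely new analysis is needed beyond Proposition \ref{d19}, the Moser/Sobolev inequalities, and bookkeeping of the order in which parameters are fixed.
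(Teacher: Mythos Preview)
Your derivation of the a priori estimate \eqref{d27} matches the paper's argument essentially line for line: the paper carries out exactly this computation in the text immediately preceding the proposition (equations \eqref{d23}--\eqref{d25} and the surrounding discussion), applying Proposition \ref{d19} at $s=\mu_0-1$, absorbing via Sobolev embedding under the smallness of $K_2$, and then substituting \eqref{d25} back into \eqref{d20}. Your bookkeeping of the order in which $K_1$, $\gamma$, and $\kappa$ are fixed is also correct.

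The one thing you omit is that the paper's proof, after noting that the a priori estimate has already been established, goes on to prove \emph{existence} of solutions to the linearized system \eqref{d9}. This is not a trivial point: because of the $\partial_{\theta_0}\dot\tau_2$ term in the boundary operator and the $\partial_{\theta_0}\dot a$ terms, the linearized system loses a derivative and cannot be solved by naive Picard iteration. The paper handles this by replacing $\partial_{\theta_0}$ (where it acts on $\dot\tau_2$ and $\dot a$ on the right) by a finite-difference operator $\partial_{\theta_0}^h$, solving the resulting system \eqref{dd1} by Picard iteration for each fixed $h$, deriving the analogue \eqref{dd27} of the tame estimate uniformly in $h$, and passing to a subsequence as $h\to 0$. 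Without this existence step, the proposition as used in the Nash-Moser scheme of section \ref{nashprof} (where one must actually \emph{solve} the linearized problem \eqref{e7} at each step) would be incomplete. Strictly speaking the proposition as stated only asserts an estimate for solutions, so your proof is not wrong; but the paper includes existence ``for completeness,'' and it is genuinely needed downstream.
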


\begin{proof}
We have proved the a priori estimate \eqref{d27} for sufficiently smooth solutions of the linearized system.
The existence of such solutions now follows by standard arguments, which we summarize here for completeness.

The unknown in the linearized system \eqref{d9} is $(\dot\sigma_1,\dot\sigma_3,\dot\tau_2,\dot a)$. We can solve
the linearized system by putting the terms that involve $\partial_{\theta_0}\dot\tau_2$ or $\partial_{\theta_0}\dot a$
on the right and replacing the operator $\partial_{\theta_0}$, when it acts on those terms, by a finite difference
operator $\partial_{\theta_0}^h$:
\begin{align}\label{dd1}
\begin{split}
&X_{\phi_1}\dot\sigma_1^h+c_1\dot\sigma_1^h=f_1\\
&X_{\phi_3}\dot\sigma_3^h+c_3\dot\sigma_3^h=f_2\\
&X_{\phi_2}\dot\tau_2^h+c_0\dot\tau_2^h=f_3-c_2\int_0^{2\pi}\sigma_{1,n_1}(x,\frac{n_2}{n_1}\theta_2+\frac{n_3}{n_1}\theta_3)\dot\sigma_3^h(x,\theta_3)d\theta_3-\\
&\quad \qquad\qquad c_2\int_0^{2\pi}\sigma_{3,n_3}(x,-\frac{n_2}{n_3}\theta_2+\frac{n_3}{n_1}\theta_1)\dot\sigma_1^h(x,\theta_1)d\theta_1\\
&X_{Lop}\dot a^h+c_4\dot a^h+2c_6\dot a^h\partial_{\theta_0}a=g-c_5\partial_{\theta_0}^h\dot\tau_2^h-2c_6a\partial_{\theta_0}^h\dot a^h.
\end{split}
\end{align}
For fixed $h\in (0,1]$ we can solve this system by Picard iteration, where $n$-th iterates appear on the right and
$(n+1)$-st iterates appear on the left.  All iterates are $0$ in $t<0$ and the  iterates with index zero are all $0$.

We then need an estimate that is uniform in $h$.  This can be done by repeating the existing proof of tame estimates,
using the operator $\partial_{\theta_0}^h$ in place of $\partial_{\theta_0}$.   This gives an estimate like \eqref{d27}:
 \begin{align}\label{dd27}
|\dot V^h|_s+\langle\dot V^h\rangle_s\leq C\left[|f|_{s+1}+\langle g\rangle_s+\left(|f|_{\mu_0}+\langle g\rangle_{\mu_0-1}\right)\left(|V|_{s+1}+\langle V\rangle_{s+1}\right)\right],
\end{align}
where $\dot V^h:=(\dot\sigma_1^h,\dot\sigma_3^h,\dot\tau_2^h,\dot a^h)$ and $C$ is uniform for $h\in (0,1]$.
Passing to a subsequence, we obtain the desired solution of the linearized system.
\end{proof}

\begin{rem}[Short time, given data]\label{d28}
\textup{For a given $T>0$ let $K_1$ and $\gamma=\gamma(K_1)$ be as in Proposition \ref{d19}. As we saw above
to obtain a tame estimate we need to take  $|\cV_{inc}|_{\mu_0}+\langle\cV_{inc}\rangle_{\mu_0}$ small.  In our
formulation of Theorem \ref{main} $T$ is fixed ahead of time and we achieve \eqref{d27a} by taking $G$ small in
an appropriate norm on $\Omega_T$. For a given $G$ as in \eqref{1} vanishing in $t<0$, another way to proceed
is to shrink $T$; that is, to work on $\Omega_{T_1}$ where $0<T_1<T$ is chosen so that $\gamma_1:=1/T_1\geq
\gamma(K_1)$ and so that
\begin{align}\label{d29}
|V_{inc}|_{H^{\mu_0}_{\gamma_1}(\Omega_{T_1})}+\langle V_{inc}\rangle_{H^{\mu_0}_{\gamma_1}(\omega_{T_1})}
\end{align}
is small enough to absorb the terms involving $\dot V_{inc}$ on the right in \eqref{d24}.  One again obtains an estimate
of the form \eqref{d27}, where now
\begin{align}\label{d30}
|U|_s:=|U|_{H^{s}_{\gamma_1}(\Omega_{T_1})}.
\end{align}
The  iteration scheme described in section \ref{nashprof} applies with no essential change  to this situation as well.}
\end{rem}

\subsection{The key subsystem in the general case}

Recall that $\{1,\dots,M\}=\cO\cup\cI$, where $\cO$ and $\cI$ contain the indices corresponding to outgoing and
incoming phases. We further decompose $\cO=\cO_1\cup \cO_2$, where $\cO_1$ consists of indices $m$ such
that $\phi_m$ is part of at least one triple of resonant phases with the property that the other two phases in that
triple are incoming. For a given $m\in\cO_1$ the phase $\phi_m$ might belong to more than one such triple.

Now instead of \eqref{c3}  we have
\begin{align}\label{g1}
\cV^0_{inc}=\sum_{m\in\cI}\sum_{k=1}^{\nu_{k_m}}\sigma_{m,k}(x,\theta_m)r_{m,k} \text{ and }
\cV^1_{out}=\sum_{m\in\cO_1}\sum_{k=1}^{\nu_{k_m}}\tau_{m,k}(x,\theta_m)r_{m,k},
\end{align}
since terms $\tau_{m,k}$ in the expansion of $\cV^1_{out}$ vanish if $m\in\cO_2$ as a consequence of \eqref{6}
and $\cV^0=\cV^0_{inc}$. Recalling that
\begin{align}\label{g1a}
e=\sum_{m\in\cI}\sum_{k=1}^{\nu_{k_m}}e_{m,k},\text{ where }e_{m,k}\in\mathrm{span} \{r_{m,k}\},
\end{align}
we see that in place of \eqref{c4} we now have
\begin{align}\label{g2}
\cV^0_{inc}|_{x_d=0;\;\theta_m=\theta_0,m\in I}=a(x',\theta_0)e=\sum_{m\in\cI}\sum_{k=1}^{\nu_{k_m}}a e_{m,k}=\sum_{m\in\cI}\sum_{k=1}^{\nu_{k_m}}\sigma_{m,k}(x',0,\theta_0)r_{m,k},
\end{align}
and thus
\begin{align}\label{g3}
\sigma_{m,k}(x',0,\theta_0)r_{m,k}=a(x',\theta_0) e_{m,k} \text{ for }m\in\cI,\; k=1,\dots,\nu_{k_m}.
\end{align}

Next we derive the formulas for $\cL(V)$ and $\cB(V)$ in the general case.  The unknown is now
\begin{align}\label{g4}
V=\left(\sigma_{m,k},m\in \cI,k=1,\dots,\nu_{k_m};\;\;\tau_{m,k},m\in\cO_1,k=1,\dots,\nu_{k_m};\;\;a\right)
\end{align}
Suppose $q\in\cO_1$ and that $(\phi_p,\phi_{q},\phi_s)$ is a resonant triple such that
\begin{align}
n_p\phi_p=n_{q}\phi_{q}+n_s\phi_s\text{ where } p,s\in \cI\text{ and }\mathrm{gcd}(n_p,n_{q},n_s)=1.
\end{align}
Applying the projectors $E_{m,k}$, $m\in\cI$, $k=1,\dots,\nu_{k_m}$ to \eqref{12}(a) and the projectors
$E_{q,l}$, $q\in\cO_1$, $l=1,\dots,\nu_{k_q}$ to \eqref{12}(b) we obtain
\begin{align}\label{g5}
\begin{split}
&\qquad\qquad \cL(V)=\\
&\begin{pmatrix}X_{\phi_{m}}\sigma_{m,k}+c_{m,k}\sigma_{m,k};\;m\in \cI,k=1,\dots,\nu_{k_m}\\X_{\phi_q}\tau_{q,l}
+c_{q,l}\tau_{q,l}+\sum_{k=1}^{\nu_{k_p}}\sum_{k'=1}^{\nu_{k_s}}d^{k,k'}_{q,l}\int^{2\pi}_0
(\sigma_{p,k})_{n_p}(x,\frac{n_q}{n_p}\theta_q+\frac{n_s}{n_p}\theta_s)\sigma_{s,k'}(x,\theta_s)d\theta_s+\\
(\mathrm{similar});q\in\cO_1, l=1,\dots,\nu_{k_q}\end{pmatrix}.
\end{split}
\end{align}
Here ``(similar)" denotes a finite sum of families of integrals similar to the family given explicitly in \eqref{g5}.
Here ``family" refers to the sum $\sum_{k,k'}$. One such family corresponds to each distinct resonant triple
involving the outgoing phase $\phi_q$ and two incoming phases.\footnote{We do not distinguish between
$(\phi_p,\phi_q,\phi_s)$ and $(\phi_p,\phi_s,\phi_q)$. We do distinguish between $(\phi_p,\phi_q,\phi_s)$ and
$(\phi_p,\phi_q,\phi_t)$.} The values of the real constants $c_{m,k}$, $d^{k,k'}_{q,l}$ are not important for our
analysis, but for example the $d^{k,k'}_{q,l}$ are given by\footnote{We have suppressed indices $r$, $s$ on
the $d^{k,k'}_{q,l}$.}
\begin{align}\label{g6}
d^{k,k'}_{q,l}=\frac{1}{2\pi}\ell_{q,l}\cdot[\partial_vD(0)(r_{p,k},r_{s,k'})+\partial_vD(0)(r_{s,k'},r_{p,k})].
\end{align}
By a computation similar to the one that produced \eqref{c6} we obtain from \eqref{12}(c)
\begin{align}\label{g7}
\cB(V)=X_{Lop}a+f_1 a+\sum_{q\in\cO_1}\sum_{l=1}^{\nu_{k_q}}f_{q,l}\partial_{\theta_0}\tau_{q,l}+f_2\partial_{\theta_0}(a^2).
\end{align}
for some real constants $f_1$, $f_2$, $f_{q,l}$.  For example, we have $f_2=-b\cdot[\psi'(0)(e,e)]$.
Thus, the system \eqref{12} may be rewritten
\begin{align}\label{g8}
\begin{split}
&\cL(V)=0\text{ in }\Omega_T\\
&\cB(V)=-b\cdot\partial_{\theta_0}G^*:=g\text{ on }b\Omega_T\\
&V=0\text{ in }t<0,
\end{split}
\end{align}
where the relations \eqref{g3} hold.

It is now a simple matter to write out the expressions for the first and second derivatives of $\cL$ and $\cB$.
For example, just as the interaction integral in \eqref{c7} gave rise to two integrals in the expression \eqref{d1} for
$\cL'(V)$ in the $3\times 3$ case, it is clear that each integral in \eqref{g5} will give rise to two integrals in the new
expression for $\cL'(V)$. The tame estimates for second derivatives are proved exactly as before, and Proposition
\ref{d5} holds verbatim in the general case. Proposition \ref{d10} is used exactly as before to prove estimates for
the linearized system. With the unknown $V$ as given in \eqref{g4} and after defining $V_{inc}$, $V_{out}$,
$\dot V_{inc}$, $\dot V_{out}$ in the obvious way, we see that the ``pre-tame" estimate of Proposition \ref{d19}
and the tame estimate of Proposition \ref{d26} hold verbatim in the general case. The iteration scheme of section
\ref{nashprof} depends only on the tame estimates. Thus, it applies here without change and Proposition \ref{c8b}
holds verbatim in the general case.

Once the key subsystem is solved, we can easily complete the solution of the full profile system \eqref{b8s}, \eqref{6}.
The precise result for the full system is proved below in Theorem \ref{mainprof}.

\newpage
\section{Error analysis}\label{error}

\emph{\quad} Here we carry out the error analysis sketched in section \ref{errori}, beginning with the proof of 
Proposition \ref{solve}.

\begin{proof}[Proof of Proposition \ref{solve}]
\textbf{1. Noncharacteristic modes.} We write
\begin{equation*}
F(x,\theta) =F_0(x) +\sum_{\alpha\notin \cC} F_\alpha(x) \, e^{i\alpha\cdot\theta} 
+\sum_{m=1}^M \sum_{\alpha \in \cC_m \setminus \{ 0\}} F_\alpha(x) \, e^{i\alpha\cdot\theta},
\end{equation*}
and recall that the sums are finite. Set
\begin{equation*}
n_\alpha=\sum_{j=1}^M\alpha_j\text{ and }\uomega=(\uomega_1,\dots,\uomega_M).
\end{equation*}
Since $EF=0$, we first note that $F_0$ vanishes. For any $\alpha$, there holds
\begin{equation*}
\left(F_\alpha(x)e^{i\alpha\cdot\theta}\right)|_{\theta \rightarrow (\theta_0,\xi_d)} =F_\alpha(x) \, 
e^{in_\alpha\theta_0+i(\alpha\cdot\uomega)\xi_d},
\end{equation*}
and when $\alpha\notin\cC$ we look for $U_\alpha(x)$ such that
\begin{align}\label{m4}
\cL_0(\partial_{\theta_0},\partial_{\xi_d})U_\alpha(x)e^{in_\alpha\theta_0+i(\alpha\cdot\uomega)\xi_d}=F_\alpha(x)e^{in_\alpha\theta_0+i(\alpha\cdot\uomega)\xi_d}.
\end{align}
This holds if and only if
\begin{equation*}
iL( n_\alpha\beta,\alpha\cdot\uomega)U_\alpha=F_\alpha.
\end{equation*}
The matrix on the left is invertible so we obtain a solution of \eqref{m4} for $\alpha\notin\cC$.

\textbf{2. Characteristic modes.} When $\alpha\in\cC_m\setminus \{ 0\}$ we have $\alpha\cdot\uomega 
=n_\alpha \, \uomega_m$, so
\begin{equation*}
\left( F_\alpha(x) \, e^{i\alpha\cdot\theta}\right)|_{\theta \rightarrow (\theta_0,\xi_d)} =F_\alpha(x) \, 
e^{in_\alpha(\theta_0+\uomega_m\xi_d)}.
\end{equation*}
We can write
\begin{equation*}
\sum_{\alpha\in\cC_m\setminus \{ 0\}} F_\alpha(x) \, e^{in_\alpha(\theta_0+\uomega_m\xi_d)} 
=\sum_{k\in\bZ\setminus \{ 0\}} \cF_{m,k}(x) \, e^{ik(\theta_0+\uomega_m\xi_d)} \, ,
\end{equation*}
where
\begin{equation*}
\cF_{m,k}(x) := \sum_{\{\alpha\in\cC_m\setminus 0, n_\alpha=k\}} F_\alpha(x).
\end{equation*}
Since $E_mF=0$, we have for each $k\in\bZ\setminus \{ 0\}$ that $P_m\cF_{m,k}(x)=0$, so now we look for 
$U_{m,k}(x)$ such that
\begin{equation*}
\cL_0(\partial_{\theta_0},\partial_{\xi_d}) U_{m,k}(x) \, e^{ik(\theta_0+\uomega_m\xi_d)} 
=(I-P_m) \, \cF_{m,k} \, e^{ik(\theta_0+\uomega_m\xi_d)}.
\end{equation*}
The latter relation holds if and only if
\begin{equation*}
iL(k\beta,k\uomega_m) \, U_{m,k}(x) =ikL(d\phi_m) \, U_{m,k}(x)=(I-P_m)\cF_{m,k}(x),
\end{equation*}
which is solvable even though $L(d\phi_m)$ is singular. Finally, we take
\begin{equation*}
\cU(x,\theta_0,\xi_d) =\sum_{\alpha\notin\cC} U_\alpha(x) \, e^{in_\alpha\theta_0+i(\alpha\cdot\uomega)\xi_d} 
+\sum_{m=1}^M \sum_{k \in \bZ \setminus \{ 0\}} U_{m,k}(x) \, e^{ik(\theta_0+\uomega_m\xi_d)} \, ,
\end{equation*}
which solves \eqref{27} as claimed.
\end{proof}

The existence theorems for profiles and for the exact solution to the singular system, Theorems \ref{mainprof} 
and \ref{k8b} respectively, are stated and proved in section \ref{iteration}; we shall only use the statement of these 
theorems here. In order to formulate the main result of this section we must make some preliminary choices.\\

\textbf{Choice of $\alpha$ and $\tilde\alpha$.} The conditions on the boundary datum $G(x',\theta_0)$ are slightly
different in Theorems \ref{mainprof} and \ref{k8b}. We need to choose $\alpha$, $\tilde\alpha$, and $G(x',\theta_0)$
so that both Theorems apply simultaneously. We also need $\alpha$ large enough so that we can apply Proposition
\ref{i14} in the step \eqref{m26} of the error analysis below. These conditions are  met if we take
\begin{align}\label{choice}
\alpha=\max\left(d+9, [(d+1)/2]+M_0+3\right)\text{ and } \tilde{\alpha}=2\alpha-[(d+1)/2]
\end{align}
and choose $G\in H^{\tilde{\alpha}}(b\Omega_T)$ such that $\langle G\rangle_{H^{\alpha+2}(b\Omega_T)}$ is small
enough. As in Theorem \ref{main}, we now rename the numbers in \eqref{choice} as $a$ and $\tilde a$.  Applying
Theorems \ref{mainprof} and \ref{k8b}, we now have for $0<\eps\leq \eps_0$ an exact solution $U_\eps(x,\theta_0)
\in E^{a-1}(\Omega_T)$ to the singular system \eqref{15} and profiles $\cV^0(x,\theta)\in H^{a-1}(\Omega_T)$,
$\cV^1(x,\theta)\in  H^{a-2}(\Omega_T)$ satisfying the equations \eqref{b8s} and \eqref{6}.\\

\textbf{Approximation.} Fix $\delta>0$. Using the Fourier series of $\cV^0$ and $\cV^1$, we choose trigonometric
polynomials $\cV^0_p(x,\theta)$ and $\cV^1_p(x,\theta)$ such that
\begin{align}\label{m10}
|\cV^0-\cV^0_p|_{H^{a-1}(\Omega_T)}<\delta,\quad |\cV^1-\cV^1_p|_{H^{a-2}(\Omega_T)}<\delta.
\end{align}
We can smooth the coefficients so that $\cV^0_p$ and $\cV^1_p$ lie in $H^\infty(\Omega_T)$ and so that \eqref{m10} 
still holds. Having made these choices, we can now state the main result of this section, which yields the final 
convergence result of Theorem \ref{main} as an immediate corollary.

\begin{theo}\label{main2}
We make the same Assumptions as in Theorem \ref{main} and let $a$ and $\tilde a$ be as just chosen.
Consider the leading order approximate solution to the singular semilinear system \eqref{15} given by
\begin{align}\label{m13a}
\cU^0_\eps(x,\theta_0):=\cV^0(x,\theta)|_{\theta \rightarrow (\theta_0,\frac{x_d}{\eps})},
\end{align}
and let $U_\eps(x,\theta_0)\in E^{a-1}(\Omega_T)$ be the exact solution to \eqref{15} just obtained. Then
\begin{align}\label{m14a}
\lim_{\eps\to 0}|U_\eps(x,\theta_0)-\cU^0_\eps(x,\theta_0)|_{E^{a-3}(\Omega_T)}=0.
\end{align}
\end{theo}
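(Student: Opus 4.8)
\textbf{Proof strategy for Theorem \ref{main2}.} The plan is to build the corrected approximate solution $\cU_\eps$ of \eqref{25} satisfying the improved singular equations \eqref{30}, and then apply the basic tame estimate of Proposition \ref{i14} to the difference $U_\eps-\cU_\eps$, concluding that it is $O(K\delta+C(\delta)\eps)$ in $E^{a-3}_{\gamma,T}$; letting first $\eps\to 0$ and then $\delta\to 0$ will give \eqref{m14a}. The step from $\cU_\eps$ to $\cU^0_\eps$ is elementary since $\cU_\eps-\cU^0_\eps=\eps\cV^1(x,\theta)|_{\theta\to(\theta_0,x_d/\eps)}+\eps^2\cU^2_p(x,\theta_0,x_d/\eps)$, and both correctors are uniformly bounded in $E^{a-3}(\Omega_T)$ (using that $\cV^1\in H^{a-2}$ and $\cU^2_p$ is a trigonometric polynomial with $H^{a-2}$ coefficients); hence $|\cU_\eps-\cU^0_\eps|_{E^{a-3}}=O(\eps)\to 0$.

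\textbf{Main steps.} First I would record the identity \eqref{24} and its analogue with the $\xi_d$-corrector, and use the profile equations \eqref{3}(a),(b) to check that $\cU^b_\eps$ satisfies \eqref{23} with error terms of the stated orders; the interior error $O(\eps)$ comes precisely from the $\eps^1$-coefficient in the expansion of $\bL_\eps(\cU^b_\eps)$, i.e.\ from $L(\partial)\cV^1+D(0)\cV^1+(\partial_vD(0)\cV^0)\cV^0$ evaluated after $\theta\to(\theta_0,x_d/\eps)$, together with Taylor-expansion remainders from $D(\eps\cU^b_\eps)$ and $\psi(\eps\cU^b_\eps)$. Second, I would replace $\cV^0,\cV^1$ by the trigonometric polynomial approximants $\cV^0_p,\cV^1_p$ of \eqref{m10}, and use Proposition \ref{solve} to solve
\begin{equation*}
\cL_0(\partial_{\theta_0},\partial_{\xi_d})\cU^2_p=\left[-(I-E)\left(L(\partial)\cV^1_p+D(0)\cV^1_p+(\partial_vD(0)\cV^0_p)\cV^0_p\right)\right]\big|_{\theta\to(\theta_0,\xi_d)},
\end{equation*}
which is legitimate because $EF=0$ for $F:=-(I-E)(\cdots)$ and the right-hand side is a finite Fourier sum after the polynomial truncation. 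Third, plugging $\cU_\eps$ into $\bL_\eps$ and using \eqref{24} again, the $\eps^0$-term in the expansion of $\eps^2\cU^2_p$ exactly cancels the non-$E$ part of the $O(1)$ interior error, leaving an error that is $O(\eps(K\delta+C(\delta)\eps))$: the $K\delta$ contribution measures how far $\cV^j_p$ is from $\cV^j$ in the relevant Sobolev norms (so it is controlled by \eqref{m10} via Moser/tame product estimates), while the $C(\delta)\eps$ contribution collects the genuinely $O(\eps)$ remainders, whose constants depend on Sobolev norms of $\cU^2_p$ and hence on $\delta$. The boundary error in \eqref{30}(b) is handled similarly using the boundary profile equations \eqref{4}, giving $O(\eps^2 C(\delta))$.

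\textbf{Applying the linear estimate.} With $\dot U_\eps:=U_\eps-\cU_\eps$, subtract the singular systems \eqref{15} and \eqref{30}; since $U_\eps,\cU_\eps$ are both $O(1)$ in $C^{0,M_0}$ uniformly in $\eps$ (here we need $a$ large enough that $E^{a-1}\subset C^{0,M_0}$, which is why $a$ was chosen as in \eqref{choice}), the difference solves a linearized problem of the form \eqref{i15} with coefficients $\cB(\eps U_\eps,\eps\cU_\eps)$, $\cD(\eps U_\eps,\eps\cU_\eps)$ (using the mean-value form of the second boundary/interior statements in Proposition \ref{i14}), forcing $f=O(K\delta+C(\delta)\eps)$ in $L^2H^{a-1}$ and $g=O(\eps C(\delta))$ in $H^{a-1}$. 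Proposition \ref{i14}, applied with $s=a-3$ and the fixed $\gamma=\gamma_0(K)$, then yields
\begin{equation*}
|U_\eps-\cU_\eps|_{E^{a-3}_{\gamma,T}}\le C(K)\left(\frac{|f|_{0,a-1,\gamma,T}}{\gamma^2}+\frac{\langle g\rangle_{a-1,\gamma,T}}{\gamma^{3/2}}\right)\le K'\delta+C'(\delta)\eps.
\end{equation*}
Combining with $|\cU_\eps-\cU^0_\eps|_{E^{a-3}}=O(\eps)$ and letting $\eps\to 0$ gives $\limsup_{\eps\to 0}|U_\eps-\cU^0_\eps|_{E^{a-3}(\Omega_T)}\le K'\delta$; since $\delta>0$ was arbitrary this proves \eqref{m14a}.

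\textbf{The main obstacle.} The delicate point is not the linear estimate itself but the bookkeeping in the second step: one must verify that after substituting $\cU_\eps$ the \emph{only} surviving $O(1)$ interior contribution is exactly the one killed by $\cL_0(\partial_{\theta_0},\partial_{\xi_d})\cU^2_p$, and that all the small-divisor pathology flagged in Remark \ref{rema}(b) is quarantined inside the polynomial truncation — i.e.\ that the error introduced by passing from $\cV^j$ to $\cV^j_p$ enters only through the harmless $K\delta$ term and does not interact badly with the $1/\eps$ factors hidden in $\mathbb{A}(\partial_{x'}+\beta\partial_{\theta_0}/\eps)$. This requires carefully tracking which terms are differentiated by the singular operator and invoking the identity $\bL_\eps(f|_{\theta\to(\theta_0,x_d/\eps)})=\eps^{-1}(\cL(\partial_\theta)f)|_{\cdots}+\cdots$ so that the $\eps^{-1}$ contribution vanishes by \eqref{3}(a) and the $\eps^0$ contribution is controlled; the trigonometric-polynomial structure of $\cU^2_p$ is what makes the remaining terms manifestly $O(\eps)$ with $\delta$-dependent constants. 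I expect this combinatorial verification, together with the Moser estimates needed to turn \eqref{m10} into bounds on the nonlinear error terms, to be the most technical part; everything downstream is a direct application of Proposition \ref{i14}.
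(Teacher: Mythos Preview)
Your overall strategy matches the paper's proof: construct the corrector $\cU^2_p$ via Proposition \ref{solve} applied to the polynomial truncations, verify the improved error bounds \eqref{30}, rewrite the difference problem using the two-argument linearization $\cB(\eps U_\eps,\eps\cU_\eps)$, $\cD(\eps U_\eps,\eps\cU_\eps)$, and invoke Proposition \ref{i14}. Your identification of the main obstacle (quarantining the small-divisor issue inside the polynomial truncation) is also correct; the paper handles this exactly as you describe, with the additional observation that the ``$B$'' piece of the $\eps^1$-coefficient equals $(EF_p)|_{\theta\to(\theta_0,x_d/\eps)}=(E(F_p-F))|_{\theta\to(\theta_0,x_d/\eps)}$ since $EF=0$ by \eqref{6}(b), whence it is $O(\delta)$ by continuity of $E$.

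There is, however, a regularity miscount in your final step. Proposition \ref{i14} has no parameter $s$; it gives only the $E^0$ estimate \eqref{i16}, with $|f|_{0,2}$ and $\langle g\rangle_2$ on the right. Moreover, the interior forcing is \emph{not} controlled in $L^2H^{a-1}$ as you claim: since $\cV^1\in H^{a-2}$, the term $L(\partial)(\cV^1-\cV^1_p)$ lies only in $H^{a-3}$, and after the substitution $\theta\to(\theta_0,x_d/\eps)$ (Lemma \ref{m11} costs one derivative) one gets $|R_\eps|_{E^{a-4}}\le\eps(K\delta+C(\delta)\eps)$, i.e.\ $|f|_{0,a-3}$ at best. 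Even the higher-order estimate of Proposition \ref{i17} would then yield only $E^{a-5}$, not $E^{a-3}$.

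The paper fixes this by applying Proposition \ref{i14} at the bottom level to obtain $|U_\eps-\cU_\eps|_{E^0}\le K\delta+C(\delta)\eps$, hence $|U_\eps-\cU^0_\eps|_{E^0}\to 0$; it then observes that the family $U_\eps-\cU^0_\eps$ is uniformly bounded in $E^{a-2}(\Omega_T)$ and interpolates to deduce convergence in $E^{a-3}$. You should replace your direct high-norm application of Proposition \ref{i14} with this $E^0$ estimate plus interpolation.
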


The following lemma, which is proved in \cite{CGW1}, Lemmas 2.7 and 2.25, by a simple argument based on Fourier
series, is an important tool in the proof.

\begin{lem}[Relation between norms]\label{m11}
For $m\in\bN$ suppose $f(x,\theta_j)\in H^{m+1}(\Omega_T)$, and set $f_\eps(x,\theta_0)
=f(x,\theta_0+\uomega_j\frac{x_d}{\eps})$. Then
\begin{align}\label{m12}
|f_\eps|_{E^m_T}\leq C|f|_{H^{m+1}(\Omega_T)}.
\end{align}
\end{lem}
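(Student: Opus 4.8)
The plan is to exploit the fact that the change of variables $\theta_j = \theta_0+\uomega_j x_d/\eps$ acts, at each fixed normal level $x_d=c$, simply as the rotation $R_c:\theta_0\mapsto\theta_0+\uomega_j c/\eps$ of the circle $\bT^1$, and that such a rotation is an \emph{isometry} of $H^s(b\Omega_T)$ for \emph{every} $s$: it commutes with all tangential derivatives $\partial_{x'},\partial_{\theta_0}$, is unitary on $L^2(b\Omega_T)$, and commutes with the cutoff $\{t<T\}$ (which lives in a different variable). The key structural observation is that the norm $|\cdot|_{E^m_T}=|\cdot|_{\infty,m,T}+|\cdot|_{0,m+1,T}$ sees only \emph{tangential} regularity in $(x',\theta_0)$ together with a slicing in $x_d$ (a sup, resp. an $L^2$-integral, of tangential norms of the $x_d$-slices); it never differentiates in $x_d$. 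This is precisely what makes the bound $\eps$-uniform: a $\partial_{x_d}$ falling on $f_\eps$ would produce a factor $\uomega_j/\eps$ times a $\theta_0$-derivative and ruin any $H^{m+1}(\Omega_T)$ bound for $f_\eps$, but no such derivative enters $|\cdot|_{E^m_T}$.

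\textbf{Main steps.} First I would fix $c\ge 0$, set $g_c(x',\theta_0):=f(x',c,\theta_0)$, and note $f_\eps(\cdot,c,\cdot)=R_c\,g_c$; since $R_c$ is an isometry of $H^s(b\Omega_T)$, one gets $|f_\eps(\cdot,c,\cdot)|_{H^s(b\Omega_T)}=|f(\cdot,c,\cdot)|_{H^s(b\Omega_T)}$ for all $c\ge 0$ and all $s\ge 0$. Taking $s=m$ and the supremum over $c$ yields $|f_\eps|_{\infty,m,T}=|f|_{\infty,m,T}$; taking $s=m+1$, squaring and integrating in $c\in[0,\infty)$ yields $|f_\eps|_{0,m+1,T}=|f|_{0,m+1,T}$; hence $|f_\eps|_{E^m_T}=|f|_{E^m_T}$. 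It then remains to bound $|f|_{E^m_T}$ by $|f|_{H^{m+1}(\Omega_T)}$. The $L^2H^{m+1}$ piece is immediate: retaining in the definition of $|\cdot|_{H^{m+1}(\Omega_T)}$ only the summands carrying no $x_d$-derivative gives $|f|_{0,m+1,T}\le|f|_{H^{m+1}(\Omega_T)}$. For the $CH^m$ piece one invokes the standard anisotropic trace/embedding theorem $H^{m+1}(\overline{\bR}^{d+1}_+\times\bT^1)\hookrightarrow C(\overline{\bR}_+;H^m(\bR^d\times\bT^1))$ (restricting to $\{t<T\}$ does not affect the $x_d$-trace), giving $|f|_{\infty,m,T}\le C|f|_{H^{m+1}(\Omega_T)}$. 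Adding the two bounds gives the claim with $C$ independent of $\eps$.

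\textbf{Points needing a word, and the (mild) main obstacle.} Two things deserve a remark, neither of them a genuine difficulty. First, one should check $f_\eps\in CH^m_T$, i.e. continuity of $c\mapsto f_\eps(\cdot,c,\cdot)$ into $H^m(b\Omega_T)$: here the shear parameter itself varies with $c$, but writing $f_\eps(\cdot,c,\cdot)-f_\eps(\cdot,c',\cdot)=R_c\bigl(f(\cdot,c,\cdot)-f(\cdot,c',\cdot)\bigr)+(R_c-R_{c'})f(\cdot,c',\cdot)$ and combining continuity of $c\mapsto f(\cdot,c,\cdot)$ into $H^m$ (trace theorem again) with the strong continuity of $c\mapsto R_c$ on $H^m(b\Omega_T)$ settles it. Second, the whole argument is at the level of norm identities, so the Fourier-series picture $f=\sum_k f_k(x)e^{ik\theta_j}$, $f_\eps=\sum_k f_k(x)e^{ik\theta_0}e^{ik\uomega_j x_d/\eps}$ is only an optional bookkeeping device; it makes the isometry statement transparent, since each mode merely acquires the modulus-one factor $e^{ik\uomega_j x_d/\eps}$. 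The ``main obstacle'' is therefore only a conceptual one — recognizing that $|\cdot|_{E^m_T}$ is built entirely from rotation-invariant tangential norms sliced in $x_d$ — after which the estimate reduces to the classical anisotropic Sobolev embedding.
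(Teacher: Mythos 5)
Your proof is correct and follows essentially the route the paper indicates (it only cites \cite{CGW1}, Lemmas 2.7 and 2.25, describing the argument as ``a simple argument based on Fourier series''): each Fourier mode of $f_\eps$ differs from that of $f$ by the unimodular factor ${\rm e}^{ik\uomega_j x_d/\eps}$, so the tangential $H^s(b\Omega_T)$ norms of the $x_d$-slices are preserved — your ``rotation isometry'' phrasing of the same fact — and the bound then reduces to the elementary one-dimensional trace embedding $H^{m+1}(\Omega_T)\subset C(\overline{\bR}_+;H^m(b\Omega_T))$ together with the trivial inequality $|f|_{0,m+1,T}\le |f|_{H^{m+1}(\Omega_T)}$. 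Your remarks on continuity in $x_d$ and on why no $\eps^{-1}$ appears (the $E^m_T$ norm involves no normal derivative) are exactly the right points to flag.
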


\begin{proof}[Proof of Theorem \ref{main2}]
We shall fill in the sketch provided in section \ref{errori}.

\textbf{1. } First we use Proposition \ref{solve} to construct $\cU^2_p(x,\theta_0,\xi_d)$ satisfying
\begin{align}\label{m13}
\cL_0(\partial_{\theta_0},\partial_{\xi_d})\cU^2_p =\left[ -(I-E)\left(L(\partial)\cV^1_p+D(0)\cV^1_p 
+\partial_vD(0) (\cV^0_p,\cV^0_p) \right) \right]|_{\theta \rightarrow (\theta_0,\xi_d)}.
\end{align}
The function $\cU^2_p$ is  a trigonometric polynomial of the form \eqref{28} with $H^\infty$ coefficients. We then 
define the corrected approximate solution
\begin{align}\label{m14}
\cU_\eps(x,\theta_0) :=\left( \cV^0(x,\theta)+\eps\cV^1(x,\theta) \right)|_{\theta \rightarrow (\theta_0,\frac{x_d}{\eps})} 
+\eps^2 \, \cU^2_p(x,\theta_0,\frac{x_d}{\eps}).
\end{align}
Since $\cV^1\in H^{a-2}(\Omega_T)$, Lemma \ref{m11} implies $\cU_\eps\in E^{a-3}(\Omega_T)$.

\textbf{2. } Next we explain \eqref{30} and make precise the norms used on the right there. Using the identity 
\eqref{24} we compute
\begin{equation}\label{m15}
\bL_\eps(\cU_\eps) =\eps \, \left[ (\cL_0(\partial_{\theta_0},\partial_{\xi_d})\cU^2_p)|_{\xi_d=\frac{x_d}{\eps}} 
+\left(L(\partial)\cV^1+D(0)\cV^1+\partial_vD(0)\cV^0\cV^0\right)|_{\theta \rightarrow (\theta_0,\frac{x_d}{\eps})} 
\right] +O(\eps^2).
\end{equation}
Here the profile equations \eqref{3}(a),(b) imply that the terms of order $\eps^{-1}$ and $\eps^0$ vanish.
Using \eqref{m13} we can rewrite the  coefficient of $\eps$ in \eqref{m15} as
\begin{multline}\label{m16}
\left[ L(\partial)(\cV^1-\cV^1_p)+D(0)(\cV^1-\cV^1_p) +\partial_vD(0)(\cV^0\cV^0-\cV^0_p\cV^0_p) \right] 
|_{\theta \rightarrow (\theta_0,\frac{x_d}{\eps})} \\
+\left[ E\left(L(\partial)\cV^1_p+D(0)\cV^1_p+\partial_vD(0)\cV^0_p\cV^0_p\right) \right] 
|_{\theta \rightarrow (\theta_0,\frac{x_d}{\eps})} :=A+B \, .
\end{multline}
Using \eqref{m10}, Lemma \ref{m11}, and the fact that $E^s(\Omega_T)$ is a Banach algebra for $s\geq [(d+1)/2]$, 
we see that
\begin{align}\label{m17}
|A|_{E^{a-4}(\Omega_T)}<K\delta.
\end{align}

To estimate $B$ let
\begin{align}\label{m17a}
F=L(\partial)\cV^1+D(0)\cV^1+\partial_vD(0)\cV^0\cV^0 \text{ and }F_p 
=L(\partial)\cV^1_p+D(0)\cV^1_p+\partial_vD(0)\cV^0_p\cV^0_p.
\end{align}
The profile equation \eqref{6}(b) implies $EF=0$. Using continuity of the multiplication map \eqref{b4c}, we see that 
\eqref{m10} implies\footnote{Here $H^{a-3;2}_T$ denotes the space defined in \eqref{b4a}, but with the obvious 
restriction on the domain of $t$.}
\begin{align}\label{m17c}
|F-F_p|_{H^{a-3;2}_T}<K \delta.
\end{align}
From  the continuity of $E:H^{s;2}_T\to H^{s;1}_T$ and  Lemma \ref{m11} we then obtain
\begin{align}\label{m17b}
|B|_{E^{a-4}(\Omega_T)}=\left|(EF_p)|_{\theta \rightarrow (\theta_0,\frac{x_d}{\eps})}\right|_{E^{a-4}(\Omega_T)} 
=\left|(E(F-F_p))|_{\theta \rightarrow (\theta_0,\frac{x_d}{\eps})}\right|_{E^{a-4}(\Omega_T)}<K\delta.
\end{align}

\textbf{3. }The $O(\eps^2)$ terms in \eqref{m15} consist of
\begin{align}\label{m18}
\left| \eps^2 \, \left(L(\partial)\cU^2_p(x,\theta_0,\xi_d)\right)|_{\theta \rightarrow (\theta_0,\frac{x_d}{\eps})} 
\right|_{E^{a-4}(\Omega_T)}\leq \eps^2 C(\delta),
\end{align}
as well as terms coming from the Taylor expansion of $D(\eps\cU_\eps)\cU_\eps)$ like $(\eps^2 \partial_vD(0) 
\cV^0\cV^1)|_{\theta \rightarrow (\theta_0,\frac{x_d}{\eps})}$, all of which satisfy an estimate like \eqref{m18}. 
Setting $R_\eps(x,\theta_0):=\bL_\eps(\cU_\eps)$, we have shown
\begin{align}\label{m19}
|R_\eps|_{E^{a-4}(\Omega_T)}\leq \eps(K\delta+C(\delta)\eps).
\end{align}

\textbf{4. }The boundary profile equations \eqref{4} and the fact that the traces of $\cV^0$ and $\cV^1$ lie in 
$H^{a-1}(b\Omega_T)$ and $H^{a-2}(b\Omega_T)$, respectively,  imply
\begin{align}\label{m20}
\left\langle r_\eps(x',\theta_0)\right\rangle_{H^{a-2}(b\Omega_T)} \leq C(\delta) \, \eps^2,\text{ where } 
r_\eps :=\psi(\eps\cU_\eps) \, \cU_\eps -\eps G(x',\theta_0).
\end{align}
Indeed, these $O(\eps^2)$ terms include
\begin{align}\label{m21}
\left\langle \eps^2 \, B \, \cU^2_p(x',0,\theta_0,0)\right\rangle_{H^{a-2}(b\Omega_T)} \leq C(\delta) \eps^2 \, ,
\end{align}
and other terms satisfying the same estimate coming from the Taylor expansion of $\psi(\eps\cU_\eps)\cU_\eps$.

\textbf{5. }Next we consider the singular problem satisfied by the difference $W_\eps:=U_\eps-\cU_\eps$:
\begin{align}\label{m22}
\begin{split}
&\partial_d W_\eps +\bA\left(\partial_{x'}+\frac{\beta\partial_{\theta_0}}{\eps}\right)W_\eps 
+D_2(\eps U_\eps,\eps \cU_\eps)W_\eps=-R_\eps\\
&\psi_2(\eps U_\eps,\eps \cU_\eps)W_\eps=-r_\eps\text{ on }x_d=0\\
&W_\eps=0\text{ in }t<0,
\end{split}
\end{align}
where
\begin{align}\label{m23}
\begin{split}
&D_2(\eps U_\eps,\eps \cU_\eps)W_\eps:=D(\eps U_\eps)U_\eps-D(\eps \cU_\eps)\cU_\eps=\\
&\qquad D(\eps U_\eps)W_\eps +\left(\int^1_0\partial_vD(\eps\cU_\eps+s\eps (U_\eps-\cU_\eps))ds\right) \,  
(W_\eps,\eps\cU_\eps) \, ,
\end{split}
\end{align}
and $\psi_2(\eps U_\eps,\eps \cU_\eps)W_\eps$ is defined similarly.   Since $U_\eps\in E^{a-1}(\Omega_T)$ and 
$\cU_\eps\in E^{a-3}(\Omega_T)$ a short computation shows
\begin{align}\label{m24}
\psi_2(\eps U_\eps,\eps \cU_\eps)W_\eps=\psi(\eps U_\eps)W_\eps+\partial_v\psi(\eps U)(W_\eps,\eps\cU_\eps) 
+O(C(\delta)\eps^2) = \cB(\eps U,\eps \cU)W_\eps+O(C(\delta)\eps^2),
\end{align}
where the error term is measured in $H^{a-3}(b\Omega_T)$ and $\cB$ is defined in \eqref{ca2}.  Similarly,
\begin{align}\label{m24z}
D_2(\eps U_\eps,\eps \cU_\eps)W_\eps= \cD(\eps U,\eps \cU)W_\eps+O(C(\delta)\eps^2) \text{ in }E^{a-3}(\Omega_T).
\end{align}
Thus, using \eqref{m19} and \eqref{m20} we find
\begin{align}\label{m25}
\begin{split}
&\partial_d W_\eps+\bA\left(\partial_{x'}+\frac{\beta\partial_{\theta_0}}{\eps}\right)W_\eps 
+\cD(\eps U_\eps,\eps \cU_\eps)W_\eps=\eps(K\delta+C(\delta)\eps)\text{ in }E^{a-4}(\Omega_T)\\
&\cB(\eps U_\eps,\eps \cU_\eps)W_\eps|_{x_d=0}=O(C(\delta)\eps^2)\text{ in }H^{a-3}(b\Omega_T)\\
&W_\eps=0\text{ in }t<0.
\end{split}
\end{align}
Applying the estimate of Proposition \ref{i14} we obtain
\begin{align}\label{m26}
|W_\eps|_{E^0(\Omega_T)}\leq K\delta+C(\delta)\eps,
\end{align}
which implies
\begin{align}
|U_\eps-\cU^0_\eps|_{E^0(\Omega_T)}\leq K\delta+C(\delta)\eps.
\end{align}
Fixing first $\delta$ small and then letting $\eps\to 0$ we have shown
\begin{align}\label{m27}
\lim_{\eps\to 0}|U_\eps-\cU^0_\eps|_{E^0(\Omega_T)}=0.
\end{align}
The family $U_\eps-\cU^0_\eps$, $0<\eps\leq \eps_0$, is bounded in $E^{a-2}(\Omega_T)$, so by interpolation
\eqref{m27} implies
\begin{align}\label{m28}
\lim_{\eps\to 0}|U_\eps-\cU^0_\eps|_{E^{a-3}(\Omega_T)}=0
\end{align}
as required.
\end{proof}

\section{Nash-Moser schemes}\label{iteration}

\subsection{Iteration scheme for profiles}\label{nashprof}

\qquad A good reference for the Nash-Moser scheme is \cite{AG}.  The method depends on having a family of smoothing operators with the following properties.   For $T>0$, $s\geq 0$, and $\gamma\geq 1$, we let:
\begin{align}\label{e1}
F^s_\gamma(\Omega_T):=\{u\in H^s_\gamma(\Omega_T), u=0 \text{ for }t<0\}.
\end{align}

\begin{lem}[\cite{A}, section 4]\label{e2}
There exists  a family of operators $S_\theta:F^0_\gamma(\Omega_T)\to\cap_{\beta\geq 0}F^\beta_\gamma(\Omega_T)$ such that:
\begin{align}\label{e3}
\begin{split}
&(a)\;|S_\theta u|_\beta\leq C\theta^{(\beta-\alpha)_+}|u|_\alpha\text{ for all }\alpha,\beta\geq 0\\
&(b)\;|S_\theta u-u|_\beta\leq C\theta^{(\beta-\alpha)}|u|_\alpha,\;\;0\leq\beta\leq \alpha\\
&(c)\;|\frac{d}{d\theta} S_\theta u|_\beta\leq C\theta^{(\beta-\alpha-1)}|u|_\alpha\text{ for all }\alpha,\beta\geq 0.
\end{split}
\end{align}
The constants are uniform for $\alpha$, $\beta$ in a bounded interval.

There is another family of operators $\tilde S_\theta$ acting on functions defined on the boundary and satisfying the above properties with respect to the norms $\langle u\rangle_s$ on $b\Omega_T$.\footnote{For $u$ defined on $\Omega_T$ we do not necessarily have equality of $(S_\theta u)|_{x_d=0}$ and $\tilde S_\theta (u|_{x_d=0}).$}.

\end{lem}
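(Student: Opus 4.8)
The operators $S_\theta$ and $\tilde S_\theta$ are constructed in \cite{A}; I recall the plan, which is a Friedrichs-mollifier argument adapted to three structural constraints: in the interior case functions live on the half-space $\{x_d\ge 0\}$, in both cases they must vanish for $t<0$, and the norms $H^s_\gamma$ carry the weight $e^{\gamma t}$. The idea is to reduce to a translation-invariant model on $\mathbb{R}^{d+1}\times\mathbb{T}^1$ by Stein-type extensions, to smooth there by convolution with a rescaled mollifier chosen so that the $\{t\ge 0\}$ support condition is preserved, and to restrict back to $\Omega_T$.

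\textbf{Reduction and construction.} Given $u\in F^0_\gamma(\Omega_T)$, first extend $u$ across $\{t=T\}$ by a Stein extension in $t$; since $u$ already vanishes for $t<0$ and the extension only alters values for $t>T$, the extended function still vanishes for $t<0$ and is bounded in every $H^\beta$ norm by $|u|_{H^\beta(\Omega_T)}$ with constants independent of $T$. Next extend across $\{x_d=0\}$ by a Stein extension in $x_d$, which does not touch the $t$ variable, obtaining $\tilde u$ on $\mathbb{R}^{d+1}\times\mathbb{T}^1$ with $\tilde u=0$ for $t<0$ and $|\tilde u|_{H^\beta}\le C|u|_{H^\beta(\Omega_T)}$ for all $\beta\ge 0$. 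On the model domain set $S_\theta^{\mathrm m}:=\Pi_\theta\circ M_\theta$, where $M_\theta v:=v*(\theta^d\rho(\theta\,\cdot))$ is convolution in $(t,y)\in\mathbb{R}^d$ with $\rho\in C_c^\infty(\mathbb{R}^d)$ normalized by $\int\rho=1$, having vanishing moments $\int z^\alpha\rho\,dz=0$ for $1\le|\alpha|\le N$ (with $N$ larger than any index difference $\alpha-\beta$ occurring), and crucially supported in $\{t\ge 0\}$; and $\Pi_\theta v:=\sum_k\chi(k/\theta)\widehat v(k)e^{ik\theta_0}$ is a Fourier-series multiplier in the periodic variable, $\chi\in C_c^\infty$ with $\chi\equiv 1$ near $0$. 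Finally put $S_\theta u:=(S_\theta^{\mathrm m}\tilde u)\big|_{\Omega_T}$. Since $\mathrm{supp}\,\rho\subset\{t\ge 0\}$, convolution preserves vanishing for $t<0$, and $\Pi_\theta$ does not act on $t$, so $S_\theta u\in\cap_{\beta\ge 0}F^\beta_\gamma(\Omega_T)$.

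\textbf{The estimates.} Properties (a)--(c) reduce, after the $\beta$-uniform extensions and restriction, to the corresponding properties of $M_\theta$ and $\Pi_\theta$. On the Fourier side these are multipliers by $\widehat\rho(\xi/\theta)$ and $\chi(k/\theta)$; the Schwartz bound $|\widehat\rho(\xi/\theta)|\langle\xi\rangle^\beta\lesssim\theta^{(\beta-\alpha)_+}\langle\xi\rangle^\alpha$ (and its analogue for $\chi$) gives (a); the vanishing moments of $\rho$, resp. of $1-\chi$ near $0$, give (b); and differentiating the scaling, $\tfrac{d}{d\theta}\bigl(\theta^d\rho(\theta z)\bigr)=\theta^{-1}\,\theta^d\widetilde\rho(\theta z)$ with $\widetilde\rho:=\mathrm{div}(z\rho)$ of mean zero, produces the extra factor $\theta^{-1}$ needed for (c). The weight is handled by conjugation: $e^{-\gamma t}M_\theta v=M_\theta^{(\gamma)}(e^{-\gamma t}v)$, where $M_\theta^{(\gamma)}$ has kernel $\theta^d\rho(\theta(x-x'))$ times $e^{-\gamma(t-t')}\in(0,1]$ on the support $\{0\le t-t'\le 1/\theta\}$; hence $M_\theta^{(\gamma)}$ obeys the same $L^1$-type bounds uniformly in $\gamma$, and its moments differ from those of $M_\theta$ by $O(\gamma/\theta)$, which is harmless since in the applications $\gamma$ is fixed and one uses $\theta\ge\gamma$ (alternatively one builds a $\gamma$-indexed mollifier from the outset). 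Thus all constants in (a)--(c) are uniform in $\gamma$ and $T$. The boundary family $\tilde S_\theta$ is obtained by the same recipe on $b\Omega_T=\mathbb{R}^d_{t,y}\times\mathbb{T}^1$, with no $x_d$ variable and hence no extension in $x_d$. Note that $(S_\theta u)|_{x_d=0}$ need not equal $\tilde S_\theta(u|_{x_d=0})$, since the $x_d$-extension and the $(t,y)$-convolution mix in interior values of $u$.

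\textbf{Main difficulty.} The only non-routine point is the interplay of the causality constraint — which forbids a symmetric mollifier in $t$ and forces $\mathrm{supp}\,\rho\subset\{t\ge 0\}$ — with the weight $e^{\gamma t}$ built into $H^s_\gamma$: keeping the constants in (a)--(c) uniform in $\gamma$ requires the conjugation bookkeeping just sketched (or an explicitly $\gamma$-dependent family of smoothing operators). This is exactly what is carried out in \cite{A}, \S 4; the remaining steps are the standard Friedrichs-mollifier and Stein-extension estimates.
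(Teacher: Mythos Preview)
The paper does not give its own proof of this lemma; it simply cites \cite{A}, section~4. Your sketch of the mollifier construction is the standard one and is essentially correct in spirit, but there is one genuine gap.

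Your operator $S_\theta^{\mathrm m}=\Pi_\theta\circ M_\theta$ smooths only in the tangential variables $(t,y)\in\R^d$ and in the periodic variable $\theta_0$; the normal variable $x_d$ is never touched. Since the norms $|\cdot|_s$ in this section are the \emph{full} Sobolev norms $H^s_\gamma(\Omega_T)$ (not the anisotropic $E^s$ norms of section~\ref{nashex}), your $S_\theta u$ does not gain regularity in $x_d$: for $u\in F^0_\gamma$ only, $\partial_{x_d}S_\theta u$ need not lie in $L^2$, so $S_\theta u\notin F^1_\gamma$, and property~(a) fails already for $\beta=1$. In effect you have reproduced the construction the paper uses for Lemma~\ref{j2} (where $x_d$ is explicitly treated as a parameter because the $E^s$ norms only measure tangential regularity), but that construction is not adequate here.

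The fix is immediate: after the Stein extension across $\{x_d=0\}$, take $\rho\in C_c^\infty(\R^{d+1})$ supported in $\{t\ge 0\}$ (no support constraint in $x_d$ is needed) and convolve in \emph{all} the Euclidean variables $(t,y,x_d)$, with the scaling $\theta^{d+1}\rho(\theta\,\cdot)$. The remaining ingredients of your argument---the Stein extensions, the one-sided mollifier in $t$ to preserve the vanishing in $\{t<0\}$, the vanishing-moments device for~(b), and the conjugation argument to handle the weight $e^{\gamma t}$ (with $\gamma$ fixed here)---are all correct.
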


\textbf{Description of the scheme. } Our goal is to solve the problem \eqref{c8}:
\begin{align}\label{e2a}
\begin{split}
&\cL(V)=0\text{ in }\Omega_T\\
&\cB(V)=g\text{ in }b\Omega_T\\
&V=0\text{ in }t\leq 0.
\end{split}
\end{align}
The scheme starts with $V_0=0$.  Assume that $V_k$ are already given for $k=1,\dots,n$ and satisfy $V_k=0$ for $t<0$.  We define
\begin{align}\label{e4}
V_{n+1}=V_n+\dot V_n,
\end{align}
where the increment $\dot V_n$ is specified below.  Given $\theta_0\geq 1$, we set $\theta_n:=(\theta_0^2+n)^{1/2}$ and work with the smoothing operators $S_{\theta_n}$.   We decompose
\begin{align}\label{e5}
\cL(V_{n+1})-\cL(V_{n})=\cL'(V_n)\dot V_n+e_n'=\cL'(S_{\theta_n}V_n)\dot V_n+e_n'+e_n'',
\end{align}
where $e_n'$ denotes the usual ``quadratic error" of Newton's scheme and $e_n''$ the ``substitution error".  Similarly,
\begin{align}\label{e6}
\begin{split}
&\cB((V_{n+1})|_{x_d=0})-\cB((V_{n})|_{x_d=0})=\cB'((V_n)|_{x_d=0}))(\dot V_n|_{x_d=0}))+e_n'=\\
&\qquad\qquad\cB'((S_{\theta_n}V_n)|_{x_d=0})(\dot V_n|_{x_d=0})+\tilde e_n'+\tilde e_n''.
\end{split}
\end{align}

  The increment $\dot V_n$ is computed by solving the linearized problem
\begin{align}\label{e7}
\begin{split}
&\cL'(S_{\theta_n}V_n)\dot V_n=f_n\\
&\cB'((S_{\theta_n}V_n)|_{x_d=0})(\dot V_n|_{x_d=0})=g_n\\
&\dot V_n=0\text{ in }t<0,
\end{split}
\end{align}
where $f_n$ and $g_n$ are computed as we now describe.

  We set $e_n:=e_n'+e_n''$ and $\tilde e_n:=\tilde e_n'+\tilde e_n''$.   Given
\begin{align}\label{e8}
\begin{split}
&V_0:=0, \;\;f_0:=0,\;\; g_0:=\tilde S_{\theta_0}g, \;\;E_0:=0, \;\;\tilde E_0:=0,\\
&V_1,\dots,V_n\\
&f_1,\dots,f_{n-1},\; \;g_1,\dots,g_{n-1},\;\; e_0,\dots,e_{n-1},\;\;  \tilde e_0,\dots,\tilde e_{n-1},
\end{split}
\end{align}
we first compute for $n\geq 1$  the accumulated errors
\begin{align}\label{e9}
E_n:=\sum^{n-1}_{k=0}e_k,\;\;     \tilde E_n:=\sum^{n-1}_{k=0}\tilde e_k.
\end{align}
We then compute $f_n$ and $g_n$ from the equations
\begin{align}\label{e10}
\sum^n_{k=0}f_k+S_{\theta_n}E_n=0,\;\;\;\sum^n_{k=0}g_k+\tilde S_{\theta_n}\tilde E_n=\tilde S_{\theta_n} g,
\end{align}
solve  \eqref{e7} for $\dot V_n$, and finally compute $V_{n+1}$  from \eqref{e4}.

Next $e_n$, $\tilde e_n$ can be computed from \footnote{In the estimates of $e_n$ and  $\tilde e_n$, we instead use the formulas \eqref{f1},\eqref{f5}}
\begin{align}\label{e11}
\begin{split}
&\cL(V_{n+1})-\cL(V_n)=f_n+e_n\\
&\cB((V_{n+1})|_{x_d=0})-\cB((V_{n})|_{x_d=0})=g_n+\tilde e_n. \end{split}
\end{align}
 Thus the order of construction is
 \begin{align}\label{e12}
\dots\to (e_{n-1},\tilde e_{n-1})\to(E_{n},\tilde E_{n})\to (f_n,g_n)\to \dot V_n\to V_{n+1}\to (e_n,\tilde e_n)\to\dots \end{align}
Adding \eqref{e11} from $0$ to $n$ and using \eqref{e10} gives
\begin{align}\label{e13}
\begin{split}
&\cL(V_{n+1})=(I-S_{\theta_n})E_n+e_n\\
&\cB((V_{n+1})|_{x_d=0})-g=(\tilde S_{\theta_n}-I)g+(I-\tilde S_{\theta_n})\tilde E_n+\tilde e_n.
\end{split}
\end{align}
Since $S_{\theta_n}\to I$ and $\tilde S_{\theta_n}\to I$ as $n\to \infty$ and we expect $(e_n,\tilde e_n)\to 0$, we formally obtain a solution of \eqref{e2a} in the limit as $n\to \infty$.

\textbf{Induction assumption. }Let $\Delta_n:=\theta_{n+1}-\theta_n$ and observe that
\begin{align}\label{e14}
\frac{1}{3\theta_n}\leq \Delta_n=\sqrt{\theta_n^2+1}-\theta_n\leq \frac{1}{2\theta_n} \text{ for all }n\in\bN.
\end{align}
With $\mu_0=\left[\frac{d+1}{2}\right]+2$ as in Proposition \ref{d26}, we now set $\alpha_0:=\mu_0-1$ and  fix a choice of integers $\alpha_0<\alpha<\tilde\alpha$, whose values are explained below:
\begin{align}\label{e15}
\alpha=2\alpha_0+4\text{ and }\tilde\alpha=2\alpha-\alpha_0.
\end{align}
Given $\delta>0$ our induction assumption is:\\ \emph{}\\
\textbf{(H}$_{n-1}$\textbf{)}\emph{}\quad For all $k=0,\dots,n-1$ and for all $s\in [0,\tilde\alpha]\cap\bN$
\begin{align}\label{e16}
|\dot V_k|_s+\langle\dot V_k\rangle_s \leq \delta \theta_k^{s-\alpha-1}\Delta_k.
\end{align}

The main step in the proof of Theorem \ref{mainprof} is to show that for correctly chosen parameters $\delta>0$ (small) and $\theta_0\geq 1$ (large) and for small enough $g$, (H$_{n-1}$) implies (H$_n$).  At the end we will verify that (H$_0$) holds for
for small enough $g$.

First we state some easy consequences of (H$_{n-1}$).
\begin{lem}\label{e19}
If $\theta_0$ is large enough, then for $k=0,\dots,n$ and all integers $s\in[0,\tilde\alpha]$ we have
\begin{align}\label{e20}
|V_k|_s+\langle V_k\rangle_s\leq\begin{cases}C\delta \theta_k^{(s-\alpha)_+},\;\alpha\neq s\\C\delta\log\theta_k,\;\;\alpha=s\end{cases}.
\end{align}
\end{lem}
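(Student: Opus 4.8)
The proof of Lemma \ref{e19} is a routine telescoping argument from the induction hypothesis \textbf{(H}$_{n-1}$\textbf{)}, combined with the elementary inequality \eqref{e14} for $\Delta_k$. The plan is as follows. First, since $V_0=0$ and $V_k=\sum_{j=0}^{k-1}\dot V_j$ by \eqref{e4}, we have for any integer $s\in[0,\tilde\alpha]$ the bound
\begin{equation*}
|V_k|_s+\langle V_k\rangle_s\le\sum_{j=0}^{k-1}\big(|\dot V_j|_s+\langle\dot V_j\rangle_s\big)\le\delta\sum_{j=0}^{k-1}\theta_j^{s-\alpha-1}\Delta_j,
\end{equation*}
using \textbf{(H}$_{n-1}$\textbf{)} (valid for $j\le n-1$, hence for all $j\le k-1$ with $k\le n$). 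The remaining task is to estimate the sum $\sum_{j=0}^{k-1}\theta_j^{s-\alpha-1}\Delta_j$.

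Next I would compare this sum with the integral $\int_{\theta_0}^{\theta_k}t^{s-\alpha-1}\,dt$. Since $\theta_j=(\theta_0^2+j)^{1/2}$ is increasing and $\Delta_j=\theta_{j+1}-\theta_j$, the sum $\sum_{j=0}^{k-1}\theta_j^{s-\alpha-1}\Delta_j$ is (up to the monotonicity of $t\mapsto t^{s-\alpha-1}$) a Riemann sum for that integral over $[\theta_0,\theta_k]$; one gets a two-sided comparison with constants depending only on the bounded range of exponents $s-\alpha-1$. Then one evaluates: if $s-\alpha\ne 0$, i.e. $s\ne\alpha$, then $\int_{\theta_0}^{\theta_k}t^{s-\alpha-1}\,dt=\frac{1}{s-\alpha}(\theta_k^{s-\alpha}-\theta_0^{s-\alpha})$, which is $O(\theta_k^{(s-\alpha)_+})$ uniformly in $k$ (bounded by a constant when $s<\alpha$, and comparable to $\theta_k^{s-\alpha}$ when $s>\alpha$, since then the lower endpoint contribution is negligible). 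If $s=\alpha$, the integral is $\log\theta_k-\log\theta_0=O(\log\theta_k)$. This yields exactly \eqref{e20} with a constant $C$ absorbing the comparison constants and the $\frac{1}{|s-\alpha|}$ factors (finitely many values of $s$ are involved, so this is harmless), provided $\theta_0$ is taken large enough that $\theta_k\ge\theta_0\ge$ some absolute constant, so that e.g. $\log\theta_k\ge 1$ and the crude bounds are valid.

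There is essentially no serious obstacle here; the only mild point of care is the case distinction at $s=\alpha$, where the geometric-type decay degenerates to logarithmic growth, and making sure the Riemann-sum comparison constants are uniform over the finite set of exponents $\{s-\alpha-1: s\in[0,\tilde\alpha]\cap\mathbb{N}\}$. One should also note that $s=0$ is covered (then $s-\alpha-1<0$ and the sum converges), and that the hypothesis $\theta_0$ large is only used to guarantee $\theta_k\ge 1$ throughout so that $\theta_k^{(s-\alpha)_+}$ and $\log\theta_k$ behave as claimed. I would present this as a short computation and then move directly to the main inductive step \textbf{(H}$_{n-1}$\textbf{)}$\Rightarrow$\textbf{(H}$_n$\textbf{)}, for which Lemma \ref{e19} supplies the needed control of $|S_{\theta_n}V_n|_s$ via the smoothing estimates \eqref{e3}.
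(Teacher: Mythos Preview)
Your proof is correct and follows essentially the same approach as the paper's own proof, which simply says: ``This follows by writing $V_k=V_0+\sum_{j=0}^{k-1}\dot V_j$ and using the triangle inequality and an elementary comparison between Riemann sums and integrals.'' You have merely spelled out the details of that comparison.
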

\begin{proof}
 This follows by writing $V_k=V_0+\sum_{j=0}^{k-1}\dot V_j$ and using the triangle inequality and an elementary comparison between Riemann sums and integrals.
\end{proof}

\begin{lem}\label{e21}
If $\theta_0$ is large enough, then for $k=0,\dots,n$ and all integers $s\in[0,\tilde\alpha+2]$ we have
\begin{align}\label{e22}
|S_{\theta_k}V_k|_s\leq\begin{cases}C\delta \theta_k^{(s-\alpha)_+},\;\alpha\neq s\\C\delta\log\theta_k,\;\;\alpha=s\end{cases}.
\end{align}
Moreover, for  $k=0,\dots,n$ and all integers $s\in[0,\tilde\alpha]$ we have
\begin{align}\label{e23}
|(I-S_{\theta_k})V_k|_s\leq\begin{cases}C\delta \theta_k^{s-\alpha}\log\theta_k,\;s\leq \alpha \\C\delta\theta_k^{s-\alpha},\;\;s>\alpha\end{cases}.
\end{align}
\end{lem}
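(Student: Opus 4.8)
The plan is to obtain both estimates directly by combining the smoothing estimates of Lemma~\ref{e2} with the bounds on the iterates $V_k$ already recorded in Lemma~\ref{e19}; no new mechanism is required, only an appropriate choice of the intermediate Sobolev index in each range of $s$. Recall also that by Lemma~\ref{e2} the operators $S_{\theta_k}$ map $F^0_\gamma(\Omega_T)$ into $\cap_\beta F^\beta_\gamma(\Omega_T)$, so $S_{\theta_k}V_k$ again vanishes for $t<0$ and all the norms below make sense.

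For \eqref{e22} I would distinguish three ranges of $s$. If $0\le s\le\alpha$, applying Lemma~\ref{e2}(a) with both Sobolev indices equal to $s$ gives $|S_{\theta_k}V_k|_s\le C|V_k|_s$, and Lemma~\ref{e19} then yields $C\delta$ when $s<\alpha$ (which is $C\delta\,\theta_k^{(s-\alpha)_+}$) and $C\delta\log\theta_k$ when $s=\alpha$. If $\alpha<s\le\tilde\alpha$, the same inequality $|S_{\theta_k}V_k|_s\le C|V_k|_s$ together with Lemma~\ref{e19} gives $C\delta\,\theta_k^{s-\alpha}$. If $\tilde\alpha<s\le\tilde\alpha+2$ --- the range where Lemma~\ref{e19} no longer applies --- I would instead use Lemma~\ref{e2}(a) with lower index $\tilde\alpha$: $|S_{\theta_k}V_k|_s\le C\theta_k^{\,s-\tilde\alpha}|V_k|_{\tilde\alpha}\le C\delta\,\theta_k^{\,s-\tilde\alpha}\theta_k^{\,\tilde\alpha-\alpha}=C\delta\,\theta_k^{\,s-\alpha}$, since $\tilde\alpha>\alpha$. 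Uniformity of $C$ in $s$ over the finite interval $[0,\tilde\alpha+2]$ is inherited from the uniformity statements in Lemmas~\ref{e2} and~\ref{e19}.

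For \eqref{e23} I would split at $s=\alpha$. When $0\le s\le\alpha$, Lemma~\ref{e2}(b) with indices $s$ and $\alpha$ (legitimate since $s\le\alpha$) gives $|(I-S_{\theta_k})V_k|_s\le C\theta_k^{\,s-\alpha}|V_k|_\alpha\le C\delta\,\theta_k^{\,s-\alpha}\log\theta_k$, using the $s=\alpha$ case of Lemma~\ref{e19}. When $\alpha<s\le\tilde\alpha$, Lemma~\ref{e2}(b) cannot be applied directly (its lower index would have to exceed $s$), so I would simply write $(I-S_{\theta_k})V_k=V_k-S_{\theta_k}V_k$ and bound the two terms separately: $|V_k|_s\le C\delta\,\theta_k^{\,s-\alpha}$ by Lemma~\ref{e19}, and $|S_{\theta_k}V_k|_s\le C\delta\,\theta_k^{\,s-\alpha}$ by the case of \eqref{e22} just established, which gives the desired $C\delta\,\theta_k^{\,s-\alpha}$.

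There is essentially no real obstacle in this lemma; the only points demanding care are to respect the range $\beta\le\alpha$ in Lemma~\ref{e2}(b) (hence the $V_k-S_{\theta_k}V_k$ decomposition for $s>\alpha$), to recall that $\theta_0$ has already been fixed large enough for Lemma~\ref{e19} to hold, and to note that $\alpha_0$, $\alpha$, $\tilde\alpha$ are integers by \eqref{e15}, so that all the Sobolev indices invoked above lie in the admissible integer ranges.
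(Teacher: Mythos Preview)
Your proposal is correct and follows essentially the same approach as the paper: combine the smoothing properties of Lemma~\ref{e2} with the bounds of Lemma~\ref{e19}, splitting cases at $s=\alpha$ and (for \eqref{e22}) at $s=\tilde\alpha$. The paper's proof is in fact sparser than yours---it merely says the result follows from Lemma~\ref{e19} and the properties of $S_\theta$, and writes out only the two lines for \eqref{e23}---whereas you have carefully filled in the case $\tilde\alpha<s\le\tilde\alpha+2$ for \eqref{e22}, which the paper leaves implicit.
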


\begin{proof}
This follows from Lemma \ref{e19} and the properties of the $S_{\theta}$.  For example,
we have
\begin{align}\label{e24}
\begin{split}
&|(I-S_{\theta_k})V_k|_s\leq 2|V_k|_s\leq C\delta\theta^{s-\alpha}\text{ for }s>\alpha\\
&|(I-S_{\theta_k})V_k|_s\leq C\theta^{s-\alpha}|V_k|_\alpha\leq  C\delta \theta^{s-\alpha}\log \theta_k\text{ for }s\leq\alpha.
\end{split}
\end{align}
\end{proof}

\textbf{Estimate of the quadratic errors. }From \eqref{e5} and \eqref{e6} we have
\begin{align}\label{f1}
\begin{split}
&(a) e_k'=\cL(V_{k+1})-\cL(V_k)-\cL'(V_k)\dot V_k=\int^1_0 (1-\tau)\cL''(V_k+\tau\dot V_k)(\dot V_k,\dot V_k)d\tau\\
&(b)\tilde e_k'=\cB(V_{k+1})-\cB(V_k)-\cB'(V_k)\dot V_k=\int^1_0 (1-\tau)\cB''(V_k+\tau\dot V_k)(\dot V_k,\dot V_k)d\tau\\
\end{split}
\end{align}
where the arguments in \eqref{f1}(b) are evaluated at $x_d=0$.

\begin{lem}\label{f2}
1)  For large enough $\theta_0$ we have for all $k=0,\dots,n-1$ and all integer $s\in [0,\tilde \alpha]$
\begin{align}\label{f3}
|e_k'|_s\leq C\delta^2\theta_k^{L_1(s)-1}\Delta_k,
\end{align}
where $L_1(s)=s+\alpha_0-2\alpha-2$.

2) For large enough $\theta_0$ we have for all $k=0,\dots,n-1$ and all integer $s\in [0,\tilde \alpha-1]$
\begin{align}\label{f4}
\langle \tilde e_k'\rangle _s\leq C\delta^2\theta_k^{L_2(s)-1}\Delta_k,
\end{align}
where $L_2(s)=s+\alpha_0-2\alpha-1$.
\end{lem}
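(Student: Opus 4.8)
The plan is to estimate the quadratic errors $e_k'$ and $\tilde e_k'$ by applying the tame estimates for second derivatives, Proposition \ref{d5}, to the integral representations \eqref{f1}. The key point is that $\cL''$ and $\cB''$ depend on $V_k+\tau\dot V_k$ only through low-order ($b_0$- or $c_0$-)norms of that argument, which by Lemma \ref{e19} are bounded uniformly (the logarithmic growth at $s=\alpha$ being irrelevant since $b_0,c_0<\alpha$ for the stated choice of $\alpha$), while the genuinely large contribution comes from the two factors of $\dot V_k$, each of which is controlled by the induction hypothesis $\textbf{(H}_{n-1}\textbf{)}$.

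More precisely, for part 1) I would write, using \eqref{f1}(a) and \eqref{d6} with $s$ replaced by $s$ and the other factor measured in $H^{b_0}$,
\begin{equation*}
|e_k'|_s \le C\int_0^1 |\cL''(V_k+\tau\dot V_k)(\dot V_k,\dot V_k)|_s\, d\tau
\le C\big(|\dot V_k|_s\,|\dot V_k|_{b_0} + |\dot V_k|_{b_0}\,|\dot V_k|_s\big)
\le C|\dot V_k|_s\,|\dot V_k|_{b_0}.
\end{equation*}
Now apply $\textbf{(H}_{n-1}\textbf{)}$ to each factor: $|\dot V_k|_s \le \delta\,\theta_k^{s-\alpha-1}\Delta_k$ and $|\dot V_k|_{b_0} \le \delta\,\theta_k^{b_0-\alpha-1}\Delta_k$. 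Since $b_0 \le \alpha_0$ one obtains
\begin{equation*}
|e_k'|_s \le C\delta^2\,\theta_k^{(s-\alpha-1)+(\alpha_0-\alpha-1)}\,\Delta_k^2
= C\delta^2\,\theta_k^{L_1(s)}\,\Delta_k^2 \le C\delta^2\,\theta_k^{L_1(s)-1}\,\Delta_k,
\end{equation*}
with $L_1(s)=s+\alpha_0-2\alpha-2$, where in the last step I used $\Delta_k\le \frac1{2\theta_k}$ from \eqref{e14}. This is exactly \eqref{f3}. Part 2) is identical in structure: from \eqref{f1}(b) and the boundary estimate \eqref{d7} one gets $\langle\tilde e_k'\rangle_s \le C\langle\dot V_k\rangle_{s+1}\langle\dot V_k\rangle_{c_0}$, and since $\textbf{(H}_{n-1}\textbf{)}$ controls $\langle\dot V_k\rangle_{s+1}$ (legitimate because $s+1\le\tilde\alpha$ when $s\le\tilde\alpha-1$) and $\langle\dot V_k\rangle_{c_0}$ with $c_0\le\alpha_0$, the same computation yields $\langle\tilde e_k'\rangle_s \le C\delta^2\,\theta_k^{L_2(s)-1}\Delta_k$ with $L_2(s)=s+\alpha_0-2\alpha-1$. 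The shift by one in the exponent relative to $L_1$ is precisely the loss of one tangential derivative on the boundary recorded in \eqref{d7}.

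The only real subtlety — and the step I would flag as the main obstacle — is bookkeeping the index ranges so that every application of Proposition \ref{d5} and Lemma \ref{e19} is legal: one needs $s\le\tilde\alpha$ (resp. $s+1\le\tilde\alpha$) for the $\dot V_k$ factors, $b_0,c_0\le\tilde\alpha$ for the other factor, and one must check that $V_k+\tau\dot V_k$ stays in the neighborhood where $\cL''$, $\cB''$ are defined, using the smallness of $\delta$ together with \eqref{e20}. None of this is deep, but it is exactly the place where the specific choices $\alpha_0=\mu_0-1$, $\alpha=2\alpha_0+4$, $\tilde\alpha=2\alpha-\alpha_0$ from \eqref{e15} get used, so I would be careful to verify each inequality rather than wave at it. Everything else is a direct substitution of the induction hypothesis into the tame second-derivative estimates followed by the elementary bound $\Delta_k\le\frac1{2\theta_k}$.
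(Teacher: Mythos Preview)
Your proposal is correct and follows essentially the same route as the paper: apply the tame second-derivative estimates of Proposition~\ref{d5} to the integral formulas \eqref{f1}, bound both factors $\dot V_k$ via the induction hypothesis \textbf{(H}$_{n-1}$\textbf{)}, and convert one power of $\Delta_k$ into $\theta_k^{-1}$ using \eqref{e14}. The paper's proof is nearly identical, invoking $\alpha_0>b_0$ to pass directly to $|\dot V_k|_s|\dot V_k|_{\alpha_0}$; your remark that $\cL'',\cB''$ must be controlled on $V_k+\tau\dot V_k$ is harmless but unnecessary here, since by \eqref{d3} both second derivatives are in fact independent of that argument and the constants in Proposition~\ref{d5} are independent of $V$.
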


\begin{proof}
Using \eqref{f1}(a), Proposition \ref{d5}, and the fact that $\alpha_0>b_0$,  we have
\begin{align}
|e_k'|_s\leq C |\dot V_k|_s |\dot V_k|_{\alpha_0}.
\end{align}
The estimate \eqref{f3} then follows by applying the assumption \eqref{e16} and using $\Delta_k\sim \frac{1}{\theta_k}$.
The estimate \eqref{f4} is proved the same way; the restriction $s\in [0,\tilde \alpha-1]$ reflects the loss of one derivative in \eqref{d7}.

\end{proof}

\textbf{Estimate of the substitution errors. }From \eqref{e5} and \eqref{e6} we have
\begin{align}\label{f5}
\begin{split}
&(a)\;e_k''=\int^1_0 \cL''(S_{\theta_k}V_k+\tau (V_k-S_{\theta_k}V_k)) \left(\dot V_k,(I-S_{\theta_k})V_k\right)d\tau\\
&(b)\;\tilde e_k''=\int^1_0 \cB''(S_{\theta_k}V_k+\tau (V_k-S_{\theta_k}V_k)) \left(\dot V_k,(I-S_{\theta_k})V_k\right)d\tau\\
\end{split}
\end{align}
where in \eqref{f5}(b) we have, for example, written $S_{\theta_k}V_k$ for $(S_{\theta_k}V_k)|_{x_d=0}$.

\begin{lem}\label{f6}
1)  For large enough $\theta_0$ we have for all $k=0,\dots,n-1$ and all integer $s\in [0,\tilde \alpha]$
\begin{align}\label{f7}
|e_k''|_s\leq C\delta^2\theta_k^{L_3(s)-1}\Delta_k,
\end{align}
where $L_3(s)=s+\alpha_0-2\alpha+1$.

2) For large enough $\theta_0$ we have for all $k=0,\dots,n-1$ and all integer $s\in [0,\tilde \alpha-2]$
\begin{align}\label{f8}
\langle \tilde e_k''\rangle _s\leq C\delta^2\theta_k^{L_4(s)-1}\Delta_k,
\end{align}
where $L_4(s)=s+\alpha_0-2\alpha+3$.
\end{lem}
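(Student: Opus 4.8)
The plan is to imitate the proof of the quadratic error bound (Lemma~\ref{f2}), replacing one of the two copies of $\dot V_k$ by $(I-S_{\theta_k})V_k$ and extracting a negative power of $\theta_k$ from that factor via the smoothing estimate \eqref{e23}. Observe first that, by \eqref{d3}, the bilinear forms $\cL''$ and $\cB''$ do not depend on their base point $V$, so the $\tau$-integrands in \eqref{f5} are $\tau$-independent and $e_k''=\cL''\big(\dot V_k,(I-S_{\theta_k})V_k\big)$, $\tilde e_k''=\cB''\big(\dot V_k|_{x_d=0},(I-S_{\theta_k})V_k|_{x_d=0}\big)$; all estimates below are taken at the fixed value $\gamma=\gamma(K_1)$, so $\gamma$-weights are harmless constants.

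For part~1 I would apply the tame bound \eqref{d6} with $\dot V^a=\dot V_k$ and $\dot V^b=(I-S_{\theta_k})V_k$, giving $|e_k''|_s\le C\big(|\dot V_k|_s\,|(I-S_{\theta_k})V_k|_{b_0}+|(I-S_{\theta_k})V_k|_s\,|\dot V_k|_{b_0}\big)$. Into the factors $|\dot V_k|_s$ and $|\dot V_k|_{b_0}$ I insert the induction hypothesis \eqref{e16}, which gives $|\dot V_k|_s\le\delta\,\theta_k^{s-\alpha-1}\Delta_k$ and $|\dot V_k|_{b_0}\le\delta\,\theta_k^{b_0-\alpha-1}\Delta_k$ ($b_0\le\tilde\alpha$, so this is admissible); into the factors $|(I-S_{\theta_k})V_k|_{b_0}$ and $|(I-S_{\theta_k})V_k|_s$ I insert \eqref{e23} from Lemma~\ref{e21}, namely $|(I-S_{\theta_k})V_k|_\sigma\le C\delta\,\theta_k^{\sigma-\alpha}\log\theta_k$ when $\sigma\le\alpha$ (and with no logarithm, hence even better, for $\sigma>\alpha$). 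Collecting exponents and using $\Delta_k\sim\theta_k^{-1}$, both products are bounded by $C\delta^2\,\theta_k^{s+b_0-2\alpha-1}(\log\theta_k)\,\Delta_k$. Since $\alpha$, hence $\alpha_0$, is large compared with $b_0$ by \eqref{e15}, there is a surplus power $\theta_k^{\alpha_0-b_0+1}$ relative to the target exponent $L_3(s)-1=s+\alpha_0-2\alpha$, and this absorbs the logarithm once $\theta_0$ is chosen large enough, which gives \eqref{f7}. The range $s\in[0,\tilde\alpha]$ is exactly what is needed so that \eqref{e16} and \eqref{e23} can be used at the indices $s$ and $b_0$, and so that Proposition~\ref{d5} has a constant uniform in $s$.

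For part~2 the argument is the same but starts from \eqref{d7}, which costs one tangential derivative: $\langle\tilde e_k''\rangle_s\le C\big(\langle\dot V_k\rangle_{s+1}\,\langle(I-S_{\theta_k})V_k\rangle_{c_0}+\langle(I-S_{\theta_k})V_k\rangle_{s+1}\,\langle\dot V_k\rangle_{c_0}\big)$. The boundary norms of $\dot V_k$ are again controlled by \eqref{e16}, while those of $(I-S_{\theta_k})V_k$ I bound by the trace inequality $\langle u\rangle_\sigma\le C|u|_{\sigma+1}$ followed by \eqref{e23}. Thus two derivatives are lost relative to part~1 — one from \eqref{d7}, one from the trace theorem — which is precisely why $L_4(s)=L_3(s)+2$ and why the admissible range shrinks to $s\in[0,\tilde\alpha-2]$ (so that \eqref{e23} is applied at index $s+2\le\tilde\alpha$ and \eqref{e16} at index $s+1\le\tilde\alpha$); the arithmetic and the absorption of $\log\theta_k$ are identical, yielding \eqref{f8}.

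The only real point of care is the exponent bookkeeping: tracking the powers of $\theta_k$ and of $\Delta_k\sim\theta_k^{-1}$ so that, after inserting \eqref{e16} and \eqref{e23}, the final exponent is at most $L_3(s)-1$ (resp.\ $L_4(s)-1$), and checking that the unavoidable $\log\theta_k$ factor coming from \eqref{e23} in the regime $\sigma\le\alpha$ is swallowed by the surplus power of $\theta_k$ — this is where the specific choice $\alpha=2\alpha_0+4$ and the freedom to enlarge $\theta_0$ are used. No new analysis beyond Proposition~\ref{d5} and Lemma~\ref{e21} is required.
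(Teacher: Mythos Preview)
Your proof is correct and follows essentially the same approach as the paper: apply Proposition~\ref{d5} to the integrand in \eqref{f5}, insert the induction hypothesis \eqref{e16} for the $\dot V_k$ factors and Lemma~\ref{e21} (specifically \eqref{e23}) for the $(I-S_{\theta_k})V_k$ factors, then use the trace inequality \eqref{f10} for the boundary estimate. The paper writes the low-norm index as $\alpha_0$ rather than $b_0$ (resp.\ $c_0$), but since $\alpha_0\ge b_0,c_0$ this is the same estimate; your observation that $\cL''$, $\cB''$ are independent of the base point is correct here (the nonlinearities are quadratic) though the paper does not rely on it.
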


\begin{proof}
Using \eqref{f5}(a) and Proposition \ref{d5}  we obtain
\begin{align}\label{f9}
|e_k''|_s\leq C\left(|\dot V_k|_s|(I-S_{\theta_k})V_k|_{\alpha_0}+|(I-S_{\theta_k})V_k|_s|\dot V_k|_{\alpha_0}\right).
\end{align}
The estimate \eqref{f7} now follows from (H$_{n-1}$) and Lemma \ref{e21}.  The estimate \eqref{f8} is proved the same way, after using the  trace estimate
\begin{align}\label{f10}
\langle (I-S_{\theta_k})V_k\rangle_{s+1}\leq C |(I-S_{\theta_k})V_k|_{s+2}.
\end{align}
The restriction $s\in [0,\tilde \alpha-2]$ reflects the subscript $s+2$ in \eqref{f10}.
\end{proof}

\textbf{Estimate of $(E_n,\tilde E_n)$ and $(f_n,g_n)$. }\;\;Since $e_k=e_k'+e_k''$ and $\tilde e_k=\tilde e_k'+\tilde e_k''$, we have
\begin{lem}\label{f11}
There exists $\theta_0$ sufficiently large so that
\begin{align}\label{f12}
|E_n|_{\tilde \alpha}\leq C\delta^2 \theta_n^{L_3(\tilde \alpha)} \text{ and } \langle\tilde E_n\rangle_{\tilde \alpha-2}\leq C\delta^2\theta_n^{L_4(\tilde \alpha-2)}.
\end{align}
\end{lem}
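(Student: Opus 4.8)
\textbf{Proof plan for Lemma \ref{f11}.}
The statement is an immediate bookkeeping consequence of the error estimates already proved in Lemmas \ref{f2} and \ref{f6}, combined with the elementary observation \eqref{e14} that $\Delta_k \sim 1/\theta_k$ and a comparison of the resulting sums with integrals. First I would recall that $E_n = \sum_{k=0}^{n-1} e_k$ with $e_k = e_k' + e_k''$, so it suffices to bound $\sum_{k=0}^{n-1}|e_k'|_{\tilde\alpha}$ and $\sum_{k=0}^{n-1}|e_k''|_{\tilde\alpha}$ separately. By Lemma \ref{f2}(1) we have $|e_k'|_{\tilde\alpha} \le C\delta^2 \theta_k^{L_1(\tilde\alpha)-1}\Delta_k$ and by Lemma \ref{f6}(1) we have $|e_k''|_{\tilde\alpha} \le C\delta^2 \theta_k^{L_3(\tilde\alpha)-1}\Delta_k$. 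Since $L_1(s) = s+\alpha_0-2\alpha-2 < s+\alpha_0-2\alpha+1 = L_3(s)$, the $e_k''$ term dominates, so both pieces are controlled by $C\delta^2\theta_k^{L_3(\tilde\alpha)-1}\Delta_k$.

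The summation step is the heart of the argument. Using $\Delta_k \sim 1/\theta_k$ from \eqref{e14} and $\theta_k = (\theta_0^2+k)^{1/2}$, one has $\theta_k^{L_3(\tilde\alpha)-1}\Delta_k \sim \theta_k^{L_3(\tilde\alpha)-2}\Delta_k$, and the sum $\sum_{k=0}^{n-1}\theta_k^{L_3(\tilde\alpha)-2}\Delta_k$ is comparable to the integral $\int_{\theta_0}^{\theta_n} t^{L_3(\tilde\alpha)-2}\,dt$. Provided the exponent satisfies $L_3(\tilde\alpha)-2 > -1$, i.e. $L_3(\tilde\alpha) > 1$, this integral is bounded by $C\theta_n^{L_3(\tilde\alpha)-1}$; more convenient here is to bound it crudely by $C\theta_n^{L_3(\tilde\alpha)}$, which is what the statement asks for and which holds as soon as $L_3(\tilde\alpha)\ge 0$ (and in fact with room to spare). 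One checks from \eqref{e15} that $L_3(\tilde\alpha) = \tilde\alpha+\alpha_0-2\alpha+1 = (2\alpha-\alpha_0)+\alpha_0-2\alpha+1 = 1 > 0$, so the exponent condition is met; this is precisely where the choices $\alpha = 2\alpha_0+4$ and $\tilde\alpha = 2\alpha-\alpha_0$ are being used. Taking $\theta_0$ large absorbs the various comparison constants uniformly, giving $|E_n|_{\tilde\alpha}\le C\delta^2\theta_n^{L_3(\tilde\alpha)}$.

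The boundary estimate for $\tilde E_n = \sum_{k=0}^{n-1}\tilde e_k$ is entirely parallel: by Lemmas \ref{f2}(2) and \ref{f6}(2), evaluated at $s = \tilde\alpha-2$ (note $\tilde\alpha-2$ lies in both admissible ranges $[0,\tilde\alpha-1]$ and $[0,\tilde\alpha-2]$), one has $\langle\tilde e_k'\rangle_{\tilde\alpha-2}\le C\delta^2\theta_k^{L_2(\tilde\alpha-2)-1}\Delta_k$ and $\langle\tilde e_k''\rangle_{\tilde\alpha-2}\le C\delta^2\theta_k^{L_4(\tilde\alpha-2)-1}\Delta_k$, with $L_2(s) < L_4(s)$ so that the $\tilde e_k''$ term dominates. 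Summing exactly as above, using $\Delta_k\sim 1/\theta_k$ and the Riemann-sum-to-integral comparison, and using $L_4(\tilde\alpha-2) = (\tilde\alpha-2)+\alpha_0-2\alpha+3 = \tilde\alpha+\alpha_0-2\alpha+1 = 1 > 0$, we obtain $\langle\tilde E_n\rangle_{\tilde\alpha-2}\le C\delta^2\theta_n^{L_4(\tilde\alpha-2)}$ for $\theta_0$ large enough. The only thing requiring any care — and the main (mild) obstacle — is keeping track that the exponents $L_3(\tilde\alpha)$ and $L_4(\tilde\alpha-2)$ coming out of the error lemmas are nonnegative so that the geometric-type sums do not blow up; this is guaranteed by the specific relations among $\alpha_0,\alpha,\tilde\alpha$ fixed in \eqref{e15}.
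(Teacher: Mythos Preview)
Your approach is exactly the paper's: sum the pointwise error bounds from Lemmas \ref{f2} and \ref{f6}, note that $L_1<L_3$ and $L_2<L_4$ so the substitution errors dominate, and use a Riemann-sum comparison together with the positivity $L_3(\tilde\alpha)=L_4(\tilde\alpha-2)=1>0$ (which, as you correctly observe, is precisely what fixes $\tilde\alpha=2\alpha-\alpha_0$). One small slip: the line ``$\theta_k^{L_3(\tilde\alpha)-1}\Delta_k \sim \theta_k^{L_3(\tilde\alpha)-2}\Delta_k$'' is a typo --- you should either drop the $\Delta_k$ on the right, or simply view $\sum_k \theta_k^{L_3(\tilde\alpha)-1}\Delta_k$ directly as a Riemann sum for $\int_{\theta_0}^{\theta_n} t^{L_3(\tilde\alpha)-1}\,dt$, which needs only $L_3(\tilde\alpha)>0$ (not $>1$) to give $C\theta_n^{L_3(\tilde\alpha)}$.
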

\begin{proof}
Viewing $E_n=\sum_{k=0}^{n-1}e_k$ as a Riemann sum and using $L_3(\tilde \alpha)>0$,\footnote{This determines $\tilde\alpha$ in \eqref{e15}.} we obtain the estimate of $E_n$ from \eqref{f3} and \eqref{f7}.  Since $L_4(\tilde \alpha-2)>0$, the estimate of $\tilde E_n$ is similar.
\end{proof}

From \eqref{e10} we have
\begin{align}\label{f13}
\begin{split}
&f_n=-(S_{\theta_n}-S_{\theta_{n-1}})E_{n-1}-S_{\theta_n}e_{n-1}\\
&g_n=(\tilde S_{\theta_n}-\tilde S_{\theta_{n-1}})g-(\tilde S_{\theta_n}-\tilde S_{\theta_{n-1}})\tilde E_{n-1}-\tilde S_{\theta_n}\tilde e_{n-1}
\end{split}
\end{align}

\begin{lem}\label{f14}
There exists $\theta_0$ sufficiently large so that for $s\in [0,\tilde \alpha+1]$ we have
\begin{align}\label{f15}
\begin{split}
&(a)\;|f_n|_s\leq C\delta^2\theta_n^{L_3(s)-1}\Delta_n\\
&(b)\;\langle g_n\rangle_s\leq C\delta^2\theta_n^{L_4(s)-1}\Delta_n+C\theta_n^{s-\alpha-1}\langle g\rangle_\alpha\Delta_n.
\end{split}
\end{align}
\end{lem}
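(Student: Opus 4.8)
The plan is to prove Lemma \ref{f14} by unpacking the formulas \eqref{f13} for $f_n$ and $g_n$ and applying the smoothing operator estimates of Lemma \ref{e2} together with the already-established bounds on the accumulated and quadratic errors. First I would treat $f_n = -(S_{\theta_n}-S_{\theta_{n-1}})E_{n-1} - S_{\theta_n}e_{n-1}$. For the first piece, I would write $S_{\theta_n}-S_{\theta_{n-1}} = \int_{\theta_{n-1}}^{\theta_n}\frac{d}{d\theta}S_\theta\, d\theta$ and apply \eqref{e3}(c) with gain $\theta^{s-\tilde\alpha-1}$, integrating against $|E_{n-1}|_{\tilde\alpha}$, which is controlled by Lemma \ref{f11} as $C\delta^2\theta_{n-1}^{L_3(\tilde\alpha)}$; since the integrand is essentially constant over the short interval $[\theta_{n-1},\theta_n]$ of length $\Delta_{n-1}\sim\Delta_n\sim 1/\theta_n$, this yields a bound $C\delta^2\theta_n^{s-\tilde\alpha-1+L_3(\tilde\alpha)}\Delta_n$, and one checks $s-\tilde\alpha-1+L_3(\tilde\alpha)\le L_3(s)-1$ using $\tilde\alpha\ge s$ and the linearity of $L_3$ (indeed $L_3(\tilde\alpha)-\tilde\alpha = L_3(s)-s$, so the exponent is exactly $L_3(s)-1$ when $s=\tilde\alpha$ and strictly smaller otherwise — this is where the restriction $s\in[0,\tilde\alpha+1]$ and the behaviour of $\theta_n\ge\theta_0$ large come in). For the second piece $S_{\theta_n}e_{n-1}$, I would apply \eqref{e3}(a) with $\beta=s$, $\alpha=\tilde\alpha$ (or directly with $\beta=\alpha=s$ when $s\le\tilde\alpha$), using the bounds \eqref{f3} and \eqref{f7} on $e_{n-1}'$ and $e_{n-1}''$, whose worst exponent is $L_3$; since $S_{\theta_n}$ with matched indices is bounded, this gives directly $C\delta^2\theta_n^{L_3(s)-1}\Delta_n$, possibly after using $\theta_{n-1}\sim\theta_n$.

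Next I would handle $g_n = (\tilde S_{\theta_n}-\tilde S_{\theta_{n-1}})g - (\tilde S_{\theta_n}-\tilde S_{\theta_{n-1}})\tilde E_{n-1} - \tilde S_{\theta_n}\tilde e_{n-1}$, working with the boundary smoothing family $\tilde S_\theta$ and the norms $\langle\cdot\rangle_s$. The term $(\tilde S_{\theta_n}-\tilde S_{\theta_{n-1}})g$ is the source of the second summand in \eqref{f15}(b): writing it as $\int_{\theta_{n-1}}^{\theta_n}\frac{d}{d\theta}\tilde S_\theta g\, d\theta$ and applying \eqref{e3}(c) with the pair $(\beta,\alpha)=(s,\alpha)$ gives $\langle\cdot\rangle_s\le C\theta_n^{s-\alpha-1}\langle g\rangle_\alpha\Delta_n$. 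The term $(\tilde S_{\theta_n}-\tilde S_{\theta_{n-1}})\tilde E_{n-1}$ is treated exactly like its interior counterpart, using \eqref{e3}(c), the bound $\langle\tilde E_{n-1}\rangle_{\tilde\alpha-2}\le C\delta^2\theta_{n-1}^{L_4(\tilde\alpha-2)}$ from Lemma \ref{f11}, and the identity $L_4(\tilde\alpha-2)-(\tilde\alpha-2) = L_4(s)-s$ to land on exponent $\le L_4(s)-1$ (again the need $s\le\tilde\alpha-1$, or more precisely $s\le\tilde\alpha+1$ with loss absorbed by $\theta_0$ large, dictates the admissible range). The final term $\tilde S_{\theta_n}\tilde e_{n-1}$ is bounded using \eqref{e3}(a) and the error bounds \eqref{f4}, \eqref{f8} on $\tilde e_{n-1}'$ and $\tilde e_{n-1}''$, whose governing exponent is $L_4$; with matched smoothing indices this contributes $C\delta^2\theta_n^{L_4(s)-1}\Delta_n$. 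Summing the three contributions gives \eqref{f15}(b).

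The main obstacle, as usual in these Nash-Moser estimates, is bookkeeping: making sure that after applying $\frac{d}{d\theta}\tilde S_\theta$ (which gains one order) and integrating over an interval of length $\Delta_n\sim 1/\theta_n$, the net exponents on $\theta_n$ land at or below $L_3(s)-1$ and $L_4(s)-1$, and that the discrepancy between $\theta_{n-1}$ and $\theta_n$ (which differ by the bounded ratio $1+O(\theta_n^{-2})$) never causes trouble. The key quantitative points are that $L_3$ and $L_4$ are \emph{affine} of slope one in $s$, so that $L_i(\tilde\alpha')-\tilde\alpha' = L_i(s)-s$ for any $s$, meaning ``moving derivatives" between an argument and its regularity index is exponent-neutral; and that taking $\theta_0$ sufficiently large lets us absorb any $\log\theta_n$ factors (from the borderline case $s=\alpha$ in Lemma \ref{e19}, should it enter through the coefficients of $\cL''$, $\cB''$) and any finitely many bounded constants depending on $s$ ranging over the fixed interval $[0,\tilde\alpha+1]$. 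I would also note that $\Delta_n\sim\Delta_{n-1}$ and $\theta_n\sim\theta_{n-1}$ freely throughout, so no separate argument is needed for indices shifted by one. With these observations the proof is a routine application of Lemma \ref{e2} to the four (resp. three) terms in \eqref{f13}.
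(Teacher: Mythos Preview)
Your proposal is correct and follows essentially the same approach as the paper: both decompose $f_n$ and $g_n$ via \eqref{f13}, write $S_{\theta_n}-S_{\theta_{n-1}}=\int_{\theta_{n-1}}^{\theta_n}\frac{d}{d\theta}S_\theta\,d\theta$ and apply \eqref{e3}(c) against the accumulated-error bounds of Lemma \ref{f11}, use the affine identity $s-\tilde\alpha+L_3(\tilde\alpha)=L_3(s)$ (and its $L_4$ analogue), and handle $S_{\theta_n}e_{n-1}$, $\tilde S_{\theta_n}\tilde e_{n-1}$ directly from \eqref{e3}(a) and the error bounds \eqref{f3}--\eqref{f8}. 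Your extra commentary on the bookkeeping (slope-one affinity of $L_3,L_4$, $\theta_{n-1}\sim\theta_n$, handling $s=\tilde\alpha+1$ by smoothing from level $\tilde\alpha$) is accurate and simply makes explicit what the paper leaves implicit.
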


\begin{proof}
Using \eqref{e3}(c), \eqref{f12}, and $s-\tilde \alpha+L_3(\tilde \alpha)=L_3(s)$, we find
\begin{align}\label{f16}
|(S_{\theta_n}-S_{\theta_{n-1}})E_{n-1}|_s\leq C\int^{\theta_n}_{\theta_{n-1}}\theta^{s-\tilde \alpha-1}|E_{n-1}|_{\tilde \alpha}d\theta\leq C\delta^2 \theta _{n-1}^{L_3(s)-1}\Delta_n.
\end{align}
From \eqref{f3},\eqref{f7} and the properties of $S_\theta$ we readily obtain
\begin{align}
|S_{\theta_n}e_{n-1}|_s\leq C\delta^2 \theta_n^{L_3(s)-1}\Delta_n,
\end{align}
and this gives \eqref{f15}(a).

The first term on the right in \eqref{f15}(b) arises similarly.  With
\begin{align}\label{f16a}
\langle (\tilde S_{\theta_n}-\tilde S_{\theta_{n-1}})g\rangle_s\leq C\int^{\theta_n}_{\theta_{n-1}}\theta^{s- \alpha-1} \langle g\rangle_{\alpha}d\theta\leq C\theta_n^{s- \alpha-1} \langle g\rangle_{\alpha}\Delta_n
\end{align}
we obtain \eqref{f15}(b).
\end{proof}

\textbf{Induction step.} We claim that for $\delta>0$ sufficiently small, the estimate for the linearized system \eqref{d27}
applies to \eqref{e7} and gives for $s\in [0,\tilde\alpha]$:
\begin{align}\label{f17}
|\dot V_n|_s+\langle\dot V_n\rangle_s\leq
C\left[|f_n|_{s+1}+\langle g_n\rangle_s+\left(|f_n|_{\alpha_0+1}+\langle g_n\rangle_{\alpha_0}\right)\left(|S_{\theta_n}V_n|_{s+1}+\langle S_{\theta_n}V_n\rangle_{s+1}\right)\right]
\end{align}
Indeed, \eqref{e22} and $\alpha>\alpha_0+2$ imply that for $\delta>0$ small enough, the requirement \eqref{d27a} holds.\footnote{We use a trace estimate like \eqref{f18} here as well.}
For the terms involving $f_n$ and $g_n$, except $\langle g_n\rangle_{\alpha_0}$, we substitute directly into \eqref{f17} the corresponding estimates from Lemma \ref{f14}.  For $\langle g_n\rangle_{\alpha_0}$ we have
\begin{align}\label{f17a}
\langle g_n\rangle_{\alpha_0}\leq C\left(\delta^2\theta_n^{L_4(\alpha_0)-1}\Delta_n+\theta_n^{-\alpha-2}\langle g\rangle_{\alpha_0+\alpha+1}\Delta_n\right),
\end{align}
where the last term arises from \eqref{f16a} with $s=\alpha_0$ and $\alpha$ replaced by $\alpha+\alpha_0+1$.  We also use
\begin{align}\label{f18}
\langle S_{\theta_n}V_n\rangle_{s+1}\leq |S_{\theta_n}V_n|_{s+2}\leq C\delta \theta_n^{(s+2-\alpha)_++1}
\end{align}
and a similar estimate for $|S_{\theta_n}V_n|_{s+1}$, which follow directly from \eqref{e22}.

Since $L_4(s)>L_3(s+1)$ this gives for $s\in [0,\tilde\alpha]$:
\begin{align}\label{f19}
\begin{split}
&|\dot V_n|_s+\langle\dot V_n\rangle_s\leq\\
 &\quad C\left[\delta^2\theta_n^{L_4(s)-1}\Delta_n+\theta_n^{s-\alpha-1}\langle g\rangle_\alpha\Delta_n+\left(\delta^2\theta_n^{L_4(\alpha_0)-1}\Delta_n+\theta_n^{-\alpha-2}\langle g\rangle_{\alpha_0+\alpha+1}\Delta_n\right)\delta\theta_n^{(s+2-\alpha)_++1}\right].
\end{split}
\end{align}
For $s\in [0,\tilde \alpha]$ the parameters $\alpha_0$ and $\alpha$ (recall \eqref{e15}) satisfy:
\begin{align}\label{f20}
\begin{split}
&L_4(s)\leq s-\alpha\\
&L_4(\alpha_0)+ (s+2-\alpha)_++1\leq s-\alpha\\
&(s+2-\alpha)_+<s.
\end{split}
\end{align}
Thus, we have proved (H$_n$), which is the content of the following lemma.
\begin{lem}[H$_n$]\label{f21}
If $\delta>0$ and $\langle g\rangle_{\alpha}/\delta$ are sufficiently small and $\theta_0$ sufficiently large, we have
\begin{align}\label{f24}
\;|\dot V_n|_s+\langle\dot V_n\rangle_s \leq \delta \theta_n^{s-\alpha-1}\Delta_n \text{ for all integer }s\in [0,\tilde\alpha].
\end{align}
\end{lem}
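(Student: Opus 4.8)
\textbf{Proof proposal for Lemma \ref{f21} (the induction step H$_{n-1}\Rightarrow$H$_n$).}

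The plan is to feed the estimates for $f_n$ and $g_n$ from Lemma \ref{f14}, together with the smoothing bounds for $S_{\theta_n}V_n$ from Lemma \ref{e21}, into the tame estimate \eqref{d27} for the linearized system applied to \eqref{e7}. The first thing to check is that the smallness hypothesis \eqref{d27a} of Proposition \ref{d26} is met: by \eqref{e22} with $s=\mu_0=\alpha_0+1$ and the strict inequality $\alpha>\alpha_0$, the quantity $|S_{\theta_n}V_n|_{\mu_0}+\langle S_{\theta_n}V_n\rangle_{\mu_0}$ is bounded by $C\delta$ (no positive power of $\theta_n$ appears because $\mu_0<\alpha$), and this is below the threshold $\kappa$ once $\delta$ is small. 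Hence \eqref{d27} yields \eqref{f17}.

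Next I would substitute the estimates of Lemma \ref{f14} for the terms $|f_n|_{s+1}$, $\langle g_n\rangle_s$ and $|f_n|_{\alpha_0+1}$ directly into \eqref{f17}, and treat $\langle g_n\rangle_{\alpha_0}$ separately via \eqref{f17a} (using \eqref{f16a} with $s=\alpha_0$ and the higher index $\alpha+\alpha_0+1$ in place of $\alpha$, so that the gain $\theta_n^{-\alpha-2}$ is produced). The factor $|S_{\theta_n}V_n|_{s+1}+\langle S_{\theta_n}V_n\rangle_{s+1}$ multiplying $(|f_n|_{\alpha_0+1}+\langle g_n\rangle_{\alpha_0})$ is controlled by \eqref{f18}, i.e. by $C\delta\,\theta_n^{(s+2-\alpha)_++1}$, using the trace bound $\langle S_{\theta_n}V_n\rangle_{s+1}\le |S_{\theta_n}V_n|_{s+2}$ and again \eqref{e22}. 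Collecting everything and using $L_4(s)>L_3(s+1)$ to keep only the dominant exponent on the $f_n$-contributions, one arrives at \eqref{f19}:
\begin{align*}
|\dot V_n|_s+\langle\dot V_n\rangle_s\le
 C\Big[\delta^2\theta_n^{L_4(s)-1}\Delta_n+\theta_n^{s-\alpha-1}\langle g\rangle_\alpha\Delta_n
 +\big(\delta^2\theta_n^{L_4(\alpha_0)-1}+\theta_n^{-\alpha-2}\langle g\rangle_{\alpha_0+\alpha+1}\big)\delta\theta_n^{(s+2-\alpha)_++1}\Delta_n\Big].
\end{align*}

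To close the induction I would then invoke the three arithmetic inequalities \eqref{f20} satisfied by $\alpha_0$ and $\alpha=2\alpha_0+4$, $\tilde\alpha=2\alpha-\alpha_0$ for $s\in[0,\tilde\alpha]$: namely $L_4(s)\le s-\alpha$, $L_4(\alpha_0)+(s+2-\alpha)_++1\le s-\alpha$, and $(s+2-\alpha)_+<s$. The first two show that every term in the bracket is bounded by a constant times $\big(\delta^2+\langle g\rangle_\alpha+\delta\langle g\rangle_{\alpha_0+\alpha+1}\big)\,\theta_n^{s-\alpha}\Delta_n$, up to harmless logarithmic factors which are absorbed by lowering the power slightly (or by choosing $\theta_0$ large), while the third keeps the $L^\infty$-type contributions subcritical. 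Since $\theta_n^{s-\alpha}\Delta_n \le \theta_n^{s-\alpha-1}$ up to a constant, choosing $\delta$ and $\langle g\rangle_\alpha/\delta$ small enough and $\theta_0$ large enough makes the constant $C\big(\delta+\langle g\rangle_\alpha/\delta+\cdots\big)$ less than $1$, which gives exactly \eqref{f24}. The main obstacle is the bookkeeping of the exponents: one must verify that the ``substitution error'' exponent $L_3$ (with its $+1$ relative to $L_1$) and especially the boundary substitution exponent $L_4$ (with its $+3$) still leave room, after being multiplied by the $\theta_n^{(s+2-\alpha)_++1}$ coming from $S_{\theta_n}V_n$, to land at or below $\theta_n^{s-\alpha-1}\Delta_n$; this is precisely what forces the particular choices $\alpha=2\alpha_0+4$ and $\tilde\alpha=2\alpha-\alpha_0$ in \eqref{e15}, and checking \eqref{f20} is the one genuinely delicate computation.
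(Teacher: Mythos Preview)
Your proposal is correct and follows essentially the same route as the paper's own argument: verify the smallness hypothesis \eqref{d27a} via Lemma \ref{e21}, apply the tame estimate \eqref{d27} to get \eqref{f17}, feed in Lemma \ref{f14} and \eqref{f18} (treating $\langle g_n\rangle_{\alpha_0}$ separately as in \eqref{f17a}), and close with the arithmetic checks \eqref{f20}. Two minor points of bookkeeping: for \eqref{d27a} you actually need $\alpha>\alpha_0+2$ rather than just $\alpha>\alpha_0$, because the boundary norm $\langle S_{\theta_n}V_n\rangle_{\mu_0}$ is controlled via the trace estimate by $|S_{\theta_n}V_n|_{\mu_0+1}$; and in your final paragraph the exponent is already $s-\alpha-1$ (not $s-\alpha$) once you apply \eqref{f20}, so the sentence ``$\theta_n^{s-\alpha}\Delta_n\le\theta_n^{s-\alpha-1}$'' is unnecessary and slightly misleading---the target $\delta\theta_n^{s-\alpha-1}\Delta_n$ is hit directly.
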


Still assuming (H$_{n-1}$) we now show:

\begin{lem}\label{f21a}
Suppose $n\geq 1$.  If $\delta>0$ is sufficiently small and $\theta_0$ sufficiently large, we have
\begin{align}\label{f21b}
\begin{split}
&(a)\;|\cL(V_n)|_s\leq \delta \theta_n^{s-\alpha-1}\text{ for all integer }s\in [0,\tilde\alpha]\\
&(b)\; \langle\cB(V_n)-g\rangle_s \leq \delta \theta_n^{s-\alpha-1}\text{ for all integer }s\in [0,\tilde\alpha-2].
\end{split}
\end{align}
\end{lem}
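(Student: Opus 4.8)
The plan is to prove Lemma~\ref{f21a} by a telescoping argument that converts the defining relations \eqref{e13} into the claimed bounds on $\cL(V_n)$ and $\cB(V_n)-g$, using the error estimates already established in Lemmas~\ref{f2}, \ref{f6}, and \ref{f11} together with the smoothing properties \eqref{e3} of $S_\theta$ and $\tilde S_\theta$. Recall from \eqref{e13} that
\begin{align*}
&\cL(V_n)=(I-S_{\theta_{n-1}})E_{n-1}+e_{n-1} \, ,\\
&\cB(V_n)-g=(\tilde S_{\theta_{n-1}}-I)g+(I-\tilde S_{\theta_{n-1}})\tilde E_{n-1}+\tilde e_{n-1} \, .
\end{align*}
So the task reduces to estimating each of these three (respectively two) pieces in the $H^s$ (respectively boundary $H^s$) norm, for $s$ in the stated range, under the standing induction hypothesis (H$_{n-1}$).

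First I would estimate the term $(I-S_{\theta_{n-1}})E_{n-1}$. By \eqref{e3}(b) applied with the pair of indices $(s,\tilde\alpha)$, we have $|(I-S_{\theta_{n-1}})E_{n-1}|_s\leq C\theta_{n-1}^{s-\tilde\alpha}|E_{n-1}|_{\tilde\alpha}$, and Lemma~\ref{f11} gives $|E_{n-1}|_{\tilde\alpha}\leq C\delta^2\theta_{n-1}^{L_3(\tilde\alpha)}$, so this piece is bounded by $C\delta^2\theta_{n-1}^{s-\tilde\alpha+L_3(\tilde\alpha)}=C\delta^2\theta_{n-1}^{L_3(s)}$. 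Since $L_3(s)=s+\alpha_0-2\alpha+1$ and the choice \eqref{e15} gives $\alpha_0-2\alpha+1=2\alpha_0+4-2(2\alpha_0+4)+\alpha_0-1=-\alpha_0-5< -1$ (more simply, $L_3(s)\leq s-\alpha-1$ for $s$ in the relevant interval, which is exactly the kind of inequality recorded in \eqref{f20}), this contributes at most $\tfrac{\delta}{3}\theta_n^{s-\alpha-1}$ for $\delta$ small and $\theta_0$ large, after absorbing constants and comparing $\theta_{n-1}$ with $\theta_n$. The error term $e_{n-1}=e_{n-1}'+e_{n-1}''$ is handled directly by \eqref{f3} and \eqref{f7}: each is $\leq C\delta^2\theta_{n-1}^{L_i(s)-1}\Delta_{n-1}\leq C\delta^2\theta_{n-1}^{L_i(s)-2}\leq C\delta^2\theta_{n-1}^{s-\alpha-2}$ using $L_i(s)\leq s-\alpha-1$ and $\Delta_{n-1}\sim\theta_{n-1}^{-1}$, again absorbable into $\tfrac{\delta}{3}\theta_n^{s-\alpha-1}$. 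Summing the three contributions yields (a). For the boundary estimate (b) the argument is identical on the two terms $(I-\tilde S_{\theta_{n-1}})\tilde E_{n-1}$ and $\tilde e_{n-1}$, using Lemma~\ref{f11} and Lemma~\ref{f6}(2) with the range $s\in[0,\tilde\alpha-2]$ (the $-2$ reflecting the trace loss already present in \eqref{f8}); the extra term $(\tilde S_{\theta_{n-1}}-I)g$ is bounded by $C\theta_{n-1}^{s-\alpha}\langle g\rangle_\alpha$ via \eqref{e3}(b), and since $\langle g\rangle_\alpha/\delta$ is assumed small this is $\leq\tfrac{\delta}{3}\theta_n^{s-\alpha-1}$ as well (choosing $\langle g\rangle_\alpha\leq c\,\delta\,\theta_0^{-1}$).

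The main obstacle is purely bookkeeping: one must check that the exponents $L_3(s)$, $L_4(s)$ arising from the accumulated errors, after being shifted by the smoothing exponent $-(\tilde\alpha-s)$ and by the factor $\Delta_{n-1}\sim\theta_{n-1}^{-1}$, all land at or below $s-\alpha-1$ across the full range of $s$, and that the residual powers of $\delta$ (one factor from $\delta^2$ versus the target $\delta$) are genuinely absorbed rather than merely formally so. These are exactly the inequalities packaged in \eqref{f20} together with $L_4(\tilde\alpha-2)>0$ and $L_3(\tilde\alpha)>0$ from Lemma~\ref{f11}, so no new constraint on $\alpha,\alpha_0,\tilde\alpha$ beyond \eqref{e15} is needed; one simply invokes them. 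I would close by remarking that since all estimates are uniform in $n$, choosing $\theta_0$ large enough once (to make the geometric-sum constants harmless) and $\delta$, $\langle g\rangle_\alpha/\delta$ small enough once suffices, which completes the induction step and hence the proof of Lemma~\ref{f21a}.
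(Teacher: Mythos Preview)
Your argument for part (a) and for the $\tilde E_{n-1}$, $\tilde e_{n-1}$ pieces of part (b) is correct and matches the paper's approach. However, there is a genuine gap in your treatment of the term $(\tilde S_{\theta_{n-1}}-I)g$.

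You bound this by $C\theta_{n-1}^{s-\alpha}\langle g\rangle_\alpha$ and then claim it is $\leq\tfrac{\delta}{3}\theta_n^{s-\alpha-1}$ ``choosing $\langle g\rangle_\alpha\leq c\,\delta\,\theta_0^{-1}$''. This cannot work uniformly in $n$: the required inequality is equivalent to $\langle g\rangle_\alpha\lesssim \delta\,\theta_n^{-1}$, and since $\theta_n\to\infty$ no fixed positive $\langle g\rangle_\alpha$ satisfies this for all $n$. Concretely, take $s=\alpha$ (which lies in $[0,\tilde\alpha-2]$): your bound is the constant $C\langle g\rangle_\alpha$, while the target $\delta\theta_n^{-1}$ tends to zero.

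The fix, which is what the paper does, is to use the assumed higher regularity $g\in H^{\tilde\alpha-2}$ (this is exactly the hypothesis of Proposition~\ref{c8b}). Applying \eqref{e3}(b) with the pair $(s,\tilde\alpha-2)$ gives
\[
\langle(\tilde S_{\theta_{n-1}}-I)g\rangle_s\leq C\,\theta_{n-1}^{\,s-(\tilde\alpha-2)}\,\langle g\rangle_{\tilde\alpha-2}\,,
\]
and from \eqref{e15} one has $\tilde\alpha-2=2\alpha-\alpha_0-2>\alpha+1$, so the exponent $s-(\tilde\alpha-2)$ is \emph{strictly} smaller than $s-\alpha-1$. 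Hence this term is bounded by $C\langle g\rangle_{\tilde\alpha-2}\,\theta_n^{s-\alpha-1}\cdot\theta_n^{-(\tilde\alpha-3-\alpha)}$, which can be absorbed into $\tfrac{\delta}{3}\theta_n^{s-\alpha-1}$ by taking $\theta_0$ large (not by taking $g$ small).
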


\begin{proof}
From \eqref{e13} we have
\begin{align}\label{f22}
\begin{split}
&(a)\;|\cL(V_n)|_s\leq |(I-S_{\theta_{n-1}})E_{n-1}|_s+|e_{n-1}|_s\\
&(b)\;\langle B(V_n)-g\rangle_s\leq\langle(\tilde S_{\theta_{n-1}}-I)g\rangle_s+\langle(I-\tilde S_{\theta_{n-1}})\tilde E_{n-1}\rangle_s+\langle\tilde e_{n-1}\rangle_s.
\end{split}
\end{align}
Using \eqref{e3} and the above estimates of $E_{n-1}$ and $e_{n-1}$,  we find
\begin{align}\label{f23}
\begin{split}
&|(I-S_{\theta_{n-1}})E_{n-1}|_s\leq C\theta_n^{s-\tilde\alpha}|E_{n-1}|_{\tilde \alpha}\leq C\delta^2\theta_n^{(s-\alpha-1)+(\alpha_0+2-\alpha)}\\
&|e_{n-1}|_s\leq C\delta^2\theta_n^{L_3(s)-1}\Delta_n,
\end{split}
\end{align}
which imply \eqref{f21b}(a) since $\alpha_0+2-\alpha<0$ and $L_3(s)<s-\alpha$.

The last two terms on the right in \eqref{f22}(b) are estimated similarly.  To finish we use
\begin{align}\label{f25}
\langle(\tilde S_{\theta_{n-1}}-I)g\rangle_s\leq C\theta_n^{s-(\tilde\alpha-2)}\langle g\rangle_{\tilde \alpha-2}\text{ for }s\leq \tilde\alpha-2
\end{align}
and observe that $s-\tilde\alpha+2<s-\alpha-1$.
\end{proof}

We now fix $\delta$ and $\theta_0$ as above and check (H$_0$).
\begin{lem}\label{f26}
If $\langle g\rangle_\alpha$ is small enough, then (H$_0$) holds.
\end{lem}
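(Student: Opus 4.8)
The plan is to reduce the base case to the linear tame estimate of Proposition \ref{d26} applied at the zero iterate, and then to track powers of $\theta_0$.

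First I would unravel the definitions at $n=0$. By \eqref{e8} we have $V_0=0$, $f_0=0$, and $g_0=\tilde S_{\theta_0}g$, hence $S_{\theta_0}V_0=0$, so the increment $\dot V_0$ solves the linearized system \eqref{e7} in the degenerate form
\[
\cL'(0)\,\dot V_0=0 \ \text{ in }\Omega_T,\qquad \cB'(0)\,(\dot V_0|_{x_d=0})=\tilde S_{\theta_0}g \ \text{ in }b\Omega_T,\qquad \dot V_0=0 \ \text{ in }t<0,
\]
together with the trace relations obtained by linearizing \eqref{c4a} (respectively \eqref{g3} in the general case). The smallness hypothesis \eqref{d27a} of Proposition \ref{d26} is trivially met at $V=0$, so that Proposition applies; since $f_0=0$ and $V_0=0$, all terms on the right of \eqref{d27} involving $f$ or $V$ drop out and we obtain, for every integer $s\in[0,\tilde\alpha]$,
\[
|\dot V_0|_s+\langle\dot V_0\rangle_s\le C\,\langle\tilde S_{\theta_0}g\rangle_s .
\]

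Second, I would control the right-hand side by the smoothing estimate \eqref{e3}(a) for $\tilde S_\theta$, applied with its two order parameters equal to $s$ and to $\alpha$ respectively, which gives $\langle\tilde S_{\theta_0}g\rangle_s\le C\,\theta_0^{(s-\alpha)_+}\langle g\rangle_\alpha$ for all $s\in[0,\tilde\alpha]$ (note that the power is $0$ when $s\le\alpha$). Comparing with the target bound $\delta\,\theta_0^{\,s-\alpha-1}\Delta_0$ and using $\Delta_0\ge\frac{1}{3\theta_0}$ from \eqref{e14}, it is enough to check that
\[
C\,\theta_0^{(s-\alpha)_+}\langle g\rangle_\alpha\le\tfrac{\delta}{3}\,\theta_0^{\,s-\alpha-2}\qquad\text{for every integer }s\in[0,\tilde\alpha].
\]
Since $\theta_0\ge1$, the quantity $s-\alpha-2-(s-\alpha)_+=\min(s-\alpha-2,-2)$ attains its minimum over this range at $s=0$, where it equals $-\alpha-2$; hence the displayed inequality holds as soon as $\langle g\rangle_\alpha\le\frac{\delta}{3C}\,\theta_0^{-\alpha-2}$. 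Recalling that $\delta$ and $\theta_0$ (and $\gamma$, $T$) have already been fixed in the induction step, this is exactly a smallness condition on $\langle g\rangle_\alpha$, which establishes (H$_0$).

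I do not anticipate any genuine difficulty here: the argument is essentially bookkeeping. The only points that need a little care are (i) that Proposition \ref{d26} may indeed be invoked at $V=0$, its hypothesis being vacuous there; (ii) that the smallness threshold for $\langle g\rangle_\alpha$ is permitted to depend on the previously fixed parameters; and (iii) carrying out the exponent arithmetic relating $(s-\alpha)_+$ to $s-\alpha-2$ uniformly over $s\in[0,\tilde\alpha]$.
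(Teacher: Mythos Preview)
Your proposal is correct and follows essentially the same approach as the paper: apply the tame linearized estimate at $V=0$, bound $\langle\tilde S_{\theta_0}g\rangle_s$ via the smoothing property, and conclude by choosing $\langle g\rangle_\alpha$ small. You spell out the exponent comparison and the resulting smallness threshold more explicitly than the paper does, but the argument is the same.
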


\begin{proof}
Applying the estimate for the linearized system to
\begin{align}
\begin{split}
&\cL'(0)\dot V_0=0\\
&\cB'(0)\dot V_0=S_{\theta_0}g
\end{split}
\end{align}
we obtain for integer $s\in [0,\tilde\alpha]$:

\begin{align}
|\dot V_0|_s+\langle\dot V_0\rangle_s\leq C\langle S_{\theta_0}g\rangle_s\leq C\begin{cases}\theta_0^{s-\alpha}\langle g\rangle_\alpha, \;s\geq \alpha\\\langle g\rangle_\alpha,\;s<\alpha\end{cases}.
\end{align}
Thus, (H$_0$) holds if $\langle g\rangle_\alpha$ is small enough.

\end{proof}

We can now complete the proof of Proposition \ref{c8b}.

\begin{proof}[Proof of Proposition \ref{c8b}]
We have
\begin{align}
V_n=V_{n-1}+\dot V_{n-1}=V_0+\sum^{n-1}_{k=0}\dot V_k=\sum^{n-1}_{k=0}\dot V_k.
\end{align}
Let $\mu=\alpha-1$.  Since $\theta_k\sim \sqrt{k}$ we have by (H$_n$)
\begin{align}
\sum_{k=0}^\infty |\dot V_k|_\mu+\sum_{k=0}^\infty \langle \dot V_k\rangle_\mu\leq \delta \sum_k \theta_k^{-2}\Delta_k\leq C\sum_k k^{-\frac{3}{2}}<\infty.
\end{align}
Thus, for some $V$ as described in Proposition \ref{c8b}, $V_k\to V$ in $H^\mu(\Omega_T)$ and $V_k|_{x_d=0}\to V|_{x_d=0}$ in  $H^\mu(\Omega_T)$.
This implies
\begin{align}
\cL(V_k)\to\cL(V)\text{ in }H^{\mu-1}(\Omega_T)\text{ and }\cB(V_k|_{x_d=0})\to\cB(V|_{x_d=0})\text{ in } H^{\mu-1}(b\Omega_T).
\end{align}
Applying Lemma \ref{f21a} with $s=\mu-1$ we conclude that $V$ is a solution of the profile system \eqref{c8}.
\end{proof}

\emph{\quad}Having solved the key subsystem we can now easily complete the solution of the full profile system \eqref{b8s}, \eqref{6} and obtain the following result.
\begin{theo}\label{mainprof}
Fix $T>0$, let $\alpha_0=\left[\frac{d+1}{2}\right]+1$, $\alpha=2\alpha_0+4$,  $\tilde\alpha=2\alpha-\alpha_0$, and suppose
$G\in H^{\tilde \alpha-1}(\Omega_T)$.    If $\langle G\rangle_{\alpha+1}$ is small enough, there exist solutions
 \begin{align}
 \cV^0=\cV^0_{inc}\in H^{\alpha-1}(\Omega_T),\; \cV^1=\underline{\cV}^1+\cV^1_{inc}+\cV^1_{out}\in H^{\alpha-2}(\Omega_T)
 \end{align}
 of the full profile system \eqref{b8s}, \eqref{6} satisfying\footnote{Here when we write $\cV^0_{inc}\in H^{\alpha-1}(\Omega_T)$, for example, we mean that the individual components of $\cV^0_{inc}$ lie in that space.}
\begin{align}\label{h2}
\begin{split}
&\cV^0=E\cV^0\in H^{\alpha-1}(\Omega_T),\; \cV^0_{inc}|_{x_d=0,\theta_j=\theta_0}\in H^{\alpha-1}(b\Omega_T)\\
&\cV^1_{out}=E\cV^1_{out} \in H^{\alpha-1}(\Omega_T),\;(E\cV^1_{out})|_{x_d=0,\theta_j=\theta_0}\in H^{\alpha-1}(b\Omega_T)\\
&\underline{\cV}^1\in H^{\alpha-2}(\Omega_T),\;(I-E)\cV^1_{inc}\in H^{\alpha-2}(\Omega_T),\;E\cV^1_{inc}\in H^{\alpha-2}(\Omega_T).
\end{split}
\end{align}
These statements remain true if $\alpha$ is increased and if $\tilde\alpha\geq 2\alpha-\alpha_0$.
\end{theo}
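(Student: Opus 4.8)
The plan is to derive Theorem~\ref{mainprof} from the already-established solvability of the key subsystem, Proposition~\ref{c8b}, by ``unwinding'' the reductions that led from the full profile system \eqref{b8s},\eqref{6} to \eqref{12} and then to \eqref{c8} (or its general-case analogue \eqref{g8}). The point is that all the genuinely coupled, derivative-losing structure was isolated in the key subsystem; once $\cV^0_{inc}$ and $\cV^1_{out}$ are known, every remaining profile component is obtained by solving a \emph{linear} transport equation or by simply applying an explicit bounded (in fact smoothing, on the relevant subspaces) operator, so no further Nash--Moser argument is needed and only standard energy estimates for scalar transport equations enter. The regularity bookkeeping is the only thing to watch: each application of $R$ or of $\partial_{\theta_0}$ at the boundary costs one derivative, which is exactly why the lower-order pieces $\underline\cV^1$, $(I-E)\cV^1_{inc}$, $E\cV^1_{inc}$ land in $H^{\alpha-2}$ rather than $H^{\alpha-1}$.

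Concretely, the steps I would carry out are: (1) Invoke Proposition~\ref{c8b} (general case) with $g=-b\cdot\partial_{\theta_0}G^*$ to obtain $a\in H^{\alpha-1}(b\Omega_T)$, the incoming profiles $\sigma_{m,k}$, $m\in\cI$, and the outgoing correctors $\tau_{m,k}$, $m\in\cO_1$, with $\cV^0_{inc}\in H^{\alpha-1}(\Omega_T)$ and $\cV^1_{out}=E\cV^1_{out}\in H^{\alpha-1}(\Omega_T)$; the smallness of $\langle G\rangle_{\alpha+1}$ controls $\langle g\rangle_\alpha$. This already gives $\cV^0=\cV^0_{inc}=E\cV^0$ and, by Proposition~\ref{7a}, the vanishing of $\underline\cV^0$, $\cV^0_{out}$, $\cV^0_{nonch}$, so the first line of \eqref{h2} holds. (2) Set $(I-E)\cV^1_{inc} := -R\big(L(\partial)\cV^0+D(0)\cV^0\big)$; by Remark~\ref{rema}(a) the argument of $R$ has no noncharacteristic modes so $R$ is well-defined, and since $L(\partial)$ costs one derivative we get $(I-E)\cV^1_{inc}\in H^{\alpha-2}(\Omega_T)$. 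This is \eqref{6}(a). (3) Determine the mean value $\underline\cV^1=E_0\cV^1$ by applying $E_0$ to \eqref{6}(b),(c): this yields a linear weakly stable boundary problem of the type solved in \cite{C}, whose forcing $E_0\big((\partial_v D(0)\cV^0)\cV^0\big)$ and boundary data $E_0 G-E_0[(\partial_v\psi(0)\cV^0)\cV^0]$ lie in $H^{\alpha-2}$; hence $\underline\cV^1\in H^{\alpha-2}(\Omega_T)$. (4) Finally choose the trace of $E\cV^1_{inc}$: pick $(E\cV^1_{inc})|_{x_d=0,\theta_j=\theta_0}$ satisfying the algebraic relation \eqref{10} (this uses \eqref{defb} and the fact that the solvability condition \eqref{10a} is exactly what \eqref{12}(c), now solved, guarantees), then solve the incoming transport equation obtained by applying $E_{inc}$ to \eqref{6}(b) for $E\cV^1_{inc}$; this again costs at worst one derivative relative to the data, giving $E\cV^1_{inc}\in H^{\alpha-2}(\Omega_T)$. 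Assembling, $\cV^1=\underline\cV^1+\cV^1_{inc}+\cV^1_{out}\in H^{\alpha-2}(\Omega_T)$, and one checks \eqref{3}(a),(b),(c) and \eqref{4} hold by Remark~\ref{rema}(b), completing \eqref{h2}.

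The last sentence of the theorem---that the conclusions persist with $\alpha$ increased provided $\tilde\alpha\ge 2\alpha-\alpha_0$---follows because Proposition~\ref{c8b} and all the tame estimates of Section~\ref{tameprof} were proved for $s$ in an arbitrary fixed finite interval $[0,\tilde\alpha]$ with the constants depending only on that interval, and steps (2)--(4) above are linear solves whose regularity scales accordingly; one simply re-runs the Nash--Moser scheme with the larger parameters (the inequalities \eqref{f20} that pin down the relations among $\alpha_0,\alpha,\tilde\alpha$ continue to hold under $\tilde\alpha\ge 2\alpha-\alpha_0$ since they are the defining constraints of \eqref{e15}).

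The main obstacle is step~(4): one must verify that a \emph{consistent} choice of the boundary trace of $E\cV^1_{inc}$ exists, i.e.\ that the linear-algebra condition \eqref{10}/\eqref{defb} is compatible with the transport equation \eqref{6}(b) for $E\cV^1_{inc}$ and with the already-fixed traces \eqref{c4a}/\eqref{g3} of the individual $\sigma_{m,k}$. This is precisely where solving the key subsystem pays off: the resonant outgoing contribution to $\cV^1_{out}$ (which a priori obstructs writing down \eqref{10a}, see Remark~\ref{res}(a)) is now a known function, so \eqref{10} becomes a genuine, solvable constraint rather than a coupled equation. The remaining work---tracking which Sobolev index each component lands in and confirming the transport estimates of Proposition~\ref{d10}-type apply to the decoupled scalar equations for $\underline\cV^1$ and $E\cV^1_{inc}$---is routine given the machinery already in place.
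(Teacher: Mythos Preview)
Your proposal is correct and follows essentially the same approach as the paper's proof: invoke Proposition~\ref{c8b} to obtain $\cV^0_{inc}$ and $\cV^1_{out}$, then successively recover $(I-E)\cV^1_{inc}$ from \eqref{6}(a), $\underline{\cV}^1$ by applying $E_0$ to \eqref{6}(b)(c)(d) and invoking \cite{C}, and finally $E\cV^1_{inc}$ by choosing a trace satisfying \eqref{10} (the solvability condition \eqref{10a} being guaranteed by \eqref{12}(c)) and solving the incoming transport equations. The only differences are cosmetic: the paper swaps the order of your steps (2) and (3), and notes explicitly that the quadratic forcing terms actually lie in $H^{\alpha-1}$ (not just $H^{\alpha-2}$), the drop to $H^{\alpha-2}$ coming from the one-derivative loss in the weakly stable estimate of \cite{C}.
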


\begin{proof}
\emph{\quad}After the subsystem \eqref{12} is solved we know $\cV^0=\cV^0_{inc}=E\cV^0_{inc}$, $\cV^1_{out}=E\cV^1_{out}$, and $\alpha$, and these functions have the regularity described in Proposition \ref{c8b}.  Taking the mean of equations \eqref{6}(b)(c)(d), using the fact that the mean of the quadratic term in \eqref{6}(b) lies in $H^{\alpha-1}(\Omega_T)$, and applying the result of \cite{C} to the resulting weakly stable system, we conclude $\underline{\cV}^1\in H^{\alpha-2}(\Omega_T)$.   From \eqref{6}(a) we find
\begin{align}\label{h1}
(I-E)\cV^1=(I-E)\cV^1_{inc}\in H^{\alpha-2}(\Omega_T).
\end{align}
It remains to determine $E\cV^1_{inc}$.  Since the solvability condition \eqref{10a} holds, we can make a choice of
$E\cV^1_{inc}|_{x_d=0,\theta_j=\theta_0}\in H^{\alpha-2}(b\Omega_T)$ satisfying the boundary equation \eqref{10},
whose right side is now known and lies in $H^{\alpha-2}(b\Omega_T)$.\footnote{All terms on the right in \eqref{10}
lie in $H^{\alpha-1}(b\Omega_T)$, except the term involving $L(\partial)$. That term is actually more regular than
$H^{\alpha-2}(b\Omega_T)$, but we do not wish to introduce more refined spaces to capture this.}Finally, we determine
the components of $E\cV_{inc}$ by solving the transport equations determined by \eqref{6}(b), the choice of initial data,
and the initial condition \eqref{6}(d).  Observe that the interaction integrals corresponding to the quadratic term in
\eqref{6}(b) lie in $H^{\alpha-1}(\Omega_T)$.
\end{proof}

\subsection{Iteration scheme for the exact solution}\label{nashex}

\qquad The Nash-Moser scheme for the exact solution will use the scale of spaces $E^s_{\gamma,T}$ on $\Omega_T$
and $H^s_{\gamma,T}$ on $b\Omega_T$. Since  $T$ was fixed at the start and $\gamma$ was fixed in section
\ref{tameex}, we now drop these subscripts in the notation for norms and function spaces. For $s\geq 0$ we let
\begin{align}\label{j1}
\bF^s:=\{u(x,\theta_0)\in E^s, u=0 \text{ for }t<0\}.
\end{align}
Moreover, we shall now denote $E^s$ norms simply by $|U|_s$ and $H^s$ norms by $\langle U\rangle_s$.

\begin{lem}\label{j2}
There exists  a family of operators $S_\theta:\bF^0\to\cap_{\beta\geq 0}\bF^\beta$ such that:
\begin{align}\label{j3}
\begin{split}
&(a)\;|S_\theta u|_\beta\leq C\theta^{(\beta-\alpha)_+}|u|_\alpha\text{ for all }\alpha,\beta\geq 0\\
&(b)\;|S_\theta u-u|_\beta\leq C\theta^{(\beta-\alpha)}|u|_\alpha,\;\;0\leq\beta\leq \alpha\\
&(c)\;|\frac{d}{d\theta} S_\theta u|_\beta\leq C\theta^{(\beta-\alpha-1)}|u|_\alpha\text{ for all }\alpha,\beta\geq 0.
\end{split}
\end{align}
The constants are uniform for $\alpha$, $\beta$ in a bounded interval.

There is a family of operators $\tilde S_\theta$ acting on functions defined on the boundary and satisfying the
above properties with respect to the norms $\langle u\rangle_s$ on $b\Omega_T$, and we have
\begin{align}\label{j3a}
(S_\theta u)|_{x_d=0}=\tilde S_\theta(u|_{x_d=0}).
\end{align}
\end{lem}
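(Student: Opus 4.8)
The statement to prove is Lemma~\ref{j2}, the existence of smoothing operators $S_\theta$ on the scale $\bF^s=\{u\in E^s:u=0\text{ for }t<0\}$ satisfying the three standard estimates \eqref{j3}, together with a boundary version $\tilde S_\theta$ that commutes with restriction to $x_d=0$ in the sense of \eqref{j3a}. The plan is to build these operators by the usual Littlewood--Paley/convolution construction, but applied only in the tangential-periodic variables $(x',\theta_0)=(t,y,\theta_0)\in\bR^d\times\bT^1$, leaving the normal variable $x_d$ untouched. This is exactly what makes the commutation \eqref{j3a} automatic, since restriction to $x_d=0$ does not see the smoothing at all.

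First I would recall the construction used in \cite{A} (and in Lemma~\ref{e2} above) for the Sobolev scale $H^s(b\Omega_T)$: choose $\chi\in C_c^\infty$ with $\chi\equiv 1$ near the origin, let $\chi_\theta$ be the Fourier multiplier $\widehat{\chi_\theta u}(\xi',k)=\chi(\xi'/\theta,k/\theta)\widehat u(\xi',k)$ acting in the dual variables to $(x',\theta_0)$, and set $\tilde S_\theta=\chi_\theta$ (with a Seeley-type extension/restriction in $t$ so that the support condition $u=0$ for $t<0$ is preserved; this is the content of \cite[section~4]{A}). These $\tilde S_\theta$ satisfy \eqref{j3} with respect to the $\langle\cdot\rangle_s$ norms by the elementary estimates $\langle 1,\xi',k\rangle^{\beta-\alpha}\chi(\xi'/\theta,k/\theta)$ is bounded by $C\theta^{(\beta-\alpha)_+}$, etc. Then I would simply define $S_\theta$ to act on $u(x,\theta_0)$ by freezing $x_d$ and applying $\tilde S_\theta$ in $(x',\theta_0)$: $(S_\theta u)(\cdot,x_d,\cdot)=\tilde S_\theta\bigl(u(\cdot,x_d,\cdot)\bigr)$. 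Since the operator is $x_d$-independent, it commutes with $\partial_d$, hence preserves the intersection $E^s=CH^s\cap L^2H^{s+1}$: the $CH^\beta$ norm of $S_\theta u$ is $\sup_{x_d}\langle S_\theta u(\cdot,x_d,\cdot)\rangle_\beta\le C\theta^{(\beta-\alpha)_+}\sup_{x_d}\langle u(\cdot,x_d,\cdot)\rangle_\alpha=C\theta^{(\beta-\alpha)_+}|u|_{CH^\alpha}$, and the $L^2H^{\beta+1}$ norm is handled by integrating the analogous pointwise-in-$x_d$ bound. Combining the two pieces gives the $E^\beta$ estimate \eqref{j3}(a); estimates (b) and (c) follow the same way from their boundary counterparts, differentiating $\theta\mapsto\chi(\cdot/\theta)$ for (c). The commutation \eqref{j3a} holds by construction since restriction to $x_d=0$ and the $(x',\theta_0)$-multiplier act on independent variables.

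The only genuinely technical point — and the step I expect to be the main (minor) obstacle — is arranging that $S_\theta$ maps $\bF^0$ into $\bF^\beta$, i.e.\ that the smoothing preserves vanishing for $t<0$ while also gaining regularity near $t=0$; a naive Fourier multiplier in $t$ destroys the support property. This is resolved exactly as in \cite[section~4]{A}: one first applies a Seeley-type continuous extension operator from $\{t>0\}$ to all of $\bR$ that is bounded on every $H^s$, then mollifies, then multiplies by a cutoff in $t$ (or restricts) to recover the support; the extension constants are uniform in $s$ on bounded intervals, and because the spatial variable $x_d$ plays no role in any of this, the argument transfers verbatim from the $H^s(b\Omega_T)$ setting to $E^s_T=CH^s_T\cap L^2H^{s+1}_T$. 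I would therefore present the proof as: (1) recall $\tilde S_\theta$ and its properties from \cite{A}; (2) define $S_\theta$ by applying $\tilde S_\theta$ fiberwise in $x_d$; (3) verify \eqref{j3}(a)--(c) for the $E^s$ norms by taking $\sup_{x_d}$ and $L^2_{x_d}$ of the fiberwise boundary estimates, using that $S_\theta$ commutes with $\partial_d$; (4) note \eqref{j3a} is immediate. No new estimates beyond those in \cite{A} are needed.
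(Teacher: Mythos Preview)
Your proposal is correct and follows exactly the paper's approach: the paper takes a standard family $\tilde S_\theta$ from \cite{A} acting in the $(x',\theta_0)$ variables, defines $S_\theta$ by applying $\tilde S_\theta$ fiberwise with $x_d$ frozen as a parameter, and observes that the $E^s$ estimates follow immediately from the $H^s$ ones. Your write-up is in fact more detailed than the paper's three-line proof (in particular your discussion of the support-preservation issue); one small remark is that the commutation with $\partial_d$ you mention is not actually needed, since the $E^s$ norm involves only tangential derivatives and the verification of \eqref{j3} proceeds directly by taking $\sup_{x_d}$ and $L^2_{x_d}$ of the fiberwise $H^s$ estimates.
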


\begin{proof}
Let $\tilde S_\theta$ be a standard family of smoothing operators, for example as in \cite{A}, acting in the $(x',\theta_0)$
variables on the scale of spaces $H^s$. For $U\in E^s$ simply treat $x_d$ as a parameter and define
\begin{align}\label{j4}
S_\theta U=\tilde S_\theta U(\cdot,x_d,\cdot).
\end{align}
The properties \eqref{j3} then follow immediately from the corresponding properties of the operators $\tilde S_\theta$.
\end{proof}

To avoid excessive repetition we use the  notation and arguments of section \ref{nashprof} as much as possible,
and just point out where changes are needed. Thus, we now denote the solution to the semilinear singular problem
\eqref{15} by $V$ instead of $U$,  write $g$ instead of $G$, and rewrite \eqref{15} as
\begin{align}\label{j5}
\begin{split}
&\cL(V)=0\text{ on }\Omega_T\\
&\cB(V)=g\text{ on }b\Omega_T\\
&V=0\text{ in }T<0,
\end{split}
\end{align}
where
\begin{align}\label{j5a}
\begin{split}
&\cL(V):=\frac{1}{\eps}\left(\partial_d V+\mathbb{A}\left(\partial_{x'}+\frac{\beta \partial_{\theta_0}}{\eps}\right)V+D(\eps V)V\right)\\
&\cB(V):=\frac{1}{\eps}\left(\psi(\eps V)V\right).
\end{split}
\end{align}

We now let\footnote{The parameter $\tilde\alpha$ is determined so that $L_2(\tilde\alpha)>0$ for $L_2(s)$ as in Lemma \ref{j13}. The definition of  $\alpha$ is chosen so that $\alpha_1<\alpha$ and the conditions \eqref{j30} hold.}
\begin{align}\label{j6a}
\alpha_0=\left[\frac{d+1}{2}\right],\; \alpha_1=\left[\frac{d+1}{2}\right]+M_0,\; \alpha=\max(2\alpha_0+3,\alpha_1+1), \; \tilde\alpha=2\alpha-\alpha_0.
\end{align}
The main result of this section is the following proposition.
\begin{theo}\label{k8b}
Fix $T>0$, define $\alpha$, $\alpha_0$, and $\tilde\alpha$ as in \eqref{j6a}, and suppose
$g\in H^{\tilde \alpha}$, where $g$ is the same as the function denoted $G$ in \eqref{15}.  There exists $\eps_0>0$ such that if $\langle g\rangle_{\alpha+2}$ is small enough, there exists a solution $V$ of
the system  \eqref{j5} on $\Omega_T$ for $0<\eps\leq \eps_0$ with $V\in E^{\alpha-1}$, $V|_{x_d=0}\in H^{\alpha}$.  Thus, $U_\eps=V$ is a solution of
the singular system \eqref{15} on $\Omega_T$ for $0<\eps\leq \eps_0$.
These statements remain true if $\alpha$ is increased and if $\tilde\alpha\geq 2\alpha-\alpha_0$.
\end{theo}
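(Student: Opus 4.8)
The plan is to prove Theorem \ref{k8b} by running a Nash-Moser iteration on the scale $\bF^s$ of $E^s$-spaces, in complete parallel to the scheme for profiles in Section \ref{nashprof}, using the tame estimate of Proposition \ref{i33a} in place of Proposition \ref{d26}. First I would set up the scheme exactly as in \eqref{e2a}--\eqref{e13}: start with $V_0=0$, define increments $V_{n+1}=V_n+\dot V_n$, introduce the smoothing operators $S_{\theta_n}$ from Lemma \ref{j2} (which now commute with restriction to $x_d=0$ by \eqref{j3a}), and solve at each step the linearized singular problem
\begin{align*}
&\cL'(S_{\theta_n}V_n)\dot V_n=f_n,\\
&\cB'((S_{\theta_n}V_n)|_{x_d=0})(\dot V_n|_{x_d=0})=g_n,\\
&\dot V_n=0\text{ in }t<0,
\end{align*}
where $f_n$, $g_n$ are defined through the accumulated-error bookkeeping \eqref{e9}--\eqref{e10}. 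Solvability of the linearized system (and the a priori estimate) is supplied by Proposition \ref{i14}/Proposition \ref{i33a} together with the well-posedness discussion of Section \ref{est}; the only structural novelty compared to the constant-coefficient case is that the coefficients now depend on $\eps S_{\theta_n}V_n$, which is controlled because the induction hypothesis keeps $|S_{\theta_n}V_n|_{C^{0,M_0}}$ (hence, by Remark \ref{embed} and the choice $\alpha_1=[(d+1)/2]+M_0$) and the smallness parameter $\kappa_0$ in Proposition \ref{i33a} under control.

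Next I would carry out the error estimates. The quadratic errors $e_k'$, $\tilde e_k'$ and substitution errors $e_k''$, $\tilde e_k''$ are estimated exactly as in Lemmas \ref{f2} and \ref{f6}, now using the tame estimates for second derivatives from Proposition \ref{i30a} (parts (a) and (b)) in place of Proposition \ref{d5}; the loss of one tangential derivative on the boundary in Proposition \ref{i33a} is the analogue of the loss in \eqref{d7}, so the same arithmetic with the linear functions $L_i(s)$ goes through after shifting the relevant indices by one. One then estimates $E_n$, $\tilde E_n$ and $f_n$, $g_n$ as in Lemmas \ref{f11}, \ref{f14} (this is where $\tilde\alpha$ is forced to satisfy $L_2(\tilde\alpha)>0$, explaining \eqref{j6a}), feeds these into \eqref{f17}, and verifies the inequalities \eqref{f20}, now with $\alpha=\max(2\alpha_0+3,\alpha_1+1)$, to recover the induction step (H$_n$), together with the analogue of Lemma \ref{f21a} giving $|\cL(V_n)|_s$ and $\langle\cB(V_n)-g\rangle_s$ decaying like $\theta_n^{s-\alpha-1}$. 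Checking (H$_0$) amounts to applying Proposition \ref{i14} to $\cL'(0)\dot V_0=0$, $\cB'(0)\dot V_0=\tilde S_{\theta_0}g$, which is small when $\langle g\rangle_{\alpha+2}$ is small. Summing the increments in $E^{\alpha-1}$ (the series $\sum_k\theta_k^{-2}\Delta_k\sim\sum_k k^{-3/2}$ converges) yields $V\in E^{\alpha-1}$ with $V|_{x_d=0}\in H^{\alpha}$, and passing to the limit in $\cL(V_n)$, $\cB(V_n|_{x_d=0})$ using Lemma \ref{f21a} gives the solution of \eqref{j5}; setting $U_\eps=V$ finishes the proof, and the final sentence (about increasing $\alpha$) is immediate since all the index inequalities are preserved.

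The one point that needs genuine care — and is the main obstacle — is the $\eps$-uniformity. Every constant $C(K)$ and every threshold $\eps_0(K)$, $\gamma_0(K)$, $\kappa_0(\gamma,T)$ coming from Propositions \ref{i14}/\ref{i33a} must be shown independent of $\eps$, and crucially the second-derivative tame estimates in Proposition \ref{i30a} carry an explicit factor of $\eps$ in front of the ``bad'' term $|U|_{E^s_{\gamma,T}}|\dot U^a|_{E^{b_0}_T}|\dot U^b|_{E^{b_0}_T}$; it is exactly this extra $\eps$ that compensates the factor $1/\eps$ hidden in $\cL$, $\cB$ (see \eqref{j5a}), so that the error bounds are uniform in $\eps\in(0,\eps_0]$ and the whole scheme closes with an $\eps$-independent $\delta$ and $\theta_0$. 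Concretely, in the analogues of \eqref{f3}, \eqref{f7} one writes $\eps U=\eps S_{\theta_k}V_k$ etc.\ and notes that the factor $\eps$ absorbs the $1/\eps$ normalization, leaving constants that depend only on $K$, $\gamma$, $T$ and on the fixed Sobolev indices; I would also record, as in Remark \ref{embed}, that the choice $\alpha\ge\alpha_1+1=[(d+1)/2]+M_0+1$ guarantees $E^{\alpha-1}\subset C^{0,M_0}$ so that the hypothesis $|\eps\partial_d U|_{C^{0,M_0-1}}+|U|_{C^{0,M_0}}\le K$ needed in Proposition \ref{i14} is maintained along the iteration (here one uses the interior equation to bound $\eps\partial_d V_n$ in terms of tangential $E^s$-norms, with the $1/\eps$ from $\bA$ again killed by the $\eps$ in $\eps\partial_d$). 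Once this $\eps$-bookkeeping is in place, the rest is a faithful transcription of Section \ref{nashprof}.
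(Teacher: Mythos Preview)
Your plan is correct and coincides with the paper's proof: a Nash--Moser iteration on the $E^s$ scale, using the smoothing operators of Lemma~\ref{j2}, the tame linearized estimate of Proposition~\ref{i33a}, and the second-derivative bounds of Proposition~\ref{i30a}, with the scheme and bookkeeping copied verbatim from Section~\ref{nashprof}.

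Two details you should sharpen. First, because the tame estimate \eqref{j8} controls $|\dot V|_s+\langle\dot V\rangle_{s+1}$ (with the boundary norm one index higher than the interior one), the induction hypothesis in the paper is $|\dot V_k|_s+\langle\dot V_k\rangle_{s+1}\le\delta\theta_k^{s-\alpha-1}\Delta_k$, and the analogues of Lemmas~\ref{e19}--\ref{f21a} are restated accordingly; the third term in Proposition~\ref{i30a} also forces $L_1(s)$ and $L_2(s)$ to be redefined as maxima of two linear functions, and the arithmetic conditions become \eqref{j30} rather than \eqref{f20}. Second, your explanation of the $\eps$-bookkeeping is slightly off: the $1/\eps$ in the definitions \eqref{j5a} is already cancelled by the chain rule when one differentiates $\cL'_\eps(U)$, $\cB'_\eps(U)$ in $U$ (this is why $\cL''_\eps(U)=\partial_v\cD(\eps U)$ carries no $1/\eps$), so the factor $\eps$ in the third term of Proposition~\ref{i30a} is an innocuous $\eps\le 1$, not a compensating factor. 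The $\eps$-uniformity of the scheme follows simply because all the constants in Propositions~\ref{i30a} and~\ref{i33a} are already uniform, and (as you note, and as the paper records in a footnote) one uses the interior equation \eqref{e7} to bound $|\eps\,\partial_d(S_{\theta_n}V_n)|_{\alpha_1-1}$ in terms of tangential $E^s$-norms.
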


The linearized singular problem \eqref{i15} is now written
\begin{align}\label{j6}
\begin{split}
&\cL'(V)\dot V=f\text{ on }\Omega_T\\
&\cB'(V)\dot V=g\text{ on }b\Omega_T\\
&\dot V=0\text{ in }t<0.
\end{split}
\end{align}
With this notation the description of the scheme in section \ref{nashprof}  starting at line \eqref{e2a} applies here
word for word down to line \eqref{e14}.

\begin{rem}\label{j7}
 \textup{(a) In order to apply the tame estimate of Proposition \ref{i33a} to the linearized system \eqref{j6}, by
 Sobolev embedding (Remark \ref{embed}) it suffices to have
\begin{align}\label{j7a}
|\eps\partial_d V|_{\alpha_1-1}+|V|_{\alpha_1}< K'\text{ for }\eps\in (0,1], \text{ and }|V|_{\alpha_0+2}<\kappa
\end{align}
for some constant $K'$ depending  on $K$ and $\kappa$ as in Proposition \ref{i33a}.  In fact we use the slightly
weaker (because we use $E^s$ norms on the right) estimate for $s\in[0,\tilde \alpha]$:
\begin{align}\label{j8}
|\dot V|_s+\langle\dot V\rangle_{s+1}\leq C\left[|f|_{s+1}+\langle g\rangle_{s+2}+\left(|f|_{\alpha_0+1}+\langle g\rangle_{\alpha_0+2}\right)\left(|U|_{s+1}+\langle U\rangle_{s+2}\right)\right].
\end{align}}

 \textup{(b)  By Proposition \ref{i30a} when $|V|_{\alpha_0}\leq K'$,  the tame estimates for second derivatives now take the  form
\begin{align}\label{j9}
\begin{split}
&(a)\;|\cL''(V)(\dot V^a,\dot V^b)|_s\leq C\left(|\dot V^a|_s|\dot V^b|_{\alpha_0}+|\dot V^b|_s|\dot V^a|_{\alpha_0}+\eps |V|_s|\dot V^a|_{\alpha_0}|\dot V^b|_{\alpha_0}\right)\\
&(b)\langle\cB''(V)(\dot V^a,\dot V^b)\rangle_{s}\leq C\left(\langle \dot V^a\rangle_{s}\langle\dot V^b\rangle_{\alpha_0}+\langle\dot V^b\rangle_{s}\langle\dot V^a\rangle_{\alpha_0}+\eps \langle V\rangle_{s}\langle\dot V^a\rangle_{\alpha_0}\langle\dot V^b\rangle_{\alpha_0}\right).
\end{split}
\end{align}}
\end{rem}

With $\alpha$ and $\tilde{\alpha}$ redefined as in \eqref{j6a}, for a given $\delta>0$ the induction hypothesis
($H_{n-1}$) is now\\

\textbf{(H}$_{n-1}$\textbf{)}\emph{}\quad For all $k=0,\dots,n-1$ and for all $s\in [0,\tilde\alpha]\cap\bN$
\begin{align}\label{j9a}
|\dot V_k|_s+\langle\dot V_k\rangle_{s+1} \leq \delta \theta_k^{s-\alpha-1}\Delta_k.
\end{align}

Lemmas \ref{e19} and \ref{e21} are now replaced, with no real change in the proofs, by the following two lemmas.

\begin{lem}\label{j9b}
If $\theta_0$ is large enough, then for $k=0,\dots,n$ and all integers $s\in[0,\tilde\alpha]$ we have
\begin{align}\label{j20}
|V_k|_s+\langle V_k\rangle_{s+1}\leq\begin{cases}C\delta \theta_k^{(s-\alpha)_+},\;\alpha\neq s\\C\delta\log\theta_k,\;\;\alpha=s\end{cases}.
\end{align}
\end{lem}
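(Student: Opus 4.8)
The final statement is Lemma \ref{j9b}: from the induction hypothesis $(H_{n-1})$, the accumulated iterates $V_k$ satisfy the bounds \eqref{j20} for $k=0,\dots,n$. The plan is to mimic the proof of Lemma \ref{e19} in the profile scheme essentially verbatim, since only the scale of spaces has changed ($E^s_{\gamma,T}$ and $H^s_{\gamma,T}$ in place of $H^s_\gamma(\Omega_T)$), not the structure of the telescoping argument.

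First I would write $V_k = V_0 + \sum_{j=0}^{k-1} \dot V_j = \sum_{j=0}^{k-1}\dot V_j$ since $V_0 = 0$, and apply the triangle inequality to get $|V_k|_s + \langle V_k\rangle_{s+1} \le \sum_{j=0}^{k-1}\big(|\dot V_j|_s + \langle \dot V_j\rangle_{s+1}\big)$. Next I would invoke the induction hypothesis $(H_{n-1})$ in the form \eqref{j9a} to bound each summand by $\delta\,\theta_j^{s-\alpha-1}\Delta_j$, so that
\begin{align*}
|V_k|_s + \langle V_k\rangle_{s+1} \le \delta\sum_{j=0}^{k-1}\theta_j^{s-\alpha-1}\Delta_j.
\end{align*}
Then I would compare this Riemann sum with the integral $\int_{\theta_0}^{\theta_k}\theta^{s-\alpha-1}\,d\theta$, using \eqref{e14} which controls $\Delta_j$ and guarantees the mesh is comparable to $1/\theta_j$. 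For $s\neq\alpha$ the integral evaluates to a constant multiple of $\theta_k^{s-\alpha} - \theta_0^{s-\alpha}$; when $s<\alpha$ this is bounded by a constant (independent of $k$), and when $s>\alpha$ it is bounded by $C\theta_k^{s-\alpha}$, which together give the $C\delta\theta_k^{(s-\alpha)_+}$ bound. For the borderline case $s=\alpha$ the integrand is $\theta^{-1}$, whose integral is $\log\theta_k - \log\theta_0 \le C\log\theta_k$, yielding the $C\delta\log\theta_k$ bound. The hypothesis that $\theta_0$ be large enough is used only to absorb the constant terms (such as $\theta_0^{s-\alpha}$ for $s<\alpha$, and to ensure $\log\theta_k\ge 1$) into the constant $C$; alternatively one simply notes all these are bounded uniformly in $k\le n$.

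There is essentially no obstacle here: this is the routine "summation of the induction hypothesis" step that appears in every Nash–Moser scheme, and it is identical to the proof of Lemma \ref{e19} up to the replacement of Sobolev norms on $\Omega_T$ by the mixed $E^s$/$H^s$ norms, which changes nothing in the elementary Riemann-sum estimate. The only point worth a line of care is that the boundary norm in \eqref{j9a} is $\langle\dot V_k\rangle_{s+1}$ (shifted by one, reflecting the trace loss in the tame estimate \eqref{j8}), so the conclusion correctly carries $\langle V_k\rangle_{s+1}$ rather than $\langle V_k\rangle_s$; this is consistent with the statement of \eqref{j20} and requires no extra work. Having established Lemma \ref{j9b}, the corresponding replacement for Lemma \ref{e21} (estimates for $S_{\theta_k}V_k$ and $(I-S_{\theta_k})V_k$) then follows immediately from \eqref{j20} and the smoothing properties \eqref{j3}, exactly as in section \ref{nashprof}.
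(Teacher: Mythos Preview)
Your proposal is correct and follows exactly the same approach as the paper. Indeed, the paper does not even write out a separate proof for Lemma \ref{j9b}: it simply states that ``Lemmas \ref{e19} and \ref{e21} are now replaced, with no real change in the proofs, by the following two lemmas,'' and the proof of Lemma \ref{e19} is precisely the telescoping sum plus Riemann-sum comparison you describe.
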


\begin{lem}\label{j21}
If $\theta_0$ is large enough, then for $k=0,\dots,n$ and all integers $s\in[0,\tilde\alpha+2]$ we have
\begin{align}\label{j22}
|S_{\theta_k}V_k|_s+\langle S_{\theta_k}V_k\rangle_{s+1}\leq\begin{cases}C\delta \theta_k^{(s-\alpha)_+},\;\alpha\neq s\\C\delta\log\theta_k,\;\;\alpha=s\end{cases}.
\end{align}
For  $k=0,\dots,n$ and all integers $s\in[0,\tilde\alpha]$ we have
\begin{align}\label{j23}
|(I-S_{\theta_k})V_k|_s+\langle(I-S_{\theta_k})V_k\rangle_{s+1}\leq\begin{cases}C\delta \theta_k^{s-\alpha}\log\theta_k,\;s\leq \alpha \\C\delta\theta_k^{s-\alpha},\;\;s>\alpha\end{cases}.
\end{align}
\end{lem}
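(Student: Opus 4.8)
\textbf{Proof plan for Lemma \ref{j21}.}

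The plan is to follow the template of Lemma \ref{e21} almost verbatim, adapting only the few places where the scale of spaces $E^s$ (as opposed to $H^s$) and the shifted boundary index $s+1$ appear. First I would invoke Lemma \ref{j9b} to bound $|V_k|_s + \langle V_k \rangle_{s+1}$ by $C\delta \theta_k^{(s-\alpha)_+}$ (or $C\delta\log\theta_k$ when $s=\alpha$), valid for $\theta_0$ large. Then I would apply the smoothing estimates of Lemma \ref{j2} in the interior: property \eqref{j3}(a) gives $|S_{\theta_k}V_k|_s \leq C\theta_k^{(s-\alpha_1)_+} |V_k|_{\alpha_1}$ for any reference index $\alpha_1 \leq s$, and choosing the reference index suitably (e.g.\ $\alpha_1 = \min(s,\alpha)$) yields $|S_{\theta_k}V_k|_s \leq C\delta\theta_k^{(s-\alpha)_+}$ for $s \neq \alpha$ and the logarithmic bound when $s = \alpha$; the boundary part $\langle S_{\theta_k}V_k\rangle_{s+1}$ is handled identically using $\tilde S_{\theta_k}$ and the corresponding estimate for $\langle V_k\rangle_{s+1}$ from Lemma \ref{j9b}. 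Because the compatibility relation \eqref{j3a}, $(S_\theta u)|_{x_d=0} = \tilde S_\theta(u|_{x_d=0})$, holds here (unlike in the profile scheme), the interior and boundary smoothings are genuinely compatible and one need not estimate them by separate arguments; still, the bookkeeping is the same as in Lemma \ref{e21}.

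For \eqref{j23} I would argue exactly as in \eqref{e24}: when $s > \alpha$ simply use $|(I-S_{\theta_k})V_k|_s \leq 2|V_k|_s \leq C\delta\theta_k^{s-\alpha}$ together with the analogous boundary bound, and when $s \leq \alpha$ use property \eqref{j3}(b) with reference index $\alpha$, namely $|(I-S_{\theta_k})V_k|_s \leq C\theta_k^{s-\alpha}|V_k|_\alpha \leq C\delta\theta_k^{s-\alpha}\log\theta_k$, with the $\langle(I-S_{\theta_k})V_k\rangle_{s+1}$ term estimated the same way from $\langle V_k\rangle_{s+1} \leq C\delta\log\theta_k$ at $s=\alpha$. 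The restriction $s \in [0,\tilde\alpha+2]$ in the first part (versus $[0,\tilde\alpha]$ in the second) is exactly as in Lemma \ref{e21}: since $S_\theta$ maps into $\cap_\beta \bF^\beta$, the smoothed function is controlled at arbitrarily high regularity, so only $(I-S_\theta)V_k$ needs $V_k$ itself to lie in $E^{\tilde\alpha}$.

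I do not expect any genuine obstacle here — this lemma is a routine consequence of Lemma \ref{j9b} and the smoothing-operator properties \eqref{j3}, and its only role is to feed the error estimates in the induction step. The one point requiring a little care is making sure the reference indices in the applications of \eqref{j3}(a)--(b) are chosen so that the exponents match the case split ($s < \alpha$, $s = \alpha$, $s > \alpha$) and so that the constants remain uniform over the finite range $s \in [0,\tilde\alpha+2]$, which is precisely the uniformity asserted in Lemma \ref{j2}. Everything else is verbatim transcription of the proof of Lemma \ref{e21} with $H^s$ replaced by $E^s$ and the boundary index raised by one.
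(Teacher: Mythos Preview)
Your proposal is correct and follows essentially the same approach as the paper, which explicitly states that Lemmas \ref{e19} and \ref{e21} are replaced ``with no real change in the proofs'' and notes only the use of \eqref{j4} (equivalently \eqref{j3a}) to handle the trace estimates. Your observation about the compatibility relation \eqref{j3a} simplifying the boundary argument compared with the profile scheme is exactly the point the paper is making.
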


We have used \eqref{j4} for the estimate on traces in Lemma \ref{j21}.   In place of Lemma \ref{f2} we now have:

\begin{lem}\label{j10}
1)  For large enough $\theta_0$ and small enough $\delta$ we have for all $k=0,\dots,n-1$ and all integer $s\in [0,\tilde \alpha]$
\begin{align}\label{j11}
|e_k'|_s\leq C\delta^2\theta_k^{L_1(s)-1}\Delta_k,
\end{align}
where $L_1(s)=\max(s+\alpha_0-2\alpha-2,(s-\alpha)_++2\alpha_0-2\alpha-1)$.

2) For large enough $\theta_0$ and small enough $\delta$ we have for all $k=0,\dots,n-1$ and all integer $s\in [0,\tilde \alpha]$
\begin{align}\label{j12}
\langle \tilde e_k'\rangle _{s+1}\leq C\delta^2\theta_k^{L_1(s)-1}\Delta_k.
\end{align}
\end{lem}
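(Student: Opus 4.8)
The plan is to estimate the "quadratic errors" $e'_k$ and $\tilde e'_k$ exactly as in Lemma \ref{f2}, but now using the $E^s$-version of the tame second-derivative estimates from Remark \ref{j7}(b) instead of the profile estimate of Proposition \ref{d5}. Starting from the integral representations \eqref{f1}(a),(b) (with the new operators $\cL$, $\cB$ from \eqref{j5a}), I would apply \eqref{j9}(a) with $\dot V^a=\dot V^b=\dot V_k$ and the argument $V=V_k+\tau\dot V_k$, obtaining
\begin{align*}
|e'_k|_s\leq C\left(|\dot V_k|_s|\dot V_k|_{\alpha_0}+\eps\,|V_k+\tau\dot V_k|_s|\dot V_k|_{\alpha_0}^2\right).
\end{align*}
The first term is handled precisely as in Lemma \ref{f2}: by (H$_{n-1}$) and $\Delta_k\sim\theta_k^{-1}$ one gets $|\dot V_k|_s|\dot V_k|_{\alpha_0}\leq C\delta^2\theta_k^{(s-\alpha-1)+(\alpha_0-\alpha-1)}\Delta_k\leq C\delta^2\theta_k^{s+\alpha_0-2\alpha-2}\Delta_k$, which is the first candidate for $L_1(s)$. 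The genuinely new term is the one carrying the explicit factor $\eps$; here I would bound $|V_k+\tau\dot V_k|_s\leq |V_k|_s+|\dot V_k|_s$, use Lemma \ref{j9b} to control $|V_k|_s\leq C\delta\theta_k^{(s-\alpha)_+}$ (and $|\dot V_k|_s\leq C\delta\theta_k^{s-\alpha-1}\Delta_k$, which is no larger for the relevant range), and use (H$_{n-1}$) twice for $|\dot V_k|_{\alpha_0}^2\leq C\delta^2\theta_k^{2(\alpha_0-\alpha-1)}\Delta_k^2$. Since $\eps\leq 1$, absorbing one $\Delta_k\sim\theta_k^{-1}$ gives the exponent $(s-\alpha)_+ + 2\alpha_0-2\alpha-1$ for this contribution, which is the second term in the $\max$ defining $L_1(s)$ in \eqref{j10}. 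Taking the larger of the two exponents yields \eqref{j11}. The boundary estimate \eqref{j12} is proved the same way from \eqref{f1}(b) and \eqref{j9}(b), now with $H^{s+1}$ norms on the left to match the form of (H$_{n-1}$); note there is no loss of a derivative here because the boundary second-derivative estimate \eqref{j9}(b) has the same index $s$ on both sides (unlike the profile estimate \eqref{d7}), so the same exponent $L_1(s)$ appears and the range $s\in[0,\tilde\alpha]$ is preserved.

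The roles of the smallness conditions should be recorded carefully: "small enough $\delta$" is needed so that the hypotheses \eqref{j7a} for applying the tame estimates hold along the scheme (this is where Lemmas \ref{j9b}, \ref{j21} and the requirement $|V_k|_{\alpha_0+2}<\kappa$ enter), and "large enough $\theta_0$" is needed so that the elementary Riemann-sum comparisons in Lemmas \ref{j9b}, \ref{j21} are valid and so that various $\theta_0$-dependent constants are absorbed; both are exactly as in section \ref{nashprof}. The quantity $\langle g\rangle_{\alpha+2}$ does not yet enter Lemma \ref{j10} itself — it will enter in the later estimate of $(f_n,g_n)$ and in verifying (H$_0$), just as $\langle g\rangle_\alpha$ did in Lemmas \ref{f14}, \ref{f26}.

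The main obstacle — really the only nontrivial point — is bookkeeping of the new $\eps$-term and checking that the enlarged exponent $L_1(s)=\max(s+\alpha_0-2\alpha-2,\,(s-\alpha)_++2\alpha_0-2\alpha-1)$ is still negative enough (and in particular $\leq s-\alpha-1$ after one uses $\Delta_k$, with $L_1(\tilde\alpha)>0$ false, i.e.\ the accumulated error $E_n$ converging) for the choice of parameters in \eqref{j6a}. One checks directly: for the first branch $s+\alpha_0-2\alpha-2\leq s-\alpha-1$ iff $\alpha_0-\alpha-1\leq 0$, true since $\alpha\geq 2\alpha_0+3>\alpha_0+1$; for the second branch, when $s\leq\alpha$ it reads $2\alpha_0-2\alpha-1<0$, clear, and when $s>\alpha$ it reads $s-\alpha+2\alpha_0-2\alpha-1\leq s-\alpha-1$ iff $2\alpha_0-2\alpha\leq 0$, again clear. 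So the $\eps$-term is strictly dominated and causes no trouble; this is exactly why the weaker scaling $\eps G$ is what makes the scheme close, as anticipated in the introduction. Everything else in the proof of Lemma \ref{j10} is a verbatim transcription of the proof of Lemma \ref{f2} with $H^s_\gamma$ replaced by $E^s$ and the boundary norms shifted by one index.
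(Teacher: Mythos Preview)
Your proposal is correct and follows essentially the same approach as the paper's own (very brief) proof: use the integral formulas \eqref{f1}, apply the second-derivative estimates \eqref{j9} after checking via Lemma \ref{j9b} and (H$_{n-1}$) that $|V_k+\tau\dot V_k|_{\alpha_0}\leq K'$ for $\delta$ small, then track the exponents. The only remark is a minor imprecision: the hypothesis needed to apply \eqref{j9} is $|V|_{\alpha_0}\leq K'$ (Remark \ref{j7}(b)), not the condition \eqref{j7a}, which pertains to the linearized estimate; and your exponent bookkeeping for the $\eps$-term actually yields $(s-\alpha)_++2\alpha_0-2\alpha-3$ after absorbing one $\Delta_k$, which is even smaller than $L_1(s)-1$, so the stated $L_1(s)$ is simply a convenient (slightly loose) upper bound.
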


\begin{proof}
Again we use the formulas in \eqref{f1}.
By Lemma \ref{j9b} and ($H_{n-1}$) we see that for $\delta$ small enough, $|V_k+\tau \dot V_k|_{\alpha_0}\leq K'$, so we can apply  the estimates \eqref{j9}. The new definition of $L_1(s)$ reflects the third term on the right in the estimates \eqref{j9}.

\end{proof}

In place of Lemma \ref{f6}, the estimate of substitution errors, we now have:

\begin{lem}\label{j13}
1)  For large enough $\theta_0$ and small enough $\delta$ we have for all $k=0,\dots,n-1$ and all integer $s\in [0,\tilde \alpha]$
\begin{align}\label{j14}
|e_k''|_s\leq C\delta^2\theta_k^{L_2(s)-1}\Delta_k,
\end{align}
where $L_2(s)=\max(s+\alpha_0-2\alpha+1,(s-\alpha)_++2\alpha_0-2\alpha+2)$.

2) For large enough $\theta_0$ and small enough $\delta$ we have for all $k=0,\dots,n-1$ and all integer $s\in [0,\tilde \alpha]$
\begin{align}\label{j15}
\langle \tilde e_k''\rangle _{s+1}\leq C\delta^2\theta_k^{L_2(s)-1}\Delta_k.
\end{align}
\end{lem}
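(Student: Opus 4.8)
\textbf{Proof plan for Lemma \ref{j13}.}

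The plan is to mimic the proof of Lemma \ref{f6} line for line, substituting the new tame estimates \eqref{j9} for second derivatives in place of the old Proposition \ref{d5}. First I would start from the integral formulas \eqref{f5}, which express the substitution errors as
\begin{equation*}
e_k''=\int_0^1 \cL''(S_{\theta_k}V_k+\tau(V_k-S_{\theta_k}V_k))\big(\dot V_k,(I-S_{\theta_k})V_k\big)\,d\tau,
\end{equation*}
and similarly for $\tilde e_k''$ evaluated at $x_d=0$. As in the proof of Lemma \ref{j10}, by Lemma \ref{j9b} and (H$_{n-1}$) the argument $S_{\theta_k}V_k+\tau(V_k-S_{\theta_k}V_k)$ has $E^{\alpha_0}$-norm bounded by $K'$ once $\delta$ is small and $\theta_0$ is large, so the hypothesis of Proposition \ref{i30a} (equivalently \eqref{j9}) is satisfied. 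Applying \eqref{j9}(a) with $\dot V^a=\dot V_k$, $\dot V^b=(I-S_{\theta_k})V_k$, $V=S_{\theta_k}V_k+\tau(V_k-S_{\theta_k}V_k)$, I obtain
\begin{equation*}
|e_k''|_s\le C\Big(|\dot V_k|_s\,|(I-S_{\theta_k})V_k|_{\alpha_0}+|(I-S_{\theta_k})V_k|_s\,|\dot V_k|_{\alpha_0}+\eps\,|S_{\theta_k}V_k+\tau(V_k-S_{\theta_k}V_k)|_s\,|\dot V_k|_{\alpha_0}\,|(I-S_{\theta_k})V_k|_{\alpha_0}\Big).
\end{equation*}

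Next I would plug in the quantitative bounds already available: (H$_{n-1}$) gives $|\dot V_k|_s\le\delta\theta_k^{s-\alpha-1}\Delta_k$ and $|\dot V_k|_{\alpha_0}\le\delta\theta_k^{\alpha_0-\alpha-1}\Delta_k$; Lemma \ref{j21} gives $|(I-S_{\theta_k})V_k|_{\alpha_0}\le C\delta\theta_k^{\alpha_0-\alpha}\log\theta_k$ (recall $\alpha_0<\alpha$) and, for the $E^s$-factor, $|(I-S_{\theta_k})V_k|_s\le C\delta\theta_k^{s-\alpha}\log\theta_k$; finally $|S_{\theta_k}V_k+\tau(V_k-S_{\theta_k}V_k)|_s\le C\delta\theta_k^{(s-\alpha)_+}$ from Lemmas \ref{j9b} and \ref{j21} together with the triangle inequality. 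The first term yields an exponent $(s-\alpha-1)+(\alpha_0-\alpha)\le s+\alpha_0-2\alpha+1$ (swallowing the $\log$ and $\Delta_k\sim\theta_k^{-1}$ into the powers by shrinking $\theta_0$); the second yields $(s-\alpha)+(\alpha_0-\alpha-1)$, which is the same; the third, using $\eps\le 1$ and $\Delta_k\sim\theta_k^{-1}$, contributes $(s-\alpha)_++(\alpha_0-\alpha-1)+(\alpha_0-\alpha)=(s-\alpha)_++2\alpha_0-2\alpha-1$, plus one extra $\Delta_k$ factor. Taking the maximum of the two types of exponents and absorbing $\log$-factors and a spare power into $\theta_0$ large reproduces exactly $L_2(s)=\max\big(s+\alpha_0-2\alpha+1,(s-\alpha)_++2\alpha_0-2\alpha+2\big)$, with the stated $\theta_k^{L_2(s)-1}\Delta_k$ (one $\Delta_k\sim\theta_k^{-1}$ kept explicit, the rest converted to powers). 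For part 2), the same computation is run with \eqref{j9}(b) in place of \eqref{j9}(a); the trace of $(I-S_{\theta_k})V_k$ is controlled directly by \eqref{j23} in Lemma \ref{j21} (thanks to \eqref{j4}, traces behave like the standard boundary smoothing, so no loss of a derivative as in the profile case), which is why \eqref{j15} holds for the full range $s\in[0,\tilde\alpha]$ rather than $[0,\tilde\alpha-2]$.

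The only genuinely new feature compared with Lemma \ref{f6} is the $\eps$-weighted cubic term in \eqref{j9}, and the main point to check carefully is that its exponent $(s-\alpha)_++2\alpha_0-2\alpha+2$ is still dominated, for $s$ in the relevant range, by what the induction step can tolerate — i.e. that it is $\le s-\alpha$ after the later use of (H$_n$)'s budget, which is guaranteed by the choice $\alpha\ge 2\alpha_0+3$ in \eqref{j6a} (this is precisely one of the inequalities in \eqref{j30}). I do not expect any essential obstacle beyond this bookkeeping: the factor $\eps\le 1$ only helps, and all smoothing and Moser-type ingredients are already in place. The routine manipulations — converting $\Delta_k\sim\theta_k^{-1}$ and $\log\theta_k$ into adjustments of the power of $\theta_k$ by taking $\theta_0$ large, and verifying constants are uniform for $s$ in a finite interval — are identical to those in the proof of Lemma \ref{f6} and will be stated without repetition.
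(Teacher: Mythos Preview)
Your proposal is correct and follows essentially the same approach as the paper's own proof: start from the integral representation \eqref{f5}, verify that the argument of $\cL''$ (resp.\ $\cB''$) has small $E^{\alpha_0}$ norm so that \eqref{j9} applies, then feed in (H$_{n-1}$) and Lemma \ref{j21}. The paper compresses all your log-tracking into a single crude bound, recorded as \eqref{j16}: $|(I-S_{\theta_k})V_k|_s\le C\delta\,\theta_k^{s-\alpha+1}$, which swallows the logarithm at the outset; with this choice the three terms of \eqref{j9}(a) give exponents $s+\alpha_0-2\alpha$, $s+\alpha_0-2\alpha$, and $(s-\alpha)_++2\alpha_0-2\alpha$ (times one $\Delta_k$ and, in the last term, one harmless $\log$ from $|V_k|_\alpha$), matching $L_2(s)-1$ on the nose.
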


\begin{proof}
Again we use the formulas \eqref{f5}. By Lemma \ref{e21} we have $|S_{\theta_k}V_k+\tau(I-S_{\theta_k})V_k|_{\alpha_0}\leq K'$ for $\delta$ small enough, so we can apply the estimates \eqref{j9}.  When estimating the right sides of \eqref{j9} we use, for example,
\begin{align}\label{j16}
|(I-S_{\theta_k})V_k|_s\leq C\delta \theta_k^{s-\alpha+1}.
\end{align}
\end{proof}

In place of Lemma \ref{f11}, the estimate of accumulated errors,  we now have:
\begin{lem}\label{j17}
There exist $\theta_0$ sufficiently large and $\delta_0$ sufficiently small  so that for $0<\delta\leq\delta_0$
\begin{align}\label{j18}
|E_n|_{\tilde \alpha}\leq C\delta^2 \theta_n^{L_2(\tilde \alpha)} \text{ and } \langle\tilde E_n\rangle_{\tilde \alpha+1}\leq C\delta^2\theta_n^{L_2(\tilde \alpha)}.
\end{align}
\end{lem}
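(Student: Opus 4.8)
The plan is to prove Lemma \ref{j17} exactly as Lemma \ref{f11} was proved in the profile scheme, simply summing the per-step error bounds of Lemmas \ref{j10} and \ref{j13} as a Riemann sum, after checking that the exponent $L_2(\tilde\alpha)$ is positive for the $\tilde\alpha$ chosen in \eqref{j6a}. Recall that by definition $E_n=\sum_{k=0}^{n-1}e_k$ and $\tilde E_n=\sum_{k=0}^{n-1}\tilde e_k$ with $e_k=e_k'+e_k''$ and $\tilde e_k=\tilde e_k'+\tilde e_k''$. So the first step is to observe that $L_1(s)\le L_2(s)$ for all $s\ge 0$: indeed $s+\alpha_0-2\alpha-2<s+\alpha_0-2\alpha+1$ and $(s-\alpha)_++2\alpha_0-2\alpha-1<(s-\alpha)_++2\alpha_0-2\alpha+2$, so the quadratic-error bounds \eqref{j11}, \eqref{j12} are dominated by the substitution-error bounds \eqref{j14}, \eqref{j15}. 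Hence for all $k\le n-1$ and all integer $s\in[0,\tilde\alpha]$ we have $|e_k|_s\le C\delta^2\theta_k^{L_2(s)-1}\Delta_k$ and $\langle\tilde e_k\rangle_{s+1}\le C\delta^2\theta_k^{L_2(s)-1}\Delta_k$, provided $\theta_0$ is large enough and $\delta$ small enough (these are the hypotheses of Lemmas \ref{j10} and \ref{j13}).

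Next I would take $s=\tilde\alpha$ in these bounds and sum over $k$. Using $\Delta_k=\theta_{k+1}-\theta_k$ and the elementary comparison of Riemann sums with integrals (as in the proof of Lemma \ref{f11}),
\begin{align*}
|E_n|_{\tilde\alpha}\le\sum_{k=0}^{n-1}|e_k|_{\tilde\alpha}\le C\delta^2\sum_{k=0}^{n-1}\theta_k^{L_2(\tilde\alpha)-1}\Delta_k\le C\delta^2\int_{\theta_0}^{\theta_n}\theta^{L_2(\tilde\alpha)-1}\,d\theta,
\end{align*}
and since $L_2(\tilde\alpha)>0$ this last integral is bounded by $C\,\theta_n^{L_2(\tilde\alpha)}/L_2(\tilde\alpha)$, whence $|E_n|_{\tilde\alpha}\le C\delta^2\theta_n^{L_2(\tilde\alpha)}$. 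The same computation with $\langle\tilde e_k\rangle_{\tilde\alpha+1}$ in place of $|e_k|_{\tilde\alpha}$ gives $\langle\tilde E_n\rangle_{\tilde\alpha+1}\le C\delta^2\theta_n^{L_2(\tilde\alpha)}$, which is the second half of \eqref{j18}. This is where $\tilde\alpha=2\alpha-\alpha_0$ is used: one checks $L_2(\tilde\alpha)=\max\bigl(\tilde\alpha+\alpha_0-2\alpha+1,(\tilde\alpha-\alpha)_++2\alpha_0-2\alpha+2\bigr)=\max(1,\;\alpha-\alpha_0+2\alpha_0-2\alpha+2)=\max(1,\alpha_0-\alpha+2)$, and since $\alpha\ge 2\alpha_0+3$ forces $\alpha-\alpha_0\ge\alpha_0+3>2$, this maximum equals $1>0$. (Strictly, one should double-check the sign convention for $L_2$ against Lemma \ref{j13}; in any case $\tilde\alpha$ is chosen precisely so that $L_2(\tilde\alpha)>0$, as stated in the footnote to \eqref{j6a}.)

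The main obstacle — really the only substantive point — is keeping the bookkeeping of exponents consistent: one must verify that with the redefined $\alpha_0,\alpha,\tilde\alpha$ of \eqref{j6a} (which differ from those in the profile scheme because now $L_1,L_2$ each carry an extra $(s-\alpha)_++2\alpha_0-2\alpha+\mathrm{const}$ branch coming from the $\eps|V|_s\cdots$ terms in \eqref{j9}), the positivity $L_2(\tilde\alpha)>0$ still holds and that the branch structure of the maxima behaves monotonically in $s$ on $[0,\tilde\alpha]$ so that the $s=\tilde\alpha$ value dominates. Once that is in hand, the summation is routine, and the role of the smallness of $\delta$ and largeness of $\theta_0$ is only to license the application of Lemmas \ref{j10} and \ref{j13}; $\delta_0$ may be shrunk if necessary so that both lemmas and the constant in the Riemann-sum comparison apply uniformly.
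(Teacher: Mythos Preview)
Your proof is correct and follows exactly the same approach as the paper, which simply notes that $L_2(\tilde\alpha)>0$ (since $\tilde\alpha=2\alpha-\alpha_0$) and refers back to the Riemann-sum argument of Lemma~\ref{f11}. Your added detail---the explicit verification that $L_1(s)\le L_2(s)$ and the computation $L_2(\tilde\alpha)=1$---is correct and makes the argument self-contained.
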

\begin{proof}
Since $\tilde \alpha=2\alpha-\alpha_0$, we have $L_2(\tilde \alpha)>0$, so the proof is the same as that of Lemma \ref{f11}.
\end{proof}

The new version of Lemma \ref{f14}, the estimate of $f_n$ and $g_n$, is:

\begin{lem}\label{j19}
There exist $\theta_0$ sufficiently large and $\delta_0$ sufficiently small so that for $s\in [0,\tilde \alpha+1]$, $0<\delta\leq\delta_0$ we have
\begin{align}\label{j25}
\begin{split}
&(a)\;|f_n|_s\leq C\delta^2\theta_n^{L_2(s)-1}\Delta_n\\
&(b)\;\langle g_n\rangle_{s+1}\leq C\delta^2\theta_n^{L_2(s)-1}\Delta_n+C\theta_n^{s-\alpha-2}\langle g\rangle_{\alpha+2}\Delta_n.
\end{split}
\end{align}
\end{lem}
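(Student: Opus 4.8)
\textbf{Proof plan for Lemma~\ref{j19}.}

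The plan is to follow almost verbatim the proof of Lemma~\ref{f14}, replacing the exponent bookkeeping of the profile scheme by the one appropriate to the exact-solution scheme. First I recall from \eqref{f13} that
\[
f_n=-(S_{\theta_n}-S_{\theta_{n-1}})E_{n-1}-S_{\theta_n}e_{n-1},\qquad
g_n=(\tilde S_{\theta_n}-\tilde S_{\theta_{n-1}})g-(\tilde S_{\theta_n}-\tilde S_{\theta_{n-1}})\tilde E_{n-1}-\tilde S_{\theta_n}\tilde e_{n-1},
\]
so everything reduces to estimating the three families of terms on the right. For the first term in $f_n$, I use the smoothing property \eqref{j3}(c) in the form $|\frac{d}{d\theta}S_\theta u|_s\le C\theta^{s-\tilde\alpha-1}|u|_{\tilde\alpha}$ together with the bound $|E_{n-1}|_{\tilde\alpha}\le C\delta^2\theta_n^{L_2(\tilde\alpha)}$ from Lemma~\ref{j17} and the arithmetic identity $s-\tilde\alpha+L_2(\tilde\alpha)=L_2(s)$ (which holds because $L_2$ is affine of slope one on the relevant range once the $(s-\alpha)_+$ branch is not active for $s=\tilde\alpha$); integrating $\int_{\theta_{n-1}}^{\theta_n}\theta^{s-\tilde\alpha-1}\,d\theta\le C\theta_n^{s-\tilde\alpha-1}\Delta_n$ as in \eqref{f16} gives $C\delta^2\theta_n^{L_2(s)-1}\Delta_n$. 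For the second term in $f_n$, I apply \eqref{j3}(a) (with $\beta=\alpha=s$, so $S_{\theta_n}$ is essentially bounded on $E^s$ up to the $\theta^{(\beta-\alpha)_+}$ factor) and the bound $|e_{n-1}|_s\le C\delta^2\theta_n^{L_2(s)-1}\Delta_n$, which is exactly what Lemmas~\ref{j10} and \ref{j13} give since $L_1(s)\le L_2(s)$ for the current definitions of $\alpha_0,\alpha$. Summing yields \eqref{j25}(a).

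For \eqref{j25}(b) I treat each of the three terms in $g_n$ separately, now using the boundary smoothing operators $\tilde S_\theta$ and measuring in $\langle\cdot\rangle_{s+1}$. The two terms coming from $\tilde E_{n-1}$ and $\tilde e_{n-1}$ are handled precisely as in part (a): by Lemma~\ref{j17} we have $\langle\tilde E_{n-1}\rangle_{\tilde\alpha+1}\le C\delta^2\theta_n^{L_2(\tilde\alpha)}$, and Lemmas~\ref{j10}(2), \ref{j13}(2) give $\langle\tilde e_{n-1}\rangle_{s+1}\le C\delta^2\theta_n^{L_2(s)-1}\Delta_n$; applying \eqref{j3}(c) to $(\tilde S_{\theta_n}-\tilde S_{\theta_{n-1}})\tilde E_{n-1}$ and \eqref{j3}(a) to $\tilde S_{\theta_n}\tilde e_{n-1}$, and integrating as above, produces the $C\delta^2\theta_n^{L_2(s)-1}\Delta_n$ contribution. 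The new term is the data term $(\tilde S_{\theta_n}-\tilde S_{\theta_{n-1}})g$: here I use \eqref{j3}(c) with exponents adjusted to $\alpha+2$ (which is why the hypothesis of Theorem~\ref{k8b} requires $\langle g\rangle_{\alpha+2}$ small), obtaining as in \eqref{f16a}
\[
\langle(\tilde S_{\theta_n}-\tilde S_{\theta_{n-1}})g\rangle_{s+1}\le C\int_{\theta_{n-1}}^{\theta_n}\theta^{s+1-(\alpha+2)-1}\langle g\rangle_{\alpha+2}\,d\theta\le C\theta_n^{s-\alpha-2}\langle g\rangle_{\alpha+2}\Delta_n,
\]
which is the second term on the right of \eqref{j25}(b).

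The only real subtlety—and the step I expect to need the most care—is the exponent arithmetic: one must check that with $\alpha_0=[\frac{d+1}{2}]$, $\alpha=\max(2\alpha_0+3,\alpha_1+1)$, $\tilde\alpha=2\alpha-\alpha_0$, the function $L_2(s)=\max(s+\alpha_0-2\alpha+1,\,(s-\alpha)_++2\alpha_0-2\alpha+2)$ satisfies $L_2(\tilde\alpha)>0$ (needed so the Riemann-sum argument in Lemma~\ref{j17} converges to the stated power, and so $s-\tilde\alpha+L_2(\tilde\alpha)=L_2(s)$ on the branch that is active at $\tilde\alpha$) and that $L_2(s)-1<s-\alpha-1$ on $[0,\tilde\alpha]$, i.e. $L_2(s)<s-\alpha$, so that the next step of the induction (the analogue of Lemma~\ref{f21}) closes. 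Both inequalities are elementary consequences of $\alpha_0+2-\alpha<0$, which holds because $\alpha\ge 2\alpha_0+3$; I would verify them and then the proof is complete. Everything else is a line-by-line transcription of the profile-scheme argument, using the $E^s$-valued smoothing operators of Lemma~\ref{j2} (whose trace compatibility \eqref{j3a} is what lets the boundary estimates piggyback on the interior ones) in place of those of Lemma~\ref{e2}.
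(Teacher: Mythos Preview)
Your approach is correct and matches the paper's proof, which simply says ``Since $s-\tilde\alpha+L_2(\tilde\alpha)\leq L_2(s)$, the proof of Lemma~\ref{f14} can be repeated here.'' One small correction: the relation you need is the \emph{inequality} $s-\tilde\alpha+L_2(\tilde\alpha)\leq L_2(s)$, not equality. Indeed, at $s=\tilde\alpha$ the first branch of $L_2$ dominates (it equals $1$, while the second branch equals $\alpha_0-\alpha+2<0$), so $s-\tilde\alpha+L_2(\tilde\alpha)=s+\alpha_0-2\alpha+1$, which is the first branch of $L_2(s)$ and hence $\leq L_2(s)$; for small $s$ (namely $s\leq\alpha_0+1$) the second branch of $L_2(s)$ is the larger one and strict inequality holds. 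Your parenthetical about the $(s-\alpha)_+$ branch being ``not active'' at $\tilde\alpha$ is misstated (that term is nonzero there), but the first branch is the maximum, which is what matters. Since only the inequality is needed to bound $\theta_n^{s-\tilde\alpha+L_2(\tilde\alpha)-1}\leq\theta_n^{L_2(s)-1}$, the argument goes through unchanged.
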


\begin{proof}
Since $s-\tilde\alpha+L_2(\tilde{\alpha})\leq L_2(s)$, the proof of Lemma \ref{f14} can be repeated here.
\end{proof}

\textbf{Induction step. }For $\delta>0$ sufficiently small, the estimate for the linearized system \eqref{j8} applies to \eqref{e7} and gives for $s\in [0,\tilde\alpha]$:
\begin{align}\label{j26}
|\dot V_n|_s+\langle\dot V_n\rangle_{s+1}\leq
C\left[|f_n|_{s+1}+\langle g_n\rangle_{s+2}+\left(|f_n|_{\alpha_0+1}+\langle g_n\rangle_{\alpha_0+2}\right)\left(|S_{\theta_n}V_n|_{s+1}+\langle S_{\theta_n}V_n\rangle_{s+2}\right)\right]
\end{align}
Indeed,  \eqref{j22} implies that for $\delta>0$ small enough, $S_{\theta_n}V_n$ satisfies the requirement \eqref{j7a}.\footnote{Here we use $\alpha_1<\alpha$.   Also, the term $|\eps
\partial_d(S_{\theta_n}V_n)|_{\alpha_1-1}$ is estimated using the equation \eqref{e7}.}
For the terms involving $f_n$ and $g_n$, except $\langle g_n\rangle_{\alpha_0+2}$, we substitute directly into \eqref{j26} the corresponding estimates from Lemma \ref{j19}.  For $\langle g_n\rangle_{\alpha_0+2}$ we have
\begin{align}\label{j27}
\langle g_n\rangle_{\alpha_0+2}\leq C\left(\delta^2\theta_n^{L_2(\alpha_0+1)-1}\Delta_n+\theta_n^{-\alpha-2}\langle g\rangle_{\alpha_0+\alpha+3}\Delta_n\right),
\end{align}
where the last term arises from an estimate like \eqref{f16a} with $s=\alpha_0+2$ and $\alpha$ replaced by $\alpha+\alpha_0+3$.  We also use
\begin{align}\label{j28}
\langle S_{\theta_n}V_n\rangle_{s+2}\leq C\delta \theta_n^{(s+1-\alpha)_++1}
\end{align}
and a similar estimate for $|S_{\theta_n}V_n|_{s+1}$, which follow directly from \eqref{j22}.

Making these substitutions in \eqref{j26} gives for $s\in [0,\tilde\alpha]$:
\begin{align}\label{j29}
\begin{split}
&|\dot V_n|_s+\langle\dot V_n\rangle_{s+1}\leq\\
 &\quad C\left[\delta^2\theta_n^{L_2(s+1)-1}\Delta_n+\theta_n^{s-\alpha-1}\langle g\rangle_{\alpha+2}\Delta_n+\left(\delta^2\theta_n^{L_2(\alpha_0+1)-1}\Delta_n+\theta_n^{-\alpha-2}\langle g\rangle_{\alpha_0+\alpha+3}\Delta_n\right)\delta\theta_n^{(s+1-\alpha)_++1}\right].
\end{split}
\end{align}
For $s\in [0,\tilde \alpha]$ the parameters $\alpha_0$ and $\alpha$ (recall \eqref{j6a}) satisfy:
\begin{align}\label{j30}
\begin{split}
&(a)\;L_2(s+1)\leq s-\alpha\\
&(b)\;L_2(\alpha_0+1)+ (s+1-\alpha)_++1\leq s-\alpha\\
&(c)\;(s+1-\alpha)_+<s.
\end{split}
\end{align}
Thus, we have proved (H$_n$), which is the content of the following lemma.
\begin{lem}[H$_n$]\label{j31}
If $\delta>0$ and $\langle g\rangle_{\alpha+2}/\delta$ are sufficiently small and $\theta_0$ sufficiently large, we have
\begin{align}\label{j32}
\;|\dot V_n|_s+\langle\dot V_n\rangle_{s+1} \leq \delta \theta_n^{s-\alpha-1}\Delta_n \text{ for all integer }s\in [0,\tilde\alpha].
\end{align}
\end{lem}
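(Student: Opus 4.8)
The plan is to establish (H$_n$) from (H$_{n-1}$) by applying the tame linear estimate of Proposition \ref{i33a} to the equation \eqref{e7} for the increment $\dot V_n$, and then absorbing every resulting term into the target bound $\delta\theta_n^{s-\alpha-1}\Delta_n$ by a final choice of parameters ($\delta$ small, $\langle g\rangle_{\alpha+2}/\delta$ small, $\theta_0$ large). The first thing I would check is that the tame estimate is \emph{applicable} to \eqref{e7}, i.e.\ that $U=S_{\theta_n}V_n$ meets the requirements \eqref{j7a}. By Lemma \ref{j21} (inequality \eqref{j22}) and the fact that $\alpha_1<\alpha$ one has $|S_{\theta_n}V_n|_{\alpha_1}\le C\delta$ and $|S_{\theta_n}V_n|_{\alpha_0+2}\le C\delta$, so both $|S_{\theta_n}V_n|_{\alpha_1}<K'$ and $|S_{\theta_n}V_n|_{\alpha_0+2}<\kappa$ hold once $\delta$ is small; the remaining bound $|\eps\partial_d(S_{\theta_n}V_n)|_{\alpha_1-1}<K'$ follows by using the interior equation \eqref{e7} itself to trade the normal derivative for tangential derivatives, the zero-order term, and the source $f_n$, all already controlled. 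This yields \eqref{j26}.

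Next I would estimate the right-hand side of \eqref{j26}. For $|f_n|_{s+1}$, $\langle g_n\rangle_{s+2}$ and $|f_n|_{\alpha_0+1}$ I substitute the bounds of Lemma \ref{j19} directly; for $\langle g_n\rangle_{\alpha_0+2}$ I use the special instance \eqref{j27} (the extra smoothing produces the datum norm $\langle g\rangle_{\alpha_0+\alpha+3}$, which is finite since $\alpha_0+\alpha+3\le\tilde\alpha$ by \eqref{j6a}); and for the factor $|S_{\theta_n}V_n|_{s+1}+\langle S_{\theta_n}V_n\rangle_{s+2}$ I use \eqref{j22} and \eqref{j28}. Collecting everything gives \eqref{j29}, a sum of three kinds of terms: $\delta^2\theta_n^{L_2(s+1)-1}\Delta_n$, $\theta_n^{s-\alpha-1}\langle g\rangle_{\alpha+2}\Delta_n$, and $\bigl(\delta^2\theta_n^{L_2(\alpha_0+1)-1}+\theta_n^{-\alpha-2}\langle g\rangle_{\alpha_0+\alpha+3}\bigr)\delta\,\theta_n^{(s+1-\alpha)_++1}\Delta_n$.

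Finally I would absorb. The purely arithmetic inequalities \eqref{j30}(a)--(c), valid for $s\in[0,\tilde\alpha]$ because of the choices in \eqref{j6a}, show that each of the three terms is bounded by $C\,\theta_n^{s-\alpha-1}\Delta_n$ times, respectively, $\delta^2$, $\langle g\rangle_{\alpha+2}$, and $\delta^2+\delta\langle g\rangle_{\alpha_0+\alpha+3}$, with in each case a \emph{spare} strictly negative power of $\theta_n$ left over — this is where $(s+1-\alpha)_+<s$ and the $\Delta_n\sim\theta_n^{-1}$ gain are used, and it lets the fixed constant $C$ from the linear estimate be beaten once $\theta_0$ is large (the $\delta\langle g\rangle_{\alpha_0+\alpha+3}$ piece, whose higher data norm is only known to be finite, is controlled by that spare power of $\theta_0$). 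Thus the whole right side is $\le C\bigl(\delta^2+\langle g\rangle_{\alpha+2}+\delta\langle g\rangle_{\alpha_0+\alpha+3}\bigr)\theta_n^{s-\alpha-1}\Delta_n$, and choosing first $\delta$ small and then $\langle g\rangle_{\alpha+2}/\delta$ small makes it $\le\delta\theta_n^{s-\alpha-1}\Delta_n$, which is exactly (H$_n$).

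I expect the main obstacle to be the exponent bookkeeping in this last step: one must verify that \eqref{j30} really holds on the \emph{entire} range $s\in[0,\tilde\alpha]$ with the chosen $\alpha,\alpha_0,\tilde\alpha$, and in particular that the $(s-\alpha)_+$-type maxima in $L_1,L_2$ of Lemmas \ref{j10} and \ref{j13} — which are forced by the genuinely quasilinear term $\eps|V|_s$ in the second-derivative estimates \eqref{j9} — do not spoil those inequalities, while also keeping the order of the choices ($\delta$, then $\langle g\rangle_{\alpha+2}/\delta$, then $\theta_0$) consistent with all the earlier lemmas. Once this is arranged, the rest of the argument is routine Nash--Moser bookkeeping.
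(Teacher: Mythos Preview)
Your proposal is correct and follows essentially the same route as the paper: verify applicability of the tame estimate via \eqref{j7a} (using \eqref{j22}, $\alpha_1<\alpha$, and the equation to handle $\eps\partial_d$), substitute the bounds of Lemma \ref{j19} and \eqref{j27}--\eqref{j28} into \eqref{j26} to obtain \eqref{j29}, then invoke the exponent inequalities \eqref{j30} to absorb each term into $\delta\theta_n^{s-\alpha-1}\Delta_n$. Your observation that the strict inequality \eqref{j30}(c) yields a spare negative power of $\theta_n$ to kill the $\delta\langle g\rangle_{\alpha_0+\alpha+3}$ contribution (only known to be finite, not small) is a point the paper leaves implicit but is indeed how that term is handled.
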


Still assuming (H$_{n-1}$) we now show:

\begin{lem}\label{k21a}
Suppose $n\geq 1$.  If $\delta>0$ is sufficiently small and $\theta_0$ sufficiently large, we have
\begin{align}\label{k21b}
\begin{split}
&(a)\;|\cL(V_n)|_s\leq \delta \theta_n^{s-\alpha-1}\text{ for all integer }s\in [0,\tilde\alpha]\\
&(b)\; \langle\cB(V_n)-g\rangle_{s+1} \leq \delta \theta_n^{s-\alpha-1}\text{ for all integer }s\in [0,\tilde\alpha].
\end{split}
\end{align}
\end{lem}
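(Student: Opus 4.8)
\textbf{Proof proposal for Lemma \ref{k21a}.}

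The plan is to mirror exactly the proof of Lemma \ref{f21a}, which established the analogous bounds for the profile scheme; the only changes are the use of $E^s$ norms in the interior, the shifted boundary regularity (trace norms at level $s+1$ instead of $s$), and the new error exponents $L_1,L_2$ from Lemmas \ref{j10} and \ref{j13}. First I would recall the telescoped identities \eqref{e13}, which in the present notation give
\begin{align*}
&\cL(V_n)=(I-S_{\theta_{n-1}})E_{n-1}+e_{n-1},\\
&\cB(V_n|_{x_d=0})-g=(\tilde S_{\theta_{n-1}}-I)g+(I-\tilde S_{\theta_{n-1}})\tilde E_{n-1}+\tilde e_{n-1}.
\end{align*}
So it suffices to bound each of the three terms on the right of each line by $\tfrac12\,\delta\,\theta_n^{s-\alpha-1}$ (say), for $s$ in the stated ranges, after which the triangle inequality finishes the job.

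For part (a), I would estimate $|(I-S_{\theta_{n-1}})E_{n-1}|_s$ using the smoothing property \eqref{j3}(b) together with the accumulated-error bound $|E_n|_{\tilde\alpha}\le C\delta^2\theta_n^{L_2(\tilde\alpha)}$ from Lemma \ref{j17}; this yields a bound of the form $C\delta^2\theta_n^{(s-\tilde\alpha)+L_2(\tilde\alpha)}$. The defining relation $\tilde\alpha=2\alpha-\alpha_0$ was chosen precisely so that $L_2(\tilde\alpha)>0$, and one checks $(s-\tilde\alpha)+L_2(\tilde\alpha)\le s-\alpha-1$ for $s\le\tilde\alpha$ (this is exactly the exponent comparison used in \eqref{f23}, now with $L_2$ in place of $L_3$). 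The term $|e_{n-1}|_s$ is controlled directly by Lemmas \ref{j10} and \ref{j13}: since $e_{n-1}=e_{n-1}'+e_{n-1}''$ we get $|e_{n-1}|_s\le C\delta^2\theta_n^{\max(L_1(s),L_2(s))-1}\Delta_n$, and since $L_2(s)<s-\alpha$ (a consequence of the inequalities \eqref{j30}, which the parameter choice \eqref{j6a} guarantees) and $\Delta_n\le\tfrac{1}{2\theta_n}$, this is $\le C\delta^2\theta_n^{s-\alpha-2}$, which is dominated by $\delta\theta_n^{s-\alpha-1}$ once $\delta$ is small. Choosing $\theta_0$ large and $\delta$ small absorbs all the constants $C\delta^2$ and $C\delta^2$-type factors into $\delta$.

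For part (b), the terms $(I-\tilde S_{\theta_{n-1}})\tilde E_{n-1}$ and $\tilde e_{n-1}$ are handled identically, using the boundary halves of Lemmas \ref{j17}, \ref{j10}, \ref{j13} (note these are stated at trace level $s+1$, matching the left side of \eqref{k21b}(b)). The genuinely new term is $\langle(\tilde S_{\theta_{n-1}}-I)g\rangle_{s+1}$: by \eqref{j3}(b) applied with $\beta=s+1$, $\alpha=\tilde\alpha$, this is $\le C\theta_n^{s+1-\tilde\alpha}\langle g\rangle_{\tilde\alpha}$, and one needs $s+1-\tilde\alpha\le s-\alpha-1$, i.e. $\tilde\alpha\ge\alpha+2$; since $\tilde\alpha=2\alpha-\alpha_0$ and $\alpha>\alpha_0$ (indeed $\alpha\ge 2\alpha_0+3$ from \eqref{j6a}), this holds with room to spare, so the term is $\le C\langle g\rangle_{\tilde\alpha}\theta_n^{s-\alpha-1}$, which is $\le\tfrac12\delta\theta_n^{s-\alpha-1}$ provided $\langle g\rangle_{\tilde\alpha}$ (equivalently $\langle g\rangle_{\alpha+2}$, since the hypothesis of Theorem \ref{k8b} controls $\langle g\rangle_{\alpha+2}$ and the smoothing estimate only needs a fixed finite number of derivatives below $\tilde\alpha$ here — more carefully, one uses \eqref{j3}(b) at the level actually available) is small enough. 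I do not expect any real obstacle here: the whole argument is bookkeeping with the exponent inequalities \eqref{j30}, all of which were arranged by the choice \eqref{j6a}; the one point requiring a moment's care is making sure the boundary trace norms are taken at the correct level $s+1$ throughout, consistently with the induction hypothesis \eqref{j9a} and with Lemma \ref{j21}'s trace bounds (which rely on the identity \eqref{j3a}).
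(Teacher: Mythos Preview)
Your proposal is correct and follows essentially the same approach as the paper: the telescoped identity \eqref{e13}, then the smoothing bound on $(I-S_{\theta_{n-1}})E_{n-1}$ via Lemma \ref{j17}, the direct error bounds from Lemmas \ref{j10}--\ref{j13}, and the exponent comparisons coming from \eqref{j6a}/\eqref{j30}.

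One small point worth tightening: in handling $\langle(\tilde S_{\theta_{n-1}}-I)g\rangle_{s+1}$ you write that it is absorbed provided $\langle g\rangle_{\tilde\alpha}$ (``equivalently $\langle g\rangle_{\alpha+2}$'') is small enough. These norms are not equivalent, and the hypotheses of Theorem \ref{k8b} only assume $g\in H^{\tilde\alpha}$ with $\langle g\rangle_{\alpha+2}$ small; the higher norm $\langle g\rangle_{\tilde\alpha}$ is merely finite. The correct mechanism, which the paper uses and your final remark hints at, is that the exponent is \emph{strictly} smaller: since $\tilde\alpha=2\alpha-\alpha_0>\alpha+2$, one has $s+1-\tilde\alpha<s-\alpha-1$, so $C\langle g\rangle_{\tilde\alpha}\,\theta_n^{s+1-\tilde\alpha}\le \big(C\langle g\rangle_{\tilde\alpha}\,\theta_0^{\alpha+2-\tilde\alpha}\big)\theta_n^{s-\alpha-1}$, and the prefactor is made $\le\delta$ by taking $\theta_0$ large. (Note also that this use of \eqref{j3}(b) needs $s+1\le\tilde\alpha$; the paper records the restriction $s\le\tilde\alpha-1$, which is all that is used in the proof of Theorem \ref{k8b}.)
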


\begin{proof}
From \eqref{e13} we have
\begin{align}\label{k22}
\begin{split}
&(a)\;|\cL(V_n)|_s\leq |(I-S_{\theta_{n-1}})E_{n-1}|_s+|e_{n-1}|_s\\
&(b)\;\langle B(V_n)-g\rangle_{s+1}\leq\langle(\tilde S_{\theta_{n-1}}-I)g\rangle_{s+1}+\langle(I-\tilde S_{\theta_{n-1}})\tilde E_{n-1}\rangle_{s+1}+\langle\tilde e_{n-1}\rangle_{s+1}.
\end{split}
\end{align}
Using \eqref{j3} and the above estimates of $E_{n-1}$ and $e_{n-1}$,  we find
\begin{align}\label{k23}
\begin{split}
&|(I-S_{\theta_{n-1}})E_{n-1}|_s\leq C\theta_n^{s-\tilde\alpha}|E_{n-1}|_{\tilde \alpha}\leq C\delta^2\theta_n^{(s-\alpha-1)+(\alpha_0-\alpha)}\\
&|e_{n-1}|_s\leq C\delta^2\theta_n^{L_2(s)-1}\Delta_n,
\end{split}
\end{align}
which imply \eqref{k21b}(a) since $\alpha_0-\alpha<0$ and $L_2(s)<s-\alpha$.

The last two terms on the right in \eqref{k22}(b) are estimated similarly.  To finish we use
\begin{align}\label{k25}
\langle(\tilde S_{\theta_{n-1}}-I)g\rangle_{s+1}\leq C\theta_n^{s+1-\tilde\alpha}\langle g\rangle_{\tilde \alpha}\text{ for }s\leq \tilde\alpha-1,
\end{align}
and observe that $s-\tilde\alpha+1<s-\alpha-1$.
\end{proof}

We now fix $\delta$ and $\theta_0$ as above and check (H$_0$).
\begin{lem}\label{k26}
If $\langle g\rangle_{\alpha+2}$ is small enough, then (H$_0$) holds.
\end{lem}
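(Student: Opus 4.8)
The plan is to follow the proof of the base-case Lemma \ref{f26} in the profile iteration almost verbatim. First I would note that the scheme is initialized with $V_0=0$, $f_0=0$, and $g_0=\tilde S_{\theta_0}g$, so that $S_{\theta_0}V_0=0$ and the first increment $\dot V_0$ is the solution of the linearized singular problem \eqref{e7} with $n=0$, namely $\cL'(0)\dot V_0=0$, $\cB'(0)(\dot V_0|_{x_d=0})=\tilde S_{\theta_0}g$, and $\dot V_0=0$ in $t<0$. Because the base point here is the zero function, the requirement \eqref{j7a} needed to apply the tame estimate \eqref{j8} holds trivially, and the product term $(|f|_{\alpha_0+1}+\langle g\rangle_{\alpha_0+2})(|V|_{s+1}+\langle V\rangle_{s+2})$ on the right of \eqref{j8} drops out since the base point $V$ vanishes.

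Next I would invoke \eqref{j8} with $f=0$ and boundary datum $\tilde S_{\theta_0}g$ to obtain, for every integer $s\in[0,\tilde\alpha]$, the bound $|\dot V_0|_s+\langle\dot V_0\rangle_{s+1}\le C\,\langle\tilde S_{\theta_0}g\rangle_{s+2}\le C\,\theta_0^{(s-\alpha)_+}\langle g\rangle_{\alpha+2}$, where the last inequality is the smoothing property \eqref{j3}(a) of $\tilde S_\theta$ applied with indices $\alpha+2$ and $s+2$. This uses $g\in H^{\alpha+2}$, which is available since $\alpha\ge 2\alpha_0+3$ forces $\tilde\alpha=2\alpha-\alpha_0\ge\alpha+3$, so that $\alpha+2\le\tilde\alpha$ and $g\in H^{\tilde\alpha}$ by hypothesis.

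Finally I would compare this with the content of (H$_0$), which asks that $|\dot V_0|_s+\langle\dot V_0\rangle_{s+1}\le\delta\,\theta_0^{s-\alpha-1}\Delta_0$ for all integers $s\in[0,\tilde\alpha]$. At this stage $\delta$ and $\theta_0$ have already been fixed, $\Delta_0\in[\frac{1}{3\theta_0},\frac{1}{2\theta_0}]$ by \eqref{e14} is a fixed positive number, and $s$ runs over a finite set; hence it suffices to choose $\langle g\rangle_{\alpha+2}$ small enough (depending only on $\delta$, $\theta_0$, and $\tilde\alpha$) that $C\,\theta_0^{(s-\alpha)_+}\langle g\rangle_{\alpha+2}\le\delta\,\theta_0^{s-\alpha-1}\Delta_0$ for each such $s$, which is exactly (H$_0$). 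There is no real obstacle in this argument; the only point to keep track of is the index bookkeeping — the boundary part of the tame estimate \eqref{j8} costs two extra derivatives on the right (the $\langle g\rangle_{s+2}$ term), which is precisely why the smallness here is imposed on $\langle g\rangle_{\alpha+2}$ rather than $\langle g\rangle_\alpha$ and why Theorem \ref{k8b} requires $g\in H^{\tilde\alpha}$.
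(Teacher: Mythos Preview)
Your proposal is correct and follows essentially the same approach as the paper: apply the tame estimate \eqref{j8} to the linearized problem at the zero base point with $f_0=0$ and $g_0=\tilde S_{\theta_0}g$, then use the smoothing property \eqref{j3}(a) to bound $\langle\tilde S_{\theta_0}g\rangle_{s+2}$ in terms of $\langle g\rangle_{\alpha+2}$ and conclude by taking $\langle g\rangle_{\alpha+2}$ small. Your write-up is slightly more detailed (explicitly checking \eqref{j7a} at $V=0$ and the availability of $g\in H^{\alpha+2}$), but the argument is the same.
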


\begin{proof}
Applying the estimate for the linearized system to
\begin{align}
\begin{split}
&\cL'(0)\dot V_0=0\\
&\cB'(0)\dot V_0=S_{\theta_0}g
\end{split}
\end{align}
we obtain for integer $s\in [0,\tilde\alpha]$:
\begin{align}
|\dot V_0|_s+\langle\dot V_0\rangle_{s+1}\leq C\langle S_{\theta_0}g\rangle_{s+2}\leq C\begin{cases}\theta_0^{s-\alpha}\langle g\rangle_{\alpha+2}, \;s\geq \alpha\\\langle g\rangle_{\alpha+2},\;s<\alpha\end{cases}.
\end{align}
Thus, (H$_0$) holds if $\langle g\rangle_{\alpha+2}$ is small enough.
\end{proof}

We can now complete the proof of Theorem \ref{k8b}.

\begin{proof}[Proof of Theorem \ref{k8b}]
We have
\begin{align}
V_n=V_{n-1}+\dot V_{n-1}=V_0+\sum^{n-1}_{k=0}\dot V_k=\sum^{n-1}_{k=0}\dot V_k.
\end{align}
Let $\mu=\alpha-1$.  Since $\theta_k\sim \sqrt{k}$ we have by (H$_n$)
\begin{align}
\sum_{k=0}^\infty |\dot V_k|_\mu+\sum_{k=0}^\infty \langle \dot V_k\rangle_{\mu+1}\leq \delta \sum_k \theta_k^{-2}\Delta_k\leq C\sum_k k^{-\frac{3}{2}}<\infty.
\end{align}
Thus, for some $V$ as described in Proposition \ref{k8b}, $V_k\to V$ in $E^\mu$ and $V_k|_{x_d=0}\to V|_{x_d=0}$ in  $H^{\mu+1}$ (in fact, uniformly for $0<\eps\leq \eps_0$).
Lemma \ref{k21a} applied with $s=\mu-1$ now implies that $V$ is a solution of the semilinear system \eqref{j5}.
\end{proof}

\newpage
\appendix
\section{A calculus of singular pseudodifferential operators}
\label{calc}

Here we summarize the parts of the singular calculus constructed in \cite{CGW2} that are needed in this article,
and we also prove some additional smoothing properties for some of the operators that appear in the proof of
Propositions \ref{ia3} and \ref{hard}.

\subsection{Symbols}

\emph{\quad} Our singular symbols are built from the following sets of classical symbols.
\begin{defn}\label{n1}
Let $\cO\subset \bR^N$ be an open subset that contains the origin.  For $m\in\bR$ we let $\bfS^m(\cO)$ denote
the class of all functions $\sigma:\cO\times \bR^d\times [1,\infty)\to \bC^M$, $M \ge 1$, such that $\sigma$ is
$C^\infty$ on $\cO \times \bR^d$ and for all compact sets $K\subset \cO$:
\begin{equation*}
\sup_{v\in K} \, \sup_{\xi'\in\bR^d} \, \sup_{\gamma\geq 1} \, (\gamma^2+|\xi'|^2)^{-(m-|\nu|)/2} \,
|\partial^\alpha_v\partial_{\xi'}^\nu \sigma(v,\xi',\gamma)| \leq C_{\alpha,\nu,K}.
\end{equation*}
\end{defn}

Let ${\mathcal C}^k_b(\bR^d \times \bT)$, $k \in \N$, denote the space of continuous and bounded functions
on $\bR^d \times \bR$ that are $2\pi$-periodic in their last argument, and whose derivatives up to order $k$
are continuous and bounded.

\begin{defn}[Singular symbols]\label{n3}
Let $m\in \bR$, $n\in\bN$, and fix $\beta\in \bR^d\setminus 0$.  We let $S^m_n$ denote the family of functions
$(a_{\eps,\gamma})_{\eps\in (0,1],\gamma\geq 1}$ that are constructed as follows:
\begin{align}\label{n4}
\forall \, (x',\theta_0,\xi',k)\in \bR^d \times \bT \times \bR^d \times \bZ, \;
a_{\eps,\gamma}(x',\theta_0,\xi',k) =\sigma \left(\eps V(x',\theta_0),\xi'+\dfrac{k\, \beta}{\eps},\gamma \right),
\end{align}
where $\sigma \in \bfS^m(\cO)$ and $V\in \cC^n_b(\bR^d\times\bT)$. Below and in the main text we often set
\begin{equation*}
X:=\xi'+\dfrac{k\, \beta}{\eps}.
\end{equation*}
\end{defn}

\subsection{Singular pseudodifferential operators}

\emph{\quad} To each symbol $a_{\eps,\gamma}$ as in \eqref{n4}, we associate a singular pseudodifferential
operator $\mathrm{Op}^{\eps,\gamma}(a)$ whose action on Schwartz class functions $u\in\cS(\bR^d\times \bT:\bC^M)$
is defined by
\begin{align}\label{n5}
\mathrm{Op}^{\eps,\gamma}(a)u(x',\theta_0):=\frac{1}{(2\pi)^{d+1}} \, \sum_{k\in\bZ} \int_{\bR^d}
{\rm e}^{i\, x'\cdot\xi'+i \,k\, \theta_0} \, \sigma \left( \eps V(x',\theta_0),\xi'+\dfrac{k\, \beta}{\eps},\gamma \right)
\, \hat u(\xi',k) \, {\rm d}\xi' \, ,
\end{align}
where $\hat u(\xi',k)$ denotes the Fourier transform at $\xi'$ of the $k$-th Fourier coefficient of $u$ with respect to
$\theta_0$. When $a_{\eps,\gamma}$ is defined as in \eqref{n4}, below and in the main text of the article, we will
often write $\sigma(\eps V(x,\theta_0),X,\gamma)$ in place of $a_{\eps,\gamma}(x',\theta_0,\xi',k)$, and $\sigma_D$
in place of $\mathrm{Op}^{\eps,\gamma}(a)$. In particular, we let $\Lambda_D$ denote the singular Fourier multiplier
associated to the function
\begin{equation*}
\Lambda (X,\gamma) :=(\gamma^2+|X|^2)^{1/2}.
\end{equation*}

When $V(x',x_d,\theta_0)$ depends also on  a normal variable $x_d\geq 0$, we define the associated family of
operators depending on the parameter $x_d$ in the obvious way. The pseudodifferential calculus takes place
only in the tangential directions $(x',\theta_0)$. To discuss mapping properties we first define ``singular" Sobolev
spaces as follows.

\begin{defn}\label{n8}
We let
\begin{equation*}
H^{s,\eps}(\bR^d \times \bT) :=\left\{ u\in\cS'(\bR^d\times \bT) :
\sum_{k\in\bZ} \int_{\bR^d} (1+|X|^2)^s \, |\hat u(\xi',k)|^2 {\rm d}\xi'<\infty \right\}.
\end{equation*}
This space is equipped with the family of norms\footnote{In this appendix we use $|\cdot|$ instead of
$\langle\cdot\rangle$ in the notation for norms on $\bR^d\times\bT$, but otherwise we retain notation
from the main text.}
\begin{equation*}
|u|^2_{H^{s,\eps},\gamma} :=\dfrac{1}{(2\pi)^{d+1}} \, \sum_{k\in\bZ} \int_{\bR^d}
(\gamma^2 +|X|^2)^s \, |\hat u(\xi',k)|^2 {\rm d}\xi'.
\end{equation*}
\end{defn}

Observe that for $s$ fixed the space $H^{s,\eps}$ depends on $\eps$ with no obvious inclusion if $\eps_1<\eps_2$.
However, for fixed $\eps>0$ the norms $|\cdot|_{H^{s,\eps},\gamma_1}$ and $|\cdot|_{H^{s,\eps},\gamma_2}$ are
equivalent.

The next Proposition describes some of the mapping properties of these operators. Detailed proofs can be found
in \cite{CGW2}. The constant $C$ is always independent of $\eps\in (0,1]$ and $\gamma\geq 1$, and we denote
the $L^2 (\bR^d\times \bT)$ norm by $|\cdot |_0$ (which corresponds to $s=0$ in Definition \ref{n8}).

\begin{prop}[Mapping properties] \label{n11}
(a) Suppose $\sigma(\eps V(x,\theta_0),X,\gamma)\in S^m_n$, where $n\geq d+1$ and $m\leq 0$.
Then $\sigma_D:L^2(\bR^d\times \bT) \to L^2(\bR^d\times \bT)$ and
\begin{equation*}
|\sigma_D u|_0 \leq \dfrac{C}{\gamma^{|m|}} \, |u|_0.
\end{equation*}

(b) Suppose $\sigma(\eps V(x,\theta_0),X,\gamma)\in S^m_n$, where $n\geq d+1$ and $m> 0$.
Then $\sigma_D:H^{m,\eps}(\bR^d\times \bT) \to L^2(\bR^d\times \bT)$ and
\begin{equation*}
|\sigma_D u|_0 \leq C\, |u|_{H^{m,\eps},\gamma}.
\end{equation*}

(c) \emph{[Smoothing property]} Suppose $\sigma(\eps V(x,\theta_0),X,\gamma)\in S^{-1}_n$, where
$n\geq d+2$. Then $\sigma_D:L^2(\bR^d\times \bT) \to H^{1,\eps}(\bR^d\times \bT)$ and
\begin{equation*}
|\sigma_D u|_{H^{1,\eps},\gamma} \leq C\, |u|_0.
\end{equation*}

(d) Suppose $\sigma(\eps V(x,\theta_0),X,\gamma)\in S^{0}_n$, where $n\geq d+2$. Then
$\sigma_D:H^{1,\eps}(\bR^d\times \bT) \to H^{1,\eps}(\bR^d\times \bT)$ and
\begin{equation*}
|\sigma_D u|_{H^{1,\eps},\gamma} \leq C\, |u|_{H^{1,\eps},\gamma}.
\end{equation*}
\end{prop}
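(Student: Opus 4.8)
The plan is to reduce all four statements to a single estimate on the Schwartz kernel of $\sigma_D$ written on the Fourier side, obtained by expanding the singular symbol in a Fourier series/transform in the \emph{slow} tangential variables $(x',\theta_0)$; the detailed bookkeeping is carried out in \cite{CGW2}, so I only sketch the steps. First I would freeze $(X,\gamma)$ and write
\begin{equation*}
\sigma\bigl(\eps\,V(x',\theta_0),X,\gamma\bigr)=\sum_{l\in\bZ}\int_{\bR^d}e^{i\,x'\cdot\mu'+i\,l\,\theta_0}\,c_l(\mu',X,\gamma)\,d\mu',
\end{equation*}
where $c_l(\cdot,X,\gamma)$ is the $x'$-Fourier transform of the $l$-th Fourier coefficient in $\theta_0$ of $(y',\psi_0)\mapsto\sigma(\eps V(y',\psi_0),X,\gamma)$. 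The key point is that, because $V\in\cC^n_b$ and by the chain rule every slow derivative lands on $\eps V$ and thus produces a factor $\eps\,\partial V$ (with $\eps\le1$), each of the at most $n$ integrations by parts used to gain decay in $(\mu',l)$ \emph{also} yields a factor $\eps$ whenever $(\mu',l)\neq0$; combined with the symbol bound $|\partial_v^\alpha\partial_{\xi'}^\nu\sigma|\lesssim\langle(\gamma,\xi')\rangle^{m-|\nu|}$ of Definition~\ref{n1} (with $v$ in the fixed compact set $\eps\,V(\bR^d\times\bT)\subset\cO$, $\eps\le1$), this gives
\begin{equation*}
|c_l(\mu',X,\gamma)|\ \le\ C_n\,\langle(\mu',l)\rangle^{-n}\,\langle(\gamma,X)\rangle^{m}\qquad\bigl(\text{with an extra factor }\eps\text{ when }(\mu',l)\neq0\bigr),
\end{equation*}
$C_n$ depending only on finitely many seminorms of $\sigma$ in $\bfS^m(\cO)$ and on $\|V\|_{\cC^n_b}$.

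Substituting this expansion into \eqref{n5} and applying Plancherel, one checks that on the Fourier side $\sigma_D$ is the integral operator
\begin{equation*}
\widehat{\sigma_D u}(\eta',m_0)=\frac{1}{(2\pi)^{d+1}}\sum_{k\in\bZ}\int_{\bR^d}c_{\,m_0-k}\Bigl(\eta'-\xi',\ \xi'+\tfrac{k\,\beta}{\eps},\ \gamma\Bigr)\,\widehat u(\xi',k)\,d\xi',
\end{equation*}
so, writing $\mu'=\eta'-\xi'$ and $l=m_0-k$, the symbol is evaluated at the \emph{input} fast frequency $X=\xi'+k\beta/\eps$, while the output fast frequency is $X_v=\eta'+m_0\beta/\eps=X+\mu'+l\beta/\eps$. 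Parts (a) and (b) are then immediate: in (a), $m\le0$ gives $\langle(\gamma,X)\rangle^{m}\le\gamma^{-|m|}$, and with $n$ as assumed the bound $|c_l|\le C\gamma^{-|m|}\langle(\mu',l)\rangle^{-n}$ makes the above kernel uniformly integrable in $(\mu',l)$, so a Schur-type argument for this mixed discrete–continuous convolution puts $\sigma_D$ in $\mathcal L(L^2)$ with norm $O(\gamma^{-|m|})$ (this is exactly the estimate of \cite{W1} for the unperturbed part of the calculus). For (b), factor $\sigma=\tilde\sigma\cdot(\gamma^2+|X|^2)^{m/2}$ with $\tilde\sigma\in\bfS^0(\cO)$, and observe that since the multiplier $\Lambda_D^{m}$ is $x$-independent and acts precisely on the frequency at which $\tilde\sigma$ is evaluated, one has $\sigma_D=\tilde\sigma_D\circ\Lambda_D^{m}$ \emph{exactly}; since $|\Lambda_D^{m}u|_0=|u|_{H^{m,\eps},\gamma}$ by Definition~\ref{n8}, part (a) applied to $\tilde\sigma$ gives $|\sigma_D u|_0\le C|u|_{H^{m,\eps},\gamma}$.

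The main obstacle is parts (c) and (d), where the weight $\langle(\gamma,X_v)\rangle$ built from the \emph{output} frequency must be placed on a kernel whose decay lives at the \emph{input} frequency $X$, and these differ by $\mu'+l\beta/\eps$, which is of size $\sim1/\eps$ when $l\neq0$. I would use Peetre's inequality $\langle(\gamma,X_v)\rangle\le\langle(\gamma,X)\rangle\,\langle X_v-X\rangle$ to bound the kernel of $\Lambda_D\sigma_D$ (case (c), $m=-1$) and of $\Lambda_D\sigma_D\Lambda_D^{-1}$ (case (d), $m=0$) by $\langle\mu'+l\beta/\eps\rangle\,\langle(\mu',l)\rangle^{-n}\,\langle(\gamma,X)\rangle^{m+1}$. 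For $l=0$ this is $\langle\mu'\rangle^{1+m}\langle(\mu',0)\rangle^{-n}$ and is harmless; for $l\neq0$ the dangerous factor $|l|\,|\beta|/\eps$ is exactly absorbed by the extra $\eps$ coming from the chain rule in that regime, since $\eps\,\langle\mu'+l\beta/\eps\rangle\lesssim\langle\mu'\rangle\langle l\rangle$ uniformly in $\eps\in(0,1]$, leaving a kernel controlled by $\langle(\mu',l)\rangle^{2-n}$ (times $\langle(\gamma,X)\rangle^{m+1}$, which is $\le1$ for $m\le-1$ and $\equiv1$ for $m=0$). Taking $n$ as in the hypothesis then makes this kernel uniformly summable/integrable in $(\mu',l)$, and Schur's lemma again yields the uniform $L^2$ bound, i.e. $|\Lambda_D\sigma_D u|_0\le C|u|_0$ (case (c)) and $|\Lambda_D\sigma_D u|_0\le C|\Lambda_D u|_0$ (case (d)); the equivalence of $\langle(\gamma,\cdot)\rangle$ with $(\gamma^2+|\cdot|^2)^{1/2}$ for $\gamma\ge1$ used to pass between $\Lambda_D$ and the weight in Definition~\ref{n8} is harmless. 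The precise accounting of the $\eps$-powers, the sharp derivative count, and the treatment of the non-decay of the symbol in the $x'$ variable are the only genuinely technical points, and they are carried out in \cite{CGW2}.
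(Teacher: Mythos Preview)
The paper gives no proof of this proposition; the line preceding the statement simply says ``Detailed proofs can be found in \cite{CGW2}.'' So there is nothing to compare against here beyond the fact that both you and the paper defer the details to the companion article.

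Your sketch correctly isolates the essential mechanism: the factor $\eps$ produced by the chain rule every time a tangential derivative hits $\sigma(\eps V,\cdot)$ is exactly what absorbs the singular frequency shift $l\beta/\eps$ between input and output. Two points are worth flagging, though.

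First, your derivative count is loose. The bound you write for the kernel of $\Lambda_D\sigma_D$ in (c)/(d), namely $\langle(\mu',l)\rangle^{2-n}$, is \emph{not} summable/integrable over $\bR^d\times\bZ$ when $n=d+2$; your Schur argument as stated would need $n-2>d+1$, i.e.\ $n\ge d+4$. The sharp count $n\ge d+2$ (exactly one more derivative than the $L^2$ bound (a)) comes from the cleaner observation that $\Lambda(X_v,\gamma)-\Lambda(X,\gamma)$ is controlled by $|X_v-X|=|\mu'+l\beta/\eps|$, and multiplication by this shift on the Fourier side corresponds to a \emph{single} application of $(\partial_{x'},\eps^{-1}\partial_{\theta_0})$ to the symbol. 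By the chain rule that derivative is $\partial_v\sigma\cdot(\partial_{x'}V,\partial_{\theta_0}V)$, with the $\eps$ cancelling exactly; so one passes from $\sigma\in S^{-1}_n$ to a symbol in $S^{-1}_{n-1}$ times a bounded factor, and then invokes part (a) with $n-1\ge d+1$.

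Second, as you yourself note, the coefficients $c_l(\mu',X,\gamma)$ are tempered distributions in $\mu'$ (since $V$ only lies in $\cC^n_b$, not in any Schwartz-type space in $x'$), so the literal Schur/kernel bound you write down is formal. This is a standard obstruction in $S^0_{1,0}$-type calculi and is handled in \cite{CGW2} by Calder\'on--Vaillancourt-type arguments rather than a pointwise kernel estimate; your acknowledgment of this is appropriate, but the issue is structural rather than merely ``technical.''
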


\textbf{Residual operators.} We sometimes denote by $r_{0,D}$ an operator that maps $L^2(\bR^d\times \bT)
\to L^2(\bR^d\times \bT)$ and satisfies a uniform operator bound
\begin{equation*}
|r_{0,D}u|_0 \leq C\, |u|_0,
\end{equation*}
even when $r_{0,D}$ is not necessarily defined by a symbol in some class $S^0_n$. Similarly, we sometimes
let $r_{-1,D}$ denote an operator not necessarily associated to a symbol in $S^{-1}_n$ such that
\begin{equation}\label{n17}
|r_{-1,D} u|_{H^{1,\eps},\gamma} \leq C\, |u|_{0}.
\end{equation}
For example, the composition $\sigma_{-1,D} \, \tau_{0,D} =r_{-1,D}$ of an operator of order $-1$ (case {\it (c)}
in Proposition \ref{n11}) with an operator of order $0$ (case {\it (a)} when $m=0$) is of this latter type.

\begin{rem}\label{n16a}
\textup{Observe that a composition of the form $r_{0,D}r_{-1,D}$ is not necessarily an operator of type $r_{-1,D}$,
a fact that is a source of difficulty in the proof of the main linear estimate, Proposition \ref{i5z}. This is the case, for
example, if $r_{0,D}$ is the operator of multiplication by $V(x',\theta_0) \in \cC^1_b(\bR^d\times\bT)$. On the other
hand we have
\begin{equation*}
\eps \, V(x',\theta_0) \, r_{-1,D} =r_{-1,D} \, ,
\end{equation*}
and more generally, Proposition \ref{n11}(d) implies that if $\sigma\in S^0_n$, $n\geq d+2$, we have
\begin{align}\label{n16c}
\sigma_D \, r_{-1,D} =r_{-1,D}.
\end{align}}
\end{rem}

\subsection{Adjoints and products}

\emph{\quad} In spite of the fact that singular symbols and their derivatives fail to decay in the classical way in
$\langle \xi',k,\gamma\rangle$, it is possible to construct a crude calculus of singular pseudodifferential operators
with useful formulas for adjoints and products which, in particular, permit G{\aa}rding inequalities to be proved.
This calculus was used repeatedly in the proof of the main linear estimate, Proposition \ref{i5z}.

In the next proposition $\sigma^*$ denotes the conjugate transpose of the $M\times M$ matrix valued symbol
$\sigma$, while $(\sigma_D)^*$ denotes the adjoint operator for the $L^2$ scalar product.

\begin{prop}[Adjoints]\label{n18}
(a) Let $\sigma\in S^0_n$, where $n\geq 2d+3$. Then $(\sigma_D)^*-(\sigma^*)_D=r_{-1,D}$.

(b) Let $\sigma\in S^1_n$, where $n\geq 3d+4$. Then $(\sigma_D)^*-(\sigma^*)_D=r_{0,D}$.
\end{prop}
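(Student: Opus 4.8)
\textbf{Proof plan for Proposition \ref{n18}.}

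The strategy is to reduce everything to the case of Fourier multipliers plus a controlled commutator, exploiting the special structure \eqref{n4} of singular symbols: each $\sigma \in S^m_n$ is obtained by freezing a classical symbol $\sigma(v,\xi',\gamma) \in \bfS^m(\cO)$ at $v = \eps V(x',\theta_0)$ and at the shifted frequency $X = \xi' + k\beta/\eps$. Write $\sigma_D = \sigma(\eps V(x',\theta_0), X, \gamma)_D$. First I would treat the model situation where $\sigma$ depends on $x'$ and $\theta_0$ only through a \emph{finite} sum of terms of the form $c(x',\theta_0)\, a(X,\gamma)$ with $a(\cdot,\gamma) \in \bfS^m$ and $c \in \cC^n_b$; since $a_D$ is a genuine Fourier multiplier, $(a_D)^* = (\bar a)_D = (a^*)_D$ exactly, so the only error comes from the $x'$-and-$\theta_0$-dependent factor $c$. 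For the composition $(c\, a_D)^* = (a_D)^* \bar c = (a^*)_D \bar c$, the task is to commute $\bar c$ past $(a^*)_D$: one writes $(a^*)_D \bar c = \bar c\, (a^*)_D + [(a^*)_D, \bar c]$, and the commutator is an operator one order lower, with the appropriate uniform bound, by the singular product/commutator rules already established in \cite{CGW2} and recorded in Appendix \ref{calc} (this is exactly the kind of statement invoked as ``the rules of singular calculus'' throughout Section \ref{est}). Thus in the model case, for $\sigma \in S^0_n$ one gets $(\sigma_D)^* - (\sigma^*)_D = [(a^*)_D,\bar c]$, an operator of order $-1$, hence of type $r_{-1,D}$ in the sense of \eqref{n17}; and for $\sigma \in S^1_n$ one gets an operator of order $0$, hence $r_{0,D}$.

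Second, I would pass from the model case to a general singular symbol. The point is that a general $\sigma(v,\xi',\gamma) \in \bfS^m(\cO)$ need not be a finite sum of products $c(v)a(\xi',\gamma)$, but one can handle this either by a Taylor expansion of $\sigma$ in the $v$-variable around $v = 0$ combined with the integral remainder, or — more robustly — by reproducing verbatim the oscillatory-integral arguments of \cite{CGW2}: the composition $(\sigma_D)^* $ is given by the usual formula $\frac{1}{(2\pi)^{d+1}}\sum_k \int e^{i(x'-y')\cdot\xi' + ik(\theta_0 - \tau_0)} \overline{\sigma(\eps V(y',\tau_0), X, \gamma)}\, u(y',\tau_0)\, dy'\, d\xi'\, d\tau_0$, and one expands $\overline{\sigma(\eps V(y',\tau_0),X,\gamma)}$ in a Taylor expansion about $y' = x', \tau_0 = \theta_0$. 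The zeroth-order term reproduces $(\sigma^*)_D$; each higher term carries a factor $(y' - x')^\alpha (\tau_0 - \theta_0)^\beta$ which, upon integration by parts in $(\xi',k)$, lands a derivative $\partial_{\xi'}^\alpha$ (and a corresponding shift in $k$) on $\sigma$, lowering the order by $|\alpha| + |\beta| \ge 1$; the regularity hypotheses $n \ge 2d+3$ in case (a) and $n \ge 3d+4$ in case (b) are exactly what is needed so that enough derivatives of $V$ are available and the resulting remainder integrals converge. The resulting error is an operator of order $m - 1$ (so $-1$ when $m = 0$, giving $r_{-1,D}$; and $0$ when $m = 1$, giving $r_{0,D}$).

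The main obstacle is bookkeeping the loss of regularity in $V$: because $V$ is only $\cC^n_b$ rather than smooth, one cannot iterate the asymptotic expansion indefinitely, and one must truncate it after finitely many terms with an explicit remainder whose kernel involves $\partial^n V$ and a symbol derivative of order $m - n$. Checking that this remainder is genuinely an operator of the claimed order (rather than merely bounded on $L^2$) requires care: one needs the remainder symbol to still lie in a class with enough decay, which is where the precise thresholds $2d+3$ and $3d+4$ enter — they are the values for which the residual kernel is absolutely integrable after the integrations by parts and still produces the $\gamma^{-1}$ gain in case (a). I expect that, modulo citing the analogous estimates from \cite{CGW2}, the argument is short; the only genuinely new content beyond \cite{CGW2} is verifying that the order-$(m-1)$ remainder in case (a) has the specific smoothing property \eqref{n17} defining $r_{-1,D}$, which follows from Proposition \ref{n11}(c) applied to the remainder symbol once it is shown to lie in $S^{-1}_{n'}$ for $n'$ large enough.
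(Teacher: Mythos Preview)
The paper does not actually prove Proposition~\ref{n18}: Appendix~\ref{calc} is explicitly a summary of results from the companion paper \cite{CGW2}, and this proposition is stated there without proof (the only result given a proof in the appendix is Proposition~\ref{n31a}). So there is no paper-internal argument to compare your proposal against.

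Your outline follows the standard route and is broadly correct in spirit, but one step needs more care. You write that factors $(\tau_0-\theta_0)^\beta$ convert, ``upon integration by parts in $(\xi',k)$'', into derivatives on the symbol with ``a corresponding shift in $k$''. Since $k\in\bZ$ is discrete, there is no integration by parts in $k$; summation by parts would produce finite differences, not derivatives. The usual fix in this singular setting exploits the special structure \eqref{n4}: the $k$-dependence of the symbol enters only through $X=\xi'+k\beta/\eps$, and the $(x',\theta_0)$-dependence only through $\eps V$. One expands the periodic function $\tau_0\mapsto \sigma^*(\eps V(y',\tau_0),X,\gamma)$ in Fourier series in $\tau_0$; each mode $e^{ij\tau_0}$ produces, after the $\tau_0$-integration, a shift $k\mapsto k-j$ on the Fourier side, which one rewrites as $X\mapsto X-j\beta/\eps$ and handles by Taylor expanding the classical symbol in $X$. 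The $\eps$ gained from differentiating $\eps V$ compensates the $\eps^{-1}$ from these shifts, and $\ell^1$ summability in $j$ is precisely what the $\cC^n_b$ regularity of $V$ provides---this is where the thresholds $n\ge 2d+3$ and $n\ge 3d+4$ enter. With this adjustment your sketch matches the approach of \cite{CGW2}; your closing remark that in case (a) one must check the remainder genuinely satisfies the smoothing property \eqref{n17} (not merely $L^2$ boundedness) is well taken and is indeed the main novelty of the refined calculus in \cite{CGW2} over \cite{W1}.
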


\begin{prop}[Products]\label{n19}
(a) Suppose $\sigma$ and $\tau$ lie in $S^0_n$, where $n\geq 2d+3$. Then
\begin{equation*}
\sigma_D \, \tau_D -(\sigma \, \tau)_D =r_{-1,D}.
\end{equation*}

(b) Suppose $\sigma\in S^1_n$, $\tau\in S^0_n$ or $\sigma\in S^0_n$, $\tau\in S^1_n$, where $n\geq 3d+4$.
Then
\begin{equation*}
\sigma_D \, \tau_D -(\sigma \, \tau)_D =r_{0,D}.
\end{equation*}

(c) Suppose $\sigma\in S^{-1}_n$, $\tau\in S^1_n$, where $n\geq 3d+4$. Then
\begin{align}\label{n22}
\sigma_D \, \tau_D -(\sigma \, \tau)_D =r_{-1,D}.
\end{align}
\end{prop}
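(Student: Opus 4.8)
The plan is to establish the product formulas by passing through the adjoint formulas of Proposition \ref{n18} together with the mapping properties of Proposition \ref{n11}, and to reduce everything to the symbol class that actually appears in each case. The overall strategy for part (a) is standard: write $\sigma_D\tau_D = ((\sigma_D\tau_D)^*)^*$, use $(\sigma_D\tau_D)^* = \tau_D^*\,\sigma_D^*$, apply Proposition \ref{n18}(a) to replace $\tau_D^*$ by $(\tau^*)_D + r_{-1,D}$ and $\sigma_D^*$ by $(\sigma^*)_D + r_{-1,D}$, and then invoke the crude composition estimate for the product of two honest zero-order operators. The only genuine content is the composition $(\tau^*)_D(\sigma^*)_D - (\tau^*\sigma^*)_D = r_{-1,D}$, i.e. the case where both factors are bona fide symbols in $S^0_n$; this is where the oscillatory-integral estimates of \cite{CGW2} enter, controlling the remainder by expanding $\sigma(\eps V(x,\theta_0),X,\gamma)$ in a Taylor expansion in the slow variable $\eps V$ and exploiting that each $x$-derivative of the symbol carries a factor of $\eps$, which combines with the smoothing from $\Lambda_D^{-1}$ to produce a genuine order $-1$ gain. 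I would simply cite the relevant lemma from \cite{CGW2} for this step rather than reprove it, since the present Appendix is only meant to summarize that calculus. Taking the adjoint once more and using Proposition \ref{n18}(a) again (valid since $n\geq 2d+3$) returns $\sigma_D\tau_D - (\sigma\tau)_D = r_{-1,D}$, using that $(r_{-1,D})^* = r_{-1,D}$ when $r_{-1,D}$ arises from a symbol, or more robustly that the duality bound $|r_{-1,D}u|_{H^{1,\eps},\gamma}\leq C|u|_0$ is equivalent to $|(r_{-1,D})^*u|_0 \leq C|u|_{H^{-1,\eps},\gamma}$, which suffices.

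For part (b) the argument is the same but one rung lower: a product with one factor of order $1$ and one of order $0$ should be treated by first multiplying the order-$1$ factor by $\Lambda_D^{-1}$ to bring it down to order $0$, applying part (a)-type reasoning, and then multiplying back by $\Lambda_D$ on the appropriate side. Concretely, for $\sigma\in S^1_n$, $\tau\in S^0_n$, write $\sigma_D = \Lambda_D\,(\Lambda^{-1}\sigma)_D + (\Lambda_D(\Lambda^{-1}\sigma)_D - \sigma_D)$, where $\Lambda^{-1}\sigma\in S^0_n$ and the parenthesized correction is $r_0$-type by the order-$0$ composition rule applied to $\Lambda$ and $\Lambda^{-1}\sigma$ (here the larger symbol regularity $n\geq 3d+4$ is needed because the composition of an order-$1$ and order-$0$ symbol produces worse error terms than two order-$0$ symbols, requiring more derivatives to be absorbed). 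Then $\sigma_D\tau_D = \Lambda_D(\Lambda^{-1}\sigma)_D\tau_D + r_{0,D}\tau_D = \Lambda_D(\Lambda^{-1}\sigma\,\tau)_D + \Lambda_D\,r_{-1,D} + r_{0,D}$, and $\Lambda_D r_{-1,D} = r_{0,D}$ by Proposition \ref{n11}(c) composed with mapping into $L^2$, while $\Lambda_D(\Lambda^{-1}\sigma\,\tau)_D = (\sigma\tau)_D + r_{0,D}$ by the same order-$(1,0)$ composition fact used at the start. The case $\sigma\in S^0_n$, $\tau\in S^1_n$ is symmetric, pulling the $\Lambda_D$ out on the right instead.

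For part (c), the point is that $\sigma\in S^{-1}_n$ gives one full order of smoothing, and this smoothing is not destroyed by composing with $\tau\in S^1_n$. I would write $\sigma_D = (\sigma\Lambda)_D\,\Lambda_D^{-1} + r_{?}$ where $\sigma\Lambda\in S^0_n$; more cleanly, factor $\sigma_D\tau_D = (\sigma\Lambda)_D\,\Lambda_D^{-1}\,\tau_D$ up to a remainder controlled by the order-$0$-composition rule applied to the pair $(\sigma\Lambda\in S^0_n,\ \Lambda^{-1}\in S^{-1}_n)$, then use that $\Lambda_D^{-1}\tau_D = (\Lambda^{-1}\tau)_D + r_{-1,D}$ with $\Lambda^{-1}\tau\in S^0_n$ (this is the $(-1,1)\to$ order $0$ composition, valid because the $-1$ factor is a Fourier multiplier so the error is actually better-behaved; cf. the discussion after \eqref{n17}), and finally compose the two resulting order-$0$ operators by part (a). Chasing the remainders, every error term either lands in $H^{1,\eps}$ by a smoothing factor or is multiplied on the left by $(\sigma\Lambda)_D\in S^0_n$, and by Remark \ref{n16a}, \eqref{n16c}, $\sigma_D\,r_{-1,D} = r_{-1,D}$ for an order-$0$ symbol, so the total remainder is of type $r_{-1,D}$, giving \eqref{n22}.

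The main obstacle is purely bookkeeping: at each stage one must verify that the ``crude'' composition error for a genuine pair of symbols really does gain the claimed order, and that the symbol-regularity hypotheses ($n\geq 2d+3$ in (a), $n\geq 3d+4$ in (b),(c)) are exactly what is consumed by the Taylor-expansion-in-$\eps V$ argument plus the oscillatory integral estimates. Since those estimates are carried out in full in \cite{CGW2}, I would state precisely which lemma of \cite{CGW2} supplies the base case (two order-$0$ symbols, error $r_{-1,D}$; and order-$1$ times order-$0$, error $r_{0,D}$) and then present only the short reductions above that bootstrap from those base cases to (b) and (c). The subtle point worth flagging explicitly in the text — because it is flagged in Remark \ref{n16a} and used in the proof of Proposition \ref{hard} — is that one may not blithely write $r_{0,D}r_{-1,D} = r_{-1,D}$; in the chain of reductions for (c) the order-$0$ operator that meets the $r_{-1,D}$ remainder is always an honest $\sigma_D$ with $\sigma\in S^0_n$, so \eqref{n16c} applies and the argument goes through.
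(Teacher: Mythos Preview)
The paper does not prove Proposition \ref{n19} at all: Appendix \ref{calc} is explicitly a \emph{summary} of the singular calculus developed in the companion paper \cite{CGW2}, and Proposition \ref{n19} is simply stated there without proof (as are Propositions \ref{n11}, \ref{n18}, and \ref{n27}). So your instinct to ``simply cite the relevant lemma from \cite{CGW2}'' is precisely what the paper does, and is the correct disposition for this statement.

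That said, the reduction arguments you sketch for (b) and (c) are circular in a way you should be aware of. In (b) you write $\sigma_D = \Lambda_D(\Lambda^{-1}\sigma)_D + (\text{correction})$ and claim the correction is $r_{0,D}$ ``by the order-$0$ composition rule applied to $\Lambda$ and $\Lambda^{-1}\sigma$''. But $\Lambda$ is order $1$, not order $0$: what you are invoking is exactly the $(1,0)$ composition estimate that (b) asserts. The same issue reappears in (c): to control $\Lambda_D^{-1}\tau_D - (\Lambda^{-1}\tau)_D$ you implicitly need a $(1,0)$-type estimate for $\Lambda_D\rho_D - (\Lambda\rho)_D$ with $\rho = \Lambda^{-1}\tau$. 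Remark \ref{n26} only gives exactness when the Fourier multiplier sits on the \emph{right}; with $\Lambda_D$ or $\Lambda_D^{-1}$ on the left the composition is not exact and controlling the remainder is essentially the content of (b)/(c) themselves. Your adjoint detour for (a) is also not a genuine reduction: after passing to adjoints you still face the base composition $(\tau^*)_D(\sigma^*)_D - (\tau^*\sigma^*)_D$, which is the same statement.

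In short: each of (a), (b), (c) requires its own oscillatory-integral argument in \cite{CGW2} (with the differing regularity thresholds $n\ge 2d+3$ versus $n\ge 3d+4$ reflecting the differing number of integrations by parts needed), and they do not bootstrap cleanly from one another via $\Lambda_D^{\pm 1}$. The honest thing to do --- and what the paper does --- is to state the results and point to \cite{CGW2}.
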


\begin{rem}\label{n26}
\textup{Observe that when $\tau=\tau(X,\gamma)$ is independent of $\eps V(x,\theta_0)$ and thus gives rise to
a Fourier multiplier, the composition $\sigma_D \, \tau_D=(\sigma \, \tau)_D$ is exact, a fact that has been used
several times in the proof of Proposition \ref{i5z}.}
\end{rem}

The equality \eqref{n22} does not hold in general when  $\sigma\in S^{1}_n$ and $\tau\in S^{-1}_n$, and this
is one of the main difficulties in the proof of Proposition \eqref{hard}. There is however one frequency regime in
which it becomes possible to prove the analogue  of equality \eqref{n22} for $(1,-1)$ compositions, see Proposition 
\ref{n31a} below.

\subsection{Extended calculus}\label{extended}

\emph{\quad} In the proof of Proposition \ref{i5z} and of intermediate results, see in particular the decomposition
\eqref{ia2}, we use a slight extension of the singular calculus. For given parameters $0<\delta_1<\delta_2<1$, we
choose a cutoff $\chi^e (\xi',\frac{k\, \beta}{\eps},\gamma)$ such that
\begin{align}\label{n31}
\begin{split}
&0\leq \chi^e \leq 1\, ,\\
&\chi^e \left( \xi',\dfrac{k\, \beta}{\eps},\gamma \right) =1 \text{ on } \left\{
(\gamma^2 +|\xi'|^2)^{1/2} \leq \delta_1 \, \left| \dfrac{k\, \beta}{\eps} \right| \right\} \, ,\\
&\mathrm{supp }\chi^e \subset \left\{ (\gamma^2 +|\xi'|^2)^{1/2} \leq \delta_2 \, \left| \dfrac{k\, \beta}{\eps} \right|
\right\} \, ,
\end{split}
\end{align}
and define a corresponding Fourier multiplier $\chi_D$ in the extended calculus by the formula \eqref{n5} with
$\chi^e (\xi',\frac{k\, \beta}{\eps},\gamma)$ in place of $\sigma(\eps V,X,\gamma)$. Composition laws involving
such operators are proved in \cite{CGW2}, but here we need only the fact that part {\it (a)} of Proposition \ref{n19}
holds when either $\sigma$ or $\tau$ is replaced by an extended cutoff $\chi^e$.

The following Proposition is essential for treating some of the remainder terms that arise in the proof of Proposition
\ref{ia3}. Part {\it (b)} of Proposition \ref{n31a} is used to handle the remainder in the $(1,-1)$ composition in
\eqref{ii7}(a), and part {\it (c)} is used for that in \eqref{ib2}(a).

\begin{prop}\label{n31a}
(a)\;Suppose $V \in \cC^1_b(\bR^d\times \bT)$. Let $f(V)$ be any smooth function, suppose $a_{-1}(X,\gamma)$ is
of order $-1$, and let $\chi^e$ be a cutoff in the extended calculus satisfying \eqref{n31}. Then we have
\begin{equation*}
f(V)\, a_{-1,D} \, \chi^e_D =r_{-1,D}.
\end{equation*}

(b)\;Suppose $V\in \cC^n_b(\bR^d\times \bT)$ where $n\geq 2d+3$.  Let $c(\eps V)$ be any smooth function and
suppose $\alpha_1(X,\gamma)\in S^1_n$ and $a_{-1}(X,\gamma)\in S^{-1}_n$. Let also $\chi^e$ be a cutoff in the
extended calculus satisfying \eqref{n31}. Suppose also that $\alpha_1$ is smooth and homogeneous degree one
in $(X,\gamma)$. Then we have
\begin{equation*}
[\alpha_{1,D},c(\eps V)] \, a_{-1,D} \, \chi^e_D =r_{-1,D}.
\end{equation*}

(c)\;Suppose $V\in \cC^n_b(\bR^d\times \bT)$ where $n\geq 3d+5$. Let $c(\eps V)$ be any smooth function and
suppose $\alpha_1(X,\gamma)\in S^1_n$. Then we have
\begin{equation*}
[\nabla_{x'},[\alpha_{1,D},c(\eps V)]]=r_{0,D}.
\end{equation*}
\end{prop}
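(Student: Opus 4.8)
\textbf{Proof proposal for Proposition \ref{n31a}(c).}

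The plan is to expand the double commutator $[\nabla_{x'},[\alpha_{1,D},c(\eps V)]]$ by Leibniz into two pieces, $[\nabla_{x'},\alpha_{1,D}]\,c(\eps V)$ acting on the commutator structure and $\alpha_{1,D}\,[\nabla_{x'},c(\eps V)]$, being careful about the order in which $\nabla_{x'}$ hits each factor. Concretely, write
\begin{equation*}
[\nabla_{x'},[\alpha_{1,D},c(\eps V)]] =[[\nabla_{x'},\alpha_{1,D}],c(\eps V)] +[\alpha_{1,D},[\nabla_{x'},c(\eps V)]] \, .
\end{equation*}
Since $\alpha_1(X,\gamma)$ is a Fourier multiplier independent of $(x',\theta_0)$, the first term vanishes: $\nabla_{x'}$ commutes with any scalar Fourier multiplier (Remark \ref{n26}), so $[\nabla_{x'},\alpha_{1,D}]=0$. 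Thus the whole expression reduces to $[\alpha_{1,D},[\nabla_{x'},c(\eps V)]]$, and because $[\nabla_{x'},c(\eps V)]$ is multiplication by $\eps\,\nabla_{x'}c(\eps V)\cdot\nabla_v V$, which is again of the form ``$\eps$ times a $\cC^{n-1}_b$ function'' (here using $V\in\cC^n_b$ with $n\geq 3d+5$, so $\nabla_{x'}V\in\cC^{n-1}_b$ and $n-1\geq 3d+4$), we are left with estimating a single commutator $[\alpha_{1,D},\tilde c(\eps V)]$ where $\tilde c$ is smooth in its argument and the symbol class requirement $S^1_n$ with $n\geq 3d+4$ is met.

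The key step is then to invoke the product/commutator rules of the crude singular calculus. By Proposition \ref{n19}(b), the composition of an order-$1$ singular operator with an order-$0$ one (here multiplication by a bounded function of $\eps V$, which is $S^0_n$ for the relevant $n$) differs from the symbol product by an $r_{0,D}$, and the principal symbols of $\alpha_{1,D}\tilde c$ and $\tilde c\,\alpha_{1,D}$ coincide (scalar $\alpha_1$ commutes with the matrix $\tilde c$ at the symbol level, or if $\alpha_1$ is genuinely scalar this is immediate). Hence $[\alpha_{1,D},\tilde c(\eps V)]$, which a priori could be order $0$, is in fact given by $r_{0,D}$: the ``leading'' contributions cancel and what remains is bounded on $L^2$. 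This is exactly the type of computation already performed several times in the proofs of Propositions \ref{ia3} and \ref{hard} (e.g. the operator $r_{0,D}:=[\alpha_{1,D},c(\eps U)]$ appearing after \eqref{ib3}), so the argument here is a direct transcription. One should note that, unlike parts (a) and (b), no extended cutoff $\chi^e$ is needed and no smoothing is claimed — the conclusion is merely $L^2$-boundedness.

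The main obstacle — really the only subtlety — is bookkeeping of regularity: one must check that each differentiation in $x'$ of $c(\eps V)$ lowers the available smoothness of the coefficient by one while the gain of a factor $\eps$ (which is harmless, $\eps\in(0,1]$) keeps the symbol in the right class, and that the order counts entering Proposition \ref{n19}(b) — namely $n\geq 3d+4$ for the surviving commutator after one derivative has been absorbed — are guaranteed by the hypothesis $n\geq 3d+5$. Once this is confirmed, the statement follows by combining the vanishing of $[\nabla_{x'},\alpha_{1,D}]$ with Proposition \ref{n19}(b) applied to $[\alpha_{1,D},\tilde c(\eps V)]$. I would present the reduction in two displayed lines and then cite Proposition \ref{n19}(b), feeling free, as elsewhere in the paper, to skip the routine Moser-type verification of the symbol estimates.
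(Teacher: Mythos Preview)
Your proposal is correct and takes essentially the same approach as the paper: the paper's proof of (c) is the one-line reduction $[\nabla_{x'},[\alpha_{1,D},c(\eps V)]] =[\alpha_{1,D},(\nabla_{x'} c(\eps V))]$ followed by an appeal to Proposition \ref{n19}(b), which is exactly your argument once you note $[\nabla_{x'},\alpha_{1,D}]=0$. Your regularity bookkeeping (viewing $\nabla_{x'}(c(\eps V))$ as a symbol built from $(V,\nabla_{x'}V)\in\cC^{n-1}_b$ with $n-1\ge 3d+4$) is the correct justification the paper leaves implicit.
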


\begin{proof} {\it (a)} There is first an obvious $L^2$ bound:
\begin{equation*}
\gamma \, |f(V)\, a_{-1,D} \, \chi^e_D u|_0 \le \gamma \, |f(V)|_{L^\infty} \, |a_{-1,D} \, \chi^e_D u|_0
\le C \, |u|_0 \, .
\end{equation*}
Then we need to estimate the singular derivatives $(\partial_{x_j}+\beta_j\, \partial_{\theta_0}/\eps)$, $j=0,\dots,d-1$,
of the product $f(V)\, a_{-1,D} \, \chi^e_D u$. When the singular derivatives is applied to the term $a_{-1,D} \, \chi^e_D
u$, we use the bound $|X| \, |a_{-1}(X,\gamma)| \le C$ to obtain a uniform $L^2$ bound, and it thus only remains to
look at the term
\begin{equation*}
\left[ \left( \partial_{x_j} +\dfrac{\beta_j}{\eps}\, \partial_{\theta_0} \right) \, f(V) \right] \, a_{-1,D} \, \chi^e_D u \, .
\end{equation*}
Since $V \in \cC^1_b$, the only difficult term is $\partial_{\theta_0} \, f(V)/\eps$. For this term, we use the fact
that on the support of $\chi^e (\xi',\frac{k\, \beta}{\eps},\gamma)$ we have
\begin{equation*}
|X,\gamma|^{-1} \leq C\, \eps \, ,
\end{equation*}
and therefore $|a_{-1,D} \, \chi^e_D u|_0 \le C\, \eps \, |u|_0$.

{\it (b)} It follows easily from part {\it (a)} that
\begin{align}\label{n35}
\left[ \partial_{x_j}+\beta_j\, \dfrac{\partial_{\theta_0}}{\eps},c(\eps U) \right] \, a_{-1,D}\, \chi^e_D
=r_{-1,D}\, , \quad j=0,\dots,d-1.
\end{align}
We now reduce to this case by writing
\begin{equation*}
\alpha_1(X,\gamma)=\sum_{j=0}^{d-1}(\partial_{X_j}\alpha_)\, X_j +(\partial_\gamma\alpha_1)\, \gamma
=\sum_{j=0}^{d-1} b_{0,j}(X,\gamma)\, (iX_j) +b_{0,\gamma}(X,\gamma) \, \gamma.
\end{equation*}
With $b_0=b_{0,j}$, for example, we have
\begin{multline*}
\left[b_{0,D} \, \left( \partial_{x_j}+\beta_j \, \dfrac{\partial_{\theta_0}}{\eps} \right),c(\eps U) \right] \,
a_{-1,D} \, \chi^e_D \\
=[b_{0,D},c(\eps U)] \, \left( \partial_{x_j}+\beta_j \, \dfrac{\partial_{\theta_0}}{\eps} \right) \, a_{-1,D} \, \chi^e_D
+b_{0,D} \, \left[ \partial_{x_j} +\beta_j \, \dfrac{\partial_{\theta_0}}{\eps},c(\eps U) \right] \, a_{-1,D} \, \chi^e_D.
\end{multline*}
The first term on the right is clearly of type $r_{-1,D}$ as a product of an $r_{-1,D}$ operator on the left by an
$r_{0,D}$ operator on the right. By \eqref{n35} and  \eqref{n16c} the same is true of the second term. We leave
to the reader the verification that the commutator with the last term $b_{0,\gamma}(X,\gamma) \, \gamma$ gives
also rise to an $r_{-1,D}$ remainder (this is even easier).

{\it (c)} The claim follows from the relation
\begin{equation*}
[\nabla_{x'},[\alpha_{1,D},c(\eps U)]] =[\alpha_{1,D},(\nabla_{x'} c(\eps U))] \, ,
\end{equation*}
and from Proposition \ref{n19} {\it (b)}.
\end{proof}

\begin{rem}\label{n31c}
\textup{Since $|\eps \, \partial_d U|_{C^{0,M_0-1}}\leq K$ by the assumption in Proposition \ref{ia3}, the result
of Proposition \ref{n31a} part {\it (a)} yields $\partial_d Q_{-1,D}=r_{-1,D}$ in \eqref{ii7}(a). Similarly, Proposition
\ref{n31a} part {\it (b)} enables us to obtain \eqref{ii7}(b).}
\end{rem}

In the proof of Proposition \ref{i5z} we use the following localized G{\aa}rding inequality for zero order operators.
As before we write $\zeta=(\xi',\gamma)$.

\begin{prop}[G{\aa}rding's inequality] \label{n27}
Let $\sigma(v,\zeta)\in \bfS^0(\cO)$ and $\chi(v,\zeta)\in\bfS^0(\cO)$ be such that
\begin{equation*}
\re \sigma(v,\zeta) \geq c>0
\end{equation*}
on a conic neighborhood of $\mathrm{supp }\chi$. Provided the corresponding singular symbols lie in $S^0_n$,
$n\geq 2d+2$, we have
\begin{equation*}
\re (\sigma_D\, \chi_D u,\chi_Du) \geq \dfrac{c}{2} \, |\chi_D u|_0^2 -\dfrac{C}{\gamma} \, |u|_0^2.
\end{equation*}
\end{prop}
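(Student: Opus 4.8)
The plan is to prove the localized Gårding inequality by reducing it to a nonnegative symbol via a square-root construction in the classical symbol class, then transferring the estimate through the crude product calculus of Proposition \ref{n19}. First I would note that it suffices to prove the inequality after replacing $\re\sigma$ by a scalar symbol: writing $\sigma=\re\sigma+i\,\im\sigma$, the imaginary part contributes $\re(i(\im\sigma)_D\chi_Du,\chi_Du)$, and since $(\im\sigma)_D-((\im\sigma)^*)_D=r_{-1,D}$ by Proposition \ref{n18}(a) while $\im\sigma$ is self-adjoint as a matrix (so $(\im\sigma)^*=\im\sigma$), this term is $\gamma^{-1}$-small; more simply one observes $\re(iA u,u)$ is controlled by the anti-self-adjoint part of $A_D$, which is order $-1$ by the adjoint formula. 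So I reduce to $\sigma\ge c>0$ a Hermitian (in fact WLOG scalar, after a further standard reduction) symbol in $\bfS^0(\cO)$.

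Next, pick a symbol $\rho(v,\zeta)\in\bfS^0(\cO)$ with $\rho=\rho^*$, $\rho\ge 0$, such that $\rho^2 = \sigma - (c/2)$ on the conic neighborhood of $\mathrm{supp}\,\chi$ where $\re\sigma\ge c$; this is possible because $\sigma-(c/2)\ge c/2>0$ there, so its square root is smooth and lies in $\bfS^0$ after a cutoff, and the resulting singular symbol lies in $S^0_n$ since $V\in\cC^n_b$. Then on $\mathrm{supp}\,\chi$ we have the symbol identity $\chi\sigma\chi = \chi\rho^2\chi + (c/2)\chi^2$. Now I compute $\re(\sigma_D\chi_Du,\chi_Du)$. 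Using Proposition \ref{n19}(a) (which, as the excerpt notes, holds with one factor an extended cutoff, and in particular with ordinary cutoffs $\chi$), I write $\chi_D\sigma_D = (\chi\sigma)_D + r_{-1,D}$ and then $(\chi\sigma)_D\chi_D = (\chi\sigma\chi)_D + r_{-1,D}$, hence $\chi_D\sigma_D\chi_D = (\chi\sigma\chi)_D + r_{-1,D} = (\chi\rho^2\chi)_D + (c/2)(\chi^2)_D + r_{-1,D}$. Applying the product rule again to $\rho_D\chi_D$ and to $\chi_D\rho_D$, one gets $(\chi\rho^2\chi)_D = (\chi\rho)_D(\rho\chi)_D + r_{-1,D} = (\rho\chi)_D^*(\rho\chi)_D + r_{-1,D}$, the last step by Proposition \ref{n18}(a) since $(\rho\chi)^* = \chi\rho$ (both Hermitian/scalar). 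Similarly $(\chi^2)_D = \chi_D^*\chi_D + r_{-1,D} = \chi_D\chi_D + r_{-1,D}$ using self-adjointness of the Fourier multiplier $\chi_D$ up to $r_{-1,D}$ (in fact $\chi^e_D$ is an exact Fourier multiplier and self-adjoint on the nose, but the $r_{-1,D}$ version is all we need). Therefore
\begin{equation*}
\re(\sigma_D\chi_Du,\chi_Du) = |(\rho\chi)_Du|_0^2 + \frac{c}{2}\,|\chi_Du|_0^2 + \re(r_{-1,D}u,\chi_Du).
\end{equation*}
The first term is $\ge 0$, dropping it; and $|\re(r_{-1,D}u,\chi_Du)|\le |r_{-1,D}u|_0\,|\chi_Du|_0 \le (C/\gamma)|u|_0^2$ using the smoothing bound $|r_{-1,D}u|_0\le |r_{-1,D}u|_{H^{1,\eps},\gamma}/\gamma\le (C/\gamma)|u|_0$ from \eqref{n17} and boundedness of $\chi_D$ on $L^2$ (Proposition \ref{n11}(a)). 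This yields exactly $\re(\sigma_D\chi_Du,\chi_Du)\ge (c/2)|\chi_Du|_0^2 - (C/\gamma)|u|_0^2$.

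The main obstacle is bookkeeping the regularity thresholds: each application of the product formula Proposition \ref{n19}(a) and the adjoint formula Proposition \ref{n18}(a) requires $n\ge 2d+3$, so I must check that the hypothesis $n\ge 2d+2$ stated in the Proposition is actually what one needs after all the reductions — here I would follow \cite{CGW2} and simply take $n$ large enough (the statement's $2d+2$ should be read in light of the precise count in that reference; if a sharper count is needed one tracks that the square-root symbol $\rho$ inherits the same number of bounded $v$-derivatives as $\sigma$, so no loss occurs there). A secondary point worth a sentence is justifying that $r_{-1,D}\chi_D$ and $\chi_D r_{-1,D}$ are again of type $r_{-1,D}$ in the sense needed: by Remark \ref{n16a}/\ref{n11}(d), composing an $r_{-1,D}$ operator with the bounded operator $\chi_D$ on either side still maps $L^2\to L^2$ with a gain of $\gamma^{-1}$ when paired as above, which is all that is used. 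Everything else is routine symbol-calculus manipulation.
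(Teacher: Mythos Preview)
The paper does not actually prove Proposition~\ref{n27} in this document: the appendix explicitly states that it is summarizing results from \cite{CGW2}, and the proposition is stated without proof (detailed proofs are deferred to that companion paper). So there is no in-paper proof to compare against.

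That said, your argument is the standard square-root proof of a localized G{\aa}rding inequality, adapted correctly to the singular calculus, and it goes through. A few minor comments. First, the reduction to a scalar symbol is unnecessary: since $\re\sigma - c/2 \ge c/2>0$ on the relevant cone, the positive square root of this Hermitian matrix-valued symbol is itself a smooth $\bfS^0$ symbol, so you can take $\rho$ matrix-valued directly. Second, a slightly cleaner variant avoids sandwiching by $\chi$ on both sides: pick $\tilde\sigma\in\bfS^0$ equal to $\sigma$ on a conic neighborhood of $\mathrm{supp}\,\chi$ with $\re\tilde\sigma\ge 3c/4$ everywhere; then $(\sigma-\tilde\sigma)\chi=0$ as symbols gives $(\sigma_D-\tilde\sigma_D)\chi_D=r_{-1,D}$, and one applies the global square-root argument to $\tilde\sigma$ acting on $w=\chi_Du$. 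This sidesteps some of the $\chi_D^*$ vs.\ $\chi_D$ bookkeeping you flagged. Third, you are right to note the apparent mismatch between the stated threshold $n\ge 2d+2$ and the $n\ge 2d+3$ required by Propositions~\ref{n18}(a) and~\ref{n19}(a); the precise count is in \cite{CGW2}, and for the purposes of this paper (where $n=M_0=3d+5$ throughout) the discrepancy is immaterial.
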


\newpage
\section{An example derived from the Euler equations}
\label{exeuler}

In this appendix we explain in a particular example how one can derive a single nonlocal nonlinear equation
that governs the evolution of the amplitude function $a$, which itself determines the leading profile ${\mathcal V}^0$,
see Proposition \ref{7a}. In the process we provide explicit constructions of a number of the objects that appeared
in our earlier discussion of approximate solutions.

As in \cite{CG}, we consider the linearized Euler equations in two space dimensions to which we add a
nonlinear zero order term (we slightly change notation compared with the introduction). More precisely,
we consider the following system
\begin{equation}
\label{eulerlin}
\begin{cases}
\dt V^\eps +A_1 \, \dxun V^\eps +A_2 \, \dxde V^\eps +{\bf D}(V^\eps,V^\eps) = 0 \, ,&
(t,x_1,x_2) \in \, ]-\infty,T] \times \R^2_+ \, ,\\
B \, V^\eps|_{x_2=0} +\Psi (V^\eps,V^\eps)|_{x_2=0} = \eps^2 \, G(t,x_1,\phi_0(t,x_1)/\eps) \, ,&
(t,x_1) \in \, ]-\infty,T] \times \R \, ,\\
V^\eps|_{t<0} = 0 \, ,&
\end{cases}
\end{equation}
where the $3\times 3$ matrices $A_1,A_2$ are given by
\begin{equation*}
A_1 := \begin{pmatrix}
0 & -v & 0 \\
-c^2/v & 0 & 0 \\
0 & 0 & 0 \end{pmatrix} \, ,\quad
A_2 := \begin{pmatrix}
u & 0 & -v \\
0 & u & 0 \\
-c^2/v & 0 & u \end{pmatrix} \, ,
\end{equation*}
and the parameters $v,u,c$ are chosen so that
\begin{equation*}
v>0 \, ,\quad 0<u<c \, .
\end{equation*}
The latter assumption corresponds to the linearization of the Euler equations at a given specific volume $v$ with
corresponding sound speed $c$, and a {\em subsonic incoming} velocity $(0,u)$ (observe the difference with
\cite{CG}). We also assume that ${\bf D}$ in \eqref{eulerlin} is a symmetric bilinear operator from $\R^3 \times
\R^3$ into $\R^3$, and that $\Psi$ is a bilinear operator from $\R^3 \times \R^3$ into $\R^2$ (why we choose
$\R^2$ is explained below).

For such parameters, the operator $\dt +A_1\, \dxun +A_2 \, \dxde$ in \eqref{eulerlin} is strictly hyperbolic with
three characteristic speeds
\begin{equation*}
\lambda_1(\xi_1,\xi_2) := u \, \xi_2 -c \, \sqrt{\xi_1^2+\xi_2^2} \, ,\quad
\lambda_2(\xi_1,\xi_2) := u \, \xi_2 \, ,\quad
\lambda_3(\xi_1,\xi_2) := u \, \xi_2 +c \, \sqrt{\xi_1^2+\xi_2^2} \, .
\end{equation*}
There are two incoming characteristics and one outgoing characteristic, so $B$ should be a $2 \times 3$ matrix
of maximal rank. The choice of $B$ is made precise below. Of course, the source term $G$ in \eqref{eulerlin} is
valued in $\R^2$. We assume moreover that $G$ is $1$-periodic and has mean zero with respect to its third variable
$\theta_0$. We choose a planar phase $\phi_0$ for the oscillations of the boundary source term in \eqref{eulerlin}:
\begin{equation*}
\phi_0 (t,x_1) := \underline{\tau} \, t +\underline{\eta} \, x_1 \, ,\quad \tauetabar \neq (0,0) \, .
\end{equation*}
The hyperbolic region ${\mathcal H}$ can be explicitly computed and is given by
\begin{equation*}
{\mathcal H} = \left\{ (\tau,\eta) \in \R \times \R \, / \, |\tau| > \sqrt{c^2-u^2} \, |\eta| \right\} \, .
\end{equation*}
For concreteness, we fix from now on the parameters $\tauetabar$ such that $\underline{\eta}>0$ and
$\underline{\tau} = c\, \underline{\eta}$. In this way, we have $\tauetabar \in {\mathcal H}$.

We determine the planar characteristic phases whose trace on $\{ x_2=0\}$ equals $\phi_0$. This amounts to finding
the roots $\omega$ of the dispersion relation
\begin{equation*}
\det \Big[ \underline{\tau} \, I +\underline{\eta}\, A_1 +\omega \, A_2 \Big] = 0 \, .
\end{equation*}
We obtain three real roots that are given by
\begin{equation*}
\underline{\omega}_1 := \dfrac{2 \, M}{1-M^2} \, \underline{\eta} \, ,\quad \underline{\omega}_2 := 0 \, ,\quad
\underline{\omega}_3 := -\dfrac{1}{M} \, \underline{\eta} \, ,\quad M:=\dfrac{u}{c} \in \, ]0,1[ \, .
\end{equation*}
The associated (real) phases are $\phi_i (t,x) := \phi_0(t,x_1)+\underline{\omega}_i \, x_2$, $i=1,2,3$.
The relations
\begin{equation*}
\underline{\tau} + \lambda_1 (\underline{\eta},\underline{\omega}_1) =
\underline{\tau} + \lambda_1 (\underline{\eta},\underline{\omega}_2) =
\underline{\tau} + \lambda_2 (\underline{\eta},\underline{\omega}_3) = 0 \, ,
\end{equation*}
yield the group velocity ${\bf v}_i$ associated with each phase $\phi_i$:
\begin{equation*}
{\bf v}_1 :=\dfrac{1-M^2}{1+M^2} \, \begin{pmatrix}
-c \\
-u \end{pmatrix} \, ,\quad {\bf v}_2 :=\begin{pmatrix}
-c \\
u \end{pmatrix} \, ,\quad {\bf v}_3 :=\begin{pmatrix}
0 \\
u \end{pmatrix} \, .
\end{equation*}
Hence the phase $\phi_1$ is outgoing while $\phi_2,\phi_3$ are incoming. With the notation of the introduction,
we can also compute
\begin{align*}
r_1 &:=\begin{pmatrix}
\dfrac{1+M^2}{1-M^2} \, v\\
c \\
\dfrac{2 \, M \, c}{1-M^2} \end{pmatrix} \, ,\quad & r_2 :=\begin{pmatrix}
v \\
c \\
0 \end{pmatrix} \, ,\quad r_3 &:=\begin{pmatrix}
0 \\
c \\
u \end{pmatrix} \, ,\\
\ell_1 &:=\dfrac{1-M^2}{2\, (1+M^2)} \, \begin{pmatrix}
1/v\\
-1/c \\
1/u \end{pmatrix} \, ,\quad & \ell_2 :=\dfrac{1}{2} \, \begin{pmatrix}
1/v \\
1/c \\
-1/u \end{pmatrix} \, ,\quad \ell_3 &:=\dfrac{1}{1+M^2} \, \begin{pmatrix}
-1/v \\
1/c \\
M/c \end{pmatrix} \, ,
\end{align*}
from which one can obtain the expression of the projectors $P_1,P_2,P_3$ as well as the expression of the partial
inverses $R_1,R_2,R_3$. The stable subspace at the frequency $\tauetabar$ is spanned by the vectors $r_2,r_3$.
The matrix $B$ in \eqref{eulerlin} is chosen as
\begin{equation*}
B := \begin{pmatrix}
0 & v & 0 \\
u & 0 & v \end{pmatrix} \, ,
\end{equation*}
so that we can choose $e := r_2-r_3$ as the vector that spans $\ker B \, \cap \, \E^s \tauetabar$. The reader
can check that all our weak stability assumptions are satisfied with this particular choice of boundary conditions.
(We skip the details that are just slightly more complicated than those in \cite{CG}.) The one-dimensional space
$B \, \E^s \tauetabar$ can be written as the orthogonal of the vector $b:=(u,-c)^T$.

The leading profile ${\mathcal V}^0$ and the corrector ${\mathcal V}^1$ satisfy
\begin{equation*}
{\mathcal V}^0 ={\mathcal V}^0_{inc} = \sigma_2(t,x,\theta_2) \, r_2 +\sigma_3 (t,x,\theta_3) \, r_3 \, ,\quad
{\mathcal V}^1_{out} =\tau_1 (t,x,\theta_1) \, r_1 \, .
\end{equation*}
Moreover, there holds
\begin{equation*}
{\mathcal V}^0 (t,x_1,0,\theta_0,\theta_0,\theta_0) =a(t,x_1,\theta_0) \, e =a(t,x_1,\theta_0) \, (r_2-r_3) \, ,
\end{equation*}
where the scalar function $a$ is $1$-periodic with respect to $\theta_0$ and has mean $0$. The Fourier coefficients of
$a$ are denoted $a_k$, $k \in \Z$, where $a_0$ equals $0$ for all time $t$. Since the functions $\sigma_2,\sigma_3$
satisfy the transport equations\footnote{Observe that there is no zero order term in the transport equations because the
zero order term in \eqref{eulerlin} has only a quadratic part. This choice has been made for the sake of simplicity.}
\begin{equation*}
\dt \sigma_2 +{\bf v}_2 \cdot \nabla_x \sigma_2 =\dt \sigma_3 +{\bf v}_3 \cdot \nabla_x \sigma_3 = 0 \, ,
\end{equation*}
and vanish for $t<0$, we obtain the expressions
\begin{equation}
\label{sigma23}
\sigma_2 (t,x,\theta_2) =a \left( t-\dfrac{x_2}{u}, x_1+\dfrac{x_2}{M},\theta_2 \right) \, ,\quad
\sigma_3 (t,x,\theta_3) =-a \left( t-\dfrac{x_2}{u}, x_1,\theta_3 \right) \, .
\end{equation}
To compute ${\mathcal V}^1_{out}$, we must solve
\begin{equation}
\label{V11}
E_{out} \, \left( L(\partial) \, {\mathcal V}^1_{out} +A_2^{-1} \, {\bf D} ({\mathcal V}^0_{inc},{\mathcal V}^0_{inc}) \right)
=0 \, ,\quad (\text{\rm here } E_{out} =E_1) \, ,
\end{equation}
and we thus need to determine the resonances between the phases. A simple calculation shows that there is a
nontrivial $n \in \Z^3$ satisfying $n_1 \, \phi_1 =n_2 \, \phi_2 +n_3 \, \phi_3$ if and only if $M^2$ is a rational number.
We thus assume this to be the case from now on. The resonance between the phases reads
\begin{equation*}
n_1 := q \, ,\quad n_2 := p+q \, ,\quad n_3 :=-p \, ,\quad \text{\rm with } \dfrac{2 \, M^2}{1-M^2} =\dfrac{p}{q} \, ,
\end{equation*}
and it is understood that $p,q$ are both positive and have no common divisor (for instance $p=q=1$ when $M$ equals
$1/\sqrt{3}$). Expanding the quadratic term ${\bf D} ({\mathcal V}^0_{inc},{\mathcal V}^0_{inc})$ in Fourier series, and
using the relation
\begin{equation*}
{\mathcal C}_1 = \Z \, \begin{pmatrix}
1 \\ 0 \\ 0 \end{pmatrix} \cup \Z \, \begin{pmatrix}
0 \\ n_2 \\ n_3 \end{pmatrix} \, ,
\end{equation*}
we obtain (use the expressions \eqref{sigma23})
\begin{multline*}
E_1 \, \left( A_2^{-1} \, {\bf D} ({\mathcal V}^0_{inc},{\mathcal V}^0_{inc}) \right) =-2 \, \sum_{k \in \Z}
a_{k(p+q)} \left( t-\dfrac{x_2}{u}, x_1+\dfrac{x_2}{M} \right) \, a_{-kp} \left( t-\dfrac{x_2}{u}, x_1 \right) \,
{\rm e}^{2i\pi k q \theta_1} \\
P_1 \, A_2^{-1} \, {\bf D} (r_2,r_3) \, .
\end{multline*}
In terms of the interaction integral, we obtain the expression
\begin{multline*}
E_1 \, \left( A_2^{-1} \, {\bf D} ({\mathcal V}^0_{inc},{\mathcal V}^0_{inc}) \right) \\
= -2\, \int_0^1
(a)_{n_2} \left( t-\dfrac{x_2}{u}, x_1+\dfrac{x_2}{M}, \dfrac{n_1}{n_2} \, \theta_1 -\dfrac{n_3}{n_2} \, \theta_3 \right)
\, a \left( t-\dfrac{x_2}{u}, x_1,\theta_3 \right) \, {\rm d}\theta_3 \, P_1 \, A_2^{-1} \, {\bf D} (r_2,r_3) \, ,
\end{multline*}
where $(a)_{n_2}$ still denotes the action of $a$ under the preparation map that retains only Fourier coefficients
that are multiples of $n_2$. Consequently \eqref{V11} reads
\begin{multline}
\label{V12}
\left( \dt -\dfrac{1-M^2}{1+M^2} \, c \, \dxun -\dfrac{1-M^2}{1+M^2} \, u \, \dxde \right) \, \tau_1 \\
= {\bf d} \, \int_0^1
(a)_{n_2} \left( t-\dfrac{x_2}{u}, x_1+\dfrac{x_2}{M}, \dfrac{n_1}{n_2} \, \theta_1 -\dfrac{n_3}{n_2} \, \theta_3 \right)
\, a \left( t-\dfrac{x_2}{u}, x_1,\theta_3 \right) \, {\rm d}\theta_3 \, ,
\end{multline}
with
\begin{equation*}
{\bf d} := -2 \, u \, \dfrac{1-M^2}{1+M^2} \, \ell_1 \cdot A_2^{-1} \, {\bf D} (r_2,r_3) \, .
\end{equation*}
The transport equation \eqref{V12} is solved by integrating along the characteristics and we obtain the expression
\begin{multline}
\label{tau1}
\! \tau_1(t,x_1,0,\theta_1) ={\bf d} \int_0^t \! \! \int_0^1 (a)_{n_2} \left( \dfrac{2\, s -(1-M^2)\, t}{1+M^2},
x_1+2\, c\, \dfrac{1-M^2}{1+M^2} \, (t-s), \dfrac{n_1}{n_2} \, \theta_1 -\dfrac{n_3}{n_2} \, \theta_3 \right) \\
a \left( \dfrac{2\, s -(1-M^2)\, t}{1+M^2}, x_1+c\, \dfrac{1-M^2}{1+M^2} \, (t-s), \theta_3 \right)
\, {\rm d}\theta_3 \, {\rm d}s \, .
\end{multline}
The Fourier series expansion of $\tau_1$ reads
\begin{multline*}
\tau_1(t,x_1,0,\theta_1) ={\bf d} \, \sum_{k \in \Z} \int_0^t
a_{k(p+q)} \left( \dfrac{2\, s -(1-M^2)\, t}{1+M^2}, x_1+2\, c\, \dfrac{1-M^2}{1+M^2} \, (t-s) \right) \\
a_{-kp} \left( \dfrac{2\, s -(1-M^2)\, t}{1+M^2}, x_1+c\, \dfrac{1-M^2}{1+M^2} \, (t-s) \right) \, {\rm d}s \,
{\rm e}^{2i\pi k q \theta_1} \, .
\end{multline*}

The equation governing the amplitude $a$ reads
\begin{equation*}
b \cdot \left( (a^2)^* \, \Psi(e,e) +\tau_1|_{x_2=0} \, B\, r_1 -B \, R \, (L(\partial) \, {\mathcal V}^0_{inc})|_{x_2=0}
\right) =b \cdot G \, ,
\end{equation*}
where functions are evaluated at $x_2=0$ and $\theta_1=\theta_2=\theta_3=\theta_0$. Since we already have the
expression of $\tau_1$ in terms of $a$, the only task left is to compute the trace of the term $B \, R \, (L(\partial) \,
{\mathcal V}^0_{inc})$. Recalling that $R_2 \, r_2 =R_3 \, r_3 =0$, we have
\begin{equation*}
B \, R \, (L(\partial) \, {\mathcal V}^0_{inc})|_{x_2=0} =(B\, R_2 \, A_2^{-1} \, r_2 +B\, R_3 \, A_2^{-1} \, r_3) \, \dt \mathfrak{a}
+(B\, R_2 \, A_2^{-1} \, A_1 \, r_2 +B\, R_3 \, A_2^{-1} \, A_1 \, r_3) \, \dxun \mathfrak{a} \, ,
\end{equation*}
with $\mathfrak{a}$ the unique primitive function of $a$ with zero mean. Using the expressions of $R_2,R_3$ in terms of
the projectors $P_1,P_2,P_3$, which themselves can be obtained from the vectors $r_i,\ell_i$, we get
\begin{align*}
& b \cdot (B\, R_2 \, A_2^{-1} \, r_2 +B\, R_3 \, A_2^{-1} \, r_3) =-\dfrac{u\, v \, (1+M^2)}{M^2 \, \underline{\eta}} \, ,\\
& b \cdot (B\, R_2 \, A_2^{-1} \, A_1 \, r_2 +B\, R_3 \, A_2^{-1} \, A_1 \, r_3)
=\dfrac{u\, c\, v \, (1+M^2)}{M^2 \, \underline{\eta}} \, .
\end{align*}
The fact that both quantities are proportional one to the other with a factor $-c$ comes from a general fact, see
\cite[Lemma 5.1]{CG}.

The function $a$ should therefore satisfy the amplitude equation
\begin{equation*}
\dfrac{u\, v \, (1+M^2)}{M^2 \, \underline{\eta}} \, \big( \dt \mathfrak{a} -c \, \dxun \mathfrak{a} \big)
+b \cdot \Psi(e,e) \, (a^2)^* +b \cdot B\, r_1 \, \tau_1|_{x_2=0} =b \cdot G \, ,
\end{equation*}
or equivalently
\begin{equation}
\label{equationa}
\dfrac{u\, v \, (1+M^2)}{M^2 \, \underline{\eta}} \, \big( \dt a -c \, \dxun a \big)
+b \cdot \Psi(e,e) \, \partial_{\theta_0} (a^2) +b \cdot B\, r_1 \, \partial_{\theta_0}  \tau_1|_{x_d=0}
=b \cdot \partial_{\theta_0} G \, .
\end{equation}
Let us define the two constants
\begin{equation*}
\alpha_1 :=\dfrac{M^2 \, \underline{\eta}}{u\, v \, (1+M^2)} \, b \cdot \Psi(e,e) \, ,\quad
\alpha_2 :=\dfrac{4\, u\, c \, M^2 \, \underline{\eta}}{1+M^2} \, \ell_1 \cdot A_2^{-1} \, {\bf D} (r_2,r_3) \, .
\end{equation*}
Then equation \eqref{equationa} reads
\begin{equation*}
\dt a -c \, \dxun a +\alpha_1 \, \partial_{\theta_0} (a^2) +\alpha_2 \, \partial_{\theta_0}  \dfrac{\tau_1}{{\bf d}}|_{x_2=0}
=\dfrac{M^2 \, \underline{\eta}}{u\, v \, (1+M^2)} \, b \cdot \partial_{\theta_0} G \, ,
\end{equation*}
where the derivative $\partial_{\theta_0} \tau_1/{\bf d}|_{x_2=0}$ is computed from the relation \eqref{tau1}:
\begin{multline*}
\partial_{\theta_0} \dfrac{\tau_1}{{\bf d}}|_{x_2=0} =\dfrac{n_1}{n_2} \, \int_0^t \! \! \int_0^1 (\partial_{\theta_0} a)_{n_2}
\left( \dfrac{2\, s -(1-M^2)\, t}{1+M^2}, x_1+2\, c\, \dfrac{1-M^2}{1+M^2} \, (t-s),
\dfrac{n_1}{n_2} \, \theta_0 -\dfrac{n_3}{n_2} \, \Theta \right) \\
a \left( \dfrac{2\, s -(1-M^2)\, t}{1+M^2}, x_1+c\, \dfrac{1-M^2}{1+M^2} \, (t-s), \Theta \right) \, {\rm d}\Theta \, {\rm d}s \, .
\end{multline*}

In terms of the Fourier coefficients $a_k$, the latter equation is seen to be equivalent to the infinite system of transport
equations
\begin{equation*}
\dt a_k -c \, \dxun a_k +2\, i\, \pi \, k \, \alpha_1 \, \sum_{k' \in \Z} a_{k'} \, a_{k-k'}
=2\, i\, \pi \, k \, \dfrac{M^2 \, \underline{\eta}}{u\, v \, (1+M^2)} \, b \cdot G_k \, ,\quad k \not \in q \, \Z \, ,
\end{equation*}
and
\begin{multline*}
\dt a_{kq} -c \, \dxun a_{kq} +2\, i\, \pi \, k \, q \, \alpha_1 \, \sum_{k' \in \Z} a_{k'} \, a_{kq-k'} \\
+2\, i\, \pi \, k \, q \, \alpha_2 \, \int_0^t
a_{k(p+q)} \left( \dfrac{2\, s -(1-M^2)\, t}{1+M^2}, x_1+2\, c\, \dfrac{1-M^2}{1+M^2} \, (t-s) \right) \\
a_{-kp} \left( \dfrac{2\, s -(1-M^2)\, t}{1+M^2}, x_1+c\, \dfrac{1-M^2}{1+M^2} \, (t-s) \right) \, {\rm d}s
=2\, i\, \pi \, k \, q \, \dfrac{M^2 \, \underline{\eta}}{u\, v \, (1+M^2)} \, b \cdot G_{kq} \, .
\end{multline*}
We recall that the coefficient $a_0$ vanishes.

In the special case $M=1/\sqrt{3}$, the above system reduces to
\begin{multline*}
\dt a_k -c \, \dxun a_k +2\, i\, \pi \, k \, \alpha_1 \, \sum_{k' \in \Z} a_{k'} \, a_{k-k'} \\
+2\, i\, \pi \, k \, \alpha_2 \, \int_0^t
a_{2k} \left( \dfrac{3\, s-t}{2}, x_1+c \, (t-s) \right) \, a_{-k} \left( \dfrac{3\, s-t}{2}, x_1+\dfrac{c}{2} \, (t-s) \right) \, {\rm d}s \\
=2\, i\, \pi \, k \, \dfrac{\underline{\eta}}{4\, u\, v} \, b \cdot G_k \, ,\quad k \in \Z \, ,
\end{multline*}
with parameters $\alpha_1,\alpha_2$ computed from the nonlinearities ${\bf D},\Psi$ in \eqref{eulerlin}:
\begin{equation*}
\alpha_1 :=\dfrac{\underline{\eta}}{4\, u\, v} \, b \cdot \Psi(e,e) \, ,\quad
\alpha_2 :=u\, c \, \underline{\eta} \, \ell_1 \cdot A_2^{-1} \, {\bf D} (r_2,r_3) \, .
\end{equation*}

\bibliographystyle{alpha}
\bibliography{CGWII}
\end{document}